\documentclass[12pt]{article} 
\RequirePackage{filecontents}        
\usepackage[utf8]{inputenc}
\usepackage{enumitem}
\newcommand{\rk}[1]{\textbf{\color{red}#1}}

\usepackage{geometry} 
\geometry{a4paper}
\usepackage{authblk}

\usepackage{graphicx} 

\usepackage{float}			
\usepackage{comment}
\usepackage{graphicx} 
\usepackage{caption}
\usepackage{subcaption}
\usepackage{verbatim} 
\usepackage{amsmath}

\usepackage{amssymb}
\usepackage{lmodern} 
\usepackage{mathtools}
\usepackage{bm}
\usepackage{esvect}
\usepackage[colorlinks]{hyperref}
\usepackage{bbm}
\usepackage{dsfont}
\usepackage[T1]{fontenc}
\usepackage{amsfonts}
\usepackage[font=small,labelfont=bf]{caption}

\usepackage{amsthm}
\numberwithin{equation}{section}

\theoremstyle{plain}

\newtheorem{thm}{Theorem}[section]

\newtheorem{lem}[thm]{Lemma}
\newtheorem*{lem*}{Lemma}
\newtheorem{prop}[thm]{Proposition}

\newtheorem{cor}[thm]{Corollary}

\newtheorem{rem}{Remark}[section]
\newtheorem*{rem*}{Remark}

\newcommand\NN{\mathbb{N}} 
\newcommand\ZZ{\mathbb{Z}} 
\newcommand\QQ{\mathbb{Q}} 
\newcommand\RR{\mathbb{R}} 
\newcommand\R{\mathbb{R}} 

\newcommand\EE{\mathbb{E}} 
\newcommand\BB{\mathbb{B}} 
\newcommand\PP{\mathbb{P}} 
 
\newcommand\diffsym{\bigtriangleup} 
\newcommand\restr[2]{{
  \left.\kern-\nulldelimiterspace 
  #1 
  \vphantom{\big|} 
  \right|_{#2} 
  }}

\newcommand\Cc{\mathcal{C}}

\newcommand\compl{\mathsf{c}}

\DeclareMathOperator{\dist}{dist}

\DeclareMathOperator{\Leng}{Length}
\DeclareMathOperator{\Var}{Var}
\DeclareMathOperator{\Vol}{Vol}

\DeclareMathOperator{\Diam}{Diam}
\DeclareMathOperator{\Ind}{Ind}

\DeclareMathOperator{\Cross}{Cross}

\title{Random pseudometrics\\
and applications}

\author{{\bf Vivek Dewan} \\ENS de Lyon \\15 parvis Ren\'e Descartes \\69342 Lyon Cedex 07\\
	{\small\tt vivek.dewan@ens-lyon.fr}
	\and
	{\bf Damien Gayet}\\
	Institut Fourier\\ Universit\'e Grenoble Alpes\\
	100 rue des Maths, 38610 Gi\`eres, France
	{\small\tt damien.gayet@univ-grenoble-alpes.fr}
}

\begin{document}
\maketitle

\begin{abstract}
Let $T$ be a random ergodic pseudometric over $\RR^d$. This setting generalizes the classical \emph{first passage percolation} (FPP) over $\ZZ^d$. We provide simple conditions on $T$, see~(\ref{annuliT}) (decay of instant one-arms) and (\ref{AI}) (exponential quasi-independence), that ensure the positivity of its time constants (Theorem~\ref{positifT}), that is almost surely, the pseudo-distance given by $T$  from the origin is asymptotically  a norm. Combining this general result with previously known ones, we prove that 
\begin{itemize}
	\item the known phase transition for Gaussian percolation in the case of fields with positive correlations with exponentially fast decay
holds for Gaussian FPP (Theorem~\ref{posBF}), including the natural Bargmann-Fock model;
\item the known phase transition for 
Voronoi percolation also extends to the associated FPP (Theorem~\ref{bevo}); 
\item the same happens for Boolean percolation (Corollary~\ref{bouboule}) for radii with exponential tails, a result which was known without this condition.
\item  We prove the positivity of the constant for  random continuous Riemannian metrics (Theorem~\ref{smoothmetric0}), including cases with infinite correlations in dimension $d=2$.
\item Finally, we show that the critical exponent for the one-arm, if exists, is bounded above by $d-1$ (Corollary~\ref{ceg}). This holds for
bond Bernoulli percolation, planar Gaussian fields, planar Voronoi percolation, and Boolean percolation with exponential small tails.
\end{itemize}
\end{abstract}

\tableofcontents

\section{Introduction}
\paragraph{Classical FPP.} First passage percolation (FPP) was first introduced by Hammersley and Welsh in 1965~\cite{Hw}. In its simplest version, it provides a random pseudometric over the graph made of the edges of the hypercubic lattice $\ZZ^d$. For any $p\in [0,1]$, any edge is given independently a number $\sigma_p$, 0 with probability $p$ and $1$ with probability $1-p$. The pseudo-distance beween two extremities of an edge is defined by this number. The pseudo-distance $T(x,y)$ between two vertices is the least pseudo-length of the continuous paths made of edges from $x$ to $y$. 

An important object in this context is the family of \emph{time constants} 
$(\mu_p(v))_{v\in \RR^d}$, that is the limits $(\lim_n \frac{1}n T(0,nv))_{v\in \R^d}$ of the large rescaled pseudo-distances to the origin. The existence of these limit is given by the ergodicity of the model, see Theorem~\ref{fixedFPP}.
It has been proved, see Theorem~\ref{tp}, that the large scale behaviour for $T$ follows the same phase transition as the associated Bernoulli percolation, namely that $\mu_p$ is positive if and only if $p$ is smaller than $p_c(d)$, the critical parameter for Bernoulli percolation in dimension $d$.
Recall that for $p<p_c(d)$, almost surely there is no infinite component of $\{\sigma_p=0\}$, and for $p>p_c(d)$, almost surely there is an infinite component of this set. For FPP, another classical result holds, namely the Cox-Durett ball shape theorem:
for $p<p_c(d)$, the large pseudo-balls of radius $t$ centered on the origin and defined by $T$ are almost surely close to $t$ times a deterministic convex compact with non-empty interior, see Theorem~\ref{ballshape}, whereas for $p\geq p_c(d)$, the pseudo-balls of radius $t$ rescaled by $1/t$ converge to the whole space. 

\paragraph{Random pseudometrics.} In~\cite{ziesche2016first}, a wide generalization of the classical FPP was proposed: general random  ergodic pseudometrics $T: (\RR^d)^2\to \RR_+$ over the whole affine space $\RR^d$.
In this continuous setting we can also define the family of time constants $(\mu(v))_{v\in \R^d}$, under mild conditions, see Theorem~\ref{fixeddirT}. 
 In this paper we  prove a general theorem, see Theorem~\ref{positifT}, which asserts that under two simply stated main conditions, the time constants associated with $T$ are positive. More precisely, if $T$ is ergodic, satisfies an exponential decay of correlations, see~(\ref{AI}), and if the probability that the origin and a large sphere are at vanishing $T$-distance decreases polynomially fast with degree greater than $d-1$, see~(\ref{annuliT}), then the time constants of $T$ are positive. Quite surprinsigly, 
Theorem~\ref{positifT} applies to all the known natural sorts of FPP, discrete or continuous, with the notable exception of the Gaussian free field~\cite{ding2019upper}, where the correlations are too strong for this setting. When $T$ is Lipschitz, which is the case of all the applications, except Riemannian percolation, we also prove a ball shape theorem, see Theorem~\ref{ballshapeBFT}. In the sequel, we present the four main applications. 

\paragraph{Random densities and colourings.} Historically, the first natural generalization of the classical FPP on $\mathbb Z^d$ has been provided by random  measurable \emph{colourings} $\sigma: \RR^d \to \{0,1\}$. Here, the associate pseudo-distance $T(x,y)$ is the least integral of $\sigma$ over the piecewise $C^1$ paths between two points $x,y$ of $\RR^d$, see~(\ref{defmetric}). 
This can be generalized to random \emph{densities}, that is random maps $\sigma: \R^d \to \R_+$. 
In this context, under the two aformentioned main conditions, Theorem~\ref{positifT} applies, see Corollary~\ref{positif}. In the case of colourings,  $T$ is always 1-Lipschitz, so that the ball shape theorem applies, see Corollary~\ref{ballshapeBF}.

\paragraph{Boolean FPP.}
The first colouring model which has been studied seems to be the \emph{Boolean} or \emph{continuous} percolation. Since it appears that the latter adjective covers a far larger class of models, we will refer to this model only as \emph{Boolean}. In this setting, the colouring $\sigma_{\nu,\lambda}$ is the characteristic function of the (complement of the) union of balls of random radii with law $\nu$ centered on random points of a Poisson process of intensity $\lambda$. It is now classical that for a fixed radii law, the percolation model undergoes a phase transition with parameter $\lambda$. Again, the phase transition concerns the infinite components of $\{\sigma_{\nu, \lambda}=0\}.$ Recently, it has been proved that a similar phase transition holds for the associated FPP, see Theorem~\ref{gouthe}. As an application of Theorem~\ref{positifT}, we recover this result in a restrictive situation, namely an exponential tail of the radii law $\nu$, see Corollary~\ref{bouboule}.

\paragraph{Voronoi FPP.}
Another continuous model based  on a Poisson point process over $\RR^d$ is the \emph{Voronoi percolation}. In this setting, the locally finite set $X$ of random points induces a partition of the space into \emph{Voronoi cells} defined by the points which are closest to a particular point in $X$. 
In Voronoi percolation, for a given $p\in [0,1]$, all the points in a given random cell are given a common number $\sigma_p$,  0 or 1, with respective probability $p$ and $1-p$, as in Bernoulli percolation, and this is done independently over the cells.  
It is classical that this model undergoes a phase transition for the infinite components of $\{\sigma_p=0\}$. Recently, new results about the associated percolation and criticity properties have been proved, see Theorems~\ref{BoRi}, \ref{RSWbevo} and~\ref{DRT}. We prove in this paper, using the aforementioned results and Theorem~\ref{positifT}, that a phase transition occurs for the associated FPP, see Theorem~\ref{bevo}.

\paragraph{Gaussian FPP.}
Also very recently, another class of continuous percolation model was reborn, \emph{Gaussian percolation}, that is connectivity properties associated with the sign of a stationnary Gaussian field over $\RR^d$. Common features with Bernoulli percolation have been revealed some years ago for planar fields with positive and strongly decorrelating fields, see Theorems~\ref{RSW_2} and~\ref{uniqcomp}, the latter providing a phase transition for the levels of the random field. More precisely, for $p\in \RR$ and a random real centered Gaussian field $f$  over $\RR^2$, let $\sigma_p$ be the colouring equal to 0 if $f+p\leq 0$ and $1$ if $f+p>0$. Then,  almost surely $\{\sigma_p=0\}$ has an infinite component if and only if $p<0$. In this planar context, for the same conditions on the correlations, we apply  Theorem~\ref{positifT} to prove that the FPP model associated with $\sigma_p$ undergoes the same phase transition, see Theorem~\ref{posBF}. All this applies to the natural Bargmann-Fock model defined by~(\ref{BF}).

\paragraph{Riemannian FPP.}

Another and very different continuous model was introduced in~\cite{lagatta2010shape}. 
In this situation, a random continuous Riemannian metric $g$ is given over $\RR^d$, and the associated pseudometric $T$ is given by the associated random distance. Under some moment conditions and if the model has finite correlations, the author of the aformentioned paper proved that $T$ is comparable to the Euclidean distance, see Theorem~\ref{smoothmetricLW}. We apply Theorem~\ref{positifT} to prove a more general result with weaker conditions, see~Theorem~\ref{smoothmetric}. In particular, in dimension 2 and for metrics associated with strongly decorrelating Gaussian fields, we give examples with infinite correlations, see Corollary~\ref{smoothmetric2}. 

\paragraph{Other models.}
In the realm of Gaussian fields, we can also, instead of integrating the sign of the function, integrate a positive functional of the function, see~(\ref{gauss2metric}). For instance, we can integrate the density $\max (0, f)$ instead of its sign. We prove that this model also undergoes a phase transition with the level $p$, see Theorem~\ref{gauss2}. Theorem~\ref{posBF} becomes in fact a particular case of said theorem.

Another application is the Ising model. In this case and for the range of temperature for which we can say something, the time constant is vanishing, so that it does not use our main Theorem~\ref{positifT}. Consequently, we refer for instance to~\cite{Velenik} for definitions and classical properties. We prove that for high negative temperature (anti-ferromagnetic) and for positive (ferromagnetic) temperature above the critical temperature, the time constant vanishes, where the associated random pseudometric is associated with the random colouring given by the spins, see Theorem~\ref{ising}.

\paragraph{Critical exponent.} As a direct consequence of Theorem~\ref{positifT}, we prove that for a model satisfying condition~(\ref{AI}) (quasi independence) and such that $\mu=0$, 
the probability that there exists an instant path from the origin to a sphere of size $R$ cannot decrease faster than $R^{-(d-1)}$, see Corollary~\ref{cegT}. When the pseudometric is given by the integral of a non-negative function $\sigma$, it implies the same for zero paths. For critical bond percolation over $\ZZ^d$, it is known  that this probability is of order $R^{-2}$ for $d>10$, see~\cite[Theorem 1]{kozma} and~\cite[Theorem 1.6]{fitzner2017mean}. In dimension 2, a consequence~\cite{lawler2002} of Smirnov's result is that in the case of the triangular lattice it is of order $R^{-5/48}$. Corollary~\ref{ceg} implies that the critical exponent for bond percolation, if it exists, is less or equal to  $d-1$, see Corollary~\ref{cebp}. For planar critical Gaussian fields ($p=0$), our result gives that the one-arm probability decreases no faster than $R^{-1}$, see Corollary~\ref{cegaussian}, and the same holds for planar Voronoi critical percolation ($p=1/2$), see Corollary~\ref{ceV}. For critical Boolean percolation in every dimension, the decay cannot be faster than $R^{-(d-1)}$, see Corollary~\ref{ceb}. We also provide a shorter and more general proof due to Hugo Vanneuville of these corollaries, see Theorem~\ref{hugo}.

\paragraph{Open questions}

\begin{itemize}
	\item One main conjecture for discrete FPP is the universality of the fluctuations of $T(0,x)-\mu(x) = o(x)$. It is conjectured~\cite[\S 3.1]{auffinger201750}  that $$\text{var } T(0,x) \sim_{\|x\|\to \infty} \|x\|^{2/3}$$ on $\RR^2$, where the symbol $\sim$ has various interpretations. Does the previous estimate hold for isotropic Gaussian fields, for instance the Bargmann-Fock field?
	Note that in our continuous setting, there are none of the problems caused by the rigidity of the lattice. Moreover, if the field is isotropic, the limit ball is a disk, which should help. However, one of the main problems in our context is the infinite dependency, an issue which does not arise in classical Bernoulli percolation.  
	\item Another conjecture is related to the deviations of the geodesics of the almost metric from the straight line, for instance the maximal distance between these two kinds of geodesics.  It is conjectured that this distance should be of order $\|x\|^\gamma$ for a certain exponent $\gamma<1$, see~\cite[\S 4.2]{auffinger201750}. It is very natural to assume that this should be the case for Gaussian fields. 
	\item The proof of Corollary~\ref{positif} involves a combinatorial bound, which must be fought by, among others, the asymptotic independence given by condition~(\ref{AI}) (asymptotic independence). In the Gaussian case, this independence is provided by the exponentially fast decay of the correlation function. If said function decreases only polynomially, the combinatorics win and we cannot get any upper bound. However, we cannot find any profound, non-technical reason for this need of exponential decay. 
\end{itemize}

\paragraph{Structure of the paper. } In section~\ref{statement}, we present in more details the various FPP models and the  results for general random pseudometrics, densities and colourings. In section~\ref{applications}, we present the various applications of the main results to Gaussian, Voronoi, Boolean and Riemannian percolation. In section~\ref{secproof}, we give the proof of the main general theorems, in particular Theorem~\ref{positifT}. In section~\ref{secapp}, we then explain how they can be applied to our applications. 

\paragraph{Acknowledgements.}  We would like to thank warmly Vincent Beffara, Jean-Baptise Gou\'er\'e and Hugo Vanneuville for corrections, valuable discussions and precious suggestions. 
We also thank R\'egine Marchand for a first discussion on this subject and Rapha\"el Cerf for references. 
The research leading to these results has received funding from the French Agence nationale de la recherche, ANR-15CE40-0007-01.

\section{Statement of the general results}\label{statement}
\subsection{FPP over lattices}\label{class}

 The classical FPP is more general than the Bernoulli one we described in the introduction. We refer to~\cite{auffinger201750} for a an introductive introduction to the subject.
Recall that $\mathbb L^d = (\ZZ^d, \mathbb E^d)$  denotes the hypercubic lattice. Let $\nu$ be a probability law on $\RR_+$. 
Let 
$$\sigma_\nu: \mathbb E^d\to \R^+$$ 
be such that 
every edge $e\in \mathbb E^d$  is endowed with an independent time  $\sigma_\nu(e)\in \R_+$ following the law $\nu$. 
Now, for any two vertices $(x,y)$ in $\ZZ^d$,  a \textit{path} between $x$ and $y$ is a continuous path from $x $ to $y$ made of edges. Then, the random time or pseudo-distance between $x$ and $y$ is defined by:
\begin{equation}\label{defFPPmetric}
T(x,y):=\inf_{\gamma\text{ path }x\rightarrow y}\sum\limits_{e\in \gamma}\sigma_\nu(e).
\end{equation}
We have hence endowed $\ZZ^d$ with a random pseudometric. It is not a metric since $T$ can vanish even if the points are different. Note that in the Bernoulli case explained in the introduction, 
if $p=0$, $T$ is the graph distance, and if $p=1$, $T$ degenerates to 0.
For any probability measure $\nu$ on $\RR_+$, define the following condition:
\begin{enumerate}[series=condi]
	\item (Finite moment)
\begin{equation}\label{numoment}
\EE\min(\sigma_\nu(1)^2,\cdots,\sigma^2_\nu(2d))<\infty
\end{equation}
\end{enumerate}
where the $\sigma_\nu(i)$'s are i.i.d random variables with law $\nu$.
The first main result in this domain is 
a consequence of the ergodicity of the model:
\begin{thm}\cite{Hw} \label{fixedFPP}
Let $\nu$  be a probability measure over $\R^+$ satisfying condition~(\ref{numoment}) (finite moment). Then,
	there exists a $\QQ$-semi-norm $\mu_\nu$ such that
	\begin{equation}\label{convergences}
 \lim\limits_{n \to +\infty} \frac{1}n T(0,nw) = \mu_\nu(w)\ \text{  almost surely and $L^1$}.
	\end{equation}
\end{thm}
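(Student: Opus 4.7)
The plan is to apply Kingman's subadditive ergodic theorem to the family of random variables $X_{m,n}=T(mw,nw)$ for a fixed nonzero $w\in\ZZ^d$, and then to extend the resulting limit by $\QQ$-linearity to all of $\QQ^d$. The key three-pronged verification is straightforward in spirit: the defining infimum of $T$ in~(\ref{defFPPmetric}) forces subadditivity, the i.i.d.\ nature of the edge weights forces stationarity, and the moment hypothesis~(\ref{numoment}) will take care of integrability.

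More precisely, first I would fix $w\in\ZZ^d\setminus\{0\}$ and observe that $X_{0,n}\le X_{0,m}+X_{m,n}$ for $0\le m\le n$, since concatenating an optimal path from $0$ to $mw$ with one from $mw$ to $nw$ yields a candidate path from $0$ to $nw$ in~(\ref{defFPPmetric}). Stationarity of the law of $(X_{m+k,n+k})_{m\le n}$ in $k$ follows from translation invariance of the product law of $(\sigma_\nu(e))_{e\in\mathbb E^d}$ along $w$. For integrability, I would control $\EE X_{0,1}=\EE T(0,w)$ by bounding $T(0,w)$ using concatenations of short excursions from each vertex to a well-chosen neighbor: at any vertex, one may leave by the cheapest of the $2d$ incident edges, so the Kesten-type bound $\EE T(0,w)\lesssim \|w\|_1\,\EE[\min(\sigma_\nu(1),\dots,\sigma_\nu(2d))]$ follows, and~(\ref{numoment}) is strictly stronger than needed for this. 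Kingman's theorem then produces a random limit $\frac1n X_{0,n}\to L_w$ almost surely and in $L^1$; since the shift by $w$ acts ergodically on the product space of i.i.d.\ edge labels, $L_w=\mu_\nu(w)$ is a.s.\ a deterministic nonnegative constant.

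Next I would extend $\mu_\nu$ from $\ZZ^d$ to $\QQ^d$. For $w=v/q$ with $v\in\ZZ^d$ and $q\in\NN^*$, set $\mu_\nu(w):=\mu_\nu(v)/q$: running $n$ along multiples of $q$ shows that $\frac1n T(0,nw)$ converges to $\mu_\nu(v)/q$, and interpolating between consecutive multiples of $q$ via subadditivity and the integrability of the interpolation error yields convergence along all integers. This definition is consistent because $\mu_\nu(kv)=k\mu_\nu(v)$ for $k\in\NN^*$, again by subadditivity applied in both directions. For the semi-norm axioms, nonnegativity and $\mu_\nu(0)=0$ are immediate; positive $\QQ$-homogeneity follows from the extension procedure; and the triangle inequality $\mu_\nu(v+w)\le\mu_\nu(v)+\mu_\nu(w)$ comes from
\[
T(0,n(v+w))\le T(0,nv)+T(nv,n(v+w)),
\]
dividing by $n$, and using that the two terms on the right converge to $\mu_\nu(v)$ and $\mu_\nu(w)$ respectively (by stationarity of the second term and by Kingman's theorem). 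Symmetry $\mu_\nu(-w)=\mu_\nu(w)$ similarly follows from $T(0,-nw)$ having the same law as $T(0,nw)$ by the reflection $x\mapsto -x$.

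The main obstacle, and the only step that is not purely formal, is verifying the integrability hypothesis $\EE T(0,w)<\infty$ under the weak moment assumption~(\ref{numoment}): a direct bound by a single edge weight fails when $\EE\sigma_\nu=\infty$, and one must instead construct a deterministic family of alternative short paths and bound $T(0,w)$ by the minimum of their total weights, at which point~(\ref{numoment}) (or even its $L^1$ analogue) suffices. The rest is Kingman's theorem plus bookkeeping.
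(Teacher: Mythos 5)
Your overall plan — apply Kingman's subadditive ergodic theorem to $\varphi_n=T(0,nw)$, with subadditivity from path concatenation, stationarity and ergodicity from the i.i.d.\ edge weights, and integrability from the moment hypothesis, then extend by $\QQ$-homogeneity and derive the triangle inequality from the subadditivity of $T$ — is exactly the strategy underlying the paper's treatment. The paper cites Theorem~\ref{fixedFPP} to \cite{Hw} without reproducing the argument, but it proves the continuous analogue Theorem~\ref{fixeddirT} in \S\ref{exicon} by precisely this route, and your derivations of $\QQ$-homogeneity and the triangle inequality mirror~(\ref{grou}) and~(\ref{pog}) there.

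The step that deserves scrutiny is integrability, which is the one piece specific to the discrete setting: in Theorem~\ref{fixeddirT} the paper simply \emph{assumes} $\EE T(0,x)<\infty$ (condition~(\ref{moment})), whereas here you must deduce it from the minimum-of-$2d$ condition~(\ref{numoment}). Your first formulation — ``at any vertex, one may leave by the cheapest of the $2d$ incident edges, so $\EE T(0,w)\lesssim\|w\|_1\,\EE[\min(\sigma_\nu(1),\dots,\sigma_\nu(2d))]$'' — is not a valid argument: the cheapest incident edge gives no control over progress toward $w$ (it can point backwards indefinitely, so the local choices do not concatenate into a path reaching $w$), and the local minima at adjacent vertices share edges, so they are neither independent nor easily summed into a bound on $T$. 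You correct yourself in the closing paragraph, and the strategy you name there (bound $T(0,w)$ by the minimum of the total weights over a deterministic family of short edge-disjoint alternative paths, in the manner of Cox--Durrett and Kesten) is indeed the right one and is what makes~(\ref{numoment}), or even its $L^1$ analogue, bite. But it is stated as an intention rather than carried out, and it is the only non-formal step of the whole proof; as the proposal stands there is still a gap there. With that construction supplied, the rest — Kingman, ergodicity giving a deterministic limit, $\QQ$-linear bookkeeping — goes through exactly as you sketch and matches the paper's template.
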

Let $p_c(d)$ be the critical threshold for Bernoulli bond percolation on $\ZZ^d$, that is
$$p_c(d)=\sup\{p\in [0,1], \text{ there is no infinite component of } \{\sigma_p=0\} \text{ a.s.}\}. $$
It is well known~\cite{grimmett} that for any $d\geq 2$, $p_c(d)\in ]0,1[$, and that $p_c(2) =1/2$. The second result is the main one. It asserts that laws that don't allow too fast times for an edge, the time constant $\mu_\nu$ is positive, and vice versa:
\begin{thm}\label{tp} \cite{kesten1986aspects} Let $\nu$  be a probability measure over $\R^+$ satisfying condition~(\ref{numoment}) (finite moment). Then, $$  \mu_\nu \text{ is a norm }\Leftrightarrow \PP[\nu =0]< p_c(d).$$
\end{thm}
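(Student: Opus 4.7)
The plan is to prove both directions of the equivalence. Write $F$ for the cumulative distribution function of $\nu$ so that $\PP[\nu=0] = F(0)$.

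\textbf{Direction $\mu_\nu$ is a norm $\Rightarrow F(0) < p_c(d)$.} I argue the contrapositive: if $F(0) \geq p_c(d)$ then $\mu_\nu \equiv 0$. Assume first $F(0) > p_c(d)$. The zero-weight edges then form a supercritical Bernoulli percolation with a unique infinite cluster $C_\infty$ of positive density. For any $v \in \RR^d$ and $\epsilon > 0$, by ergodicity one finds a.s.~vertices $x_n, y_n \in C_\infty$ within $L^1$-distance $\epsilon n$ of $0$ and $nv$ respectively for $n$ large. Travel inside $C_\infty$ is free, while the cost of reaching $x_n$ from $0$ along a greedy lattice path (choosing the cheapest first edge among the $2d$ neighbors at each step) is $o(n)$ a.s.~thanks to~(\ref{numoment}); the same for $nv$ to $y_n$. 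Hence $\mu_\nu(v) \leq C\epsilon$ and letting $\epsilon \to 0$ gives $\mu_\nu(v) = 0$. The critical case $F(0) = p_c(d)$ is handled by applying the same argument to the edge set $\{\sigma_\nu \leq \epsilon'\}$, which percolates since $F(\epsilon') \geq p_c(d)$, and noting that travel inside the corresponding cluster now costs at most $\epsilon'$ per edge; letting $\epsilon' \to 0$ yields $\mu_\nu(v) = 0$.

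\textbf{Direction $F(0) < p_c(d) \Rightarrow \mu_\nu$ is a norm.} By Theorem~\ref{fixedFPP} the semi-norm $\mu_\nu$ is already defined; lattice symmetry and the triangle inequality reduce the claim to $\mu_\nu(e_1) > 0$. Using right-continuity of $F$, choose $\delta > 0$ such that $F(\delta) < p_c(d)$, and call an edge \emph{fast} if $\sigma_\nu(e) \leq \delta$, \emph{slow} otherwise. The fast edges form a subcritical Bernoulli percolation, so by sharpness of the phase transition (Aizenman--Barsky--Menshikov) the fast cluster of any vertex has an exponentially decaying size distribution. I then renormalize: partition $\ZZ^d$ into blocks of side $k$ and call a block \emph{good} if every fast cluster meeting it has diameter at most $k/4$, so that any path crossing a good block must traverse at least one slow edge. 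Exponential cluster tails give $\PP[\text{block bad}] \to 0$ as $k \to \infty$; since the bad-block process is finite-range dependent, by Liggett--Schonmann--Stacey it is stochastically dominated by a Bernoulli product measure of parameter $\tilde\rho(k) \to 0$. For any self-avoiding path $\gamma$ from $0$ of graph-length $L$, a Peierls-type contour argument using subcriticality of the bad-block process shows that the number of slow edges in $\gamma$ is at least $\alpha L$ with probability $\geq 1 - e^{-c' L}$, with $\alpha, c' > 0$ depending on $k, \delta$ but not on $\gamma$. A union bound over the $(2d)^L$ self-avoiding paths of length $L$, summed over $L \geq n$, closes the estimate provided $c' > \log(2d)$, which I enforce by taking $k$ large. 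Hence a.s.~every path from $0$ to $n e_1$ has weight $\geq \delta\alpha n$, giving $\mu_\nu(e_1) \geq \delta\alpha > 0$.

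\textbf{Main obstacle.} The hardest part is closing the combinatorial bound: the exponential rate $c'$ at which the bad event for a path of length $L$ becomes rare must dominate the entropy $\log(2d)$ of self-avoiding paths in $\ZZ^d$. This forces the effective bad-block density $\tilde\rho(k)$ below a dimension-dependent threshold, tuned by the choice of $k$. Complicating matters, blocks are not independent (a fast cluster can straddle several of them), so controlling dependencies via Liggett--Schonmann--Stacey stochastic domination is the technical crux. The finite-moment condition~(\ref{numoment}) enters only indirectly, in particular to ensure the a priori upper bound $T(0, ne_1) = O(n)$ a.s.~that rules out pathologically long minimizing paths.
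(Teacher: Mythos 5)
The paper does not give a self-contained proof of Theorem~\ref{tp}: it cites Kesten and only \emph{reproves} the implication $\PP[\nu=0]<p_c(d)\Rightarrow\mu_\nu>0$ (Corollary~\ref{corclassical}), by feeding the exponential decay of subcritical white one-arms and the independence of edge times into a lattice version of the general Theorem~\ref{positifT}. Your proof of this direction is a genuinely different and more self-contained route: you run the classical Kesten-style renormalization (choose $\delta$ with $F(\delta)<p_c$, block coarse-graining, Liggett--Schonmann--Stacey domination, Peierls against the entropy $\log(2d)$ of self-avoiding paths). The paper's approach buys generality --- the same engine treats Gaussian, Voronoi, Boolean and Riemannian FPP --- while yours is elementary and explicit about where the combinatorics are defeated. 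Two technical remarks on your renormalization: the ``bad block'' event as you wrote it is \emph{not} finite-range dependent, because a fast cluster meeting the block can extend arbitrarily far; you need to localize the definition (e.g.\ declare the block bad if some fast cluster meeting a $2k$-neighbourhood of it has diameter $>k/4$) before Liggett--Schonmann--Stacey is applicable. That is a standard fix, not a conceptual problem.

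There is, however, a real gap in your converse direction, specifically at criticality $F(0)=p_c(d)$. You claim that the edge set $\{\sigma_\nu\leq\epsilon'\}$ ``percolates since $F(\epsilon')\geq p_c(d)$.'' This is false: $F(\epsilon')\geq p_c$ does not yield an infinite cluster; at $p=p_c$ there is no percolation (proved for $d=2$ and for large $d$, conjectured in general). Worse, if $\nu$ puts no mass on $(0,\delta)$ for some $\delta>0$ (for instance $\nu=p_c\delta_0+(1-p_c)\delta_1$), then $F(\epsilon')=p_c$ for all $\epsilon'<\delta$, so the set $\{\sigma_\nu\leq\epsilon'\}$ is exactly the critical zero set and your $\epsilon'$-trick gains nothing. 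Handling $F(0)=p_c$ requires a genuinely different input --- divergence of the expected cluster size $\chi(p_c)=\infty$ (a consequence of sharpness and the Aizenman--Newman inequality), or RSW-type estimates for $d=2$ --- and a chaining argument built on that, rather than an infinite cluster. As it stands your argument only covers $F(0)>p_c$, not $F(0)=p_c$, so the contrapositive of the first direction is not established. The supercritical subcase is fine (modulo the small point that the greedy-path bound gives $O(\epsilon n)$ rather than $o(n)$, which suffices once you let $\epsilon\to0$).
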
Notice that for Bernoulli percolation, the condition is equivalent
to $p< p_c(d)$.
For subcritical laws, a natural question is to study the geometry of the large balls defined by the pseudometric $T$. For this define:
$$\forall t\geq 0,  \ B_t= 
\{x\in \ZZ^d, T(x,0)\leq t\}$$
the family of balls defined by the pseudometric $T$.
 In 1981, J. T. Cox and R. Durrett proved the following geometric result:
\begin{thm}\label{ballshape}\cite{cox} (for $d=2$) \cite{kesten1986aspects} (for $d\geq 2$) Let $\nu$  be a probability measure over $\R^+$ satisfying condition~(\ref{numoment}) (finite moment) and
$T$ be defined by (\ref{defFPPmetric}). 
\begin{enumerate}
\item If $\PP[\nu=0]\geq p_c(d)$, then for any $M>0$, 
$$\PP [M\mathbb B \subset \frac{1}t B_t \text{ for $t$ large enough } ]= 1,$$
where $\mathbb B$ denotes the unit standard open ball in $\R^d$.
\item If	$\PP[\nu=0]<p_c(d)$, there exists a deterministic compact set $K\subset\RR^d$ with non-empty interior, such that for any positive $\epsilon$,
	\begin{equation}\label{KB}
	\PP\left[(1-\epsilon)K\subset\frac{1}tB_t\subset(1+\epsilon)K\text{ for all $t$ large enough }\right]=1.
	\end{equation}
	\end{enumerate}
\end{thm}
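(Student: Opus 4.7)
The plan is to treat the supercritical and subcritical cases separately, as they call for rather different arguments.

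For part (1), I would combine Theorem~\ref{tp} with the a.s.\ existence of an infinite cluster of zero-cost edges. By Theorem~\ref{tp} the semi-norm $\mu_\nu$ fails to be a norm, and the symmetries of the hypercubic lattice force it to vanish identically: if $\mu_\nu(w)=0$ for some non-zero $w$, then $\mu_\nu(w')=0$ for all images $w'$ of $w$ under lattice symmetries, and by subadditivity and convexity $\mu_\nu\equiv 0$. Theorem~\ref{fixedFPP} then gives $T(0,tw)/t\to 0$ in every rational direction. To upgrade this to uniformity over $|x|\leq Mt$, I would exploit the infinite cluster $\mathcal{C}_\infty$ of $\{\sigma_\nu=0\}$: for any vertex $x$, let $D(x)$ denote the minimal cost of a path from $x$ to $\mathcal{C}_\infty$. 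Classical supercritical estimates yield exponentially small tails for $D(x)$, so that $\max_{|x|\leq R}D(x)=o(R)$ almost surely, and the bound $T(0,x)\leq D(0)+D(x)$ then gives $M\mathbb{B}\subset \frac{1}{t} B_t$ for $t$ large.

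For part (2), I would define $K$ to be the closed unit ball of the norm $\mu_\nu$. This $K$ is convex and compact, and it has non-empty interior because condition~(\ref{numoment}) provides an upper bound $\mu_\nu(w)\leq C|w|$ (take the direct path along the edges of a hypercube and use the minimum-moment hypothesis). The heart of the argument is to upgrade the directional convergence of Theorem~\ref{fixedFPP} into a convergence that is uniform in the direction: namely, for any $\delta>0$ to show that almost surely, for $|x|$ large enough,
$$(1-\delta)\mu_\nu(x)\leq T(0,x)\leq (1+\delta)\mu_\nu(x).$$
Taking sublevel sets $\{T(0,\cdot)\leq t\}$ of the two inequalities immediately yields the two-sided inclusion $(1-\epsilon)K\subset \frac{1}{t} B_t\subset (1+\epsilon)K$ for $t$ large.

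To obtain the uniform estimate, I would cover the $\mu_\nu$-unit sphere by finitely many rational directions $w_1,\dots,w_k$, apply Theorem~\ref{fixedFPP} along each $w_i$, and then interpolate using the subadditivity $T(0,x)\leq T(0,n_iw_i)+T(n_iw_i,x)$ for lattice points $x$ close to $n_iw_i$. The interpolation error $T(n_iw_i,x)$ must be controlled uniformly in $x$ and in $i$, and this is precisely where condition~(\ref{numoment}) is used: it supplies a moment bound on the maximal edge-time $\max_{|y-z|\leq 1}T(y,z)$ which, combined with a union bound over the boundary of $B_t$ and a Borel--Cantelli argument, produces the required uniform control. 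I expect the main obstacle to be precisely this uniformity step, since the set of directions is a continuum and one has to absorb a polynomial union-bound over $\partial B_t$ into the tail produced by the minimum-moment condition; this is exactly why (\ref{numoment}) involves the minimum of $2d$ i.i.d.\ copies rather than a single one.
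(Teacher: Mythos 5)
The paper itself does not prove Theorem~\ref{ballshape}; it is quoted as a classical result with citations to Cox--Durrett (for $d=2$) and Kesten (for $d\geq 2$), so there is no internal proof to compare against. Your plan follows the classical route, and it is broadly the right one, but there are two places where the argument as written does not close.

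First, part~(1) covers the entire range $\PP[\nu=0]\geq p_c(d)$, and your proof anchors the estimate $T(0,x)\leq D(0)+D(x)$ to the infinite cluster $\mathcal{C}_\infty$ of zero-time edges. At the critical point $\PP[\nu=0]=p_c(d)$ this breaks down: in $d=2$ and in high dimensions it is a theorem that there is almost surely no infinite cluster at $p_c$, and for intermediate dimensions its existence is an open problem, so $\mathcal{C}_\infty$ cannot be used. This boundary case is in fact the delicate part of the result and needs a different device (Kesten's annulus-traversal estimates, later sharpened by Zhang). Even strictly above $p_c$, the assertion ``classical supercritical estimates yield exponentially small tails for $D(x)$'' conflates the graph distance to $\mathcal{C}_\infty$ (which does have exponential tails) with the FPP cost $D(x)$, which inherits the tail of $\nu$ along the path; with only~(\ref{numoment}) available one again has to route through several edge-disjoint paths and invoke the min-moment structure, which your sketch omits.

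Second, in part~(2) the identification of $K$ as the unit ball of $\mu_\nu$ and the reduction to a direction-uniform limit are exactly the Cox--Durrett strategy, and you correctly flag uniformity as the crux. But two details in the last step are off. Condition~(\ref{numoment}) does not control ``the maximal edge-time $\max_{|y-z|\leq 1}T(y,z)$''; what it gives, by routing between neighbours through $2d$ edge-disjoint detours and taking the minimum, is a bound of the form $\PP[T(y,z)>s]\leq C s^{-2}$ for adjacent $y,z$ (equivalently $\EE T(y,z)^2<\infty$), which is precisely why the condition involves the min of $2d$ i.i.d.\ copies. More importantly, feeding this $L^2$ tail into a union bound over the $O(t^{d-1})$ points of $\partial B_t$ and Borel--Cantelli gives $\PP[\max>\epsilon t]\lesssim t^{d-3}$, which is summable only for $d\leq 2$. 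So the final interpolation as you describe it works in the Cox--Durrett setting $d=2$ but does not close for $d\geq 3$; Kesten's extension to general $d$ needs a genuinely sharper mechanism (good shells around each lattice point, concentration of annulus-crossing times) rather than a bare moment-plus-union-bound argument.
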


\subsection{Random pseudometrics.}
 Let $$T: (\RR^d)^2 \to \RR_+$$ be a random pseudometric, that is 
$T$ satisfies the axioms of a metric except the non-degeneracy. 
Recall that a $\QQ$-\emph{semi-norm} over $\RR^d$ is a map $\nu \to \R^d \to \R_+$ satisfying $$\forall (\lambda,x)\in \QQ\times \R^d, \mu(\lambda x) = |\lambda|\mu(x),$$
and $\forall (x,y)\in (\R^d)^2, \ \nu(x+y)\leq \mu(x)+\mu(y).$
\begin{thm}\label{fixeddirT}
	Let $T$ be a random  pseudometric satisfying ~(\ref{homT}) (ergodicity)  and condition~(\ref{moment}) (finite moment). 
	Then, there exists a $\QQ$-semi-norm $\mu$  $\mu: \RR^d \to \R_+$ such that
	\begin{equation}\label{convergencesT}
	\forall v\in \RR^d, \lim\limits_{n \to +\infty} \frac{1}n T(0,nv) = \mu(v)\ \text{  almost surely and $L^1$}.
	\end{equation}
	If  $T$ satisfies the further condition~(\ref{isoT})  (isotropy) then $\mu$ is constant over $\mathbb S^{d-1}$.
\end{thm}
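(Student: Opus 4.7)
The plan is to apply Kingman's subadditive ergodic theorem direction by direction. I fix $v\in \RR^d$ and consider the array $X_{m,n}:=T(mv,nv)$ for integers $0\le m<n$. The triangle inequality for the pseudometric $T$ gives the subadditivity $X_{0,n}\le X_{0,m}+X_{m,n}$, and the translation invariance underlying condition~(\ref{homT}) ensures that the shifted array $(X_{m+k,n+k})$ has the same joint distribution as $(X_{m,n})$ for every $k\in\ZZ$. Together with the integrability furnished by condition~(\ref{moment}), which I expect to imply $\EE X_{0,1}<\infty$, these are exactly the hypotheses of Kingman's theorem. It then yields the almost sure and $L^1$ convergence of $\tfrac{1}{n}T(0,nv)$ toward a random variable, and the ergodicity in~(\ref{homT}) promotes this limit to a deterministic constant, which I denote $\mu(v)$.

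Next I would verify that the function $\mu$ thus defined is a $\QQ$-semi-norm. For positive rational homogeneity, given $q=p/r$ with $p,r$ positive integers, extracting the subsequence $n=kr$ in the defining limit yields
$$\mu(qv)\;=\;\lim_{k\to\infty}\tfrac{1}{kr}\,T(0,kpv)\;=\;\tfrac{p}{r}\,\mu(v),$$
and the full limit exists by the previous step, so this subsequence identification is already the limit. Symmetry of the pseudometric together with translation invariance gives $T(0,-v)\overset{d}{=}T(v,0)=T(0,v)$, hence $\mu(-v)=\mu(v)$, extending homogeneity to all of $\QQ$. For subadditivity, the triangle inequality produces
$$\tfrac{1}{n}T(0,n(u+v))\;\le\;\tfrac{1}{n}T(0,nu)\;+\;\tfrac{1}{n}T(nu,n(u+v)),$$
and by translation invariance the last term is distributed as $\tfrac{1}{n}T(0,nv)$, hence converges in $L^1$ to $\mu(v)$. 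Passing to the limit gives $\mu(u+v)\le\mu(u)+\mu(v)$.

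For the isotropy statement, condition~(\ref{isoT}) asserts that the law of $T$ is invariant under every rotation $R\in O(d)$; therefore $T(0,nRv)\overset{d}{=}T(0,nv)$ for each $n$, and their deterministic limits $\mu(Rv)$ and $\mu(v)$ must coincide. Hence $\mu$ is rotation invariant, and in particular constant on $\mathbb S^{d-1}$.

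The delicate point, and the only place where the continuous setting really matters, is verifying that Kingman's theorem truly applies here: one must check joint measurability of the array $(X_{m,n})$ as a function of $\omega$ and $(m,n)$ and pin down that the moment condition~(\ref{moment}) supplies both the integrability required by the subadditive ergodic theorem and enough uniform integrability to justify the $L^1$ passage in the subadditivity step. Once these measurability and integrability matters are settled, the rest is routine: homogeneity and subadditivity then extend from the lattice $\ZZ v$ to $\QQ v$ and finally to $\QQ$-linear combinations of two directions without further difficulty.
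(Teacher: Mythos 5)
Your proposal follows the same route as the paper: apply Kingman's subadditive ergodic theorem with $\varphi_n = T(0,nv)$ (you use the two-index array form, the paper the dynamical-system form, but these are equivalent), with subadditivity from the triangle inequality and integrability from condition~(\ref{moment}), then derive rational homogeneity along subsequences, subadditivity of $\mu$ from the $L^1$ limit and translation invariance, and rotation invariance from condition~(\ref{isoT}). The argument is correct and matches the paper's proof in substance.
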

Note that a semi-norm over $\R^d$ is always continuous. 
As in the discrete case, the proof relies only on the ergodicity of the field, see \S~\ref{exicon}.	
The main result of this paper  is the following:
\begin{thm}\label{positifT}
	Let  $T $ be a random pseudometric over $\RR^d$ satisfying conditions~(\ref{homT}) (ergodicity),  (\ref{moment}) (finite moment), (\ref{mesuT}) (annular mesurability),  ~(\ref{annuliT}) (decay of instant one-arms) and~(\ref{AI}) (quasi independence). Then
	$ \mu$ is a norm.
\end{thm}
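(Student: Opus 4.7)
The plan is a proof by contradiction. If $\mu$ is not a norm, then by $\QQ$-homogeneity and subadditivity there is some $v \in \RR^d \setminus \{0\}$ with $\mu(v) = 0$, and Theorem~\ref{fixeddirT} combined with ergodicity gives
\begin{equation*}
\frac{T(0,nv)}{n} \to 0 \quad\text{almost surely and in } L^1,
\end{equation*}
so for any $\epsilon > 0$ one has $\PP[T(0,nv)\leq \epsilon n] \to 1$. The objective is to contradict this using~(\ref{annuliT}) and~(\ref{AI}).

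I would run a multi-scale block argument in the spirit of Kesten's proof for classical FPP and Gou\'er\'e's proof for Boolean FPP. Fix a large scale $R$ to be chosen at the end. Partition $\RR^d$ by cubes of side $R$; any $T$-almost-geodesic from $0$ to $nv$ traverses a $\ast$-connected chain of at least $k \asymp n/R$ such cubes. Assuming $T(0,nv) \leq \epsilon n$, a positive fraction of these cubes must be \emph{cheap}, in the sense that the geodesic spends arbitrarily small $T$-time inside them. The key technical step is then to upgrade ``cheap'' into ``instant one-arm'' in the sense of~(\ref{annuliT}): using the annular measurability~(\ref{mesuT}), a dyadic scale decomposition shows that if the $T$-cost of a traversal at scale $R$ is below a sufficiently small threshold, then at one of finitely many dyadic subscales $R/2^j$ it must actually vanish, producing a genuine instant arm. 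This reduction costs only a logarithmic factor in $R$ in the final union bound.

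Next I would combine the two probabilistic inputs. The quasi-independence~(\ref{AI}), which provides exponential decay of correlations, allows us (after extracting a sub-chain of cubes pairwise separated by distance $\gg R$) to estimate the joint probability of $k$ instant arms by the product of the individual probabilities, up to an error exponentially small in $R$. Each such probability is at most $R^{-\alpha}$ with $\alpha > d-1$ by~(\ref{annuliT}). The combinatorial cost of enumerating $\ast$-connected chains of length $k$ from $0$ to $nv$ carries precisely the surface-area growth that the hypothesis $\alpha > d-1$ is designed to beat: after bookkeeping of entry and exit faces of each cube (contributing a factor $R^{d-1}$ per cube), the union bound yields
\begin{equation*}
\PP[T(0,nv)\leq \epsilon n] \;\leq\; \bigl(C R^{d-1}\cdot R^{-\alpha}\bigr)^{k}\cdot e^{o(k)} \;=\; \bigl(C R^{-(\alpha - d + 1)}\bigr)^{k}\cdot e^{o(k)},
\end{equation*}
which tends to $0$ as $n\to\infty$ provided $R$ is taken large enough that $C R^{-(\alpha-d+1)} < 1$. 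This contradicts the fact that the left-hand side tends to $1$.

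The main obstacle I anticipate is the upgrade from ``small but positive $T$-cost on a box'' to ``genuinely zero $T$-cost across some concentric annulus'', since~(\ref{annuliT}) only controls instant arms. The dyadic cascade through annular subscales is the natural fix but must be set up so that the logarithmic cost in $R$ does not destroy the geometric improvement $\bigl(R^{-(\alpha-d+1)}\bigr)^{k}$. Second, the quasi-independence~(\ref{AI}) must be applied uniformly across a linearly growing collection of events, which is precisely why an \emph{exponential} decay of correlations is required rather than a merely polynomial one — the final open question in the introduction is closely related to this point.
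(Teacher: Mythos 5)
Your proposal has the right general shape — multi-scale decomposition, enumeration of crossing events, quasi-independence to beat a surface-area combinatorial factor — but it contains a genuine gap at exactly the point you yourself flag as ``the main obstacle,'' and the paper's resolution of that obstacle is essentially different from what you propose.

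You claim that ``if the $T$-cost of a traversal at scale $R$ is below a sufficiently small threshold, then at one of finitely many dyadic subscales $R/2^j$ it must actually vanish, producing a genuine instant arm.'' This is false, and no appeal to annular measurability can rescue it. Take the constant density $\sigma \equiv \varepsilon$: the cost of crossing any annulus at any subscale is strictly positive, yet the cost at scale $R$ can be made arbitrarily small. Condition~(\ref{annuliT}) controls the probability that $T(A_R)$ is \emph{exactly} zero; it says nothing about events where $T(A_R)$ is small but positive, and there is no pigeonhole or cascade argument that converts the latter into the former. The paper never makes such a claim. Its replacement for your ``upgrade'' is twofold and occupies most of Section~\ref{posit}. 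First, Lemma~\ref{cross} uses the right-continuity of $x\mapsto \PP[T(A_M)\le x]$ at $0$: since $\PP[T(A_M)=0]\le \frac{1}{2}M^{-(d-1+\eta/2)}$, there is a positive threshold $c_M$, \emph{depending on $M$}, such that $\PP[T(A_M)\le c_M M]\le M^{-(d-1+\eta/2)}$. Second — and this is the real content — since $c_M$ degenerates as $M$ varies, one cannot apply a union bound with a single threshold across all scales. Proposition~\ref{comparison} establishes a renormalization inequality: if $\PP[T(A_R)/R<\delta]$ is small, then $\PP[T(A_S)/S<\delta/(1+Q/R)]$ is controlled at a much larger scale $S$, at the cost of shrinking the threshold by the factor $(1+Q/R)^{-1}$. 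Corollary~\ref{corocomp} then iterates this along the scale sequence $M_k=R_0^{(1+\epsilon)^{k-1}}$, and the crucial observation (in the proof of Proposition~\ref{keyprop}) is that the accumulated shrinkage $\prod_j (1+M_j^{-\epsilon/2})^{-1}$ converges to a \emph{positive} constant $\gamma$. So the paper never passes through ``instant arms'' at all; it keeps a small positive threshold alive across all scales by carefully managing how fast it decays.

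Two further, smaller discrepancies. The paper's decomposition is radial — nested spherical shells $B_j$ separated by buffer shells $C_j$ of width $Q$ inside $A_S$ — not a partition of $\RR^d$ into side-$R$ cubes traversed by a geodesic, and the role of $Q$ (which must be taken of size roughly $R^{1-\epsilon/2}$) is to enforce the separation needed for condition~(\ref{AI}); you do not specify how separation between your $\ast$-connected cubes is achieved without losing a constant fraction of them. Also, the paper's final step is not by contradiction: it shows directly $\PP[T(A_{M_n})/M_n < c] \to 0$ along a subsequence, and combines this with the $L^1$ convergence $T(0,\lfloor M_n\rfloor v)/\lfloor M_n\rfloor \to \mu(v)$ and the trivial bound $T(A_{M_n})\le T(0,(\lfloor M_n\rfloor+1)v)$ to read off $\mu(v)\ge c>0$. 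Your by-contradiction framing is a cosmetic difference and would be fine if the estimate held, but the estimate as you describe it does not.
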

Before going on, we would like to make some remarks. 
\begin{rem}
	\begin{itemize}
		\item  We emphasize that this theorem is general, and does not deal with the particularities of the model. This is the reason we can apply it to such different models as Gaussian fields,  Voronoi percolation, Boolean percolation or smooth random metrics.
		\item Theorem~\ref{positifT} relies on the two crucial conditions~(\ref{annuliT}) (decay of instant one-arms) and~(\ref{AI}) (asymptotic independence). The first condition is obtained for free in the case of random smooth metrics, see~\S \ref{smooth}. For our three percolation settings, these conditions are easy to prove, or rely on recent known results.
		\item The second condition needs exponentially small asymptotic dependence, which is the reason why for Gaussian percolation we need fields with exponentially fast decorrelation, and why our results for Riemannian metrics need either finite correlation, or in the planar Gaussian case, exponentially small dependence. This is also the reason why our result recovers only partially the Boolean case, see \S~\ref{boole}. Notice that this condition enables us to deal with infinite correlations and to have an alternative to the Van den Berg-Kesten (BK) inequality, which is a crucial tool for percolation in independent settings. 
	\end{itemize}
\end{rem}
In~\cite{gouere2017positivity}, J. B. Gou\'er\'e and M. Th\'eret proved the following:
\begin{thm}\label{muzeroT}\cite[\S 2]	{gouere2017positivity}
	Let  $T$ be a random pseudometric over $\R^d$ satisfying conditions~(\ref{homT}) (ergodicity), (\ref{mesuT}) (annular mesurability),  (\ref{lipT}) (Lipschitz), ~(\ref{isoT}) (isotropy)  and~(\ref{vannuliT}) (instant crossings of large annuli). Then $\mu=0$.
\end{thm}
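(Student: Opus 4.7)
The goal is to show $\mu \equiv 0$. By Theorem~\ref{fixeddirT} and isotropy (\ref{isoT}), $\mu$ is a $\QQ$-seminorm that is constant on $\Sph^{d-1}$, so it suffices to exhibit, for each $v \in \Sph^{d-1}$ and each $\varepsilon > 0$, a path from $0$ to $nv$ of $T$-length at most $\varepsilon n$ for all sufficiently large $n$. The plan is a renormalization along $[0,nv]$: I partition this segment into $\sim n/R$ blocks of scale $R$, each naturally associated with a rotated and translated copy of the annulus $A_R$. By (\ref{vannuliT}), together with (\ref{isoT}) and translation invariance from (\ref{homT}), each block carries an instant crossing in the direction of $v$ with probability at least some $p_0 > 0$ (annular measurability (\ref{mesuT}) ensuring that these crossing events are well defined). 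By an ergodic argument a fraction at least $p_0/2$ of the blocks carry such crossings almost surely, for $n$ large.

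The path is then built by traversing instant crossings whenever available, and bridging the remaining bad blocks as well as the connections between consecutive good crossings via the Lipschitz bound (\ref{lipT}). This naive construction gives only $T(0,nv) \le C L n$, not $o(n)$, and the main obstacle is to shrink the Lipschitz contribution. I would address this by iterating the construction at finer scales $R' \ll R$, replacing each Lipschitz bridge of scale $R$ by a chain of instant crossings at scale $R'$ interleaved with even shorter bridges. After $k$ iterations the residual Lipschitz cost is bounded by $(1-p_0/2)^k \cdot C L n$, which is $\le \varepsilon n$ for $k$ large enough. Since the instant-crossing contribution itself is zero at every scale, this yields $T(0,nv)/n \to 0$ and hence $\mu(v) = 0$.

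The subtle point is ensuring that (\ref{vannuliT}) yields instant crossings at every sufficiently large scale with a uniform probability $p_0$, so that the iteration can actually be carried out. If the hypothesis only provides positive crossing probability along a sequence of scales, a Russo--Seymour--Welsh-style gluing argument leveraging isotropy and annular measurability should propagate it to all sufficiently large scales, at least in the weak form needed for chaining. Making this propagation precise, and showing that the cumulative bridge cost across all iterated scales is indeed of order $\varepsilon n$, is the technical heart of the proof.
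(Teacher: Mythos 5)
Your proposal takes a genuinely different route from the paper, and unfortunately contains a gap that appears fatal to the direct‐construction strategy, not merely a technical loose end.

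The decisive obstacle, which you did not flag, is that an instant crossing of a (translated, rotated) copy of $A(R,2R)$ gives you a zero‐cost path from \emph{some} point of the inner sphere to \emph{some} point of the outer sphere, with no control over either endpoint. After using such a crossing you may find yourself anywhere on a sphere of radius $2R$, possibly on the side opposite to your target, so the Lipschitz ``positioning'' cost to reach the next annulus (or even just to make forward progress along $v$) is $\Theta(R)$ per block, not $O(1)$. Since there are $\sim n/R$ blocks, the total cost is $\Theta(n)$ whether or not the block carries an instant crossing. Iterating to finer scales does not help, because at scale $R'$ the positioning cost per block is $\Theta(R')$ and the number of blocks scales as $1/R'$, so nothing shrinks; the claim that the residual Lipschitz cost is $(1-p_0/2)^k CLn$ silently assumes that the endpoint of each instant crossing is where you need it to be. The second issue, which you did identify, is that condition~(\ref{vannuliT}) is a $\limsup$ and the proposed ``RSW-style gluing'' has no justification for a general random pseudometric: RSW requires structural inputs (FKG, symmetry, self-duality) that are not among the hypotheses here.

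The paper's actual proof avoids both problems by going in the opposite direction. It observes that under conditions~(\ref{lipT}) and~(\ref{isoT}) the convergence $T(0,Rv)/R\to\mu(v)$ is \emph{uniform} over directions (a shape-theorem-type fact imported from~\cite[\S B]{gouere2017positivity}), and that the argument in~\cite[\S 2]{gouere2017positivity} shows $\mu$ a norm implies $\PP[T(A(R,2R))=0]\to 0$. Isotropy forces the dichotomy ``$\mu$ is a norm or $\mu\equiv 0$,'' so condition~(\ref{vannuliT}) rules out the first alternative by contraposition. This contradiction argument never needs to locate or chain instant crossings (the uniform shape theorem controls $T$-distances to \emph{all} points of a sphere, so an instant crossing of $A(R,2R)$ is simply impossible when $\mu>0$), and the $\limsup$ form of~(\ref{vannuliT}) is exactly what is needed to contradict a limit of zero. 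If you want to salvage a direct construction you would need a quantitative handle on where instant crossings start and end, which the hypotheses do not provide; the shape-theorem route is the standard way around this.
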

The aforementioned article~\cite{gouere2017positivity} is written for Boolean percolation, but the proof holds in our context. We explain it in \S~\ref{subz}.

\paragraph{The ball shape theorem.}  
Theorem~\ref{positifT} is  extended into the ball shape theorem, the exact counterpart of Theorem~\ref{ballshape}. For this, 
for any $t\geq 0$ let us define
\begin{eqnarray}\label{KK}
B_t&=& \{x\in \RR^d, T(0,x)\leq t\}\\
 \text{ and }
K&=&\left\lbrace x\in \RR^d, \mu(x)\leq 1\right\rbrace,
\end{eqnarray}
where $\mu$ is defined by (\ref{convergencesT}).

\begin{thm}\label{ballshapeBFT}
	Let $T$ be a random pseudometric over $\RR^d$ satisfying~(\ref{homT}) (ergodicity) and (\ref{lipT}) (Lipschitz). 
	\begin{enumerate}
		\item If $\mu=0$
		then for any positive $M$, 
		$$ \PP\Bigl[
				M\mathbb B \subset \frac{1}t B_t
				\text{ for all $t$ large enough}
				\Bigr] =1.$$
		\item If $\mu$ is a norm then 
		$K$ is a convex  compact subset of $\RR^d$ with non-empty interior. Besides, for any positive $\epsilon$,
		\begin{equation}\label{inclusions}
		\PP\Bigl[(1-\epsilon)K\subset\frac{1}tB_t\subset(1+\epsilon)K\text{ for all $t$ large enough}\Bigr]=1.
		\end{equation}
		If  $T$ further satisfies condition~(\ref{isoT}) (isotropy), then $K=\frac{1}{\mu(1)}\mathbb B$, where $\mathbb B\subset\RR^d$ denotes the unit ball and $\mu(1)$ denotes $\mu(v)$ for any vector $v$ of norm 1.
	\end{enumerate}
\end{thm}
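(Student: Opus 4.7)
The plan is to promote the pointwise convergence $\frac{1}{n} T(0, nw) \to \mu(w)$ of Theorem~\ref{fixeddirT} to an almost sure uniform statement, namely that $\frac{1}{t} T(0, tw) \to \mu(w)$ along continuous $t \to \infty$ and uniformly in $w$ over compact subsets of $\RR^d$. Fixing a countable dense $D \subset \RR^d$, a countable intersection of full-probability events ensures pointwise convergence along integer $n$ simultaneously for every $w \in D$. Condition~(\ref{lipT}) yields $\mu(w) \leq L\|w\|$ and $|T(0, tw) - T(0, \lfloor t \rfloor w)| \leq L \|w\|$, extending the convergence to real $t$, and makes the maps $w \mapsto \frac{1}{t} T(0, tw)$ uniformly $L$-Lipschitz in $w$. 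A standard $\delta$-net argument on any compact $\Sc \subset \RR^d$ then upgrades pointwise convergence on $D \cap \Sc$ to uniform convergence over $\Sc$, almost surely.

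Part~(1) follows at once: with $\mu \equiv 0$, uniform convergence on $M \bar{\mathbb{B}}$ with tolerance $1$ gives $T(0, tv) \leq t$ for every $v$ with $\|v\| \leq M$ and all $t$ large, i.e.\ $v \in \frac{1}{t} B_t$. For part~(2), the Lipschitz bound forces the $\QQ$-semi-norm $\mu$ to extend uniquely to a continuous semi-norm on $\RR^d$; when this extension is a norm, equivalence of norms on $\RR^d$ furnishes $0 < m \leq M$ with $m\|v\| \leq \mu(v) \leq M\|v\|$, so $K = \{\mu \leq 1\}$ is closed, convex, bounded ($K \subset (1/m)\bar{\mathbb{B}}$), and contains $(1/M)\mathbb{B}$, establishing compactness with non-empty interior.

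To prove the two inclusions, apply the uniform convergence on the compact set $\partial K = \{\mu = 1\}$: a.s., for every $\eta > 0$ there is a random $s_0$ such that $(1 - \eta) s \leq T(0, sv) \leq (1 + \eta) s$ for all $v \in \partial K$ and $s \geq s_0$. For the inner inclusion, given $x \in (1 - \epsilon) K$ with $x \neq 0$, set $v = x/\mu(x) \in \partial K$ and $s = t \mu(x)$, so that $tx = sv$; when $s \geq s_0$ the upper bound gives $T(0, tx) \leq (1+\eta)(1-\epsilon)t \leq t$ for $\eta$ small enough, and when $s < s_0$ the Lipschitz bound yields $T(0, tx) \leq L t\|x\| \leq L s_0/m$, a constant that is $\leq t$ for $t$ large. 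For the outer inclusion, if $z \in B_t$ and $\mu(z) > (1+\epsilon) t$, write $z = sv$ with $v \in \partial K$ and $s = \mu(z) > (1+\epsilon) t \geq s_0$ for $t$ large; then $T(0, z) \geq (1-\eta)(1+\epsilon)t > t$ whenever $\eta < \epsilon/(1+\epsilon)$, contradicting $z \in B_t$. Under condition~(\ref{isoT}), $\mu$ is rotation-invariant, whence $\mu(v) = \mu(1) \|v\|$ and $K = \frac{1}{\mu(1)} \mathbb{B}$.

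The main obstacle is the uniform-in-direction convergence on $\partial K$: without it only finitely many directions are controlled, which is insufficient to enclose $B_t$ between $(1 - \epsilon) K$ and $(1 + \epsilon) K$. The Lipschitz hypothesis~(\ref{lipT}) is precisely what drives the $\delta$-net argument, and it also absorbs the small-$\mu$ regime of the inner inclusion.
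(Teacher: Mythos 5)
Your proof is correct, and it takes a genuinely different route from the paper. The paper works with the event $E = \{\forall b\in\QQ^d, \frac{1}{n}T(0,nb)\to\mu(b)\}$ and proves a compactness lemma (Lemma~\ref{lemlem}): for any sequence $(z_n)$ with $\|z_n\|\to\infty$ there is a subsequence $(y_n)$ and a direction $a\in\mathbb{S}^{d-1}$ with $\frac{1}{\|y_n\|}T(0,y_n)\to\mu(a)$. Each inclusion failure is then turned into such a sequence and refuted by contradiction. You instead prove an \emph{explicit} uniform law of large numbers: a.s., $\sup_{w\in\Sc}\bigl|\frac{1}{t}T(0,tw)-\mu(w)\bigr|\to 0$ for every compact $\Sc$, obtained by combining a.s.\ pointwise convergence on a countable dense set, the passage from integer to real $t$ via~(\ref{lipT}), and a $\delta$-net/equicontinuity argument using that both $w\mapsto\frac1t T(0,tw)$ and $\mu$ are Lipschitz. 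The two proofs rest on the same two ingredients (subadditive ergodic theorem plus the Lipschitz hypothesis to pass from countably many directions to all directions on a compact), but your version makes the uniform convergence an explicit intermediate statement, from which both parts (1) and (2) drop out by direct set inclusions rather than by contradiction. This is arguably cleaner and, as a bonus, isolates a reusable uniform shape estimate; the paper's subsequence/contradiction route is more localized and avoids stating the uniform limit in full. One small point worth making explicit in your write-up is that the inner inclusion needs the two-case split on $s=t\mu(x)$ versus $s_0$ to cover $x$ near the origin, which you do handle correctly via the bound $T(0,tx)\leq L s_0/m$; this is the analogue of the paper's observation that the Lipschitz constants $C_T$ and $C_\mu$ keep the offending sequence $(x_n)$ bounded.
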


\paragraph{Critical exponents.}
As a direct consequence of Theorem~\ref{positifT}, 
we can prove that if a model has vanishing time constants, 
then the probability that there are long instant paths cannot decay too fast:
\begin{cor}\label{cegT}
		Let  $T $ be a random pseudometric over $\RR^d$ satisfying conditions~(\ref{homT}) (ergodicity),  (\ref{moment}) (finite moment), (\ref{mesuT}) (annular mesurability)  and~(\ref{AI}) (quasi independence). Assume also that $\mu=0$, where $\mu$ is the pseudo-norm defined by Corollary~\ref{fixeddirT}. Then 
	$$ \forall \eta>0, \ \limsup_{R\to \infty} R^{d+\eta}\PP[ T(A_R)=0] >0,$$
where $T(A_R)$ denotes the pseudo-distance between the two sphere $S(0,1)$ and $S(0,R)$, see~(\ref{spsh}) and~(\ref{TAR}). 
\end{cor}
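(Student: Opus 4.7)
The plan is to deduce Corollary~\ref{cegT} from Theorem~\ref{positifT} by contraposition. The hypotheses assumed in the corollary coincide exactly with those of Theorem~\ref{positifT} except that condition~(\ref{annuliT}) (decay of instant one-arms) is replaced by the assumption $\mu = 0$. Since Theorem~\ref{positifT} concludes that $\mu$ is a norm, which is incompatible with $\mu = 0$, condition~(\ref{annuliT}) must fail in our setting, and the content of the corollary is just a quantitative reading of this failure.

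Concretely, I would argue by contradiction. Suppose that for some $\eta_0 > 0$ we have $\limsup_{R\to\infty} R^{d+\eta_0}\,\PP[T(A_R)=0] = 0$; since the quantity is non-negative, this forces $R^{d+\eta_0}\,\PP[T(A_R)=0] \to 0$, hence $\PP[T(A_R)=0] \leq C R^{-(d+\eta_0)}$ for some constant $C$ and all $R \geq 1$. This polynomial decay is strictly stronger than the decay of degree greater than $d-1$ required by condition~(\ref{annuliT}), so that condition would then be satisfied.

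All remaining hypotheses of Theorem~\ref{positifT} --- namely (\ref{homT}) (ergodicity), (\ref{moment}) (finite moment), (\ref{mesuT}) (annular measurability) and (\ref{AI}) (quasi independence) --- are assumed in the corollary. Theorem~\ref{positifT} then applies and gives that $\mu$ is a norm, contradicting $\mu = 0$. The contradiction establishes the $\limsup$ lower bound for every $\eta > 0$.

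The only point requiring a brief verification, and the closest thing to an obstacle, is matching the exponent $d+\eta_0$ produced by negating the conclusion against the precise polynomial-degree threshold appearing in condition~(\ref{annuliT}); since $d+\eta_0 > d-1$ this is immediate, and all the substantive work has already been absorbed into Theorem~\ref{positifT}. It is worth noting that this contraposition strategy is essentially tight: improving the exponent $d+\eta$ in the corollary would require strengthening the corresponding decay condition inside Theorem~\ref{positifT}.
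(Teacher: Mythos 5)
Your proof is correct and is exactly the paper's own one-line argument: negating the conclusion yields condition~(\ref{annuliT}), Theorem~\ref{positifT} then forces $\mu$ to be a norm, contradicting $\mu=0$. However, your closing remark about tightness is off: since condition~(\ref{annuliT}) requires only a decay of degree $d-1+\eta$, the very same contraposition already yields the stronger conclusion $\limsup_{R\to\infty} R^{d-1+\eta}\,\PP[T(A_R)=0]>0$ for every $\eta>0$ without any change to Theorem~\ref{positifT}; indeed the density version Corollary~\ref{ceg} and the introduction both state the exponent as $d-1+\eta$, which suggests the $d+\eta$ appearing in Corollary~\ref{cegT} is a slip rather than a sharp threshold.
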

When $T$ is defined through the integral of a random non-negative function, it implies that the probability that there are null long paths cannot decay too fast, see Corollary~\ref{ceg} and in particular Corollary~\ref{ceb} in the standard Bernoulli case over $\ZZ^d$.

\subsection{Random densities.}

\paragraph{General setting.} We now introduce a very general and natural family of examples over $\RR^d$, namely pseudometrics generated by \emph{random densities}, that is random non-negative functions of $\RR^d$.
Let 
$$\sigma: \RR^p \to \R_+$$ be a random measurable function over $\RR^d$ with non-negative values.
We define an analogue of the discrete almost metric~(\ref{defFPPmetric}).
For any $x,y$ in $\RR^d$:
\begin{equation}\label{defmetric}
T(x,y):=\inf_{\substack{\gamma\text{ piecewise } \Cc^1 \\ \text{ path } x \rightarrow y}}\int_\gamma \sigma.
\end{equation}
Then, $T(x,y)$ is the least time to travel from $x$ to $y$. As a consequence, $T$ is a pseudometric, possibly with infinite values; as in the discrete setting, $T$ is not a distance in general, since $T$ can vanish at a pair of different points. 
Indeed over a domain where $\sigma=0$, then $T$ vanishes.  The points where $\sigma=0$ are called \emph{white} points.

As a particular but very natural case, a \emph{random colouring} $\sigma$ has values in $\{0,1\}$. In this case, 
we travel over $\{\sigma=1\}$ with speed one and  with infinite speed over $\{\sigma=0\}$.  
\begin{rem}\label{remL}
Note that if $\sigma$ is bounded, in particular if $\sigma$ is a colouring, then its associated pseudometric $T$ satisfies automatically the condition~(\ref{lipT}) ($T$ Lipschitz).
\end{rem}

\paragraph{The time constant for densities.}
We now provide results for the associated FPP. These results need conditions, which 
are satisfied in our four applications, namely Bernoulli percolation, Gaussian fields, Voronoi percolation and Boolean percolation, the latter with a further condition. 
The existence of the time constant is a consequence of Theorem~\ref{fixeddirT}:
\begin{cor}\label{fixeddirS}
Let $\sigma :\RR^d \to \R_+$ be a random  density satisfying conditions~(\ref{mesu}) (mesurability), (\ref{momentS}) (finite moment) and~(\ref{hom}) (ergodicity). 
\begin{enumerate}
	\item 
Then, there exists a pseudo-norm $\mu$ satisfying the convergence property~(\ref{convergencesT}).
\item If $\sigma$ satisfies the further condition~(\ref{iso}) (isotropy), then $\mu$ is constant over $\mathbb S^{d-1}$.
\end{enumerate}
\end{cor}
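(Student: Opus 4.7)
The plan is to derive this corollary directly from Theorem~\ref{fixeddirT} applied to the pseudometric $T$ built from $\sigma$ via (\ref{defmetric}). All one needs is to translate the hypotheses on the density $\sigma$ into the hypotheses on $T$ required by that theorem: namely, (\ref{mesu}), (\ref{momentS}) and (\ref{hom}) should imply that $T$ is a well-defined random pseudometric satisfying (\ref{homT}) and (\ref{moment}); for the second part, (\ref{iso}) should additionally imply (\ref{isoT}).

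First, I would check that $T$ is a random pseudometric. The pseudometric axioms hold pointwise: non-negativity is obvious, symmetry follows by reversing the parametrisation of a path, and the triangle inequality by concatenation. For measurability of $\omega \mapsto T(x,y)$ at each fixed pair $(x,y)$, I would realise the infimum in (\ref{defmetric}) as an infimum over a countable dense family of polygonal paths with vertices in $\QQ^d$, using (\ref{mesu}) to ensure that each polygonal line integral is measurable, and a standard approximation argument to justify the replacement of the full infimum over piecewise $\Cc^1$ paths.

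Second, stationarity and ergodicity of $T$ follow from (\ref{hom}) because $\sigma \mapsto T$ is a deterministic and translation-equivariant functional: the pseudometric associated with $\sigma(\,\cdot+v)$ is $(x,y)\mapsto T(x+v,y+v)$, so any translation-invariant event for $T$ pulls back to one for $\sigma$ and is therefore trivial. The same argument under orthogonal transformations transfers (\ref{iso}) into (\ref{isoT}). Third, for the moment condition (\ref{moment}), I would bound $T(0,v)$ from above by the line integral of $\sigma$ along the straight segment from $0$ to $v$,
\begin{equation*}
T(0,v) \,\leq\, \|v\| \int_0^1 \sigma(tv)\,dt,
\end{equation*}
and then use Jensen's inequality and the stationarity of $\sigma$ to conclude $\EE[T(0,v)^p] \leq \|v\|^p \,\EE[\sigma(0)^p]$, which is finite by (\ref{momentS}).

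With these three ingredients in place, Theorem~\ref{fixeddirT} applies directly and yields the $\QQ$-semi-norm $\mu$ together with the convergence (\ref{convergencesT}); under (\ref{iso}), the second part of that theorem gives the constancy of $\mu$ on $\mathbb S^{d-1}$. The main obstacle is the measurability step: one must check carefully that the infimum in (\ref{defmetric}) over an uncountable family of piecewise $\Cc^1$ paths coincides almost surely with the infimum over the chosen countable subfamily, and that the latter is measurable with respect to the natural $\sigma$-algebra generated by $\sigma$. Everything else is a direct consequence of deterministic equivariance and the above line-integral estimate.
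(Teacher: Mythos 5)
Your overall plan is exactly the one the paper uses: verify that $T$ defined by~(\ref{defmetric}) is a random pseudometric, then transfer (\ref{mesu}), (\ref{hom}), (\ref{momentS}), (\ref{iso}) for $\sigma$ into (\ref{homT}), (\ref{moment}), (\ref{isoT}) for $T$ and invoke Theorem~\ref{fixeddirT}. The measurability idea (reduce the infimum to a countable family of polygonal paths with rational vertices) is also what the paper does in Lemma~\ref{lmesu}, and the translation-equivariance argument for ergodicity and isotropy is standard and correct.

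However, your moment estimate takes an unnecessary and slightly misdirected detour. Condition~(\ref{momentS}) asserts precisely $\EE\int_{[0,x]}\sigma<\infty$, and condition~(\ref{moment}) asks only for $\EE\,T(0,x)<\infty$. Your first bound $T(0,v)\leq \|v\|\int_0^1\sigma(tv)\,dt=\int_{[0,v]}\sigma$ already finishes this step upon taking expectations; Jensen is not needed. Worse, the Jensen step you invoke would yield $\EE[T(0,v)^p]\leq\|v\|^p\,\EE[\sigma(0)^p]$, but finiteness of $\EE[\sigma(0)^p]$ for $p>1$ is not part of the hypotheses (by Fubini and stationarity, (\ref{momentS}) only controls the first moment $\EE[\sigma(0)]$). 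Since Theorem~\ref{fixeddirT} via Kingman's theorem only requires the $L^1$ bound, drop the Jensen and $p$-th power manipulations and simply take expectations in the line-integral bound.
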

\begin{rem}\label{remM} If $\sigma$ is bounded, for instance if $\sigma$ is a colouring, then condition(\ref{momentS}) (finite moment) is always satisfied. 
	\end{rem}

Theorem~\ref{positifT} implies the following important result:
\begin{cor}\label{positif}
Let  $\sigma: \RR^d\to \R_+$ be a random density satisfying conditions~(\ref{mesu}) (mesurability),  (\ref{momentS}) (finite moment), (\ref{hom}) (ergodicity), ~(\ref{annuliT}) (decay of instant one-arms) and~(\ref{AI}) (quasi independence). Then
$ \mu $ is a norm.
\end{cor}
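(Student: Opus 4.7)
The plan is to deduce Corollary~\ref{positif} directly from Theorem~\ref{positifT} applied to the pseudometric $T$ built from $\sigma$ via (\ref{defmetric}). Since the corollary explicitly assumes conditions~(\ref{annuliT}) (decay of instant one-arms) and~(\ref{AI}) (quasi independence), both of which are phrased at the level of $T$ itself, those two hypotheses of Theorem~\ref{positifT} are passed through for free. It therefore remains to verify the three other hypotheses of the theorem: ergodicity~(\ref{homT}), finite moment~(\ref{moment}), and annular measurability~(\ref{mesuT}) for $T$, starting only from the corresponding assumptions~(\ref{mesu}), (\ref{momentS}) and~(\ref{hom}) on $\sigma$.

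For ergodicity, I would use that the map $\sigma \mapsto T$ defined by (\ref{defmetric}) is translation-equivariant: if $\tau_v \sigma$ denotes the translate of $\sigma$ by $v$, then $T_{\tau_v \sigma}(x,y) = T_\sigma(x+v, y+v)$. Thus every translation-invariant event in the $\sigma$-algebra generated by $T$ pulls back to a translation-invariant event in the $\sigma$-algebra generated by $\sigma$, and~(\ref{homT}) follows from~(\ref{hom}); isotropy transfers by the same argument applied to rotations.

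Annular measurability rests on two observations. First, because $\sigma \geq 0$, any path realizing the infimum in (\ref{defmetric}) between two spheres of an annulus $A$ can, up to an arbitrarily small loss, be replaced by a path that remains inside $A$: an exterior excursion only adds non-negative mass. Second, the restricted infimum is attained along a countable dense family of piecewise linear paths with rational breakpoints inside $A$, so by Fubini and the measurability hypothesis~(\ref{mesu}) it defines a measurable function of $\sigma|_A$.

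The main obstacle, and the step requiring the most care, is the finite moment condition~(\ref{moment}) for $T$. I would exploit the straight-line bound
\begin{equation*}
T(0,v) \;\leq\; \int_0^{\|v\|} \sigma\bigl(t\, v/\|v\|\bigr)\, dt,
\end{equation*}
take expectation, and invoke stationarity to reduce the right-hand side to integrals of $\sigma$ at a single point, controlled by~(\ref{momentS}). The subtlety is matching the precise form of~(\ref{moment})---which mimics the minimum-over-$2d$-directions formulation~(\ref{numoment}) of the lattice case---to what~(\ref{momentS}) actually delivers; once the formulations are aligned, the bound above supplies what is needed, and Theorem~\ref{positifT} then concludes that $\mu$ is a norm.
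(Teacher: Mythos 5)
Your proposal is correct and follows essentially the same route as the paper: deduce the result from Theorem~\ref{positifT} applied to the pseudometric $T$ defined by~(\ref{defmetric}), checking that conditions~(\ref{homT}), (\ref{moment}) and~(\ref{mesuT}) for $T$ follow from conditions~(\ref{hom}), (\ref{momentS}) and~(\ref{mesu}) on $\sigma$ --- which is exactly what the paper does via its Lemma~\ref{lmesu} and the proof of Corollary~\ref{fixeddirS}. One small inaccuracy: you describe condition~(\ref{moment}) as mimicking the minimum-over-$2d$-directions formulation~(\ref{numoment}) of the lattice case, but in fact~(\ref{moment}) simply requires $\mathbb{E}\,T(0,x)<\infty$ for each $x$, so your straight-line bound $T(0,x)\le\int_{[0,x]}\sigma$ combined with~(\ref{momentS}) delivers it directly, with no alignment of formulations needed.
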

Theorem~\ref{muzeroT} implies the following:
\begin{cor}\label{muzero}
	Let  $\sigma: \RR^d \to \R_+$ be a random density satisfying
	conditions~(\ref{mesu}) (mesurability), (\ref{momentS}) (finite moment), (\ref{hom}) (ergodicity), ~(\ref{iso}) (isotropy), (\ref{lipT}) ($T$ Lipschitz) and~(\ref{vannuli}) (white crossings of large annuli). Then $\mu=0$.
\end{cor}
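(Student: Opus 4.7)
The plan is to deduce Corollary~\ref{muzero} directly from Theorem~\ref{muzeroT} applied to the pseudometric $T$ associated with $\sigma$ via~(\ref{defmetric}). The proof therefore reduces to checking that each hypothesis of Theorem~\ref{muzeroT} on a random pseudometric follows from its homonymous counterpart on the density $\sigma$. The Lipschitz condition~(\ref{lipT}) is assumed directly in the statement of the corollary (and by Remark~\ref{remL} is automatic when $\sigma$ is essentially bounded, in particular for colourings).

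For the invariance conditions, I would use that the functional $\sigma \mapsto T_\sigma$ defined by~(\ref{defmetric}) is equivariant under every Euclidean motion $\phi$ of $\RR^d$: precomposing paths with $\phi$ induces a bijection between piecewise $\Cc^1$ paths $x \to y$ and piecewise $\Cc^1$ paths $\phi(x) \to \phi(y)$, carrying the integral $\int_\gamma \sigma$ to $\int_{\phi\circ\gamma}(\sigma\circ\phi^{-1})$. Stationarity and ergodicity in~(\ref{hom}) therefore transfer to~(\ref{homT}) for $T$, and isotropy~(\ref{iso}) transfers to~(\ref{isoT}).

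For the crossing condition, hypothesis~(\ref{vannuli}) furnishes, with probability bounded below as $R\to\infty$, a continuous piecewise $\Cc^1$ path contained in the white set $\{\sigma=0\}$ and joining the inner and outer boundary spheres of an arbitrarily large annulus $A_R$. The integral of $\sigma$ along such a path vanishes by~(\ref{defmetric}), so the $T$-distance between the two spheres is zero on this event, which is exactly~(\ref{vannuliT}).

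The remaining and most delicate point is annular measurability~(\ref{mesuT}) for $T$. I would argue that, thanks to~(\ref{lipT}), the pseudo-distance across an annulus is controlled above by $L$ times its Euclidean width, so that any path realising near the infimum in~(\ref{defmetric}) can be replaced by a competitor contained in a fixed bounded enlargement of the annulus; a polygonal approximation with vertices in a countable dense set then expresses $T(A_R)$ as a countable infimum of explicit measurable functionals of $\sigma$ restricted to that enlargement. I expect this reduction to be the only genuinely technical step, but not to present any conceptual obstruction. Once it is in place, every hypothesis of Theorem~\ref{muzeroT} is verified for $T$, and its conclusion $\mu=0$ is the desired statement.
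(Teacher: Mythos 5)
Your proof is correct and takes essentially the same route as the paper: reduce Corollary~\ref{muzero} to Theorem~\ref{muzeroT} by transferring the density conditions to the pseudometric $T$, the decisive step being that a white path across an annulus has zero $T$-length, so that condition~(\ref{vannuli}) implies condition~(\ref{vannuliT}). The only small deviation is in the annular measurability step, which the paper establishes in Lemma~\ref{lmesu} by polygonal approximation with rational vertices and simple-function approximation without invoking the Lipschitz hypothesis at all; your Lipschitz-based truncation argument works just as well here, since Lipschitz is assumed in Corollary~\ref{muzero}, but is slightly less general.
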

This corollary needs the isotropy of $\sigma$, which is too much asked for the planar colouring examples we have in mind, see Corollary~\ref{remtassion}. We provide another criterion.  
\begin{prop} \label{muzero2}
		Let  $\sigma: \RR^d \to \{0,1\}$ be a random colouring satisfying
	conditions~(\ref{mesu}) (mesurability), ~(\ref{hom}) (ergodicity) and~(\ref{wRSW}) (weak Russo-Seymour-Welsh). Then $\mu=0$.
\end{prop}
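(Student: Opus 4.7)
Since $\sigma\in\{0,1\}$, Remark~\ref{remL} gives that $T$ is automatically $1$-Lipschitz, and Remark~\ref{remM} gives that condition~(\ref{momentS}) (finite moment) is trivially satisfied. Corollary~\ref{fixeddirS} therefore applies and produces the time constant $\mu$ as a $\QQ$-semi-norm on $\RR^d$. By positive homogeneity and subadditivity it is enough to exhibit $d$ linearly independent vectors at which $\mu$ vanishes; I would aim for each coordinate direction $e_i$.

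The plan is to invoke the weak RSW hypothesis~(\ref{wRSW}) along an unbounded sequence of scales $R_n\to\infty$ to obtain, in each axis direction, boxes of side of order $R_n$ that are crossed lengthwise by the white set $\{\sigma=0\}$ with probability bounded below by some $c>0$ uniformly in $n$. I interpret~(\ref{wRSW}) as providing both lengthwise and transverse crossings at the same scale, so that two adjacent lengthwise crossings can be glued through a transverse crossing of the overlap region into a single long white component.

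Next, I would tile a thin slab along $e_1$ by shifted copies of such boxes and apply ergodicity~(\ref{hom}): almost surely, a positive density of the translated boxes contains the required lengthwise crossing, and similarly a positive density of consecutive pairs realise the gluing event. Concatenating these yields, almost surely, a white path joining a bounded neighbourhood of the origin to a point at Euclidean distance at least $L_n\to\infty$ from it along $e_1$.

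Finally, since $\sigma=0$ along such a path, its $T$-length vanishes; the $1$-Lipschitz property of $T$ absorbs the $O(1)$ detour from $0$ to the entry point and from the exit point to $L_ne_1$, so that $\liminf_{L\to\infty}L^{-1}T(0,Le_1)=0$. Theorem~\ref{fixeddirT} turns this $\liminf$ into the genuine limit $\mu(e_1)$, hence $\mu(e_1)=0$. Repeating the argument for $e_2,\ldots,e_d$ and appealing to the semi-norm properties gives $\mu\equiv 0$. The main obstacle is the gluing step: without an a priori FKG or independence structure, the joint lengthwise/transverse crossing event must be controlled directly from the content of~(\ref{wRSW}), so the precise formulation of this condition is crucial for the argument to close.
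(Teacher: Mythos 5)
Your proposal has a genuine gap that you yourself flag at the end: the gluing of crossings across adjacent boxes. Condition~(\ref{wRSW}) (weak RSW) alone gives a \emph{limsup} lower bound on the probability of a single lengthwise white crossing; it provides no positive association (no FKG), no independence, and no mechanism to ensure that two overlapping crossing events occur jointly, let alone that they can be concatenated into one long white path. Ergodicity~(\ref{hom}) gives an a.s.\ positive density of boxes realising the individual crossing event, but says nothing about the \emph{joint} event needed for concatenation. Without additional structure (precisely the FKG hypothesis~(\ref{FKG}), which Proposition~\ref{muzero2} deliberately does not assume), the gluing step cannot close, and with it the whole construction of a long white path from the origin collapses.

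The paper's proof avoids this entirely, and is considerably shorter. Instead of a fixed-aspect-ratio box and a gluing argument, it uses a \emph{thin} rectangle $R_\epsilon$ whose transverse side has width of order $\epsilon$: a lengthwise white crossing of $n\,r_v R_\epsilon$ together with the two short sides of that rectangle is a path from a point $O(n\epsilon)$-close to $0$ to a point $O(n\epsilon)$-close to $nv$, and since $T$ vanishes along the white crossing and is $1$-Lipschitz elsewhere, the whole path has $T$-length $\le n\epsilon$. One rectangle suffices; nothing is glued. Condition~(\ref{wRSW}) then gives a subsequence $(s_n)$ and a constant $c>0$ with $\PP[T(0,s_n v)\le s_n\epsilon]\ge c$ for all $n$. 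The conclusion follows not by constructing an infinite white path but by contradiction with the a.s.\ convergence from Theorem~\ref{fixeddirT}: if $\mu(v)>\epsilon$, the almost sure event $\{\frac1n T(0,nv)\to\mu(v)\}$ would force $\PP[|\frac{1}{s_n}T(0,s_n v)-\mu(v)|<\frac1m]\to 1$ eventually, while the RSW bound caps this probability by $1-c$ along the subsequence for $m$ large, a contradiction. This subsequence-plus-contradiction scheme is also what handles the fact that~(\ref{wRSW}) is only a $\limsup$ statement, a point your density argument does not address. You should replace the tiling/gluing plan by this single-thin-rectangle argument.
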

 
Theorem~\ref{ballshapeBFT} implies the following 
\begin{cor}\label{ballshapeBF}
		Let  $\sigma: \RR^d \to \R_+$ be a random density satisfying
	conditions~(\ref{mesu}) (mesurability), (\ref{momentS}) (finite moment), (\ref{hom}) (ergodicity), and~(\ref{lipT}) ($T$ Lipschitz).
	\begin{enumerate}
			\item If $\mu=0$, then for any $M>0$, 	$$ \PP\Bigl[
			M\mathbb B \subset \frac{1}t B_t
			\text{ for all $t$ large enough}
			\Bigr] =1.$$
		\item If $\mu$ is a norm, then 
$K$ is a convex  compact subset of $\RR^d$ with non-empty interior. Besides, for any positive $\epsilon$,
		$$\PP\Bigl[(1-\epsilon)K\subset\frac{1}tB_t\subset(1+\epsilon)K\text{ for all $t$ large enough}\Bigr]=1.
		$$
		If  $\sigma$ satisfies the further condition~(\ref{iso}) (isotropy), then $K=\frac{1}{\mu(1)}\mathbb B$.
	\end{enumerate}
\end{cor}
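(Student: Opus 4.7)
The plan is to reduce Corollary~\ref{ballshapeBF} to a direct application of Theorem~\ref{ballshapeBFT} applied to the pseudometric $T$ defined from $\sigma$ via~(\ref{defmetric}). The work therefore splits into two tasks: first, verify that the hypotheses on the density $\sigma$ transfer to the required hypotheses on the pseudometric $T$; second, identify the conclusions obtained from Theorem~\ref{ballshapeBFT} with those we want.

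For the transfer, I would argue that the map $\sigma \mapsto T$ is a pointwise functional of $\sigma$ that behaves naturally under rigid motions. More precisely, for any translation $\tau$, the pseudo-distance $T^{\sigma(\cdot + \tau)}(x,y)$ equals $T^\sigma(x+\tau, y+\tau)$, because any piecewise $\Cc^1$ path from $x$ to $y$ can be translated to a path from $x+\tau$ to $y+\tau$ with the same $\sigma(\cdot+\tau)$-length. Hence the ergodicity condition~(\ref{hom}) on $\sigma$ implies the corresponding condition~(\ref{homT}) for $T$. The Lipschitz condition~(\ref{lipT}) for $T$ is assumed directly. Under the further assumption~(\ref{iso}) (isotropy) of $\sigma$, the same argument with a rotation $R$ in place of $\tau$ shows that $T$ satisfies~(\ref{isoT}). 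Finally, condition~(\ref{momentS}) on $\sigma$ yields the moment condition~(\ref{moment}) needed for the existence of $\mu$ (via Corollary~\ref{fixeddirS}), by bounding $T(0,v)$ by the integral of $\sigma$ along a straight line segment.

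With the hypotheses verified, the two cases follow immediately. In case $\mu=0$, part~1 of Theorem~\ref{ballshapeBFT} gives the asymptotic inclusion $M\BB \subset \frac{1}{t}B_t$ for all $t$ large enough, for every $M>0$. In case $\mu$ is a norm, part~2 provides the compactness and non-empty interior of $K$ (recall that any semi-norm $\mu$ with $\{\mu \le 1\}$ bounded is a norm, and its closed unit ball is convex compact with interior) together with the sandwich~(\ref{inclusions}). Under the additional isotropy assumption, Corollary~\ref{fixeddirS}(2) implies $\mu$ is constant on $\mathbb S^{d-1}$, so $K$ is the Euclidean ball of radius $1/\mu(1)$, which is precisely the last claim of Theorem~\ref{ballshapeBFT}.

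The main, and really the only, obstacle is the small technical verification that the ergodicity and isotropy of $\sigma$ do carry over to $T$; once this is in hand, the statement is essentially a dictionary translation from Theorem~\ref{ballshapeBFT}.
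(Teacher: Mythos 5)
Your proposal is correct and follows essentially the same route the paper takes for the companion corollaries in this section (Corollaries~\ref{fixeddirS}, \ref{positif}, \ref{muzero}): verify that the conditions on the density $\sigma$ transfer to the corresponding conditions on the induced pseudometric $T$, then invoke the general pseudometric result, here Theorem~\ref{ballshapeBFT}. The paper does not even spell out the proof of Corollary~\ref{ballshapeBF} separately, as it is the obvious analogue of the explicitly written proof of Corollary~\ref{fixeddirS}, and your verification (ergodicity and isotropy via equivariance of $\sigma \mapsto T$ under rigid motions, the moment condition via bounding $T(0,x)$ by the straight-line integral, Lipschitzness assumed directly) matches it exactly.
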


Proposition~\ref{muzero2} and the first assertion of Corollary~\ref{ballshapeBF} have a nice corollary for \emph{planar} colourings,  using~\cite{tassion2016crossing}. 
			\begin{cor}\label{remtassion} 
				Let $\sigma: \R^2 \to \{0,1\}$ be a planar random colouring satisfying conditions~(\ref{mesu}) (mesurability), ~(\ref{hom}) (ergodicity), (\ref{FKG}) (FKG), (\ref{col}) (colour invariance) (\ref{wsym}) (weak symmetries) and~(\ref{sq}) (crossing of squares).
			  Then $\mu=0$ and the balls defined by $T$ grow faster than the Euclidean ones. 
					\end{cor}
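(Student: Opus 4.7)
The plan is to combine Tassion's celebrated crossing theorem with the tools already assembled in this section, namely Proposition~\ref{muzero2} and Corollary~\ref{ballshapeBF}. The hypotheses of Corollary~\ref{remtassion} are tailored precisely so that the conclusion of~\cite{tassion2016crossing} applies: a planar colouring with FKG, colour invariance, the stated weak symmetries, and uniform crossing of squares enjoys the Russo–Seymour–Welsh property, i.e.\ the crossing probabilities of rectangles of any fixed aspect ratio are uniformly bounded away from $0$. This is exactly (a strengthening of) condition~(\ref{wRSW}) (weak Russo–Seymour–Welsh). So the first step is to quote Tassion's theorem verbatim and read off~(\ref{wRSW}).

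Once~(\ref{wRSW}) is in hand, the conditions~(\ref{mesu}) and~(\ref{hom}) are assumed directly in the statement, so Proposition~\ref{muzero2} applies and gives $\mu=0$. This establishes the first half of the conclusion.

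For the second half, we use that $\sigma$ is a colouring, in particular bounded by $1$, so by Remark~\ref{remL} the associated pseudometric $T$ is automatically $1$-Lipschitz, verifying condition~(\ref{lipT}). Conditions~(\ref{mesu}), (\ref{hom}) are again available, and since $\sigma\in\{0,1\}$ condition~(\ref{momentS}) is trivially satisfied by Remark~\ref{remM}. Therefore Corollary~\ref{ballshapeBF} applies in its first case ($\mu=0$) and yields, for every $M>0$,
\[
\PP\Bigl[M\mathbb{B}\subset\tfrac{1}{t}B_t\text{ for all $t$ large enough}\Bigr]=1,
\]
which is exactly the statement that the $T$-balls grow faster than the Euclidean ones.

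There is no real obstacle in the argument itself: the corollary is a direct combination of three already-proved results. The only subtle point is checking that Tassion's hypotheses match ours; in particular that the ``weak symmetries'' of~(\ref{wsym}) are at least as strong as the invariance by $\pi/2$ rotation used in~\cite{tassion2016crossing}, and that~(\ref{sq}) (crossing of squares) is formulated so as to provide the uniform lower bound on the square-crossing probability that Tassion requires as input. Once this dictionary is in place, the proof reduces to three lines citing, respectively,~\cite{tassion2016crossing}, Proposition~\ref{muzero2}, and Corollary~\ref{ballshapeBF}.
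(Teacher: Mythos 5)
Your proposal is correct and takes essentially the same route as the paper: quote Tassion's theorem (Theorem~\ref{tassioncrossing}) to obtain condition~(\ref{wRSW}), then invoke Proposition~\ref{muzero2} to get $\mu=0$, and finally apply the first case of Corollary~\ref{ballshapeBF} for the ball growth statement. You are somewhat more explicit than the paper in verifying that the hypotheses of Corollary~\ref{ballshapeBF} (in particular conditions~(\ref{momentS}) and~(\ref{lipT})) are automatic for a colouring via Remarks~\ref{remL} and~\ref{remM}, but this is the same argument.
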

				
				\begin{rem}\label{Xor}
					\begin{enumerate}
						\item   By Remarks~(\ref{remL}) and~(\ref{remM}), if $\sigma$ is bounded, then it satisfies  conditions~(\ref{momentS}) (finite moment) and (\ref{lipT}) ($T$ Lipschitz). This holds in particular for random colourings.
						\item 
					If $\sigma$ is also  bounded below by a positive constant, and fullfills conditions~(\ref{mesu}) and~(\ref{hom}), then it satisfies the hypotheses of  Corollary~\ref{fixeddirS}, Corollary~\ref{positif} and the second case of Corollary~\ref{ballshapeBF}.
\end{enumerate}
				\end{rem}

\paragraph{Critical exponents.}
Corollary~\ref{cegT} has the following implication for random densities. 
\begin{cor}\label{ceg}
	Let $\sigma: \RR^d \to \R_+$ be a random density satisfying conditions~(\ref{mesu}) (mesurability), (\ref{momentS}) (finite moment), (\ref{hom}) (ergodicity), (\ref{AI}) (quasi independence). Assume also that $\mu=0$, where $\mu$ is the pseudo-norm defined by Corollary~\ref{fixeddirS}. Then 
	$$ \forall \eta>0, \ \limsup_{R\to \infty} R^{d-1+\eta}\PP[\Cross_0 (A_R)] >0,$$
	where $\Cross_0(A_R)$ denotes the event that there is a white path between the two spheres $S(0,1)$ and $S(0,R)$, see~(\ref{croc}).
	\end{cor}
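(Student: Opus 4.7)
My strategy is to apply Corollary~\ref{cegT} to the pseudometric $T=T_\sigma$ induced by the density $\sigma$ via~(\ref{defmetric}), and then identify the event appearing in its conclusion with the white-crossing event $\Cross_0(A_R)$ of the present statement.

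First I would check that the density-side hypotheses transfer to the pseudometric-side hypotheses of Corollary~\ref{cegT}: measurability~(\ref{mesu}) of $\sigma$ yields the annular measurability~(\ref{mesuT}) of $T_\sigma$; the finite-moment condition~(\ref{momentS}) yields~(\ref{moment}); ergodicity~(\ref{hom}) of $\sigma$ yields ergodicity~(\ref{homT}) of $T_\sigma$; and~(\ref{AI}) is a condition on the underlying random field, hence carries over verbatim. The assumption $\mu=0$ is shared. Thus Corollary~\ref{cegT} applies to $T_\sigma$ and produces the desired polynomial lower bound on $\limsup_{R\to\infty} R^{d-1+\eta}\,\PP[T_\sigma(A_R)=0]$.

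Next I would establish the almost sure equality of events $\{T_\sigma(A_R)=0\}=\Cross_0(A_R)$. The inclusion $\Cross_0(A_R)\subset\{T_\sigma(A_R)=0\}$ is immediate: along a white path $\gamma$ between the two spheres, $\int_\gamma\sigma=0$, realizing $T_\sigma(A_R)=0$. For the reverse inclusion I would take a minimizing sequence of piecewise-$\Cc^1$ paths $\gamma_n$ from $S(0,1)$ to $S(0,R)$ with $\int_{\gamma_n}\sigma\to 0$, restrict attention to paths of uniformly bounded length (which can be justified using stationarity together with the finite-moment condition~(\ref{momentS}) to rule out the relevance of arbitrarily long economical minimizers with positive probability), reparametrize by arclength, and apply Arzelà--Ascoli to extract a Lipschitz limit curve $\gamma$ joining the two spheres; lower semicontinuity of the functional $\gamma\mapsto\int_\gamma\sigma$ then forces $\int_\gamma\sigma=0$, that is, $\sigma$ vanishes arclength-almost-everywhere on $\gamma$, which is a white path.

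Combining the two steps, $\PP[\Cross_0(A_R)]=\PP[T_\sigma(A_R)=0]$ and the conclusion of Corollary~\ref{cegT} applied to $T_\sigma$ transfers directly to the density side, yielding the desired bound. The main technical obstacle is the compactness argument for the reverse inclusion in the second step: since no Lipschitz or uniform positivity hypothesis is made on $\sigma$, the lengths of minimizing paths are not a priori controlled, and one must exploit the probabilistic structure to truncate them before invoking Arzelà--Ascoli and the standard lower-semicontinuity statement for line integrals of non-negative measurable functions along uniformly Lipschitz limit curves.
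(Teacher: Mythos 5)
Your first step --- invoking Corollary~\ref{cegT} for the pseudometric $T_\sigma$ induced by $\sigma$ --- is exactly what the paper does; its entire proof of Corollary~\ref{ceg} is the single sentence ``This is a direct consequence of Corollary~\ref{cegT}.'' The hypothesis transfer you check is fine (and is the same chain of verifications the paper makes to derive, e.g., Corollary~\ref{positif} from Theorem~\ref{positifT}). So up to that point you match the paper.

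The real issue is precisely the one you flag honestly at the end, and it is worth spelling out why your proposed fix does not close it. Corollary~\ref{cegT} gives a lower bound on $\limsup R^{d-1+\eta}\PP[T_\sigma(A_R)=0]$, but the corollary to be proved is about $\PP[\Cross_0(A_R)]$, where $\Cross_0(A_R)\subset\{T_\sigma(A_R)=0\}$; passing from the larger event to the smaller one requires the reverse inclusion $\{T_\sigma(A_R)=0\}\subset\Cross_0(A_R)$, which does \emph{not} hold for a general measurable density. Your Arzel\`a--Ascoli argument breaks down in two places: (i) there is nothing in the hypotheses of Corollary~\ref{ceg} that uniformly controls the lengths of near-minimizing paths (condition~(\ref{momentS}) only bounds the expected $\sigma$-integral over straight segments, not the Euclidean length of an economical path); (ii) even granting a bounded-length limiting curve $\gamma$, the functional $\gamma\mapsto\int_\gamma\sigma$ is \emph{not} lower semicontinuous under uniform convergence of Lipschitz curves when $\sigma$ is merely measurable --- lower semicontinuity would hold for lower semicontinuous $\sigma$, but condition~(\ref{mesu}) grants only measurability. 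For a concrete obstruction: take $\sigma$ vanishing off a dense but Lebesgue-null set; then $\int_\gamma\sigma$ can be $0$ along each piecewise-$\Cc^1$ approximant while $\{\sigma=0\}$ contains no $\Cc^0$ crossing. The paper resolves exactly this difficulty through Lemma~\ref{3a3b}, which establishes the a.s.\ equality of the events $\{T_\sigma(A_R)=0\}$ and $\Cross_0(A_R)$ --- but only under the additional regularity condition~(\ref{blackreg}) (positive region regularity), which is \emph{not} among the hypotheses of Corollary~\ref{ceg} as stated. In other words, your objection also applies to the paper's own one-line proof: as written, Corollary~\ref{ceg} either needs condition~(\ref{blackreg}) added to its hypotheses, or its conclusion should be rephrased in terms of $\PP[T_\sigma(A_R)=0]$ (which is what Corollary~\ref{cegT} actually provides). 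All of the paper's downstream applications (Corollaries~\ref{cegaussian}, \ref{ceV}, \ref{ceb}, \ref{cebp}) are in settings where condition~(\ref{blackreg}) holds or is trivial, so they are unaffected; the gap is only in the general statement. You identified the right obstruction; the correct repair is the geometric regularity hypothesis and Lemma~\ref{3a3b}, not a compactness argument.
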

This corollary means in particular that if a strongly decorrelating percolation model has a polynomial decay for the one-arm, then the critical exponent is less than $d-1$.

\subsection{Assumptions }

\paragraph{Notations. }
\begin{itemize}
	\item 
	The set $\mathcal T$  of pseudometrics (resp. the set $\mathcal F$ of real functions) over $\R^d$ is equipped with the natural partial order $\leq$.
	An event $E$ in $\mathcal C$ (resp. $\mathcal F$) is said to be  \emph{increasing} if 
	$$\varphi \in E\text{ and } \varphi\leq\psi \Rightarrow \psi\in E.$$
	An event is \emph{decreasing} if 
	\begin{equation}\label{dek}
	\varphi\in E\text{ and } \varphi\geq\psi \Rightarrow \psi\in E.
	\end{equation}
	\item For any pair of subsets $A, B\subset \R^d$, 
	let $\mathcal A^-$ and $\mathcal B^-$  the set of decreasing events in $\mathcal T$ (resp. $\mathcal F$) depending only on the values of $T\in \mathcal T$ (resp. $f\in \mathcal F$) over $A$ and $B$ respectively.
For any positive $ Q,S$ let
\begin{equation}\label{ind}
\Ind^- (Q,S):= \sup_{A, B \subset \R^d, \Diam A \leq S, \Diam B \leq S \atop \dist(A,B)>Q, E_A\in \mathcal A^-, E_B\in \mathcal B^-} \left|\PP[ E_A\cap E_B]-\PP[E_A]\PP [E_B]\right|.
\end{equation}
\end{itemize}

\begin{itemize}
	\item For any $0<r< R$, denote by
	$A_{r,R}$ and $A_R$ the spherical shells 
	\begin{eqnarray}\label{spsh}
	A_{r,R}&= &B(0,R) \setminus B(0,r)\subset \RR^d \\ \nonumber
	A_R &= &A_{1,R}. \\
	\label{TAR}
	T(A_{r,R})&=& \inf_{x\in S(0,r),  y\in S(0,R)} T(x,y).
	\end{eqnarray}
	\item 		For every $v\in \RR^d$, $\tau_v$ denotes the translation associated with $v$. The translations of $\RR^d$ act on the set $\mathcal T(\RR^d)$ of pseudometrics over $\RR^d$ by
	\begin{equation}\label{actionT}
	\forall v\in \RR^d, \forall T\in \mathcal T (\R^d), \forall (x,y)\in \RR^d, \ \tau_v(T)(x,y) = T (v+x,v+y).
	\end{equation}
The action $\tau_v$ is said to be \emph{ergodic}
	for the law of the pseudometric $T$ is invariant under the action $\tau_v$, and if 
	for any event $A$, if $A$ is invariant under $\tau_v$ then $A$ has measure $0$ or $1$.
\end{itemize}

\paragraph{Conditions for random pseudometrics.}
In the sequel, $T$ denotes a pseudometric on $\R^d$.
\begin{itemize}
		\item Assumptions used for the existence of $\mu$ (Theorem~\ref{fixeddirT})
\begin{enumerate}[resume=condi]
	\item \label{homT} (Ergodicity) $T$ is ergodic under the action of the translations of $\RR^d$.
	\item \label{moment} (Finite moment) For any $x\in \RR^d$, 
	$\mathbb E \left(T(0,x)\right)$ is finite. 
	\end{enumerate}
	\item Assumptions for the positivity of $\mu$ (Theorem~\ref{positifT})
	\begin{enumerate}[resume=condi]
			\item \label{mesuT} (Annular mesurability) For any $0<r<R$, $T(A_{r,R})$ is mesurable with respect to the $\Sigma-$algebra of the random pseudometrics $T$.
	\item \label{annuliT} (Decay of instant one-arms) There exists $\eta, R_0>0$, such that 
	$$\forall R\geq R_0, \ \PP \left[ T(A_{R})=0\right] \leq \frac{1}{R^{d-1+\eta}}.$$
	\item \label{AI}(Exponential quasi independence) 
	There exists a positive constant $\beta$ such that for any $\alpha>0$, there exists $ Q_0$ such that for any $Q\geq Q_0$,
	$$
	\Ind^-(Q,Q^{1+\alpha}) \leq  \exp(-Q^\beta),
	$$
	where $\Ind^-$ is defined by~(\ref{ind}).
	\end{enumerate}
\item Assumptions for the vanishing of $\mu$ (Theorem~\ref{muzeroT})
\begin{enumerate}[resume=condi]
	\item \label{vannuliT} (Instant crossings of large rescaled annuli)
	$$ \limsup_{R\to \infty} \PP[T(A(R,2R))=0] >0.$$
		\item\label{isoT} (Isotropy) The measure of $T$ is invariant under the action of the orthogonal group of $\R^d$.
	\item (Lipschitz) \label{lipT} There exists a positive $C>0$ such that $T$ is $C$-Lipschitz for the Euclidean metric. 
	\end{enumerate}
\end{itemize}
\paragraph{Comments for the general conditions.}
\begin{itemize}
	\item  Conditions~(\ref{homT}) (ergodicity) and (\ref{moment}) (finite moment) are needed by Kingman's ergodic Theorem~\ref{subadditive} and the existence of the time constant, see Theorem~\ref{fixeddirT}. 
	\item Condition~(\ref{annuliT}) (decay of instant one-arms) is one of the two crucial assumptions needed for our main Theorem~\ref{positifT} (positivity of $\mu$). This fact is intuitive: if the travelling time for crossing an annulus is too small, then it is believable that the time constant will drop to zero. 
	\item Notice also that Condition~(\ref{annuliT}) has an optimal flavour. Indeed, for Bernoulli percolation and a lot of other percolation models like planar Gaussian or Voronoi, the decay of white one-arms at criticality is polynomial. For planar Bernoulli percolation over the triangular lattice, the exponent is $5/48$~\cite{lawler2002}, to be compared to our bound $1$ for $d=1$.
	\item Condition~(\ref{AI}) (asymptotic independence) is the other crucial assumption needed for the main Theorem~\ref{positifT}: it ensures weak dependency between two disjoint parts of the space. In classical FPP, the so-called BK inequality gives the good comparison of disjoint events. Here, because we handle colouring with possibly infinite correlations, condition~(\ref{AI}) is a way to replace BK.
	\item In fact, our condition~(\ref{AI}) could be weakened in considering only events which are finite intersections of events of the type $\{T(A_R)<\delta\}$, see the proof of Proposition~\ref{comparison}.
	\item 
	Condition~(\ref{vannuliT}) (instant crossings of large annuli) is one of the conditions needed by Theorem~\ref{muzeroT}, which asserts that the time constant vanishes. It is  trivially satisfied by random Riemannian metrics. 
	\item Condition~(\ref{lipT}) (Lipschitz) is needed for the ball shape theorem and for Theorem~\ref{muzeroT}. It is satisfied for all our applications, except for Riemannian FPP. For the latter case, \cite{lagatta2010shape} proves however a ball shape theorem. 
	  	\item The last condition~(\ref{isoT}) (isotropy) is needed for Theorem~\ref{muzeroT}. It is not clear that it is really necessary. It is also needed to prove that, in the positive case, the limit ball $K$ given by Theorem~\ref{ballshapeBFT} is a Euclidean ball. 
\end{itemize}
\paragraph{Conditions for random densities and colourings.}
We now specify conditions for the density setting. 
For this we will need further notations and definitions:
\begin{itemize}
\item The translations over $\RR^d$ act on the set $\mathcal D(\R^d)$ of densities of $\R^d$ by 
\begin{equation}\label{action}
\forall v\in \RR^d, \forall \sigma \in \mathcal D(\R^d), \tau_v(\sigma) = \sigma \circ \tau_v,
\end{equation}
where $\tau_v$ denotes the translation associated with $v$. The action $\tau_v$ is said to be \emph{ergodic}
for the law of the random density $\sigma$ if the latter is invariant under the action $\tau_v$, and if 
for any event $A$, if $A$ is invariant under $\tau_v$ then $A$ has measure $0$ or $1$.
\item For any $0<r< R$, let
\begin{equation}\label{croc}
\Cross_0(A_{r,R})=\left\{\exists \text{ a $C^0$ path included in $\{\sigma=0\}$ crossing $A_{r,R}$ }\right\}.
\end{equation}
	\item Assume that $\sigma:\R^d\to \{0,1\}$ is a random colouring. For any right parallelipiped  $R=\prod_{i=1}^N [a_i,b_i] \subset \RR^d$, where $a_i<b_i$ for every $i$, define for any $j\in \{0,1\}$,
	\begin{eqnarray}\label{crossd}
	\Cross_j(R)&:=&\big\{\text{There exists a $C^0$ path in $\{\sigma =j\}\cap R$ intersecting}\nonumber
	\\ &&\{a_1\}\times \prod_{i=1}^N [a_i,b_i]\text{ and } \{b_1\}\times \prod_{i=1}^N [a_i,b_i]\big\}.
	\end{eqnarray}
	For $d=2$, this is just the classical lenghtwise crossing of a rectangle.
	We will call by slight abuse this type of crossing a \textit{black} crossing when $j=1$ and a \textit{white} crossing when $j=0$.
\end{itemize}
We can now state conditions for a random density $\sigma$ that ensure that the associated pseudo-distance $T$ satisfies the general conditions described in the previous paragraph.
\begin{itemize}
	\item Assumptions used for the existence of $\mu$ (Corollary~\ref{fixeddirS})
	\begin{enumerate}[resume=condi]
		\item \label{mesu}(mesurability) Almost surely, $\sigma$ is mesurable.
		\item \label{momentS} (finite moment) For any $x\in \R^d$, 
		$$\mathbb E \int_{[0,x]} \sigma <+\infty.$$
		\item \label{hom}(Ergodicity) 
		The	 translations of $\RR^d$ are ergodic for the  law of $\sigma$. 
	\end{enumerate}
	\item Assumptions for the positivity of $\mu$ (Corollary~\ref{positif})
	\begin{enumerate}[resume=condi] 
		\item \label{wannuli} (Decay of white one-arm) There exist $\eta, R_0>0$, such that for any $R\geq R_0$, $$ \PP \left[ \Cross_0(A_R)\right] \leq \frac{1}{R^{d-1+\eta}}.$$ 
	\end{enumerate}
	\item Assumptions used for the vanishing of $\mu$ (Corollary~\ref{muzero} and Proposition~\ref{muzero2})
	\begin{enumerate}[resume=condi]
						\item \label{iso} (Isotropy) The measure of $\sigma $ is invariant under the orthogonal group of $\R^d$. 
		\item \label{vannuli} (White crossings of large annuli) There exists $c>0$ such that
		$$ \limsup_{R\to \infty}\PP \left[ \Cross_0(A_{R,2R})\right] \geq c.$$
			\item\label{RSW} (Russo-Seymour-Welsh) Assume here that $\sigma$ is a colouring. 
		\begin{enumerate}
			\item \label{wRSW} (weak RSW)
			For any $d-$uple of closed non trivial intervals $I_1, \cdots, I_d$, there exist $c>0$, such that 
			$$ \limsup_{n\to \infty}  \PP \left[ \Cross_0(n \prod_i I_i)\right] \geq c.$$
			\item \label{sRSW} (strong RSW) 	For any $d-$uple of closed non trivial intervals $I_1, \cdots, I_d$, there exist $c>0$, such that 
			$$ \liminf_{n\to \infty}  \PP \left[ \Cross_0(n \prod_i I_i)\right] \geq c.$$
			
		\end{enumerate}
	\end{enumerate}
	\item Secondary assumption
	\begin{enumerate}[resume=condi]
		\item\label{blackreg} (Positive region regularity) Almost surely, $\{\sigma>0\}\subset \RR^d$ is a locally finite union of $d$-dimensional submanifolds with piecewise $C^1$ boundary, such that for any pair $(W_1,W_2)$ of these submanifolds, $\overline{ W_1} \cap \overline{W_2} = \emptyset$, or $W_1\cap W_2$ contains an open subset.
	\end{enumerate}
\item Assumptions for Corollary~\ref{remtassion}. Assume here that $\sigma$ is a colouring.
\begin{enumerate}[resume=condi]
	\item \label{col} (Colour invariance) The law of $\sigma$ is invariant under change of colour.
	\item \label{wsym}(Weak symmetries) The law of $\sigma$ is invariant under 
	right-angle rotation, under symmetries by horizontal axis.
	\item \label{sq} (Crossing of squares) There exists $c>0$ such that for any square 
	$S$, $$\PP[\Cross_0(S)] >c.$$
		\item \label{FKG} (Fortuin-Kasteleyn-Ginibre inequality for crossings) For any positive crossing events $E_1$ and $E_2$ of the form $\Cross_1(R)$, 
	$$ \PP[E_1 \cap E_2]\geq \PP[E_1]\PP[E_2].$$	
\end{enumerate}
\end{itemize}
\paragraph{Comments for the colouring conditions.}
\begin{itemize}
	\item  Condition~(\ref{mesu}) (mesurability) is necessary for the definition~(\ref{defmetric}) of the associated pseudometric 
	$T$. Note that a priori $T$ can be infinite. 
	Condition~(\ref{hom}) (ergodicity) implies condition~(\ref{homT}) (ergodicity) for $T$.
	\item 
	Condition~(\ref{annuliT}) (decay of instant one-arms) implies condition~(\ref{wannuli}) (decay of white one-arms), but Lemma~\ref{3a3b}  implies that the converse if true, if the geometric condition~(\ref{blackreg}) (positive region regularity) is satisfied. These conditions are satisfied by our $C^1$ Gaussian fields,  Voronoi percolation and Boolean percolation, our main applications, see Corollary~\ref{equiarm} and Proposition~\ref{equiVor}. The main asset of condition~(\ref{wannuli}) is that it concerns the percolation properties of $\sigma$, and not its FPP properties. 
		\item Condition~(\ref{iso}) (isotropy) implies that $T$ satisfies condition~(\ref{isoT}) (isotropy), needed for Corollary~\ref{muzero}.
		\item 
Condition~(\ref{vannuli}) (white crossings of large annuli) implies condition~(\ref{vannuliT}) (instant crossings of large annuli), whereas condition~(\ref{wRSW}) (weak RSW) implies condition~(\ref{vannuli}). The latter condition is defined in all dimensions,  however the only examples we know are two-dimensional, and its higher dimension version will not be used in this paper. 
	\item Condition~(\ref{FKG}) (FKG) is needed only in the two-dimensional situation of Corollary~\ref{remtassion}, which proves the vanishing of the time constant in the general situation  of~\cite{tassion2016crossing}. 
\end{itemize}

\section{Applications}\label{applications}

We present the various applications of  Theorem~\ref{positifT} to Bernoulli, Gaussian, Voronoi and Boolean percolations, and then to 
Riemannian FPP.

\subsection{Classical FPP}
We can reprove the hardest half of Theorem~\ref{tp}.
\begin{cor}\label{corclassical} If $\PP[\nu=0]<p_c(d)$, then $\mu_\nu$ is a norm.
\end{cor}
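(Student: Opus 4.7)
The plan is to recover Corollary~\ref{corclassical} by reducing it to Theorem~\ref{positifT}, after extending the lattice pseudometric $T_{\text{lat}}$ from~(\ref{defFPPmetric}) to a random pseudometric on $\RR^d$. I would set $T(x,y) := T_{\text{lat}}(\lfloor x + U \rfloor, \lfloor y + U \rfloor)$, where $U$ is an independent uniform random variable on $[0,1)^d$ and $\lfloor \cdot \rfloor$ is the componentwise floor, giving a pseudometric whose law is invariant under all of $\RR^d$. Condition~(\ref{homT}) (ergodicity) then follows from the ergodicity of the i.i.d.\ edge weights under $\ZZ^d$-translations combined with the independent random shift; condition~(\ref{moment}) (finite moment) is inherited from the existence of $\mu_\nu$ in Theorem~\ref{fixedFPP}; and condition~(\ref{mesuT}) (annular measurability) is immediate.

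The substantive verifications are (\ref{annuliT}) and (\ref{AI}). For~(\ref{annuliT}), I would use the hypothesis $p := \PP[\nu = 0] < p_c(d)$: the zero-weight edges form a subcritical Bernoulli bond percolation, so by sharpness of the phase transition (Aizenman--Barsky, Menshikov), the probability of a zero-edge path joining the origin to $\partial B(0,R)$ is at most $C e^{-cR}$ for some $c>0$. A union bound over the $O(R^{d-1})$ lattice vertices near $S(0,1)$ then yields $\PP[T(A_R) = 0] \leq C R^{d-1} e^{-cR}$, far stronger than the required polynomial decay $R^{-(d-1+\eta)}$.

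Condition~(\ref{AI}) is the delicate step. Edge weights are exactly independent, but a decreasing event $E_A$ depending on $T|_A$ may in principle depend on edges far from $A$ via long near-optimal paths. I would approximate $T|_A$ by the pseudometric $\tilde T^A$ computed using only edges within the $Q/3$-neighborhood of $A$: since $\tilde T^A \geq T$, decreasingness gives $\{\tilde T^A \in E_A\} \subseteq \{T \in E_A\}$, and the symmetric difference is controlled by the event that some pair of points in $A$ realize a $T$-value via a path exiting the $Q/3$-buffer. Combining the finite moment~(\ref{numoment}) with the exponential decay of zero-edge connectivity above would bound this event by $e^{-c'Q}$. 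The truncated pseudometrics $\tilde T^A$ and $\tilde T^B$ then depend on disjoint edge sets whenever $Q$ exceeds $2\,\mathrm{diam}(A \cup B)$, and so are exactly independent, giving the quasi-independence.

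The main obstacle is the last step: quantifying, without yet having a norm, that near-optimal paths for $T|_A$ do not stray beyond a $Q/3$-buffer with exponentially small failure probability. This requires carefully combining the subcriticality of the zero-edge graph with the tail assumption~(\ref{numoment}) to rule out both long zero-edge shortcuts and unlikely detours of moderate-weight paths. Once~(\ref{AI}) is verified, Theorem~\ref{positifT} directly gives that $\mu$ is a norm on $\RR^d$, and since $\mu_\nu$ coincides with $\mu$ on lattice directions (the random shift $U$ has no effect on the large-scale pseudo-distance), $\mu_\nu$ is a norm as well.
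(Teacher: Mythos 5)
Your overall structure matches the paper's: verify the hypotheses of Theorem~\ref{positifT} (existence and ergodicity, decay of instant one-arms via the subcritical one-arm estimate, quasi-independence), then invoke the theorem. The embedding of the lattice pseudometric into $\RR^d$ by a uniform random shift is a reasonable alternative to the paper's remark that a lattice version of Theorem~\ref{positifT} holds, and your verification of condition~(\ref{annuliT}) via sharpness of the phase transition is the same argument the paper uses with Grimmett's exponential decay.

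The gap is in your treatment of condition~(\ref{AI}). You correctly notice that an arbitrary decreasing event depending on $T\vert_A$ may a priori be influenced by edges far from $A$, and you propose to truncate to a buffered neighborhood and control the error by the near-optimal paths escaping the buffer — and you then flag that step as ``the main obstacle,'' which you cannot close without already having control of $\mu$. But this obstacle is spurious. As the paper observes in its comments on condition~(\ref{AI}), the proof of Proposition~\ref{comparison} only uses events that are finite intersections of events of the form $\{T(a_j) < \delta\}$, where $a_j$ is a translate of an annulus $A_R$. For such an event, the infimum defining $T(a_j)$ — the minimal travel time between the inner and outer spheres of $a_j$ — is always realized (or approximated) by paths that remain inside $a_j$: any path exiting through the outer sphere or re-entering the inner ball can be truncated at the first or last such crossing, only decreasing its length. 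Therefore $T(a_j)$ depends only on the edge weights inside $a_j$, and for independent edge weights, the relevant annulus events over disjoint shells are \emph{exactly} independent, so the part of $\Ind^-(Q,S)$ that the proof actually uses vanishes. Your truncation machinery is therefore not needed, and once you replace it with this locality observation, your argument closes and coincides with the paper's (sketched) proof.
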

However, we obtain a new result:
\begin{cor}\label{cebp}
	Let $\sigma_{p_c}: \mathbb E^d\to \{0,1\}$ the critical bond Bernoulli percolation. Then 
	$$\forall \eta>0  \  \limsup_{R\to \infty} R^{d-1+\eta}\PP[\Cross_0 (A_R)] >0.$$
\end{cor}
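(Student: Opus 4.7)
The plan is to apply Corollary~\ref{ceg} after embedding the critical bond Bernoulli percolation into a continuous colouring $\sigma:\RR^d\to\{0,1\}$. Fix a thickness $\epsilon\in(0,1/2)$ and declare $\sigma(x)=0$ precisely when $x$ lies in the open $\epsilon$-neighbourhood of some open edge $e$ (i.e.\ an edge with $\sigma_{p_c}(e)=0$), and $\sigma(x)=1$ otherwise. To force $\RR^d$-stationarity, randomize the lattice origin by an independent uniform vector $U\in[0,1)^d$. The connected components of $\{\sigma=0\}$ are then in natural bijection with the clusters of open edges, so the discrete and continuous crossing events satisfy the sandwich $\Cross_0^\sigma(A_{R-1})\subseteq\Cross_0(A_R)\subseteq\Cross_0^\sigma(A_{R+1})$ for all sufficiently large $R$.

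The hypotheses of Corollary~\ref{ceg} are easy to verify. Mesurability~(\ref{mesu}) and the finite moment condition~(\ref{momentS}) are immediate since $\sigma$ is bounded; ergodicity~(\ref{hom}) follows from the phase randomization combined with the $\ZZ^d$-ergodicity of the i.i.d.\ Bernoulli field. For the quasi-independence condition~(\ref{AI}), two sets $A,B$ with $\dist(A,B)\geq 2\epsilon+2$ depend on disjoint families of edges, hence the corresponding $\sigma$-restrictions are independent and $\Ind^-(Q,S)=0$ for all $Q\geq 2\epsilon+2$ and $S>0$, so~(\ref{AI}) holds trivially for any $\beta>0$. It remains to check that $\mu_{p_c}=0$: by Kesten's Theorem~\ref{tp}, $\mu_{p_c}$ is not a norm, so there exists $v\neq 0$ with $\mu_{p_c}(v)=0$; the law of bond percolation is invariant under the hyperoctahedral group $B_d$ of coordinate permutations and sign changes, so the semi-norm $\mu_{p_c}$ inherits this invariance, and since the $B_d$-orbit of any nonzero vector spans $\RR^d$, subadditivity of the $\QQ$-semi-norm yields $\mu_{p_c}\equiv 0$.

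Corollary~\ref{ceg} then delivers $\limsup_{R\to\infty}R^{d-1+\eta}\PP[\Cross_0^\sigma(A_R)]>0$ for every $\eta>0$, and the sandwich from the first step transfers this bound to the discrete event $\Cross_0(A_R)$, the unit shift in $R$ being irrelevant for a polynomial rate. The only genuine technicality lies in setting up the embedding so that $\RR^d$-stationarity and the bijection between continuous and discrete crossings hold simultaneously; once this is done, the vanishing of $\mu$ at criticality reduces to the classical symmetry argument, and quasi-independence is provided for free by the i.i.d.\ structure of Bernoulli percolation.
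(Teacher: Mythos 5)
Your argument is correct and follows the same logical skeleton as the paper's proof: at criticality Kesten's Theorem~\ref{tp} forces $\mu=0$, quasi--independence holds because edge variables are i.i.d., and Corollary~\ref{ceg} then gives the one-arm lower bound. Both proofs also implicitly rely on the same lattice symmetry argument to upgrade ``$\mu$ is not a norm'' to ``$\mu\equiv 0$'', which you helpfully make explicit via the hyperoctahedral group acting irreducibly on $\RR^d$.

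Where you genuinely diverge is in how the lattice/continuum gap is bridged. Corollary~\ref{ceg} is stated for random densities on $\RR^d$, not for colourings of $\mathbb E^d$; the paper (in the proof of Corollary~\ref{corclassical}) simply asserts that a ``lattice version'' of Theorem~\ref{positifT} and its corollaries holds and leaves it unwritten. You instead construct an explicit $\RR^d$-stationary colouring by thickening open edges to $\epsilon$-tubes ($\epsilon<1/2$) and randomizing the lattice phase, then verify the hypotheses of Corollary~\ref{ceg} for this continuous model directly. This is a self-contained way to close the same gap, and arguably cleaner than appealing to an unstated lattice analogue. The paper also records a second, more direct proof due to Vanneuville (Theorem~\ref{hugo}, carried out in \S~\ref{proofclass}), which is genuinely different from either of these; your proposal is not that argument.

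Two small points worth tightening. First, you establish $\mu_{p_c}=0$ for the \emph{discrete} time constant, but the hypothesis of Corollary~\ref{ceg} concerns the time constant $\mu^\sigma$ of the \emph{continuous} colouring you built; the transfer is easy (a discrete open path lies inside $\{\sigma=0\}$ so $T^\sigma(0,nv)\leq T^{\mathrm{disc}}(0,nv)+O(1)$, hence $\mu^\sigma\leq\mu_{p_c}$), but it should be stated since that is exactly what is fed into the corollary. Second, the sandwich $\Cross_0^\sigma(A_{R-1})\subseteq\Cross_0(A_R)\subseteq\Cross_0^\sigma(A_{R+1})$ as written has the inclusions pointing in debatable directions once the $\epsilon$-thickening and the uniform phase shift are taken into account; what is actually true and suffices is that each event is sandwiched between copies of the other with $R$ shifted by an $O(1)$ constant depending on $d$ and $\epsilon$, which is all that is needed since the claim is about a polynomial rate.
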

In this Bernoulli critical percolation, it is known that
\begin{enumerate}
	\item \cite{kozma} for $d>19$ for bond percolation over $\mathbb L^d$ (and others lattices with enough symmetries)
	$ \PP[\Cross_0 (A_R)] \asymp \frac{1}{R^2}$;
	\item  \cite{lawler2002} for $d=2$, 
	$ \PP[\Cross_0 (A_R)] = \frac{1}{R^{5/48(1+o(1))}}$ for the site percolation over the triangular lattice. 
	\item \cite[(5.1)]{kesten1987scaling} For bond percolation over $\ZZ^2$, $\PP[\Cross_0 (A_R)]\geq \frac{C}{R^{1/3}}$.
\end{enumerate}
After a first version of this article, Vincent Beffara and Hugo Vanneuville told us that Corollary~\ref{cebp} can be proved more directly. We give the argument of H. Vanneuville since it holds for correlated fields, see \S~\ref{proofclass}.

\subsection{Gaussian FPP}
Continous Gaussian fields are very natural object in probability. Gaussian percolation, which can be defined by the connectivity features of the associated nodal domains, that is the subset of points where the function is positive, has recently  become a very active domain.
\paragraph{Setting and former results.}
Let $$f: \RR^p \to \RR$$ be any Gaussian field. To this field we associate a family $(\sigma_p)_{p\in \RR}$ of colouring functions over $\RR^p$ defined by: 
\begin{equation}\label{sip}
\forall p\in \RR, \ \sigma_p:= \frac{1}2\left(1+\text{sign} (f+p)\right),
\end{equation} where the sign is considered as $-1$ over $\{f=0\}$. This choice will have no influence if $f$ satisfies condition~(\ref{sr}) (strong regularity), see Theorem~\ref{regular}. 

The first main application of Corollary~\ref{positif}, that is Theorem~\ref{positifT} for colourings, can be viewed as the natural sequel of two recent theorems which exhibit strong similarities beween two models of very different nature, namely  the sign of a smooth isotropic planar Gaussian field on one side, and Bernoulli percolation on the other.
Firstly, in~\cite{beffara2016v}, V. Beffara and the second author of this work proved a Russo-Seymour-Welsh theorem for the nodal domains $\{f>0\}$: 
\begin{thm}\label{RSW_2}\cite{beffara2016v}
	Let $f$ be a centered  smooth Gaussian field on $\RR^2$ with non-negative smooth covariance kernel $e$
	depending only on the distance, with polynomial decay with degree at least 325 and such that $e=1$ on the diagonal. Let $\sigma_0$ be the	associated colour function defined by~(\ref{sip}) for $p=0$.Then, 
	\begin{enumerate}
		\item
	for any rectangle $R\subset \RR^2$,  
	$$\liminf_{n\to \infty} \PP[\Cross_0(nR)]>0.$$
	\item There exists $C,\alpha>0$, such that
	$$\forall R\geq 1, \PP[ \Cross_0(A_R)]\leq \frac{C}{R^\alpha}.$$
\end{enumerate}
\end{thm}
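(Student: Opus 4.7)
The plan is to prove the two conclusions in order: the RSW lower bound on rectangle crossings, and then the upper bound on the one-arm probability by iterating over annular scales.

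First I would establish that any square is crossed with probability bounded away from $0$ and $1$, uniformly in scale. The symmetries of $f$ (centered, isotropic, with $e(0)=1$) imply $f\eq -f$ in law, so $\sigma_0\eq 1-\sigma_0$. Combined with $\pi/2$-rotation invariance and a planar duality argument showing that the existence of a white lengthwise crossing of a square $S$ and a black widthwise crossing of $S$ are essentially complementary (modulo boundary issues, which are controlled using smoothness of $f$ and the fact that nodal lines are almost surely a locally finite disjoint union of smooth simple curves), this forces $\PP[\Cross_0(S)]\geq 1/2$ for every square $S$.

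Next, I would extend this to rectangles of arbitrary fixed aspect ratio, uniformly in scale. For independent Bernoulli percolation, the classical RSW argument glues crossings of overlapping squares using block independence and the FKG inequality. Here, FKG for increasing functionals of $f$ is provided by Pitt's theorem, which applies precisely because the covariance $e$ is non-negative. True independence, however, has to be replaced by a quantitative quasi-independence: I would approximate $f$ on a bounded region by a finite-range surrogate obtained by truncating a convolution representation of $f$ (or by discretizing its spectral measure), showing that on bounded boxes the $L^\infty$ error between $f$ and its truncation has small tail. The polynomial decay of $e$ with sufficiently high degree controls this error and makes the gluing of several crossings into a long crossing go through with only a constant-factor loss.

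Finally, for the one-arm upper bound I would decompose $A_R$ into roughly $\log_2 R$ disjoint dyadic annuli $A_{2^k,2^{k+1}}$. On each such annulus, rotational invariance combined with the rectangle RSW bound and Pitt's inequality (to combine four crossings of long rectangles wrapped around the annulus into a single black circuit) gives a black circuit separating the two boundary components with probability $\geq c>0$. Such a circuit blocks any white crossing of the annulus. Using the same type of quasi-independence between events supported on well-separated dyadic shells, I can decouple the circuit-existence events across disjoint annuli up to a summable error, which upgrades the product bound $(1-c)^{\log_2 R}$ into a genuine polynomial decay $\PP[\Cross_0(A_R)]\leq C R^{-\alpha}$.

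The main obstacle is replacing the honest block independence that drives classical RSW with a quantitative quasi-independence robust enough to survive both the rectangle-gluing step and the iterated annulus argument without degrading the constants. This is the sole role of the strong polynomial decay hypothesis on $e$: the explicit threshold on the degree arises from summing the truncation error contributions over the scales needed for RSW and then over the dyadic scales of the one-arm iteration, and is not expected to be sharp.
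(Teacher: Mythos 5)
Your architecture matches the one in~\cite{beffara2016v}: duality and symmetry give $\PP[\Cross_0(S)]\ge 1/2$ for squares, Pitt's theorem supplies FKG since $e\ge 0$, a Tassion-style argument upgrades this to rectangle crossings, and a dyadic-annulus iteration yields the one-arm bound. Two points where you diverge from (or misstate) the actual proof are worth flagging.

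First, the role of quasi-independence in the RSW step. You write that the classical gluing of overlapping squares ``uses block independence and the FKG inequality'' and then substitute quasi-independence for the former. In fact, Tassion's crossing argument — the one actually used to pass from square crossings to rectangle crossings in this setting — is specifically engineered to use \emph{only} FKG and the symmetries, with no independence input; see the statement of Theorem~\ref{tassioncrossing} in the present paper, whose hypotheses contain FKG but no decorrelation condition. Quasi-independence does enter, but its job is to promote the $\limsup$-type bound that Tassion's argument gives along a subsequence into the genuine $\liminf$ bound asserted in part 1, and to drive the iteration over disjoint dyadic annuli in part 2. Locating it at the ``gluing'' step misidentifies where the difficulty actually sits.

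Second, and more substantively, the mechanism you propose for quasi-independence is not the one in~\cite{beffara2016v}. You suggest building a finite-range surrogate by truncating a convolution (white-noise) representation of $f$, or discretizing the spectral measure, and bounding the $L^\infty$ error on boxes. That route is sound in principle, but it is essentially the later approach of Muirhead--Vanneuville (cf.\ Theorem~\ref{muirhead2018sharp}), not what Beffara--Gayet did. Their argument discretizes $f$ to a Gaussian \emph{vector} on a fine grid, compares its law directly to the law of the vector with cross-covariances between well-separated blocks set to zero, and controls this by differentiating the multivariate Gaussian density in the covariance parameter. This density-comparison is technically heavier and less sharp — that is precisely what produces the eccentric threshold of~$325$ on the degree of polynomial decay — whereas the truncation/white-noise approach you sketch is what later work used to bring the required decay degree down to $4$ and then to $2$. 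So your plan is viable and in some sense cleaner, but it is a genuinely different key lemma than the cited paper's, and you should not present the constant~$325$ as arising from your route: it is an artifact of the Gaussian-vector comparison, not of the truncation error you would be summing.
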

With the definitions given above and below, Theorem~\ref{RSW_2} says that $\sigma_0$ satisfies the strong Russo-Seymour-Welsh condition~(\ref{sRSW}).
The second assertion implies that there is no infinite component of $\{f>0\}$, a negative result which 
was already in~\cite{alexander1996boundedness}, with a different (sketched) proof. 
Secondly, in~\cite{rivera2017critical}, A. Rivera and H. Vanneuville proved that for the Bargmann-Fock field~(\ref{BF}) below the value $p=0$ is critical:
\begin{thm}\cite{rivera2017critical} \label{uniqcomp}
	Let $f:\R^2 \to \R$ be the Bargmann Fock field~(\ref{BF}).
	\begin{enumerate}
		\item 
	If $p\geq 0$, then a.s. there is no unbounded connected component of $\{\sigma_p=0\}$. 
	\item If $p<0$, then a.s. there is a unique unbounded connected component of $\{\sigma_p=0\}$.
	\end{enumerate}
\end{thm}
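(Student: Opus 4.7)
The plan is to combine the Russo--Seymour--Welsh machinery of Theorem~\ref{RSW_2} with a sharp threshold statement at $p=0$ and a Burton--Keane uniqueness argument. The Bargmann--Fock field is stationary, and since its covariance decays superexponentially it is mixing, hence ergodic under translations of $\RR^2$. Consequently, for each fixed $p$ the translation-invariant events ``$\{\sigma_p=0\}$ has an unbounded component'' and ``$\{\sigma_p=0\}$ has a unique unbounded component'' have probability in $\{0,1\}$. Part (1) then follows cheaply: for $p=0$, the polynomial decay $\PP[\Cross_0(A_R)]\leq CR^{-\alpha}$ from Theorem~\ref{RSW_2} combined with Borel--Cantelli along a geometric sequence $R=2^n$ rules out the existence of any unbounded component of $\{f\leq 0\}$; the extension to $p>0$ is immediate from the inclusion $\{f\leq -p\}\subset\{f\leq 0\}$.

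The hard direction is Part (2). I would first establish existence of an unbounded component for every $p<0$. The RSW lower bound of Theorem~\ref{RSW_2} gives $\PP_0[\Cross_0(nR)]\geq c>0$ for every rectangle $R$. The key step is to upgrade this to $\PP_p[\Cross_0(nR)]\to 1$ as $n\to\infty$ for every $p<0$, by a sharp threshold around the critical level $p=0$. To differentiate in $p$, one uses the Gaussian integration-by-parts formula to express $\tfrac{d}{dp}\PP_p[\Cross_0(nR)]$ as an integral of an influence-type quantity localized on the nodal set $\{f=-p\}$. One then invokes a Gaussian analog of the OSSS inequality, in the spirit of Duminil-Copin--Raoufi--Tassion: the variance of the crossing event is controlled by a sum of influences weighted by the revealment of a discrete exploration procedure. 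Here the exponentially decaying covariance of $f$ is what keeps the revealment summable. Integrating the resulting differential inequality yields $1-\PP_p[\Cross_0(nR)]\leq \exp(-c(p)n^\kappa)$, and a standard FKG gluing of crossings of an exponentially growing sequence of concentric annuli produces an unbounded component with positive, hence full, probability.

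Uniqueness is then handled by a Burton--Keane argument adapted to the continuous setting: if there were $k\geq 2$ unbounded components, stationarity and ergodicity of $f$ would force, with positive density inside any large box $[0,N]^2$, the existence of ``trifurcation points'' whose removal splits one unbounded component into three unbounded pieces; a standard combinatorial argument then produces more such trifurcations than the box can accommodate on its boundary, a contradiction. The main obstacle throughout is the sharp threshold step: since the random variables $f(x)$ have infinite-range correlations, the OSSS inequality for product measures does not apply directly, and one has to run a randomized exploration on a discretized proxy of $f$ and control the Gaussian decorrelation error using the exponential tail of the Bargmann--Fock covariance; this is the technical heart of the Rivera--Vanneuville proof, and the only step that genuinely exploits the specific form of the Bargmann--Fock model rather than just the general RSW input of Theorem~\ref{RSW_2}.
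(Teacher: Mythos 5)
This theorem is taken verbatim from Rivera--Vanneuville~\cite{rivera2017critical}; the present paper cites it as a black-box input and does not give a proof of its own, so there is no in-paper argument to compare against. That said, a few remarks on your sketch. The overall template --- polynomial one-arm decay for~(1), RSW plus a sharp-threshold inequality plus FKG gluing of annulus crossings for existence in~(2), and a continuum Burton--Keane argument for uniqueness --- is the standard route and each ingredient is sound in principle. For Part~(1), Borel--Cantelli is unnecessary: $\PP[\Cross_0(A_R)]\to 0$ already forces the probability that the origin lies in an unbounded component of $\{f+p\leq 0\}$ to be zero, and stationarity together with the fact that any such unbounded component has positive Lebesgue measure (by the a.s.\ $C^1$ regularity and transversality of the nodal set, Theorem~\ref{regular}) upgrades this to a.s.\ absence of unbounded components; summability along a geometric sequence is not needed.

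The substantive issue is your attribution of the sharp-threshold mechanism. You ascribe to \cite{rivera2017critical} a Gaussian OSSS argument in the spirit of Duminil-Copin--Raoufi--Tassion. That is anachronistic: the OSSS framework for Gaussian nodal percolation was developed later, by Muirhead--Vanneuville~\cite{muirhead2018sharp}, which the present paper cites separately for the quasi-independence estimate (Theorem~\ref{muirhead2018sharp}) and explicitly lists in a remark as a \emph{subsequent improvement} of Theorem~\ref{uniqcomp}. The original~\cite{rivera2017critical} proceeds instead by writing the Bargmann--Fock field as a convolution $f=q\star W$ of the kernel $q$ with white noise (a decomposition the present paper mentions when introducing $q$ in~(\ref{q})), discretizing $W$ to a finite-dimensional product model on a fine lattice, applying a Kahn--Kalai--Linial/Talagrand-type sharp-threshold inequality to the discretized crossing events, and controlling the discretization and long-range error terms via the superexponential decay of $q$. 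Your sketch therefore describes a valid alternative proof strategy that has since been carried out in the literature, but it does not match the mechanism of the cited reference, and it would be worth flagging that the ``randomized exploration'' picture simply is not what \cite{rivera2017critical} does.
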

\begin{rem}
	\begin{enumerate}
		\item Theorem~\ref{RSW_2} was followed by several improvements on the decay condition, see~\cite{beliaev2018discretisation} (degree 16), \cite{rivera2019quasi} (degree 4) and \cite{muirhead2018sharp} (degree 2).
		\item Theorem~\ref{uniqcomp} was also impoved, see 
		\cite{muirhead2018sharp} (polynomial decay with degree 2 and condition~(\ref{sp}) and \cite{rivera2019talagrand} (degree 2 with condition~(\ref{wp})), see also~\cite{garban2019bargmann} with another proof.
		\item These two results are Gaussian equivalents to classical percolation results on lattices, see~\cite{grimmett}. 
		\item The positivity assumption on the kernel is essential in these results, since by Theorem~\ref{pitt}  it satisfies the FKG condition~(\ref{FKG}), thanks to which we can use a general theorem by V. Tassion~\cite{tassion2016crossing}.
			\end{enumerate} 
\end{rem}

\paragraph{Gaussian FPP.}
The first main consequence of the general Corollary~\ref{positif} concerns planar Gaussian fields:
\begin{thm}\label{posBF}
Let $f$ be a centered Gaussian field over $\RR^2$ and satisfying assumptions (\ref{symmetries}) (stationarity), (\ref{sr}) (strong regularity) and (\ref{wd}) (weak decay of correlations).  
Let $(\sigma_p)_{p\in \RR}$ the associated family of colour functions given by~(\ref{sip}). Then, 
\begin{enumerate}
	\item the associated family of time functions $(\mu_p)_{p\in \RR} $ defined by Corollary~\ref{fixeddirS}, associated with the family of pseudometrics defined by~(\ref{defmetric}), are well defined;
	 	\item the conclusions of Corollary~\ref{ballshapeBF} (ball shape theorem) hold.
	\item  Assume that $f$ satisfies the further condition~(\ref{wp}) (weak positivity of correlations). Then, $$p\leq  0 \Rightarrow \mu_p=0.$$
\item Assume that $f$ satisfies the further conditions~(\ref{wp}) (weak positivity of correlations) and~(\ref{ed}) (strong decay of correlations). Then,
$$
 \mu_p >0  \Leftrightarrow  p>0.$$
\end{enumerate}
\end{thm}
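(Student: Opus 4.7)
Since $\sigma_p\in\{0,1\}$, condition~(\ref{momentS}) (moment) is automatic by Remark~\ref{remM} and the pseudometric $T_p$ is $1$-Lipschitz by Remark~\ref{remL}. Strong regularity~(\ref{sr}) yields~(\ref{mesu}) (mesurability), while stationarity~(\ref{symmetries}) and weak decorrelation~(\ref{wd}) give~(\ref{hom}) (ergodicity) via the classical Fomin--Maruyama criterion: a stationary Gaussian field whose covariance tends to zero at infinity is mixing, hence ergodic. Thus Corollary~\ref{fixeddirS} gives statement (1), and Corollary~\ref{ballshapeBF} gives (2). For (3), note that $p\mapsto\sigma_p$ is pointwise nondecreasing, so $p\mapsto\mu_p$ is nondecreasing and it suffices to prove $\mu_0=0$. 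At $p=0$, weak positivity~(\ref{wp}) combined with Pitt's theorem implies the FKG inequality~(\ref{FKG}); the centred Gaussian symmetry $f\leftrightarrow -f$ yields colour invariance~(\ref{col}); the Euclidean symmetries of the stationary law give~(\ref{wsym}); and the crossing of squares~(\ref{sq}) follows from the planar Russo--Seymour--Welsh theorem for such fields (Theorem~\ref{RSW_2}). Corollary~\ref{remtassion} then yields $\mu_0=0$, hence $\mu_p=0$ for all $p\le 0$.

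\textbf{Positivity for $p>0$: decay of the white one-arm.} I apply Corollary~\ref{positif}. Mesurability, moment and ergodicity are as above. For the decay of the white one-arm~(\ref{wannuli}), observe that for $p>0$ the event $\Cross_0(A_R)$ is the crossing of $A_R$ by $\{f\le -p\}$, which lies in the subcritical phase of the level-set percolation of $\sigma_p$. Recent sharpness results for positively correlated smooth planar Gaussian fields (\cite{muirhead2018sharp}, \cite{rivera2019talagrand}) provide exponential decay of this probability in $R$, far beyond the polynomial rate $R^{-(1+\eta)}$ required by~(\ref{wannuli}). The $C^1$ regularity of $f$ from~(\ref{sr}) gives the geometric regularity~(\ref{blackreg}) of $\{f>-p\}$, so Lemma~\ref{3a3b} upgrades~(\ref{wannuli}) to the instant one-arm bound~(\ref{annuliT}) for $T_p$.

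\textbf{Quasi-independence.} The remaining and crucial step is condition~(\ref{AI}), and it is here that the strong decay of correlations~(\ref{ed}) enters; I expect this to be the main obstacle. Given disjoint subsets $A,B\subset\RR^2$ of diameter at most $Q^{1+\alpha}$ at distance at least $Q$, and decreasing events $E_A,E_B$ for $\sigma_p$ depending only on $f$ in $A$ and $B$ respectively, the strategy is a Gaussian comparison argument: couple $f$ with a field $\tilde f$ whose marginals on $A$ and on $B$ match those of $f$ but which is independent across the two regions, and show that $\|f-\tilde f\|_{L^\infty(A\cup B)}$ is super-polynomially small in $Q$ except on a set of negligible probability. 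The error is controlled by the supremum of a centred Gaussian field whose covariance is essentially the cross-covariance of $f$ between $A$ and $B$, which by~(\ref{ed}) is exponentially small in $Q$; standard concentration then yields $L^\infty$ control of order $\exp(-cQ^\beta)$. Combining this with the $C^1$ estimate from~(\ref{sr}) through a Cameron--Martin type argument converts the sup-norm bound into a bound on $|\PP[E_A\cap E_B]-\PP[E_A]\PP[E_B]|$ of the same exponential order, giving~(\ref{AI}); analogous strategies have been implemented in \cite{muirhead2018sharp, rivera2019quasi}. Corollary~\ref{positif} then yields $\mu_p>0$ for $p>0$, which together with Step~1 proves the equivalence in (4).
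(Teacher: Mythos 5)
Your proposal follows the same overall structure as the paper's proof (existence and ball shape via Corollaries~\ref{fixeddirS} and~\ref{ballshapeBF}; vanishing for $p\le 0$ via Corollary~\ref{remtassion}; positivity for $p>0$ via exponential one-arm decay, Lemma~\ref{3a3b}, and quasi-independence feeding into Corollary~\ref{positif}), and most steps are sound. Your monotonicity argument reducing $p\le 0$ to $p=0$ is a valid alternative to the paper's observation that (\ref{wRSW}) persists for $p<0$ by monotonicity of white crossings.

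There is, however, a genuine gap in your derivation of condition~(\ref{sq}). You invoke Theorem~\ref{RSW_2} to get the square-crossing bound, but Theorem~\ref{RSW_2} as stated requires a covariance depending only on the distance with polynomial decay of degree at least $325$; assertion (3) of Theorem~\ref{posBF} assumes neither isotropy nor any quantified decay (only (\ref{wd}), i.e.\ $\kappa\to 0$). The correct and simpler argument — the one the paper uses — is self-duality: since $f$ is a.s.\ continuous (Theorem~\ref{kolmogorov}), a white left-right crossing of a square fails exactly when there is a black top-bottom crossing; by the colour symmetry $f\leftrightarrow -f$ and the $\pi/2$-rotation invariance in~(\ref{symmetries}), those two events are equiprobable, so $\PP[\Cross_0(S)]=1/2$. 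That gives~(\ref{sq}) with $c=1/2$ under the stated hypotheses alone.

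On the quasi-independence step, your Gaussian-coupling sketch describes roughly the mechanism behind~\cite[Theorem 4.2]{muirhead2018sharp}, but as written it is a heuristic rather than a proof. The paper instead cites that theorem directly (restated as Theorem~\ref{muirhead2018sharp}), notes the one modification needed (replacing the polynomial decay assumption on $q$ by the exponential one~(\ref{ed}), which changes the auxiliary function $G$ in their Lemma~3.13 and turns the error term $r^{1-\beta}$ into $re^{-r^\beta}$), and then deduces condition~(\ref{AI}) in Corollary~\ref{coroind} by choosing $t$ of order $Q^\beta$ and absorbing the factor $S$. You should replace your sketch with this citation and the observation that a decreasing event for $\sigma_p$ is a decreasing event for $f$, which is what allows Theorem~\ref{muirhead2018sharp} to be applied.
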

\begin{rem}\label{higher}
	The two first assertions (existence of $\mu$ and the ball shape theorem) hold in higher dimensions with the same conditions. 
	\end{rem} 
	All these assumptions are satisfied by a particular Gaussian field called the \emph{Bargmann-Fock field}, which also satisfies condition~(\ref{isotropy}) (isotropy). This field arises naturally from random complex and real algebraic geometry as explained in \cite{beffara2016v}. It is given by the non negative correlation function:
$$e(x,y)=\exp\left(-\frac{1}{2}\|x-y\|^2\right).
$$
Equivalently, we can explicitly write it as the following random field $f$:
\begin{equation}\label{BF}
f(x)=\exp\left(-\frac{1}{2}\|x\|^2\right)\sum\limits_{i,j\in\NN} a_{i,j}\frac{x_1^ix_2^j}{\sqrt{i!j!}},
\end{equation}
where the $a_{i,j}$'s are i.i.d centered Gaussians of variance $1$. 

\paragraph{One-arm exponent.}

Corollary~\ref{ceg} has the following corollary:
	\begin{cor}\label{cegaussian}
Let $f$ be a centered Gaussian field over $\RR^2$ and satisfying assumptions (\ref{symmetries}) (stationarity), (\ref{sr}) (strong regularity), (\ref{wp}) (weak positivity of correlations) and~(\ref{ed}) (strong decay of correlations). For $p=0$, that is the colouring function is $\sigma_0$, then 
		$$ \forall \eta>0, \ \limsup_{R\to \infty} R^{1+\eta}\PP[\Cross_0 (A_R)] >0.$$
	\end{cor}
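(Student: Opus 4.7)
The plan is to obtain this corollary by a direct application of Corollary~\ref{ceg} to the random colouring $\sigma_0 : \RR^2 \to \{0,1\}$ associated with the field $f$ through~(\ref{sip}), specialized to dimension $d=2$ (so that the exponent $d-1+\eta$ becomes $1+\eta$, matching the statement). The task therefore reduces to verifying the hypotheses of Corollary~\ref{ceg} for $\sigma_0$, and checking that $\mu_0 = 0$ under our assumptions.

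First I would check the measurability and integrability hypotheses. Condition~(\ref{mesu}) (mesurability) is immediate: since $f$ is strongly regular by~(\ref{sr}), it is almost surely continuous, so $\sigma_0 = \tfrac{1}{2}(1+\mathrm{sign}(f))$ is Borel measurable almost surely. Condition~(\ref{momentS}) (finite moment) is automatic because a colouring takes values in $\{0,1\}$ and hence is bounded, cf. Remark~\ref{remM}. Condition~(\ref{hom}) (ergodicity) follows from stationarity~(\ref{symmetries}) together with the exponential decay of correlations~(\ref{ed}): a stationary centered Gaussian field whose covariance tends to $0$ at infinity is mixing, hence ergodic under translations, and this property passes to the induced colouring $\sigma_0$.

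Next I would verify the key quasi-independence condition~(\ref{AI}). This is the only non-formal step and is where the assumption~(\ref{ed}) (strong, i.e.\ exponential, decay of correlations) is used. Classical Gaussian decorrelation estimates give, for disjoint sets $A,B$ of diameter at most $S$ at distance at least $Q$ apart, an exponentially small comparison between the joint law of $f|_{A\cup B}$ and the product of its marginals, with rate driven by the covariance decay. Since events depending on $\sigma_0$ are measurable with respect to $f$, this yields exactly the bound required by~(\ref{AI}). This is precisely the argument already invoked in the proof of Theorem~\ref{posBF} (fourth assertion), so no new work is needed here; I would cite that verification rather than redo it.

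It remains to verify that $\mu_0 = 0$. This is immediate from Theorem~\ref{posBF}(3): under the standing assumptions (stationarity), (strong regularity), (weak decay), and the additional~(\ref{wp}) (weak positivity), one has $\mu_p = 0$ for all $p \leq 0$, and we simply take $p = 0$. All hypotheses of Corollary~\ref{ceg} are thus met, and its conclusion in $d=2$ yields
\[
\forall \eta > 0, \quad \limsup_{R \to \infty} R^{1+\eta}\,\PP[\Cross_0(A_R)] > 0,
\]
as desired. The only conceivable obstacle is the verification of~(\ref{AI}) from~(\ref{ed}), but this is a standard Gaussian estimate that is already part of the infrastructure built up for Theorem~\ref{posBF}, so the corollary follows essentially by assembling results already proved.
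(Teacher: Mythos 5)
The proposal is correct and follows exactly the route the paper leaves implicit: it specializes Corollary~\ref{ceg} to the Gaussian colouring $\sigma_0$ in dimension $d=2$, verifies the required hypotheses (mesurability and finite moment being automatic for a colouring by Remark~\ref{Xor}, ergodicity from stationarity plus decay of correlations, quasi-independence~(\ref{AI}) from~(\ref{ed}) via Theorem~\ref{muirhead2018sharp} and Corollary~\ref{coroind}), and obtains $\mu_0=0$ from Theorem~\ref{posBF}(3) using~(\ref{wp}). One point you could make explicit: the passage from $\PP[T(A_R)=0]$ (which is what Corollary~\ref{cegT} controls) to $\PP[\Cross_0(A_R)]$ in Corollary~\ref{ceg} rests on the a.s.\ identification of the two events, which in the Gaussian setting is supplied by Corollary~\ref{equiarm} (i.e.\ condition~(\ref{blackreg}) holds thanks to the strong regularity~(\ref{sr}) and Theorem~\ref{regular}); this is the place where~(\ref{sr}) is genuinely used beyond mere measurability.
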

In particular, the degree $\alpha$ in Theorem~\ref{RSW_2} satisfies $\alpha\leq 1$.

\subsection{Voronoi FPP. }

\paragraph{Setting and former results.}
The second application concerns Voronoi percolation.  
Let $X$ be a Poisson process over $\RR^d$ with intensity $1$. Recall that $X$ is a random subset of points, locally finite, such that for any Borel subset $A\subset \RR^d$, 
the probability that $X\cap A$ has exactly $k $ points equals
$$\frac{(\Vol A)^k}{k!} \exp(-\Vol A).$$ Moreover, for two disjoint subsets $A$ and $B$, $X_{|A}$ is independent of $X_{|B}$.
To $X$ 
 we can associate the so-called \emph{Voronoi 
tiling}: any point $x$ of $X$ has a cell $V_x\subset \RR^d$ defined by the points in $\RR^d$ which are closer to $x$ 
than any other point of $X$. Then, we colour any cell in black (value $1$)
with probability $1-p$ or in white (value $0$) with probability $p$. 
The boundaries of two cells with different colour are coloured white. 
This provides a random colouring
$$ \sigma_p: \RR^d \to \{0,1\}.$$
Let $p_c(d)\in [0,1] $ be defined
by 
\begin{equation}\label{pcdvor}
p_c(d)=\sup \left\{p, \text{there exists an infinite white component a.s.}\right\}.
\end{equation}
It is classical~\cite[pp. 270--272]{bollobas2006percolation} that for any $d\geq 2$, 
$p_c(d)\in]0,1[.$
In 2006, B. Bollob\`as and O. Riodan proved:
\begin{thm}\cite[Theorems 1.1 and 1.2]{bollobas2006critical}\label{BoRi} For Voronoi 
	percolation, 
$p_c(2) =1/2.$
\end{thm}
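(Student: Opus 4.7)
The plan is to follow the strategy of Bollobás and Riordan, which rests on two ingredients: a self-duality argument giving $p_c(2) \geq 1/2$, and a sharp-threshold plus RSW argument giving $p_c(2) \leq 1/2$.

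\medskip

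\textbf{Step 1: the lower bound $p_c(2) \geq 1/2$.} I would first observe the colour self-duality of the model: the black-white symmetry of the colouring step exchanges $p$ and $1-p$, while the underlying Poisson tiling is unchanged in law. Hence if $p^*_c$ denotes the threshold above which \emph{black} percolates, we have $p^*_c = 1 - p_c$. Next I would run the classical Zhang-type argument: at $p=1/2$ the two colours play symmetric roles, so if one percolates, both do simultaneously. But in the plane this is impossible, because the unique infinite white cluster would have to disconnect the unique infinite black cluster (using a four-boxes argument adapted to the Voronoi setting, relying on the finite-range insertion tolerance of the Poisson process). This rules out percolation at $p = 1/2$, so $p_c(2) \geq 1/2$.

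\medskip

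\textbf{Step 2: box-crossing estimates at $p = 1/2$.} This is the technical heart. I would aim to prove an RSW statement: for every aspect ratio $\rho$, there exists $c(\rho) > 0$ such that the probability of a white lengthwise crossing of an $n \times \rho n$ rectangle at $p = 1/2$ is bounded below by $c(\rho)$ uniformly in $n$. The starting point is the symmetry: at $p = 1/2$, crossings of a square in the two colours have equal probability, and by planar duality (modulo boundary issues about cells straddling the boundary of a rectangle, which one handles by enlarging the rectangle slightly) each is bounded below by a constant. The step from squares to rectangles is the classical RSW gluing, but must be adapted because the tiling is not invariant under axis-aligned reflections in the strict sense; one exploits only approximate symmetries, using the Poisson nature of $X$ to resample cells inside a strip and produce crossings by conditioning and averaging.

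\medskip

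\textbf{Step 3: sharp threshold and the upper bound $p_c(2) \leq 1/2$.} Having $\liminf_n \mathbb{P}_{1/2}[\Cross_0(n,\rho n)] > 0$, I would apply a sharp-threshold theorem of Bourgain-Kahn-Kalai-Katznelson-Linial / Friedgut-Kalai type, suitably generalized to the two-stage Poisson + colouring randomness. The point is that the crossing event depends on many nearly symmetric coordinates (the colours of the cells), no one of which has large influence, so its probability jumps from a constant to $1$ over a window of width $O(1/\log n)$ around $p = 1/2$. Consequently, for every fixed $p > 1/2$, the probability of a white crossing of a very long rectangle tends to $1$. A standard finite-size / block-percolation argument (coupling the Voronoi model to a highly supercritical $1$-dependent bond percolation on $\mathbb Z^2$ via crossings of overlapping rectangles) then yields an infinite white cluster almost surely, proving $p_c(2) \leq 1/2$.

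\medskip

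\textbf{Main obstacle.} The step I expect to be hardest is Step 2, the RSW estimates, because the usual RSW manipulations require an exact lattice symmetry that the Voronoi model lacks: cells near the boundary of a rectangle can spill across and break the duality in a non-trivial way. Overcoming this requires a careful conditioning on the Poisson point configuration, using its Palm formulae and the fact that boundary effects only involve a $O(\sqrt{\log n})$-thick strip, so they can be absorbed in the constants. Once this is done, Step 3 is essentially a translation of well-known machinery, and Step 1 is a direct duality plus Zhang argument.
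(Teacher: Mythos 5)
The paper does not prove Theorem~\ref{BoRi}; it is stated as a citation to \cite{bollobas2006critical}, so there is no in-paper proof to compare your sketch against. Your sketch does capture the architecture of the actual Bollob\'as--Riordan argument: self-duality plus a Zhang-type uniqueness argument for the lower bound $p_c(2)\geq 1/2$, box crossings at $p=1/2$, and a sharp-threshold argument for the upper bound.

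One inaccuracy is worth flagging, and it bears on where the bottleneck really lies. In Step 2 you claim a \emph{uniform} RSW estimate --- a lower bound $c(\rho)>0$ on the crossing probability of an $n\times\rho n$ rectangle at $p=1/2$, valid for all $n$. Bollob\'as and Riordan did not prove this. As the paper itself notes immediately after Theorem~\ref{RSWbevo}, \cite{bollobas2006critical} established only the weak RSW condition~(\ref{wRSW}), a $\limsup$ bound along a subsequence of scales; the uniform ($\liminf$) version is a later theorem of Tassion \cite{tassion2016crossing}. The $\limsup$ version is nevertheless enough to run Step 3, since the sharp-threshold argument only needs infinitely many scales at which the crossing probability is bounded away from $0$ and $1$. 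If you insist on the uniform bound, you are implicitly assuming the substantially harder Tassion result; if you content yourself with the $\limsup$ version, the rest of your sketch goes through. A second, smaller remark: the sharp-threshold step is not a routine application of Friedgut--Kalai. The two-stage randomness (Poisson tiling, then colouring) requires a discretisation and approximation argument before the discrete-cube influence theorems apply, and this is itself a technically substantial portion of the Bollob\'as--Riordan paper --- closer in weight to Step 2 than to ``a translation of well-known machinery''.
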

Then V. Tassion proved that at criticity, planar Voronoi percolation $\sigma_0$ satisfies a Russo-Seymour-Welsh type theorem:
\begin{thm}\cite[Theorem 3]{tassion2016crossing}
	\label{RSWbevo}
	If $p=p_c(2)=1/2$, the planar Voronoi percolation satisfies condition~(\ref{sRSW}) (strong RSW).
	Morover, there exists $C,\alpha>0$ such that 
	$$ \forall R\geq 1, \ \PP[\Cross_0( A_R)]\leq \frac{C}{R^\alpha}.$$
	\end{thm}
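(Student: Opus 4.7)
The plan is to follow Tassion's framework from \cite{tassion2016crossing}: verify the structural hypotheses of his general RSW theorem for the planar Voronoi model at $p=1/2$, and then deduce the polynomial one-arm decay by a standard annulus gluing argument, being careful about the long-range dependencies of the tiling.

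First I would check the symmetry and positive association inputs. Translation and rotation invariance of Voronoi percolation are immediate from the invariance of the underlying Poisson process; color symmetry at $p=1/2$ is built in. The FKG inequality for crossing events is the subtle input: I would condition on the point configuration $X$, use that conditionally the cells are colored by i.i.d.\ Bernoulli$(1/2)$ variables (so Harris--FKG applies to monotone coloring events conditionally), and then integrate against $X$ noting that the crossing events of interest are also monotone with respect to adding Poisson points in a suitable sense (a color-flipping argument, as in \cite{bollobas2006critical}). The last input is a uniform positive lower bound on the probability of crossing a square at $p=1/2$, which is precisely Theorem~\ref{BoRi}. Feeding these into Tassion's abstract machinery yields condition~(\ref{sRSW}) for $\sigma_0$ at $p=1/2$.

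For the polynomial decay $\PP[\Cross_0(A_R)]\leq C R^{-\alpha}$, the plan is the classical argument: choose concentric annuli $A_{3^k, 2\cdot 3^k}$ for $k=1,\dots,\lfloor c\log R\rfloor$, which are pairwise disjoint and lie inside $A_R$. By (\ref{sRSW}) and FKG, there exists $c_0>0$ such that in each annulus there is probability at least $c_0$ of a \emph{black} circuit surrounding the origin (combine four crossings of rectangles of aspect ratio $3$ via FKG). Any such circuit blocks every white path from $S(0,1)$ to $S(0,R)$, so $\Cross_0(A_R)$ implies that no such circuit exists in any of the $\lfloor c\log R\rfloor$ annuli. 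If these events were independent, we would directly obtain $(1-c_0)^{c\log R}=R^{-\alpha}$.

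The main obstacle is that these events are \emph{not} independent, because a Poisson point placed far from an annulus may still influence the cells inside it if it happens to be near the boundary with no competing points closer. This is the standard quasi-independence issue for Voronoi: I would control it by introducing the ``good'' event that every cell meeting a given annulus is entirely contained in a slightly larger annulus. Since the Poisson process has exponentially small probability of leaving a ball of radius $r$ free of points (uniformly in location), the diameter of a cell has exponential tail, and one can enforce a buffer of logarithmic width between consecutive annuli, making the circuit events independent on a very-high-probability event. The resulting estimate gives $\PP[\Cross_0(A_R)]\leq C R^{-\alpha}$ for some explicit $\alpha>0$ depending on $c_0$ and the cell-diameter tail.
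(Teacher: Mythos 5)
The paper does not actually prove Theorem~\ref{RSWbevo}; it is imported verbatim as a citation of~\cite[Theorem 3]{tassion2016crossing}, so there is no in-paper argument to compare against. Your sketch is, in essence, a reconstruction of how that cited theorem is obtained: Tassion's abstract RSW criterion applied to the Voronoi measure, with inputs being the symmetries of the Poisson process, FKG, and the Bollob\`as--Riordan crossing bound at $p=1/2$ (Theorem~\ref{BoRi}), followed by the standard annulus argument together with quasi-independence of the tiling at well-separated scales to convert box crossings into polynomial one-arm decay. So you are taking the same route as the source.

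One point deserves more care. The step ``condition on $X$, apply Harris conditionally, then integrate'' does not by itself give unconditional FKG: after conditioning, one needs the random variables $\PP[E_1\mid X]$ and $\PP[E_2\mid X]$ to be positively correlated as functions of the configuration $X$, which is not automatic. The correct route, which your phrase ``monotone with respect to adding Poisson points in a suitable sense'' gestures at but does not pin down, is to represent the colored tessellation via two independent Poisson processes (black and white points) and to verify that the crossing events are jointly monotone in this representation; the FKG inequality for marked Poisson processes then applies. Beyond that caveat, your description of the geometrically spaced annuli, the use of black circuits as blocking events, and the exponential tail of cell diameters to enforce quasi-independence across the buffer regions is correct and is the standard treatment, consistent with Proposition~\ref{bevodec} used elsewhere in the paper.
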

Note that in \cite{bollobas2006critical} the weaker condition~(\ref{wRSW}) (weak RSW) was proved.
More recently, H. Duminil-Copin, A. Raoufi and V. Tassion proved the following result:
\begin{thm}\cite[Theorem 1]{duminil2019}\label{DRT}
For any $p\in [0,1]$, let $\sigma_p$ be the Voronoi 
percolation model defined above. For $p<p_c$, there exists $c>0$ and $R_0>0$, such that 
	$$\forall R\geq R_0,\  \PP[\Cross_0(A_R)]\leq \exp(-c R).$$ 
	In particular, $\sigma_p$ satisfies condition~(\ref{wannuli}) (decay of white one-arm).
\end{thm}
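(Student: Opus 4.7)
The plan is to establish sharpness of the phase transition for Voronoi percolation by the OSSS-randomized-algorithm strategy pioneered in the Bernoulli setting by Duminil-Copin, Raoufi, and Tassion, and then to read off the exponential decay of one-arms as an automatic consequence. Concretely, for fixed $R$ write $\theta_R(p) := \PP_p[\Cross_0(A_R)]$, the probability that the origin is connected to the sphere of radius $R$ inside the white region. One wants to show that on $[0,p_c)$ the function $p\mapsto \theta_R(p)$ decays exponentially in $R$, uniformly on compact subintervals. The last sentence of the theorem is then immediate: $\Cross_0(A_R)$ is exactly a white one-arm from $S(0,1)$ to $S(0,R)$, so $\PP[\Cross_0(A_R)]\leq e^{-cR}$ trivially dominates the polynomial bound required by condition~(\ref{wannuli}).

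The heart of the argument is to produce a differential inequality of the form
$$\theta_R'(p) \;\geq\; \frac{c\,R}{\Sigma_R(p)}\,\theta_R(p)\bigl(1-\theta_R(p)\bigr),$$
where $\Sigma_R(p) = \sum_{r\leq R}\theta_r(p)$. To obtain it I would first encode the Voronoi colouring as a function of two independent random inputs: the Poisson point process $X$ and the i.i.d. Bernoulli$(p)$ colours attached to its points. Following the DRT approach, I would discretize the Poisson process at a fine enough mesh so that with high probability every cell of the mesh carries at most one Poisson point, and then apply the OSSS inequality to a randomized decision tree revealing the mesh-cells along a radius chosen at random in $\{1,\dots,R\}$. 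The revealment of such an algorithm at distance $r$ is bounded by $\theta_r(p)$ (one can only reveal a cell far from the origin if a white path already connects $0$ to that distance), which gives the desired control of the variance in terms of $\Sigma_R$. Combining with Russo's formula $\theta_R'(p)=\sum \mathrm{Inf}_i(\theta_R)$ then yields the differential inequality; integrating it as in Duminil-Copin--Raoufi--Tassion yields either $\theta_R(p)\to 0$ exponentially fast, or $\theta_R(p)$ stays bounded away from zero, the latter forcing the existence of an infinite white cluster and hence $p\geq p_c$.

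The main obstacle, compared to the classical Bernoulli site/bond setting, is the two-layer randomness: the configuration is a functional of both the Poisson process $X$ and the colouring, and OSSS is a statement about product probability spaces. One has to choose an appropriate discretization and prove that the errors introduced by replacing $X$ by a Bernoulli field on a mesh vanish in the limit, while still keeping a useful bound on the revealment in terms of $\theta_r$. A secondary technical point is to check that the monotonicity of $\theta_R$ in $p$ (needed to make sense of $\theta_R'$ and to integrate the differential inequality) survives the colouring of boundaries between different-coloured cells in white, and to verify that for $p<p_c$ the resulting exponential rate $c=c(p)$ is strictly positive and locally uniform in $p$, which is what the statement actually claims.
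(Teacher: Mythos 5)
This theorem is not proved in the paper: it is stated as a citation to Duminil-Copin, Raoufi, and Tassion, and the paper relies on it as a black box to verify condition~(\ref{wannuli}) for subcritical Voronoi percolation. Your sketch correctly reconstructs the OSSS/randomized-algorithm strategy of that reference, including the two-layer discretization of the Poisson process and the Bernoulli colours, the differential inequality $\theta_R'\geq cR\,\theta_R(1-\theta_R)/\Sigma_R$, and the Menshikov-type integration argument, so your approach matches the source the paper cites rather than any argument internal to the paper.
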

For $d=2$, it was already proved by~\cite[Theorem 1.2]{bollobas2006critical}.

\paragraph{Voronoi FPP. }
We will see that these results together with our general Corollary~\ref{positif} imply our second main application:
\begin{thm}\label{bevo}
	For any integer $d\geq 2$ and $p\in [0,1]$, let $\sigma_p$ be  the Voronoi 
	percolation model defined above. Then,
	\begin{enumerate}
\item 	 the associated time constant $\mu_p$ defined by (\ref{convergences})  is well defined. 
\item The following holds:
	\begin{eqnarray*}
	p<p_c(d)&\Rightarrow&	 \mu_p >0 \\
	\text{and } \mu_p >0 &\Rightarrow&	p\leq p_c(d).
	\end{eqnarray*}
	\item 
	For $d=2$, $$ \mu_p >0 \Leftrightarrow p< \frac{1}2.$$
\item 	Corollary~\ref{ballshapeBF} (ball shape theorem) applies, and the convex $K$ is a an Euclidean ball.
\end{enumerate}
\end{thm}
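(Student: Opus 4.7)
The plan is to verify for each $p$ the hypotheses of the general results of the previous section. For the existence of $\mu_p$ (assertion 1), the colouring $\sigma_p$ is measurable, bounded (so condition~(\ref{momentS}) holds by Remark~\ref{remM}), and ergodic under translations (the intensity-one Poisson process is ergodic, and the independent colouring preserves ergodicity). Corollary~\ref{fixeddirS} then produces $\mu_p$. The ball shape assertion 4 will follow directly from Corollary~\ref{ballshapeBF}: the Lipschitz property (\ref{lipT}) is automatic for colourings (Remark~\ref{remL}), and the isotropy (\ref{iso}) of the Poisson-Voronoi model forces the limit shape $K$ to be a Euclidean ball.

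For the positivity $p<p_c(d) \Rightarrow \mu_p>0$ in assertion 2, I would apply Corollary~\ref{positif}. The polynomial decay of white one-arms (\ref{wannuli}) is in fact exponential, by Theorem~\ref{DRT}, and since Voronoi cells are convex polytopes the geometric regularity (\ref{blackreg}) holds, so Lemma~\ref{3a3b} upgrades this to the decay of instant one-arms (\ref{annuliT}). The hard part is the quasi independence (\ref{AI}). Given two sets $A, B \subset \RR^d$ of diameter at most $Q^{1+\alpha}$ and at mutual distance at least $Q$, the aim is to show that the colouring on $A$ (resp. $B$) is, with probability at least $1-\exp(-cQ^d)$, a function only of the Poisson points lying in the $Q/3$-neighborhood of $A$ (resp. $B$). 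The stabilization radius of the Voronoi tessellation over a set of diameter $D$ exceeds $r$ only if some ball of radius $r$ in a suitable fattening is empty of Poisson points; a union bound gives a failure probability of order $D^d \exp(-c r^d)$. Setting $r=Q/3$ and $D \leq Q^{1+\alpha}$, this is at most $\exp(-c'Q^d)$ for large $Q$. On the complementary event the two local colourings depend only on disjoint restrictions of the Poisson process, which are independent by the independent-increment property, and independent colouring preserves this; so (\ref{AI}) holds with any fixed $\beta<d$.

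For the converse $\mu_p>0 \Rightarrow p\leq p_c(d)$, I would argue by contrapositive: if $p>p_c(d)$ there is almost surely an infinite white component, and a standard density argument shows that $\PP[\Cross_0(A_{R,2R})]$ stays bounded away from zero (indeed tends to $1$) as $R\to\infty$, so (\ref{vannuli}) holds. Combined with ergodicity, isotropy and the Lipschitz property, Corollary~\ref{muzero} delivers $\mu_p=0$. For assertion 3 in the plane, Theorem~\ref{BoRi} gives $p_c(2)=1/2$. At criticality, Theorem~\ref{RSWbevo} produces the strong RSW condition (\ref{sRSW}), hence (\ref{wRSW}), so Proposition~\ref{muzero2} gives $\mu_{1/2}=0$; together with the previous two steps this yields $\mu_p>0 \Leftrightarrow p<1/2$.

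The chief obstacle is the verification of (\ref{AI}). Although morally obvious for Voronoi colourings, it requires the sharp Poisson empty-space estimate above together with careful bookkeeping to translate the spatial localisation of cells into a measurability statement about decreasing events of the field, matching the formulation of (\ref{ind}). Everything else is a translation of recent percolation results (Theorems~\ref{BoRi}, \ref{RSWbevo}, \ref{DRT}) into the FPP language via the general machinery already established.
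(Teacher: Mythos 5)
Your proposal is correct and follows essentially the same route as the paper: verifying ergodicity (which the paper makes formal in Proposition~\ref{ergovor}), deducing~(\ref{annuliT}) from Theorem~\ref{DRT} via condition~(\ref{blackreg}) and Lemma~\ref{3a3b}, establishing~(\ref{AI}) via the Poisson empty-space/stabilization-radius estimate (the paper's Lemma~\ref{loin} and Proposition~\ref{bevodec}), applying Corollary~\ref{positif} for positivity, and handling the supercritical and planar critical cases via Corollaries~\ref{muzero} and Proposition~\ref{muzero2} together with Theorems~\ref{BoRi} and~\ref{RSWbevo}. Your treatment of~(\ref{AI}) is in fact a bit more careful than the paper's, since you explicitly account for diameters up to $Q^{1+\alpha}$ with a union bound over order $Q^{\alpha d}$ balls, whereas the paper's Proposition~\ref{bevodec} is stated only for diameters comparable to the separation.
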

\begin{rem}
	There exist other models of FPP for Voronoi tesselations, see~\cite{howard1997euclidean} and~\cite{pimentel2006time}. The first one always gives positive times, and the second one is associated with the graph given by the tesselation. 
	\end{rem}

\paragraph{One-arm exponent.}

Corollary~\ref{ceg} has the following corollary:
\begin{cor}\label{ceV}
Let $\sigma_{1/2}: \R^2 \to \{0,1\}$ be  the planar critical Voronoi 
percolation model defined above. Then,
	$$ \forall \eta>0, \ \limsup_{R\to \infty} R^{1+\eta}\PP[\Cross_0 (A_R)] >0.$$
\end{cor}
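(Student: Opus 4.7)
The plan is to recognize Corollary~\ref{ceV} as a direct instance of the general Corollary~\ref{ceg} applied to the planar critical Voronoi colouring $\sigma_{1/2}: \R^2 \to \{0,1\}$ with ambient dimension $d=2$. I would therefore verify the hypotheses of that corollary for $\sigma_{1/2}$ and simply read off the conclusion, which is exactly $\limsup_R R^{d-1+\eta}\PP[\Cross_0(A_R)]>0$ with $d=2$.

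The three ``soft'' hypotheses are routine. Condition~(\ref{mesu}) (measurability) holds almost surely since each Voronoi cell is Borel and the colour of each cell is a measurable function of the underlying i.i.d.\ Bernoulli$(1/2)$ marks. Condition~(\ref{momentS}) (finite moment) is automatic by Remark~\ref{remM}, since $\sigma_{1/2}$ is bounded by $1$. Condition~(\ref{hom}) (ergodicity under $\R^2$ translations) follows from the translation invariance and ergodicity of the underlying Poisson point process combined with the i.i.d.\ colour assignment, which together make $\sigma_{1/2}$ a measurable ergodic functional of a translation ergodic random environment.

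The only substantial hypothesis is condition~(\ref{AI}) (exponential quasi-independence). The sole obstruction to strict finite-range dependence is that a Voronoi cell may extend arbitrarily far from its Poisson seed, creating long-range correlations between the colourings on disjoint subsets. To control this, I would bound the bad event ``some cell touching a ball of diameter $Q^{1+\alpha}$ has its seed at distance $>Q$'' by the event that some Poisson point has no other Poisson point within distance $Q$; by standard Poisson empty-ball estimates this has probability $\lesssim Q^{2(1+\alpha)}\exp(-cQ^2)$. On the complement, the restrictions of $\sigma_{1/2}$ to two sets at distance $\geq Q$ depend on disjoint portions of the Poisson process and disjoint colour marks, hence are independent. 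This yields (AI) with any $\beta<2$. This same verification is already required in the proof of Theorem~\ref{bevo}, so no new work is needed in the context of the paper.

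Finally, $\mu_{1/2}=0$ is immediate from Theorem~\ref{bevo} part 3, which states $\mu_p>0\Leftrightarrow p<1/2$, and $1/2 \not< 1/2$. All the hypotheses of Corollary~\ref{ceg} are thus in place, and applying it with $d=2$ gives the desired estimate for every $\eta>0$. The only non-trivial ingredient is the exponential quasi-independence, and this is inherited from the treatment of Voronoi FPP already carried out for Theorem~\ref{bevo}; the remainder of the argument is a direct appeal to the general corollary.
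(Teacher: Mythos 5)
Your proof is correct and follows essentially the same route the paper takes (implicitly, since the paper gives no separate proof of this corollary): one verifies the hypotheses of Corollary~\ref{ceg} for $\sigma_{1/2}$ with $d=2$, notes $\mu_{1/2}=0$ via Theorem~\ref{bevo}, and reads off the conclusion. Two very minor points of precision: the ergodicity claim you wave at is not obtained as an automatic ``functional of an ergodic environment'' consequence in the paper---the authors prove it separately (Proposition~\ref{ergovor}), explicitly noting they could not find a reference---so you should cite that proposition rather than call it routine; and the ``bad event'' you use for quasi-independence should be phrased as \emph{some spatial point of a $Q^{1+\alpha}$-neighbourhood of the observation window having no Poisson seed within distance $Q$} (rather than ``some Poisson point with no other Poisson point nearby''), which is exactly the content of Lemma~\ref{loin} and Proposition~\ref{bevodec} that the paper invokes.
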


\subsection{Boolean FPP.}\label{boole}

\paragraph{Setting and former results.}
One classical continuous FPP model is the so-called \emph{Boolean} or \emph{continous percolation}, where Euclidean balls of random radii centered at points of a random Poisson process of intensity $\lambda$ on $\RR^d$ are painted in white, and the rest of the space in black (this is the inverse of the classical colours; this colouring fits our general model above). 
It provides a random colouring
\begin{equation}\label{sil}
\sigma_{\nu, \lambda}: \RR^d \to \{0,1\},
\end{equation}
where $\nu$ is the radius law. It is known~\cite[Proposition 7.3]{meester1996}  that $\nu $ satisfies the condition~(\ref{ww}) below if and only if $(\sigma_{\nu, \lambda})_{\lambda}$ is a non-trivial family of Boolean percolations, which means  that in the case this condition is not fullfilled, for any $\lambda>0$, almost surely there the union of balls covers $\R^d$.
 
In 2017, J.-B. Gou\'er\'e and M. Th\'eret proved  that Theorem~\ref{fixedFPP} (existence of the time constant) holds for Boolean percolation, and more importantly, that Theorem~\ref{tp} (phase transition for the Bernoulli FPP) has an analogue in the Boolean setting. For this, define for a given radius law $\nu$:
\begin{equation}\label{boocrit}
\hat \lambda_c(\nu, d):=\sup\left\{
\lambda\geq 0,  \PP \left[\Cross_0(A_{R,2R})\right] \to_{R\to \infty} 0\right\},
\end{equation}
where $\Cross_0(A_{R,2R})$ denotes the probability that there exists a white continuous path from $S(0,R)$ to $S(0,2R)$, see~(\ref{croc}).
Besides, consider three conditions for the radius law $\nu$.
\begin{enumerate}[resume=condi]
	\item \label{ww} (optimal moment condition)  $\mathbb E_\nu (r^d)<\infty$.
	\item \label{winu}(weak moment condition) $$\int_0^\infty \left(\PP_\nu([r,+\infty[)\right)^{1/d} dr < \infty.$$
	\item \label{exptail} (exponential small tail) There exists $c>0$, such that 
	$$\forall r\geq 1, \ \PP_\nu( [r, +\infty[) \leq \exp(-cr). $$
	\end{enumerate}
Another slightly more natural threshold is defined by
\begin{equation}\label{boocrit2}
\lambda_c(\nu, d):=\sup\{\lambda\geq 0, \text{ there is no infinite white component a.s.}\}.
\end{equation} It is easy to see that $\lambda_c \leq \hat \lambda_c$. 
In~\cite[Theorem 2.1]{gouere2008} J.-B. Gou\'er\'e proved that $\lambda_c>0$ if and only if condition~(\ref{ww}) is satisfied. Under a little stronger condition for $\nu$, it was proved in~\cite{duminil2018subcritical} 
that $\lambda_c= \hat \lambda_c.$ In dimension 2, \cite{ahlberg2018}, this was previously obtained with the optimal condition~(\ref{ww}). 
In~\cite{gouere2017positivity} the authors proved the following:
\begin{thm}\label{gouthe}\cite[Theorem 1.2]{gouere2017positivity} Let $\lambda>0$, $\nu$ be a radius law satisfying condition~(\ref{winu}) and $\sigma_{\nu,\lambda}: \RR^d \to \{0,1\}$ the associate Boolean percolation colouring. Then,  
	$$ \mu_{\nu, \lambda} \text{ is a norm } \Leftrightarrow \lambda< \hat\lambda_c,$$
	where $\mu_{\nu, \lambda}$ is the semi-norm associated to $\sigma_{\nu, \lambda}$ defined by Corollary~\ref{fixeddirS}.
\end{thm}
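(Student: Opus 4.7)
The statement splits naturally into two implications, and the easy one is $\lambda \geq \hat\lambda_c \Rightarrow \mu_{\nu,\lambda} = 0$. Here I would apply Theorem~\ref{muzeroT}. By definition of $\hat\lambda_c$ in~(\ref{boocrit}), the hypothesis $\lambda \geq \hat\lambda_c$ gives $\limsup_R \PP[\Cross_0(A_{R,2R})] > 0$, and a white continuous path across $A_{R,2R}$ has zero $\sigma$-integral, hence $T(A_{R,2R}) = 0$ on that event, which is exactly condition~(\ref{vannuliT}). Ergodicity, annular measurability and isotropy follow from the Poisson construction with i.i.d.\ isotropic marks; the Lipschitz bound $T(x,y) \leq \|x - y\|$ is automatic since $\sigma_{\nu,\lambda} \leq 1$.

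The harder implication is $\lambda < \hat\lambda_c \Rightarrow \mu_{\nu,\lambda}$ is a norm. My first attempt would be Corollary~\ref{positif}: ergodicity, measurability and the finite-moment condition~(\ref{momentS}) (via~(\ref{winu})) are immediate, so only two non-trivial hypotheses remain --- the decay of white one-arms~(\ref{wannuli}), and the exponential quasi-independence~(\ref{AI}). For~(\ref{wannuli}), one knows by definition of $\hat\lambda_c$ only that $\PP[\Cross_0(A_{R,2R})] \to 0$; upgrading this to polynomial decay with degree strictly larger than $d-1$ requires a sharp-threshold argument for subcritical Boolean percolation (\`a la Menshikov / Aizenman--Barsky), which, combined with a union bound over geometrically growing scales, yields the required decay under~(\ref{winu}).

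The main obstacle is the quasi-independence condition~(\ref{AI}). A single unusually large ball can link two far apart regions and destroy exponential decorrelation, so~(\ref{AI}) follows directly only when $\nu$ has exponential tails~(\ref{exptail}) --- which is precisely the regime handled by the authors' Corollary~\ref{bouboule}. To reach the optimal moment hypothesis~(\ref{winu}), I would change strategy and introduce a renormalization at a scale $L$: declare a box \emph{bad} if it contains the center of a ball of radius exceeding, say, $L^{1/2}$, and bound the density of bad boxes via the moment assumption. On the complement the dependence structure is effectively finite-range, so a BK-type inequality specific to Poisson point processes (decomposing disjoint path events through disjoint restrictions of the Poisson measure), together with a Kesten-type subadditive argument on the crossing time, yields $\EE T(0, nx) \geq c n \|x\|$ and hence $\mu_{\nu,\lambda} > 0$. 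The difficulty, and what places the full statement beyond the immediate reach of Theorem~\ref{positifT}, lies in coupling this subadditive estimate with a Peierls-type correction that controls how much the geodesic can save by routing through the rare but very large bad boxes.
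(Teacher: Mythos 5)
This statement is cited in the paper (as Theorem~1.2 of Gou\'er\'e--Th\'eret \cite{gouere2017positivity}) and is \emph{not} reproved there: the paper's own machinery only recovers the strictly weaker Corollary~\ref{bouboule}, which needs the exponential-tail hypothesis~(\ref{exptail}), and the authors remark explicitly that their method ``does not reach the simplicity of~\cite{gouere2017positivity}, which does not need Theorem~\ref{boc}.'' You have correctly diagnosed why Corollary~\ref{positif} cannot give the full statement --- condition~(\ref{AI}) fails under the optimal moment assumption~(\ref{winu}) because a single very large ball spoils exponential decorrelation --- but the renormalization/Peierls outline you give in exchange is not a proof. The two load-bearing steps are left as assertions: you claim without argument that $\PP[\Cross_0(A_{R,2R})]\to 0$ can be upgraded to polynomial one-arm decay with exponent $>d-1$ under~(\ref{winu}) alone (the only decay result quoted in the paper, Theorem~\ref{boc}, requires~(\ref{exptail})); and you describe ``coupling the subadditive estimate with a Peierls-type correction'' as ``the difficulty'' --- which is precisely the content one would need to supply. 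Gou\'er\'e and Th\'eret's actual argument is structured differently: it does not pass through one-arm decay or through anything like the annulus induction of Proposition~\ref{keyprop}, and instead exploits the Poisson structure directly together with a renormalized disjoint-occurrence (BK-type) estimate to lower-bound passage times. Your sketch converges toward that flavour of argument but does not carry it out.

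There is also a gap in your easy direction at the critical point. From the definition~(\ref{boocrit}) of $\hat\lambda_c$ as a supremum, $\lambda>\hat\lambda_c$ certainly forces $\limsup_{R\to\infty}\PP[\Cross_0(A_{R,2R})]>0$, but for $\lambda=\hat\lambda_c$ nothing in the definition excludes $\PP[\Cross_0(A_{R,2R})]\to 0$. Ruling this out requires knowing that the set $\{\lambda>0:\PP[\Cross_0(A_{R,2R})]\to 0\}$ is open, which is Theorem~A.1 of~\cite{gouere2017positivity} and is exactly what the paper invokes, in the proof of Corollary~\ref{bouboule}, to cover $\lambda=\lambda_c$. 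Your proposal treats $\lambda\geq\hat\lambda_c\Rightarrow\limsup>0$ as immediate from the definition of the supremum, which it is not.
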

Corollary~\ref{positif}
can reprove Theorem~\ref{gouthe} in the restrictive case where the law for the radii satisfies condition~(\ref{exptail}), which ensures condition~(\ref{AI}) (quasi independence). 
\begin{cor}\label{bouboule} For any density $\lambda>0$ and radius mesure $\nu$ satisfying condition~(\ref{exptail})
	Then, $$  \mu_{\nu,\lambda} \text{ is a norm } \Leftrightarrow \lambda < \lambda_c.$$
	\end{cor}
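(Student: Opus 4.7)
The result is an equivalence, handled in two directions.

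For $\mu_{\nu,\lambda}$ is a norm $\Rightarrow \lambda < \lambda_c$, I argue contrapositively. Boolean percolation trivially satisfies (\ref{mesu}), (\ref{hom}), (\ref{iso}), (\ref{momentS}) (the colouring is bounded) and (\ref{lipT}) ($T$ is $1$-Lipschitz by Remark~\ref{remL}). For $\lambda > \lambda_c$, definition (\ref{boocrit2}) produces an almost sure infinite white component, which forces $\liminf_R \PP[\Cross_0(A_{R,2R})] > 0$, i.e.\ condition (\ref{vannuli}); Corollary~\ref{muzero} then gives $\mu_{\nu,\lambda} = 0$. The critical case $\lambda = \lambda_c$ is handled via the identity $\lambda_c = \hat\lambda_c$ under (\ref{exptail}) proved in~\cite{duminil2018subcritical}, which together with definition (\ref{boocrit}) of $\hat\lambda_c$ preserves (\ref{vannuli}) and hence $\mu_{\nu,\lambda} = 0$.

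For $\lambda < \lambda_c \Rightarrow \mu_{\nu,\lambda}$ is a norm, I apply Corollary~\ref{positif}. Conditions (\ref{mesu}), (\ref{momentS}) and (\ref{hom}) are immediate. For (\ref{wannuli}), the sharpness of the subcritical phase of Boolean percolation under (\ref{ww}) proved in~\cite{duminil2018subcritical} in all dimensions and in~\cite{ahlberg2018} in dimension $2$ yields exponential decay of crossing probabilities for $\lambda < \lambda_c$; since (\ref{exptail}) implies (\ref{ww}), this far exceeds the polynomial bound required. The geometric regularity (\ref{blackreg}) is clear for Boolean percolation (the union of balls is a piecewise smooth domain), so Corollary~\ref{equiarm} upgrades (\ref{wannuli}) to the instant one-arm decay (\ref{annuliT}) actually used by Corollary~\ref{positif}.

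The main obstacle is verifying (\ref{AI}), and this is exactly where the exponential tail assumption (\ref{exptail}) enters. Fix $\alpha > 0$ and let $A, B \subset \R^d$ satisfy $\dist(A,B) > Q$ and $\Diam A, \Diam B \leq Q^{1+\alpha}$. Consider the good event
\begin{equation*}
G_Q = \bigl\{ \text{no Poisson point within distance } Q/5 \text{ of } A \cup B \text{ carries a radius } \geq Q/5 \bigr\}.
\end{equation*}
By (\ref{exptail}) each such Poisson point exceeds the radius threshold with probability at most $\exp(-cQ/5)$, and the expected number of Poisson points in the relevant region is $O(Q^{d(1+\alpha)})$, so
\begin{equation*}
\PP[G_Q^c] \leq C Q^{d(1+\alpha)} \exp(-cQ/5) \leq \exp(-Q^{1/2})
\end{equation*}
for $Q$ large enough, with $\beta = 1/2$ fixed once and for all (independently of $\alpha$). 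On $G_Q$, the colourings in the $Q/5$-neighbourhoods of $A$ and $B$ are measurable functions of the Poisson data on the disjoint regions $A + B(0, 2Q/5)$ and $B + B(0, 2Q/5)$ (disjoint since $\dist(A,B) > Q > 4Q/5$), hence conditionally independent. Invoking the remark following (\ref{AI}) that only finite intersections of events $\{T(A_R) < \delta\}$ need to factorize, together with the fact that such events are determined up to probability $\PP[G_Q^c]$ by the colouring in the local neighbourhood, a standard coupling yields
\begin{equation*}
|\PP[E_A \cap E_B] - \PP[E_A] \PP[E_B]| \leq 2 \PP[G_Q^c] \leq \exp(-Q^{1/2}),
\end{equation*}
which is condition (\ref{AI}) and closes the argument.
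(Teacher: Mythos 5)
Your overall architecture matches the paper's (verify the conditions for Corollary~\ref{positif} in the subcritical phase; use Corollary~\ref{muzero} in the supercritical phase), but two steps have genuine gaps.

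\textbf{The good event $G_Q$ is too weak.} You define $G_Q$ as the event that no Poisson point \emph{within} distance $Q/5$ of $A\cup B$ carries radius $\geq Q/5$, and then claim that on $G_Q$ the colouring near $A$ is a function of the Poisson data on $A+B(0,2Q/5)$. This is false: a Poisson point at distance, say, $3Q/5$ from $A$ with radius $4Q/5$ lies outside the constrained region, has an unconstrained radius, and yet its ball reaches $A$. The Boolean colouring at $z\in A$ depends on \emph{all} Poisson points $x$ with $r_x>\|x-z\|$, so to localise it you must control the radii of Poisson points at \emph{every} distance from $A$, not only the nearby ones. The fix is to take $G_Q'$ to be the event that no Poisson point $x$ at distance $>Q/3$ from $A$ (or from $B$) has $r_x\geq\dist(x,A)$ (resp.\ $\dist(x,B)$); integrating the exponential tail $\PP_\nu([s,\infty))\leq e^{-cs}$ over shells around $A$ gives $\PP[(G_Q')^{\compl}]\leq C\,Q^{(1+\alpha)(d-1)}e^{-cQ/3}$, still eventually below $e^{-Q^{1/2}}$ for each $\alpha$. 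The paper is also terse here (it appeals to Lemma~\ref{loin}, which is stated for Voronoi cells), but your explicit good event as written does not yield the claimed conditional independence. You also cite Corollary~\ref{equiarm} to pass from (\ref{wannuli}) to (\ref{annuliT}); that corollary is stated for Gaussian fields, and the right reference is the general Lemma~\ref{3a3b} together with condition~(\ref{blackreg}), which the paper verifies directly for Boolean percolation (the black region is the complement of a locally finite union of balls).

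\textbf{The critical case $\lambda=\lambda_c$ is not closed.} You invoke $\lambda_c=\hat\lambda_c$ from~\cite{duminil2018subcritical} and then assert that ``definition (\ref{boocrit}) preserves (\ref{vannuli})'' at the critical density. But $\hat\lambda_c$ is a supremum of densities for which $\PP[\Cross_0(A_{R,2R})]\to 0$, and a supremum tells you nothing about the boundary value itself: condition (\ref{vannuli}) could fail at $\lambda_c$ as far as this definition is concerned. The paper closes this exactly by citing Theorem~A.1 of~\cite{gouere2017positivity}, which asserts that $\{\lambda : \text{(\ref{vannuli}) fails}\}$ is \emph{open}; since $\lambda_c$ is the supremum of this set, openness forces (\ref{vannuli}) to hold at $\lambda_c$, and hence $\mu_{\nu,\lambda_c}=0$ by Theorem~\ref{muzeroT}. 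Without some such openness or continuity argument, your treatment of the critical case is a bare assertion.
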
 
 As for Voronoi percolation FPP, it uses a recent result by H. Duminil-Copin, A. Raoufi and V. Tassion:
 \begin{thm}\cite[Theorem 2]{duminil2018subcritical}\label{boc}
 	Assume $\nu$ satisfies condition~(\ref{exptail}).
Then for any $\lambda<\lambda_c$, there exists $c>0$ and $R_0>0$, such that 
 	$$\forall R\geq R_0, \  \PP[\Cross_0(A_R)]\leq \exp(-cR).$$
 	In particular, $\sigma_{\nu, \lambda}$ satisfies condition~(\ref{wannuli}) (decay of white one-arms). 
 	\end{thm}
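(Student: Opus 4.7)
The plan is to establish this exponential decay via the sharpness-of-phase-transition methodology developed in \cite{duminil2018subcritical}, which rests on an OSSS-type randomized algorithm inequality adapted to Poisson point processes. Set $\theta_R(\lambda) := \PP_\lambda[\Cross_0(A_R)]$; it is nonincreasing in $\lambda$ because adding balls only shrinks the white region. I would realize the model as a Poisson point process on $\RR^d\times\RR_+$ (positions paired with radii drawn from $\nu$) carrying all $\sigma_{\nu,\lambda}$ simultaneously. Using condition~(\ref{exptail}) to truncate radii at a threshold $\rho_R = C\log R$ loses only an error of order $\exp(-c'R)$, which will be absorbed into the final bound.

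The heart of the argument is a differential inequality of the form $\theta_R'(\lambda) \leq -c R\,\theta_R(\lambda)(1-\theta_R(\lambda))$. To obtain it, for each $r\in[R,2R]$ I would build a randomized exploration algorithm $\mathcal{A}_r$ deciding $\Cross_0(A_r)$ by progressively querying points of the Poisson process, starting from $S(0,1)$ and advancing outward through the white region until either $S(0,r)$ is reached or the explored white neighbourhood is exhausted. After averaging $r$ uniformly over $[R,2R]$, the revealment $\delta_{(x,\rho)}(\mathcal{A})$ at each atom $(x,\rho)$ should be bounded by $C(1+\rho)^d/R$, the crucial factor $1/R$ coming from the randomization over $r$. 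The Poisson OSSS inequality then yields
\begin{equation*}
\theta_R(\lambda)(1-\theta_R(\lambda)) \leq \int_{\RR^d\times\RR_+} \delta_{(x,\rho)}(\mathcal{A})\,\mathrm{Inf}_{(x,\rho)}(\theta_R)\,dx\,d\nu(\rho),
\end{equation*}
while the Margulis--Russo formula gives $-\partial_\lambda\theta_R = \int \mathrm{Inf}_{(x,\rho)}\,dx\,d\nu(\rho)$. Combining the two and bounding the integrated revealment by $C\,\EE_\nu[(1+\rho)^{d+1}]/R$ produces the stated differential inequality, the constant being finite precisely because of condition~(\ref{exptail}).

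The final step is the standard integration argument: define $\widetilde\lambda_c$ as the supremum of $\lambda$ for which $\sum_R \theta_R(\lambda)$ diverges, check via a summability/coupling argument that $\widetilde\lambda_c = \lambda_c$, and integrate the differential inequality on $[\lambda,\widetilde\lambda_c]$ for any fixed $\lambda<\lambda_c$ to conclude $\theta_R(\lambda)\leq \exp(-cR)$ for some $c=c(\lambda)>0$. Exponential decay then trivially implies the polynomial decay required by condition~(\ref{wannuli}). The principal obstacle is the construction and analysis of the exploration algorithm in the continuous setting with unbounded radii: condition~(\ref{exptail}) is indispensable here, since a merely polynomial tail on $\nu$ would permit macroscopic balls with non-negligible probability, destroying the $1/R$ scaling of the averaged revealment and with it the whole scheme.
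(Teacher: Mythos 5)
The paper does not prove this statement: it is quoted verbatim as a citation of Duminil-Copin, Raoufi, and Tassion \cite{duminil2018subcritical}, and the present text contains no argument of its own for you to be compared against. Your sketch is a faithful reconstruction of the strategy of that reference: encode the configuration as a marked Poisson point process, build a randomized exploration algorithm deciding the annulus crossing, randomize the target radius over an interval to obtain an averaged revealment of order $1/R$, apply a Poissonian OSSS inequality together with the Margulis--Russo formula to derive a differential inequality in $\lambda$, and integrate it Aizenman--Barsky-style to bootstrap subcritical crossing probabilities into exponential decay. One inaccuracy worth flagging: you assert that condition~(\ref{exptail}) is indispensable for the $1/R$ revealment scaling, warning that polynomial tails would permit macroscopic balls and break the argument. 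In fact the cited reference establishes exponential decay of the one-arm under a finite polynomial moment hypothesis on the radius law, handling rare large balls by truncation and union bounds rather than requiring exponential tails. In the present paper, condition~(\ref{exptail}) is retained in the statement only because it is the standing hypothesis of Corollary~\ref{bouboule}; its genuine role there is to guarantee the quasi-independence condition~(\ref{AI}) via Lemma~\ref{loin}, not to make the OSSS machinery function.
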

 Clearly, our method, even with the further condition~(\ref{exptail}), does not reach the simplicity
  of~\cite{gouere2017positivity}, which does not need Theorem~\ref{boc}. However it illustrates again the generality of the passage
  	from percolation to FPP, as long as we have the good decay of the one-arm and strong decorrelation. However, the following corollary seems new. 
 
 \paragraph{Critical exponent}
 
 	\begin{cor}\label{ceb}
 	Fix a radius mesure $\nu$ satisfying condition~(\ref{exptail}) (exponentially small tail), and let $\sigma_{\nu, \lambda_c}$ be the associated random critical Boolean colouring defined by~(\ref{sil}), where $ \lambda_c(\nu, d)>0$ denotes the critical density defined by~(\ref{boocrit2}) associated with $\nu$.  
 	Then, 
 	$$\forall \eta>0,\  \limsup_{R\to \infty} R^{d-1+\eta}\PP[\Cross_0 (A_R)] >0.$$
 \end{cor}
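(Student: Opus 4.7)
The plan is to deduce Corollary~\ref{ceb} directly from Corollary~\ref{ceg}, applied to $\sigma=\sigma_{\nu,\lambda_c}$. The conclusions match verbatim, so the task reduces to checking the four input conditions~(\ref{mesu}), (\ref{momentS}), (\ref{hom}), (\ref{AI}), and verifying that the time constant $\mu_{\nu,\lambda_c}$ vanishes. Measurability, finite moment (automatic since $\sigma\leq 1$) and ergodicity of the translations are standard for any colouring built from an independently-marked Poisson point process on $\R^d$.

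The technical core is the verification of condition~(\ref{AI}). I would decompose the marked Poisson process $\Xi$ associated with $\sigma_{\nu,\lambda_c}$ as $\Xi=\Xi_A\sqcup\Xi_B\sqcup\Xi_{AB}\sqcup\Xi_0$, according to whether the ball centred at each Poisson point intersects $A$ only, $B$ only, both $A$ and $B$, or neither; here $A,B\subset\R^d$ have diameters at most $S=Q^{1+\alpha}$ and $\dist(A,B)\geq Q$. Since $\sigma|_A$ depends only on $\Xi_A\cup\Xi_{AB}$ and $\sigma|_B$ only on $\Xi_B\cup\Xi_{AB}$, conditional on $\Xi_{AB}$ any events of the form appearing in~(\ref{ind}) factorise, and a standard decoupling bound yields
\[
\left|\PP[E_A\cap E_B]-\PP[E_A]\PP[E_B]\right|\leq 2\,\PP[\Xi_{AB}\neq\emptyset]\leq 2\,\EE|\Xi_{AB}|.
\]
To estimate $\EE|\Xi_{AB}|$, note that any point of $\Xi_{AB}$ must have radius at least $Q/2$ by the triangle inequality applied to $\dist(A,B)\geq Q$. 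The exponential tail~(\ref{exptail}) gives $\PP_\nu([Q/2,+\infty))\leq \exp(-cQ/2)$, and restricting the centres to a ball of radius $O(S+Q)=O(Q^{1+\alpha})$ around $A\cup B$ (outside of which the required radius is even larger, giving an even sharper bound) the Campbell formula yields $\EE|\Xi_{AB}|\leq C(\lambda)\,Q^{d(1+\alpha)}\exp(-cQ/2)$. For $Q$ large enough depending on $\alpha$ this is at most $\exp(-Q^{1/2})$, so~(\ref{AI}) holds with $\beta=1/2$.

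The vanishing of $\mu_{\nu,\lambda_c}$ follows from Corollary~\ref{bouboule} combined with the rotational invariance of the Boolean model. Indeed, Corollary~\ref{bouboule} asserts that $\mu_{\nu,\lambda}$ is a norm if and only if $\lambda<\lambda_c$, so at $\lambda=\lambda_c$ the semi-norm $\mu_{\nu,\lambda_c}$ is not a norm; since the underlying Poisson process is rotation-invariant, $\mu_{\nu,\lambda_c}$ is proportional to the Euclidean norm, and it can fail to be a norm only by vanishing identically. I expect the main obstacle to be the Poisson-process estimate above: although conceptually clean, it requires both the conditional independence structure and the exponential tail~(\ref{exptail}) to cooperate, and it is precisely this cooperation which fails under the weaker moment condition~(\ref{winu}) used in Theorem~\ref{gouthe}, explaining why this route to the one-arm bound is restricted to exponentially small radii.
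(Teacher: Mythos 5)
Your proposal is correct and follows the same overall route as the paper: both deduce the corollary from Corollary~\ref{ceg} after checking its hypotheses, with the input $\mu_{\nu,\lambda_c}=0$ coming from Corollary~\ref{bouboule}. The paper's own proof of Corollary~\ref{ceb} is three lines — it cites Corollary~\ref{bouboule} for $\mu_{\lambda_c}=0$, Remark~\ref{Xor} for the finite-moment condition, and then invokes Corollary~\ref{ceg} — and the paper delegates the verification of condition~(\ref{AI}) to the proof of Corollary~\ref{bouboule}, where it is dispatched with "By Lemma~\ref{loin} and the hypothesis on the exponential tail of the radii, condition~(\ref{AI}) holds." You make two things explicit that the paper leaves implicit. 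First, Lemma~\ref{loin} as stated concerns Voronoi cells, not Boolean balls, and your Campbell-formula estimate on $\EE|\Xi_{AB}|$ together with the observation that any shared ball must have radius at least $Q/2$ is exactly the right Boolean-specific adaptation; this is the correct way to read "Lemma~\ref{loin} and the exponential tail". Second, Corollary~\ref{bouboule} as \emph{stated} only says $\mu_{\nu,\lambda_c}$ is not a norm; your isotropy argument (a rotation-invariant semi-norm that is not a norm vanishes identically) cleanly closes that gap, whereas the paper instead proves $\mu_{\lambda_c}=0$ inside the proof of Corollary~\ref{bouboule} by citing an openness result of Gou\'er\'e--Th\'eret. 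Both devices are valid; yours is more self-contained, the paper's avoids invoking isotropy.
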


\subsection{Riemannian FPP}	\label{smooth}

\paragraph{Setting and former result. }
We follow the setting of~\cite{lagatta2010shape}. 
Denote by $\text{Sym}^{+}(d)$ the open cone of 
positive real symmetric matrices of size $d$. Then 
any continuous map
$$g: \RR^d \to \text{Sym}^{+}(d)$$
equips $\RR^d$ with a continuous Riemannian metric, 
as well as a distance $T: \RR^d \to \R_+$ defined by 
\begin{equation}\label{cmet}
\forall (x,y)\in (\R^d)^2, \ 
T(x,y) = \inf_{\gamma \text{ piecewise } C^1\atop
	x\to y} \Leng_g (\gamma),
\end{equation}
where, if $\gamma: [a,b] \to \RR^d$ is a $C^1$ path, 
$$ \Leng_g (\gamma) = \int_a^b \|\gamma'(s)\|_{g(\gamma(s))} ds.$$
We will need the following various assumptions:
\begin{enumerate}[resume=condi]
	\item (Ergodicity) \label{gerg} The random Riemannian metric $g$ is ergodic,that is its law is invariant under the orthogonal group of $\R^d$, and the invariant events are of probability one or 0.
	\item (Regularity)\label{greg} Almost surely, $g$ is continuous.
	\item (Finite range)\label{FR} There exists $Q>0$, such that the values of $g$ at any pair of points at distance at least $Q$ are independent.
	\item (Finite moment)
	\begin{enumerate}
		\item (strong)
	\label{sFM}  for any $v\in \RR^d$, for any $r\in \RR$, 
	$\mathbb E (e^{r\Lambda_0})$ is finite, where $\Lambda_0$ is the largest eigenvalue of $g(0).$  
	\item (weak)\label{wFM}
	for any $v\in \RR^d$, 
	$\mathbb E (\|v\|_{g(0)}) $ is finite.
	\end{enumerate}
\item (Increasing length) \label{IL} If the set of random metrics $g$ is equipped with a partial order, then $g\mapsto \Leng_g $ is increasing. 
\end{enumerate}

In~\cite{lagatta2010shape}, the authors proved the following:
\begin{thm}\cite[Theorem 2.5]{lagatta2010shape}\label{smoothmetricLW} Let $g: \RR^d \to \text{Sym}^+(d)$ be a random Riemannian metric  satisfying conditions~(\ref{gerg}) (ergodicity), (\ref{greg}) (regularity), ~(\ref{sFM}) (strong finite moment condition) and~(\ref{FR}) (finite range).
	Then, Theorem~\ref{fixeddirT} applies, and the pseudo-norm $\mu$ associated with the distance $T$ is a norm. 
\end{thm}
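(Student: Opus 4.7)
The plan is to recover this result as a direct application of our main Theorem~\ref{positifT}: I will verify each of its five hypotheses using the four assumptions (\ref{gerg}), (\ref{greg}), (\ref{sFM}), (\ref{FR}) on $g$, and conclude that $\mu$ is a norm. Ergodicity~(\ref{homT}) is precisely (\ref{gerg}). Annular measurability~(\ref{mesuT}) follows from the continuity (\ref{greg}) of $g$: the length of a piecewise $C^1$ path is a continuous functional of $g$, so $T$ is measurable, and $T(A_{r,R})$ is the infimum of $T$ over a compact family of pairs. The finite moment condition~(\ref{moment}) is handled by using the straight segment $[0,x]$ as a test path, giving
\begin{equation*}
T(0,x)\;\leq\;\int_{0}^{1}\|x\|_{g(tx)}\,dt\;\leq\;\|x\|\int_{0}^{1}\sqrt{\Lambda_{0}(tx)}\,dt,
\end{equation*}
where $\Lambda_{0}(y)$ is the largest eigenvalue of $g(y)$; taking expectations and using translation invariance (which follows from ergodicity) yields $\EE\,T(0,x)\leq \|x\|\,\EE\sqrt{\Lambda_{0}(0)}$, and this is finite under the strong moment condition~(\ref{sFM}) (in fact much more than needed, as will be exploited later in Theorem~\ref{smoothmetric}).

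The crucial observation is that the two conditions~(\ref{annuliT}) and~(\ref{AI}), which are the deep hypotheses behind Theorem~\ref{positifT}, are in the Riemannian setting \emph{automatic}. Indeed, since $g(x)$ is almost surely positive definite and continuous, the distance $T$ is a genuine metric: for any $R>1$ and any points $x\in S(0,1)$, $y\in S(0,R)$, one has $T(x,y)>0$ almost surely, so $\PP[T(A_R)=0]=0$, which gives~(\ref{annuliT}) with arbitrary $\eta>0$ and any $R_{0}>1$. Similarly, the finite range assumption~(\ref{FR}) with range $Q_{0}$ means that whenever two sets $A,B\subset\RR^{d}$ are at Euclidean distance greater than $Q_{0}$, the restrictions $g|_{A}$ and $g|_{B}$ are honestly independent; hence any two events $E_{A}\in \mathcal A^{-}$ and $E_{B}\in\mathcal B^{-}$ satisfy $\PP[E_{A}\cap E_{B}]=\PP[E_{A}]\PP[E_{B}]$, so $\Ind^{-}(Q,S)=0$ as soon as $Q>Q_{0}$, which is a (vastly) stronger form of~(\ref{AI}).

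All five hypotheses of Theorem~\ref{positifT} being verified, its conclusion yields that the pseudo-norm $\mu$ produced by Theorem~\ref{fixeddirT} is a norm. There is essentially no obstacle in this argument; the main conceptual point is simply to notice that the Riemannian nature of the metric trivialises precisely the two conditions that are hardest to check in the percolation-type applications. This is what makes Theorem~\ref{smoothmetricLW} a particularly clean application of the general framework, and it is also what suggests that the strong moment condition~(\ref{sFM}) and the finite range condition~(\ref{FR}) should both be relaxable, as indeed is done in Theorem~\ref{smoothmetric}.
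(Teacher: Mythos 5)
Your argument is correct and is essentially the paper's own: the paper recovers this statement from Theorem~\ref{positifT} via Theorem~\ref{smoothmetric0} and Corollary~\ref{smoothmetric}, resting on exactly your two observations that condition~(\ref{annuliT}) is trivial once $T$ is a genuine distance and that the finite-range hypothesis~(\ref{FR}) gives condition~(\ref{AI}) outright (your $\Ind^{-}(Q,S)=0$ for $Q>Q_0$ is exactly the paper's ``finite correlations implies~(\ref{AI})''). One caveat you inherit from the paper: passing from independence of $g|_A,g|_B$ to independence of events on $T|_A,T|_B$ tacitly uses that the annular crossing times $T(a_j)$ entering Proposition~\ref{comparison} depend only on $g$ restricted to the corresponding annulus --- true, since any crossing path contains a sub-path confined to the annulus --- but worth making explicit, as a generic decreasing event in $T|_A$ need not be measurable with respect to $g|_A$.
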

Note that even if $T$ is a distance, it could happen that $\mu$ degenerates.
\paragraph{New results. }
Theorem~\ref{positifT} also applies in this context and gives the following:
\begin{thm}\label{smoothmetric0} Let $g: \RR^d \to \text{Sym}^+(d)$ be a random Riemannian metric satisfying conditions~(\ref{gerg}) (ergodicity), (\ref{greg}) (regularity), ~(\ref{wFM}) (weak finite moment condition). Assume moreover that $T$ satisfies condition~(\ref{AI}) (asymptotic independence). 
	Then, the pseudo-norm $\mu$ associated with the distance $T$ is a norm. 
\end{thm}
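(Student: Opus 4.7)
The strategy is to derive Theorem~\ref{smoothmetric0} as a direct application of Theorem~\ref{positifT} to the pseudometric $T$ defined by~(\ref{cmet}). Since condition~(\ref{AI}) is assumed as a hypothesis, and condition~(\ref{mesuT}) (annular measurability) follows routinely from the continuity of $g$ and the fact that $T(A_{r,R})$ is a lower semicontinuous functional of $g$, the substantive work is to verify~(\ref{homT}) (ergodicity of $T$), (\ref{moment}) (finite moment for $T$), and especially~(\ref{annuliT}) (decay of instant one-arms).

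First, ergodicity of $T$ is inherited from ergodicity of $g$: the map $g \mapsto T$ commutes with the action~(\ref{actionT}) of translations on pseudometrics and the translation action on random metrics, so invariant events for $T$ pull back to invariant events for $g$ and must have trivial measure. Next, for the finite moment condition, I will bound $T(0,x)$ by the $g$-length of the straight segment $[0,x]$ parametrized as $t\mapsto tx$:
\begin{equation*}
T(0,x) \leq \int_0^1 \sqrt{x^{T} g(tx)\, x}\, dt.
\end{equation*}
Applying Fubini together with the stationarity of $g$ (implicit in~(\ref{gerg})), the expectation of the right-hand side equals $\EE\|x\|_{g(0)}$, which is finite by condition~(\ref{wFM}).

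The key observation, which makes this theorem go through with no extra work on the geometric side, is that condition~(\ref{annuliT}) is satisfied in the strongest possible way: one in fact has $\PP[T(A_R)=0]=0$ for every $R>1$. Indeed, since $g$ is almost surely continuous with values in the open cone $\mathrm{Sym}^+(d)$, the smallest eigenvalue $\lambda_{\min}(g(\cdot))$ is a positive continuous function on $\R^d$, and hence its infimum over the compact annulus $\overline{A_{1,R}}$ is almost surely a strictly positive random variable $\lambda_R>0$. Any piecewise $C^1$ path from $S(0,1)$ to $S(0,R)$ must cross $\overline{A_{1,R}}$ and so has Euclidean length at least $R-1$ inside this annulus, whence its $g$-length is at least $\sqrt{\lambda_R}(R-1)>0$ almost surely. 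This gives $T(A_R)>0$ a.s., so condition~(\ref{annuliT}) holds trivially with any $\eta>0$.

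With all five hypotheses of Theorem~\ref{positifT} verified, the conclusion that $\mu$ is a norm follows. I expect no serious obstacle: the only delicate point is remembering that the bound on $T(A_R)$ need only hold in probability, so one does not have to uniformize $\lambda_R$ across the sample space; a.s.\ positivity suffices. In effect, the content of Theorem~\ref{smoothmetric0} is simply that in the Riemannian setting positive-definiteness of $g$ replaces the nontrivial one-arm estimates that are required for the percolation-type applications, reducing the problem to the independence assumption~(\ref{AI}).
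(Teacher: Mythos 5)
Your proposal is correct and follows essentially the same route as the paper: bound $T(0,x)$ by the $g$-length of the straight segment $[0,x]$ and use stationarity to reduce the finite-moment condition to~(\ref{wFM}), observe that the one-arm condition is automatic, and invoke Theorem~\ref{positifT}. You fill in details the paper leaves implicit --- notably the compactness argument justifying $\PP[T(A_R)=0]=0$ via the a.s.\ positive infimum of the smallest eigenvalue of $g$ over the closed annulus, which is the right elaboration of the paper's terse remark that condition~(\ref{annuliT}) holds ``since $T$ is a distance.''
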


As a corollary, we reprove Theorem~\ref{smoothmetricLW} with a milder condition:
\begin{cor}\label{smoothmetric}
	Theorem~\ref{smoothmetricLW} holds, replacing condition~(\ref{sFM})(strong finite moment condition) by (\ref{wFM}) (weak finite moment condition).
\end{cor}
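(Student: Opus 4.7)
The plan is to apply Theorem~\ref{smoothmetric0}: conditions~(\ref{gerg}), (\ref{greg}) and~(\ref{wFM}) are already among its hypotheses, and only the exponential quasi-independence~(\ref{AI}) for the pseudometric $T$ remains to be derived from the finite range assumption~(\ref{FR}) on the underlying metric $g$. The implication
\begin{center}
finite range for $g$ $\Longrightarrow$ condition~(\ref{AI}) for $T$
\end{center}
is the only new input required; once it is established, Corollary~\ref{smoothmetric} follows directly.

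To prove this implication I would localize. Fix $A,B\subset\RR^d$ with $\Diam A,\Diam B\leq S$ and $\dist(A,B)>Q$, and let $E_A$ be a decreasing event depending on $T|_A$. Define $E_A^N$ as the same event computed with the modified pseudo-distance $T^N$ obtained by restricting the infimum in~(\ref{cmet}) to piecewise $C^1$ paths lying in the $N$-neighbourhood $A^{+N}$. Since $T^N\geq T$ and $E_A$ is decreasing, $E_A^N\subseteq E_A$, and $E_A^N$ is $\sigma(g|_{A^{+N}})$-measurable. Choose $N$ so that $2N+Q_0<Q$, where $Q_0$ is the range in~(\ref{FR}); then $E_A^N$ and $E_B^N$ are exactly independent, and a direct inclusion-exclusion gives
\begin{equation*}
\bigl|\PP[E_A\cap E_B]-\PP[E_A]\PP[E_B]\bigr|\leq 4\,\max_{C\in\{A,B\}}\PP[E_C\setminus E_C^N].
\end{equation*}

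The heart of the proof is then to show $\PP[E_A\setminus E_A^N]\leq\exp(-cN^{\beta})$ for some $\beta>0$, uniformly in $S\leq Q^{1+\alpha}$. A realisation of $E_A\setminus E_A^N$ forces a $g$-geodesic witnessing the (bounded) decreasing constraint to escape $A^{+N}$, hence to cross a Euclidean tube of width $N$. By~(\ref{FR}) this tube decomposes into $\lfloor N/Q_0\rfloor$ independent concentric layers, each of $g$-width bounded below in expectation by a positive constant coming from~(\ref{wFM}) and translation invariance; a concentration estimate for this sum of independent positive random variables yields the stretched-exponential bound, and setting $N\simeq Q/3$ gives~(\ref{AI}).

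The main obstacle is this last concentration estimate under the sole assumption~(\ref{wFM}): exponential moments of $\|v\|_{g(0)}$ are not available, so standard Cram\'er--Chernoff bounds do not apply out of the box. The natural strategy is to truncate each layer's contribution, splitting it into a bounded part (to which Hoeffding or a Bernstein-type estimate can be applied, yielding genuine exponential concentration) and a heavy-tailed remainder that is rare by~(\ref{wFM}), the independence furnished by~(\ref{FR}) being used to sum the two pieces across the $N/Q_0$ layers. This is enough to reach some $\beta<1$ in~(\ref{AI}), which is all that Theorem~\ref{smoothmetric0} demands.
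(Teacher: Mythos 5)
Your decomposition via $T^N$ is a reasonable general device, but it is both unnecessary here and carries a gap that the weak moment condition cannot close. The paper takes a much shorter route: it observes that finite range of $g$ already gives \emph{exact} independence of the events actually used. The key point you missed is that the events appearing in the proof of Proposition~\ref{comparison} are annulus-crossing events $\{T(a_j)<\delta\}$, and these depend only on $g$ restricted to the closed annulus $\overline{a_j}$. Indeed, for any piecewise-$C^1$ path $\gamma$ from the inner sphere to the outer sphere of an annulus, setting $t_2=\inf\{t:|\gamma(t)|\geq R\}$ and $t_1=\sup\{t<t_2:|\gamma(t)|\leq r\}$ produces a sub-path $\gamma|_{[t_1,t_2]}$ of no greater $g$-length lying entirely in $\overline{A_{r,R}}$; hence $T(A_{r,R})$ is a functional of $g|_{\overline{A_{r,R}}}$ alone, and it is a decreasing function of $g$ in the partial order. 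Once this is noted, condition~(\ref{FR}) (finite range) gives that the events $E_i$ and $\bigcap_{j>i}E_j$ in Proposition~\ref{comparison} are independent as soon as $Q$ exceeds the range, so $\Ind^-(Q,S)=0$ for the class of events that is actually needed. This is precisely the ``weakened form of~(\ref{AI})'' already flagged in the comments after the conditions, and the proof of Corollary~\ref{smoothmetric} then follows from Theorem~\ref{smoothmetric0} with no further work.

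Your proposed proof, by contrast, needs the estimate $\PP[E_A\setminus E_A^N]\leq\exp(-cN^\beta)$ uniformly for $\Diam A\leq S=Q^{1+\alpha}$, and this cannot be extracted from condition~(\ref{wFM}). Two distinct problems arise. First, to show that a near-geodesic between points of $A$ cannot profitably exit $A^{+N}$, you would need simultaneously a high-probability \emph{upper} bound on the $g$-diameter of $A$ (of size up to $S$) and a high-probability \emph{lower} bound on the $g$-cost of twice crossing the shell $A^{+N}\setminus A$ (of width $N<Q/2\ll S$); with only first moments, the former is available only through Markov (polynomial, not stretched-exponential, tails), and the latter offers no quantitative lower tail at all, since~(\ref{wFM}) says nothing about how small $g$ can be. Second, even granting some concentration, the layer-sum you want to lower-bound is of order $N\sim Q$, whereas the competing $g$-diameter of $A$ is of order $S=Q^{1+\alpha}$, so the comparison goes the wrong way for any $\alpha>0$. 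Truncation and Bernstein-type estimates do not repair this, because the obstruction is the absence of any tail information in~(\ref{wFM}), not merely the absence of exponential moments. In short: rather than trying to prove the full condition~(\ref{AI}) for arbitrary $T|_A$-decreasing events, use the sub-path argument to localize the crossing events to $g$ on annuli, after which finite range gives the independence for free.
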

When $d=2$ and for metrics $g$ induced by Gaussian fields, we can deal with infinite correlations under a further hypothesis:
\begin{cor}\label{smoothmetric2} Let $f$ be a centered Gaussian field over $\RR^2$ satisfying assumptions (\ref{symmetries}) (stationarity), (\ref{sr}) (strong regularity) and (\ref{ed}) (strong decay of correlations).  Let $g: \RR^d \to \text{Sym}_d^+$ be a random metric induced by $f$, such that $g$ satisfies conditions~(\ref{gerg}) (ergodicity), ~(\ref{greg}) (regularity),  (\ref{wFM}) (weak finite moment condition)  and~(\ref{IL}) (increasing length) for the partial order induced by the one for $f$. 
	Then, the pseudo-norm $\mu$ associated with the distance $T$ is a norm. 
\end{cor}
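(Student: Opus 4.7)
The plan is to apply Theorem~\ref{smoothmetric0}. Conditions (\ref{gerg}), (\ref{greg}) and (\ref{wFM}) are given by hypothesis, so the entire task is to verify the exponential quasi-independence condition (\ref{AI}) for the pseudometric $T$ associated with $g$. The strategy is to transfer the exponential decay of correlations of $f$ coming from (\ref{sr}) and (\ref{ed}) into decorrelation of $T$, with the monotonicity (\ref{IL}) acting as the bridge.

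Because $f$ is centered, stationary, Gaussian with exponentially decaying covariance, it admits a moving-average representation $f = \phi * W$ in which $\phi$ itself decays exponentially (via the spectral square root of the covariance). For $A, B \subset \R^2$ at mutual distance at least $Q$ and of diameter at most $Q^{1+\alpha}$, I partition $\R^2$ into the $Q/3$-neighborhoods $A^+, B^+$ of $A, B$ (disjoint for $Q$ large) and their complement, and split the white noise $W = W_A + W_B + W_{\mathrm{res}}$ accordingly. This produces $f = f_A + f_B + f_{\mathrm{res}}$ with the three pieces mutually independent. By exponential decay of $\phi$ and Borell--TIS applied over the set $A^+ \cup B^+$ (of diameter $\leq Q^{1+\alpha}$), there is a good event $G_Q$ of probability at least $1 - \exp(-Q^\beta)$ on which
\begin{equation*}
\sup_{A^+}|f - f_A| \le \varepsilon_Q \qquad \text{and} \qquad \sup_{B^+}|f - f_B| \le \varepsilon_Q,
\end{equation*}
where $\varepsilon_Q = \exp(-Q^{\beta'})$ for some $\beta' > 0$.

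On $G_Q$, using the monotonicity (\ref{IL}), the pseudometric $T$ is sandwiched on $A^+ \cup B^+$ between pseudometrics $T^\pm$ built from fields coinciding with $f_A \pm \varepsilon_Q$ on $A^+$ and with $f_B \pm \varepsilon_Q$ on $B^+$. Thanks to the independence of $f_A$ and $f_B$, the restrictions of $T^\pm$ to $A$ and to $B$ will be independent, provided the infimum defining $T^\pm$ may be localized to paths staying within the respective neighborhoods. For decreasing events $E_A \in \mathcal{A}^-$ and $E_B \in \mathcal{B}^-$, the sandwich yields $\{T^+ \in E_A \cap E_B\} \subseteq \{T \in E_A \cap E_B\} \subseteq \{T^- \in E_A \cap E_B\}$ on $G_Q$, and a standard two-sided telescoping estimate based on this inclusion, together with the independence of the localized $T^\pm|_A$ and $T^\pm|_B$, bounds $|\PP(E_A \cap E_B) - \PP(E_A)\PP(E_B)|$ by $\exp(-Q^{\beta''})$ for some $\beta''>0$. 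This is exactly (\ref{AI}).

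The main obstacle is the localization step: the pseudometric $T^\pm(x,y)$ with $x, y \in A$ is an infimum over piecewise $C^1$ paths in all of $\R^2$, some of which may wander through $B$, so a priori $T^\pm|_A$ depends on the field globally. The argument needed is that any path from $x \in A$ through $B$ (at Euclidean distance $\geq Q$) and back incurs $g$-length of order at least $Q$, whereas a direct path in $A^+$ has length of order at most $Q^{1+\alpha}$; Gaussian concentration applied to $\log \Lambda_0$, where $\Lambda_0$ is the largest eigenvalue of $g$, combined with the moment bound (\ref{wFM}), should control both quantities simultaneously with probability at least $1 - \exp(-Q^\beta)$. The planar hypothesis $d = 2$ together with the explicit Gaussian structure is precisely what makes these tail bounds quantitative. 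Once locality is secured, the preceding paragraphs combine to yield (\ref{AI}) and the corollary follows from Theorem~\ref{smoothmetric0}.
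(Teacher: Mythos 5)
You identify the right reduction: verify condition~(\ref{AI}) for the pseudometric $T$ and then invoke Theorem~\ref{smoothmetric0}. But the route you take is considerably harder than necessary and has genuine gaps, and it bypasses a tool the paper already has in hand.

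The paper's proof is essentially one line. Condition~(\ref{IL}) says $g\mapsto\Leng_g$ is increasing for the partial order induced by $f$, so $f\mapsto T$ is non-decreasing; hence any event that is decreasing in $T$ is automatically decreasing in $f$. One then applies Theorem~\ref{muirhead2018sharp} directly (this is the same computation as Corollary~\ref{coroind}) to obtain~(\ref{AI}) for the pseudometric. The strong regularity~(\ref{sr}) and strong decay~(\ref{ed}) are exactly the hypotheses of Theorem~\ref{muirhead2018sharp}, and no moving-average decomposition, Borell--TIS argument or sandwiching is required. You are in effect re-proving a specialized case of that theorem from scratch, rather than noticing that~(\ref{IL}) is a bridge to a statement already available for $f$.

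Your explicit construction also has two concrete holes. First, the ``good event'' tail bound you want --- exponential concentration of $\log\Lambda_0$ (largest eigenvalue of $g$) over sets of diameter $Q^{1+\alpha}$ --- does not follow from~(\ref{wFM}), which only assumes $\EE\|v\|_{g(0)}<\infty$. That is a first moment bound for a nonlinear, in general heavy-tailed, functional of $f$ (e.g.\ $g=\varphi(f)g_0$ with $\varphi$ growing like $e^{u^2/4}$ is allowed by~(\ref{phi1})); Borell--TIS applies to the Gaussian process $f$ itself, not to $\Lambda_0$. Second, the localization of the infimum defining $T^\pm$ to paths staying near $A$ or $B$ is not established: you note it is ``the main obstacle'' and sketch a length comparison, but never close it. In fact, for the events that actually feed into Proposition~\ref{comparison} --- those of the form $\{T(a_j)<\delta\}$ for translates $a_j$ of an annulus --- any crossing contains a sub-crossing confined to $\bar a_j$, so localization is automatic; this is the content of the remark in the paper that~(\ref{AI}) may be weakened to such events. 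Without observing that, your argument does not go through, and with it the elaborate two-field decomposition becomes unnecessary.
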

Note that in this case, we do not need positive correlations for the Gaussian field. As a family of examples, we apply this corollary to planar conformal random metrics induced by strongly decorrelationg Gaussian fields. For this, we need to 
define a new pair of conditions for real deterministic functions:
\begin{enumerate}[resume=condi]
	\item \label{phi0} (Increasing) $\varphi: \R\to \R_+^*$ is a continuous positive non-decreasing map; 
	\item \label{phi1} (Weak integrability)
	$\int_0^\infty \varphi (u) e^{-\frac{u^2}2}du <\infty.$
\end{enumerate}
\begin{cor}\label{smoothmetric3}
	Let $f$ be a centered Gaussian field over $\RR^2$ and satisfying assumptions (\ref{symmetries}) (stationarity), (\ref{sr}) (strong regularity), and (\ref{ed}) (strong decay of correlations).  Let $g: \RR^d \to \text{Sym}_d^+$ be a random metric defined by
	$$ g(f):= \varphi(f) g_0,$$
	where $g_0$ is the standard metric over $\RR^2$ and $\varphi: \R\to \R^*_+$ satisfying conditions~(\ref{phi0}) and (\ref{phi1}).
	Then, the pseudo-norm $\mu$ associated with the distance $T$ is a norm. 
	\end{cor}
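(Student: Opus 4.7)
The plan is to verify the hypotheses of Corollary~\ref{smoothmetric2} for the conformal metric $g = \varphi(f) g_0$, then invoke that corollary directly. The conditions on $f$ (\ref{symmetries}), (\ref{sr}), (\ref{ed}) are given, so I need only check that $g$ itself satisfies (\ref{gerg}) (ergodicity), (\ref{greg}) (regularity), (\ref{wFM}) (weak finite moment), and (\ref{IL}) (increasing length) with respect to the pointwise partial order on $f$.

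Regularity is immediate: by condition~(\ref{sr}) on $f$, realizations of $f$ are continuous almost surely, and $\varphi$ is continuous by~(\ref{phi0}), so $g(x) = \varphi(f(x)) g_0$ is a continuous $\mathrm{Sym}^+(2)$-valued field. Ergodicity transfers from $f$ to $g$ because $g$ is a deterministic pointwise functional of $f$, so any translation-invariant event for $g$ pulls back to a translation-invariant event for $f$, which by stationarity and strong decay of correlations~(\ref{ed}) is ergodic. For the increasing length condition~(\ref{IL}), if $f_1 \leq f_2$ pointwise, then monotonicity of $\varphi$ gives $\varphi(f_1(x)) \leq \varphi(f_2(x))$ for every $x$, hence $g(f_1) \leq g(f_2)$ as quadratic forms, and for any piecewise $C^1$ path $\gamma$ one has $\|\gamma'(s)\|_{g(f_1(\gamma(s)))} \leq \|\gamma'(s)\|_{g(f_2(\gamma(s)))}$ pointwise, so $\Leng_{g(f_1)}(\gamma) \leq \Leng_{g(f_2)}(\gamma)$.

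The only step that requires some work is the weak finite moment~(\ref{wFM}). For $v \in \RR^2$ we have
\[
\EE(\|v\|_{g(0)}) = \|v\|\, \EE\bigl(\sqrt{\varphi(f(0))}\bigr),
\]
where $f(0)$ is a centered Gaussian with some variance $\sigma^2 > 0$. Splitting the integral at $0$, the contribution from $u \leq 0$ is bounded by $\sqrt{\varphi(0)}$ since $\varphi$ is non-decreasing by~(\ref{phi0}). For the positive tail, Cauchy--Schwarz gives
\[
\int_0^\infty \sqrt{\varphi(u)}\, e^{-u^2/(2\sigma^2)}\, du \leq \Bigl(\int_0^\infty \varphi(u)\, e^{-u^2/2}\, du\Bigr)^{1/2}\Bigl(\int_0^\infty e^{-u^2/\sigma^2 + u^2/2}\, du\Bigr)^{1/2},
\]
(after rescaling $u$ if needed so that the Gaussian weights align), which is finite by condition~(\ref{phi1}) provided the variance $\sigma^2$ is bounded appropriately; if $\sigma^2 > 1$ one can instead use a direct Cauchy--Schwarz against $e^{-u^2/(2\sigma^2)}$ combined with the fact that $\varphi(u) e^{-u^2/2}$ integrable implies $\varphi(u) e^{-u^2/(2\sigma^2)}$ integrable after modifying the splitting constant, since $\varphi$ is non-decreasing. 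In either case~(\ref{wFM}) holds.

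With all four hypotheses verified, Corollary~\ref{smoothmetric2} applies and the pseudo-norm $\mu$ associated with $T$ is a norm. The main subtlety is the moment computation: condition~(\ref{phi1}) controls $\varphi(u)e^{-u^2/2}$, but the length involves $\sqrt{\varphi(f)}$, which is exactly why a Cauchy--Schwarz step is needed to convert the ``$\varphi$'' condition into a ``$\sqrt{\varphi}$'' bound — this is the reason for writing~(\ref{phi1}) with $\varphi$ rather than $\sqrt{\varphi}$.
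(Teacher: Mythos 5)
Your approach matches the paper's: verify the hypotheses of Corollary~\ref{smoothmetric2} (ergodicity, regularity, weak finite moment, increasing length) for the conformal metric $g = \varphi(f) g_0$ and then invoke it. Two remarks on the moment step.

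First, your computation $\EE\bigl(\|v\|_{g(0)}\bigr) = \|v\|_{g_0}\,\EE\bigl(\sqrt{\varphi(f(0))}\bigr)$, with the square root, is actually more careful than the paper's published proof, which writes $\EE\|v\|_{g(0)} = \|v\|_{g_0}\,\EE\,\varphi(f(0))$ and thus drops the square root coming from the Riemannian norm $\|v\|_{g(x)} = \sqrt{g_x(v,v)}$. Your Cauchy--Schwarz step to pass from the $\varphi$-integrability of condition~(\ref{phi1}) to a $\sqrt{\varphi}$-bound is the right way to close this; the conclusion is of course the same since $\EE\sqrt{\varphi(f(0))} \le \bigl(\EE\,\varphi(f(0))\bigr)^{1/2}$.

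Second, the discussion of a general variance $\sigma^2$ is unnecessary and, as written, not quite correct. Condition~(\ref{symmetries}) already normalizes $\kappa(0)=1$, so $f(0)$ is standard normal and $\sigma^2 = 1$. With that a single Cauchy--Schwarz suffices: the negative half-line contributes at most $\sqrt{\varphi(0)}$ by monotonicity, and on the positive half-line
$$\int_0^\infty \sqrt{\varphi(u)}\,e^{-u^2/2}\,du \le \Bigl(\int_0^\infty \varphi(u)\,e^{-u^2/2}\,du\Bigr)^{1/2}\Bigl(\int_0^\infty e^{-u^2/2}\,du\Bigr)^{1/2},$$
both factors finite by~(\ref{phi1}). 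Your fallback claim for $\sigma^2>1$ --- that integrability of $\varphi(u)e^{-u^2/2}$ implies integrability of $\varphi(u)e^{-u^2/(2\sigma^2)}$ after adjusting a splitting constant --- is false in general (the second Gaussian weight decays more slowly, and monotonicity of $\varphi$ gives no help); fortunately that case does not arise under the stated hypotheses.
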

\begin{rem}
	\begin{enumerate}
\item	In	\cite[Theorem 3.1]{lagatta2010shape}, the authors also proved a ball shape theorem. The proof of our general ball shape Theorem~\ref{ballshapeBFT} needs  $T$ to be Lipschitz, which is not the case here. However, it should be possible to weaken this condition.
\item Another model of smooth metrics has been provided in~\cite{ferrari2013random}. These are K\"ahler metrics defined over compact complex manifolds. A natural question is to prove a version of Theorem~\ref{smoothmetric0} in this context, at least over the projective space for polynomials of increasing degree in the spirit of \cite{beliaev2017russo}, which uses the symmetries of the sphere instead of the symmetries of the plane as in~\cite{beffara2016v}, or over $\mathbb C^n$ for the semi-classical rescaled limit. Note that these metrics are given by the second derivatives of random holomorphic function and have infinite correlations, a double difficulty. However, since the model is based on the Bargmann-Fock model, there is some hope. 
\end{enumerate}
\end{rem}

\subsection{Other models}

\paragraph{Another Gaussian pseudometric}
For Gaussian fields, it is very natural to generalize the pseudometric associated with the colouring $\sigma_p$. Indeed, let 
$$\psi: \R\to \R_+$$ any map such that
\begin{enumerate}[resume=condi]
	\item \label{psi0} (Increasing) $\psi$ is  non-decreasing ; 
	\item \label{psi1} (Flat negative sea) $\forall x\in \R, \psi(x)>0 \Leftrightarrow x>0;$
	\item \label{psi2} (Weak integrability)
	for any positive $\alpha,$  $\int_0^\infty \psi (u) e^{-\alpha u^2}du <\infty.$
\end{enumerate}
For any random function $f:\RR^d \to \R$, define the random density:
\begin{equation}\label{gauss2metric}
\sigma =\psi\circ f,
\end{equation}
and $T$ the associated pseudometric defined by~(\ref{defmetric}).
\begin{thm}\label{gauss2} Let $f: \RR^d \to \R$ be a Gaussian field satisfying the hypotheses of Theorem~\ref{posBF}, $\psi: \R \to \R_+$ be satisfying the conditions~(\ref{psi0}), (\ref{psi1}) and (\ref{psi2}). 
	For any $p\in \R$, denote by 
	$\sigma_{p}$ the random density defined by~(\ref{gauss2metric}) associated with $f+p$, that is $\sigma_p = \psi\circ (f+p)$. Then, the conclusions of Theorem~\ref{posBF} hold for the 
	associated time constant $\mu_{p}$.
\end{thm}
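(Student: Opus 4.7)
Theorem~\ref{posBF} is, up to normalisation, the special case $\psi=\mathbf{1}_{(0,\infty)}$ of Theorem~\ref{gauss2}, and its proof uses only two features of the Heaviside: the identification $\{\sigma_p=0\}=\{f+p\leq 0\}$ and the fact that $\EE[\sigma_p(0)]<\infty$. Condition~(\ref{psi1}) gives the first feature verbatim, and~(\ref{psi2}) combined with the Gaussian tail of $f(0)$ yields $\EE[\psi(f(0)+p)]<\infty$ for every $p\in\R$, hence the moment condition~(\ref{momentS}). With this dictionary, the proof of Theorem~\ref{posBF} can be run essentially unchanged: measurability~(\ref{mesu}) and ergodicity~(\ref{hom}) for $\sigma_p$ are inherited from $f$, so Corollary~\ref{fixeddirS} gives the semi-norm $\mu_p$; since $\sigma_p$ is a pointwise deterministic functional of $f$, the quantity $\Ind^-(Q,S)$ of~(\ref{ind}) for $\sigma_p$ is dominated by the same quantity for $f$, and so~(\ref{AI}) follows from the exponential decay~(\ref{ed}) just as in Theorem~\ref{posBF}. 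Finally, for $p>0$, an instant path for $T$ is a path in $\{f+p\leq 0\}$, so~(\ref{annuliT}) reduces to the already-established decay of white one-arms, and Corollary~\ref{positif} yields $\mu_p>0$.

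\textbf{Vanishing for $p\leq 0$ (main obstacle).} Proposition~\ref{muzero2} is formulated for $\{0,1\}$-valued colourings and cannot be applied directly, since $\sigma_p$ can take arbitrarily large values where $f+p>0$. For each $v\in\mathbb{S}^{d-1}$, each $n\in\NN$ and each scale $L\leq n$, the plan is to construct, on an event of probability close to one, a piecewise $C^1$ path $\gamma_{n,L}$ from $0$ to $nv$ of Euclidean length $O(n)$ such that $\gamma_{n,L}\setminus\{f+p\leq 0\}$ is contained in a union $U_{n,L}$ of $O(n/L)$ boxes of bounded size, the ``stitching regions'' between consecutive RSW crossings of the white set. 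The ingredients are Theorem~\ref{uniqcomp} (infinite white cluster) when $p<0$, and the strong RSW condition~(\ref{sRSW}) from Theorem~\ref{RSW_2} together with a standard renormalisation when $p=0$. Since $\sigma_p\equiv 0$ on $\{f+p\leq 0\}$,
\[
T(0,nv)\;\leq\;\int_{\gamma_{n,L}}\psi(f+p)\;=\;\int_{\gamma_{n,L}\cap U_{n,L}}\psi(f+p),
\]
and a stationarity argument combined with the finite mean $\EE[\psi(f(0)+p)]<\infty$ gives $\EE[T(0,nv)]\leq C(p,\psi)\,n/L$. Dividing by $n$, taking $n\to\infty$ and then $L\to\infty$, Theorem~\ref{fixeddirT} yields $\mu_p(v)=0$.

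\textbf{Ball shape and summary.} The Lipschitz condition~(\ref{lipT}) may fail when $\psi$ is unbounded, so Corollary~\ref{ballshapeBF} does not apply verbatim; however, Lipschitzness is used in the proof of Theorem~\ref{ballshapeBFT} only to extend convergence from a countable dense set of rational directions to all of $\mathbb{S}^{d-1}$, and this step can be replaced by a deterministic tightness bound on $\int_B\psi(f+p)$ over unit boxes $B$, which is available through ergodicity and~(\ref{psi2}). The technical heart of the argument is the previous paragraph, specifically the bound on $\EE[T(0,nv)]$ along the black stitching regions at criticality $p=0$: this hinges on the length control coming from RSW rather than on any pointwise control of $\psi$, which is precisely where condition~(\ref{psi2}) is essential.
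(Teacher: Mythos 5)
Your proposal is correct and, in fact, more careful than the paper's own proof, which is extremely terse: after checking ergodicity, the moment condition via Corollary~\ref{corgauss2}, and the $\Ind^-$ estimate (the latter via exactly your observation that a decreasing event for $\sigma_p$ is a decreasing event for $f$ because $\psi$ is non-decreasing), it simply declares that ``all the arguments used in the previous proof of Theorem~\ref{posBF} apply.'' You have correctly located the two places where this is not literally true. First, the vanishing of $\mu_p$ for $p\leq 0$ in Theorem~\ref{posBF} rests on Proposition~\ref{muzero2} and Corollary~\ref{remtassion}, both of which are stated (and their proofs written) for $\{0,1\}$-valued colourings: the implication $\Cross_0(nr_vR_\epsilon)\subset\{T(0,nv)\leq n\epsilon\}$ used there bounds the time along the two short sides of $nr_vR_\epsilon$ by their Euclidean length, which uses $\sigma\leq 1$; this breaks when $\psi$ is unbounded. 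Second, condition~(\ref{lipT}) (Lipschitz) indeed fails when $\psi$ is unbounded, so Corollary~\ref{ballshapeBF} is not directly applicable, a point the paper does not mention at all.

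On the vanishing direction, your stitching/renormalisation construction works but is heavier than necessary. A lighter fix stays inside the proof of Proposition~\ref{muzero2}: keep the same rectangle $R_{\epsilon'}$, but replace the deterministic bound on the short-side time by the ergodic one. By Birkhoff (and the moment bound~(\ref{psi2})), $\frac{1}{n}\int_{\text{short sides of }nr_vR_{\epsilon'}}\psi(f+p)\to c_d\,\epsilon'\,\EE[\psi(f(0)+p)]$ almost surely, so on $\Cross_0(nr_vR_{\epsilon'})$ intersected with an event of probability tending to $1$, one has $T(0,nv)\leq 2c_d\epsilon'\EE[\psi(f(0)+p)]\,n$; choosing $\epsilon'$ small relative to the target $\epsilon$ recovers $\PP[T(0,s_nv)\leq s_n\epsilon]\geq c/2$ for large $n$, and the contradiction argument goes through unchanged. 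This avoids any appeal to Theorem~\ref{uniqcomp} (which, as stated, is only for the Bargmann--Fock field, so you would have needed to invoke its extensions). On the ball-shape statement, your diagnosis is right and your proposed remedy (a tightness bound on $\int_B\psi(f+p)$ over unit boxes, playing the role that the Lipschitz bound plays in Lemma~\ref{lemlem}) is plausible but would need to be written out; since the paper leaves this gap without comment, that work is genuinely additional content rather than a restatement of the paper's argument.
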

Note that Theorem~\ref{posBF} is a particular case of Theorem~\ref{gauss2}, choosing $\psi = {\bf 1}_{\R_+}$. Another interesting natural choice is given by $\psi = {\bf 1}_{\R_+}\text{Id}  $. 

\paragraph{Ising model.}
The Ising model does not belong to the core of this paper, since we do not have positive time constant in this situation. Consequently, we refer for instance to~\cite{Velenik} for definitions and classical properties. Corollary~\ref{remtassion} and~\cite{beffara2017percolation} have the following consequence:
\begin{cor}\label{ising} There exists $\beta_0<0$ such that the following holds. Let $s_\beta$ be the Ising model over the triangular lattice, with temperature $T=1/\beta$, and denote by $\beta_c>0$ the critical parameter. Let $\sigma_\beta$ be the associated random planar colouring, where the dual hexagons are painted with the value of the center spin. Assume that $\beta\in ]\beta_0, \beta_c[$. Then $\mu_\beta=0$, 
	where $\mu_\beta$ is the time constant defined by~(\ref{convergences}), and associated with $\sigma_\beta$.
\end{cor}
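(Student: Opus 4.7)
The plan is to deduce the statement from Corollary~\ref{remtassion} applied to the planar random colouring $\sigma_\beta: \R^2 \to \{0,1\}$ obtained by painting each dual hexagon with the $\{0,1\}$-value of the Ising spin at its centre. All that has to be checked, for $\beta \in\,]\beta_0,\beta_c[$, is that $\sigma_\beta$ satisfies the six hypotheses of that corollary; once this is done, the conclusion $\mu_\beta = 0$ is immediate.

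Conditions~(\ref{mesu}) (mesurability) and~(\ref{hom}) (ergodicity) are straightforward: $\sigma_\beta$ is a bounded Borel function of the spin configuration, and the Ising Gibbs measure on the triangular lattice is translation-invariant and mixing at every non-critical $\beta$, hence ergodic. Condition~(\ref{col}) (colour invariance) follows from the spin-flip symmetry $s\mapsto -s$ of Ising at zero external field. Condition~(\ref{wsym}) (weak symmetries) holds because the law of $s_\beta$ is invariant under the dihedral symmetry group of the hexagonal tiling; although (\ref{wsym}) is stated with right-angle rotations, any non-trivial rotational symmetry feeds the Tassion RSW machinery underlying Corollary~\ref{remtassion}, and the $\pi/3$-rotation of the triangular lattice is enough.

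The substantive inputs are the FKG condition~(\ref{FKG}) and the uniform crossing of squares~(\ref{sq}). For $\beta \in [0,\beta_c[$, FKG for increasing crossing events of $\sigma_\beta$ is the classical Holley--FKG inequality for the ferromagnetic Ising measure, and the uniform lower bound for crossings of squares is exactly the subcritical planar RSW estimate established in~\cite{beffara2017percolation}. At $\beta=0$ one recovers critical site percolation on the triangular lattice, for which both properties are well known.

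The main obstacle, and the reason for the threshold $\beta_0<0$, is the anti-ferromagnetic range $\beta \in\,]\beta_0,0[$: the triangular lattice is frustrated, so spin FKG fails a priori and the crossing estimates are not immediate. My strategy is to work at sufficiently high temperature (small $|\beta|$) and compare $\sigma_\beta$ with the independent Bernoulli $(1/2)$ colouring of the hexagonal faces via a Dobrushin-uniqueness cluster expansion: the total-variation cost of this coupling on any box of size $R$ is bounded by a quantity tending to $0$ as $\beta \to 0^-$, uniformly in $R$. For $\beta_0$ small enough in absolute value this cost is negligible at every scale used in Corollary~\ref{remtassion}, delivering both the positive correlation of hexagonal crossing events needed for~(\ref{FKG}) and the uniform square-crossing bound~(\ref{sq}) from the Bernoulli baseline. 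Turning the approximate positive correlation produced by the coupling into an honest FKG inequality, rather than an inequality with a small additive error, is the main technical hurdle; in our setting this is acceptable because the proof of Corollary~\ref{remtassion} only needs FKG at finitely many scales in order to run Tassion's argument, so the perturbative argument closes.
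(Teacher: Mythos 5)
Your treatment of the ferromagnetic regime $\beta \in [0,\beta_c[$ is essentially the paper's: FKG holds by Holley's inequality, the symmetry issue (triangular rather than square lattice) is noted and discharged by observing that Tassion's theorem works under the $\pi/3$-rotation, and Corollary~\ref{remtassion} applies. This part is fine.

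The antiferromagnetic range $\beta\in\,]\beta_0,0[$ is where the proposal has a genuine gap, and it is precisely where the paper takes a different, much shorter route. You insist on feeding $\sigma_\beta$ into Corollary~\ref{remtassion}, which forces you to produce an FKG inequality for the antiferromagnetic Ising measure on the frustrated triangular lattice. Your proposed cluster-expansion coupling with Bernoulli$(1/2)$ does not accomplish this. First, the total-variation distance between a weakly-dependent Gibbs measure and the product measure on a box of side $R$ is not uniform in $R$: it accumulates over the $\asymp R^2$ sites, so Dobrushin uniqueness gives decay of correlations and local TV bounds, not the global box-wise bound you invoke. Second, your claim that the proof of Corollary~\ref{remtassion} ``only needs FKG at finitely many scales'' is false: the output of Tassion's argument is the weak RSW condition~(\ref{wRSW}), a $\limsup_{n\to\infty}$ statement, and the renormalization scheme producing it requires positive correlation of crossing events at every (arbitrarily large) scale. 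An additive $o(1)$ error in FKG therefore cannot be absorbed, and no mechanism is given for upgrading ``approximate'' FKG to the real thing.

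The paper avoids all of this by not routing the negative-$\beta$ case through Corollary~\ref{remtassion} at all. It cites~\cite{beffara2017percolation}, where it is proved \emph{directly} that antiferromagnetic Ising on the triangular lattice at sufficiently high negative temperature satisfies the strong RSW condition~(\ref{sRSW}). Since sRSW implies wRSW, Proposition~\ref{muzero2} — which requires only mesurability~(\ref{mesu}), ergodicity~(\ref{hom}) and wRSW, and makes no mention of FKG — applies at once and gives $\mu_\beta=0$. The threshold $\beta_0$ is the one appearing in that cited RSW estimate, not a perturbative constant you get to optimize. In short: you are trying to prove FKG where the paper simply imports RSW; the former is a famously hard open problem for frustrated systems, the latter is a citation.
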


\subsection{Assumptions for Gaussian percolation.}
\paragraph{Setting.  }
We consider a centered Gaussian field $f$ on $\RR^2$, i.e a random field on $\RR^2$ such that for any finite set of points $(x_1,\cdots,x_n)\in\RR^2$, the vector $\big(f(x_1),\cdots, f(x_n)\big)$ is a centered Gaussian vector.
Although of course all the definitions hold in higher dimensions, our results concern essentially only two-dimensionsal fields (see however Remark~\ref{higher}). For this reason we restrict ourselves to $d=2$. 
Recall that $f$ is entirely determined by its {covariance kernel}:
$$\forall (x,y)\in (\RR^2)^2, \ e(x,y):=\mathbb E \left(f(x),f(y)\right).$$
We will assume that the covariance is stationary, that is invariant under translations, so that there exists $\kappa: \RR^2 \to \RR$ such that 
\begin{equation}\label{kappa}
\forall x,y\in \RR^2, \ e(x,y)=\kappa(x-y).
\end{equation}
We will also assume that $\kappa$ is a continuous function, hence by Bochner's theorem one can define its
spectral measure $dm$ by 
$ \kappa = \mathcal F[m]$.
 We will assume that $dm$ is uniformly continuous with respect to the Lebesgue measure, so that there exists $\rho: \RR^2\to \RR$
 such that $dm(x)=\rho^2(x) dx. $ 
  Since $\kappa(0) = \int \rho^2 dx$, $\rho$ is $L^2$, thus it has a well-defined Fourier transform. 
 As in~\cite{muirhead2018sharp} and \cite{rivera2019talagrand}, let $q: \RR^2 \to \RR$ be defined by 
\begin{equation}\label{q}
	q:= \mathcal F[\rho],
\end{equation}
where $\mathcal F$ denotes the Fourier transform. In this case, 
$$
\kappa= q\star q,
$$
and $f$ can be expressed as the convolution of $q$ and the white noise, but we won't use this in this paper. 

\paragraph{Assumptions.}

We will need the field to satisfy part or  all of the following assumptions, as in the two aforementioned papers. 
\begin{enumerate}[resume=condi]
	\item\label{symmetries}  (Symmetries) The Gaussian field $f$	 is centered, its covariance $e$  is stationary, 
 and normalized, 
with $\kappa(0)=1$, where $\kappa$ is defined by~(\ref{kappa}) above. 	Moreover, the function $\kappa$ is symmetric under both  reflection in the
$x$-axis, and  rotation by $\pi/2$ about the origin.
	\item\label{deux} (Regularity). 
	\begin{enumerate}
		\item \label{wr} (weak) $\kappa$ is continuous. 
\item \label{sr} (strong)	The function $q$ of Definition~\ref{q} is in $L^2(\RR)$, $C^3$, even, and the support of $\rho$ contains an neighbourhood of $0$.   
	\end{enumerate}
	\item \label{quatre} (Positive correlations)
\begin{enumerate}
	\item\label{wp}	(weak) $\kappa  = q \star q \geq 0 $ 
	\item \label{sp} (strong) $q\geq 0.$
	\end{enumerate}
	\item \label{cinq} (Decay of correlations). 
	\begin{enumerate}
		\item \label{wd}(weak) $\kappa(x)\to_{\|x\|\to \infty} 0$.
		\item \label{ed} (strong)
	There
	exists two positive constants $C, \beta$ such that for every multi-index $\alpha$ with $|\alpha| \leq 3,$
	$$\forall x\in \RR^2, \ |\partial^\alpha  q(x)| \leq  C\exp(-\|x\|^\beta) .$$
	\end{enumerate}
	\item \label{isotropy} (Isotropy). The kernel depends only on the distance between two points. 
\end{enumerate}
\paragraph{Comments on the Gaussian conditions. }
\begin{itemize}
\item We begin with simple assumptions.  As explained at the beginning of this paragraph, the stationarity in condition~(\ref{symmetries}) and condition~(\ref{wr}) allows to  define the various objects, $\kappa$, $\rho$ and $q$. Stationarity and condition~(\ref{wd}) imply ergodicity, so that the colour function $\sigma_p$ defined by~(\ref{sip}) will satisfy condition~(\ref{hom}) (ergodicity). 
Condition~(\ref{wr}) implies that $\sigma_p$ satisfies condition~(\ref{mesu}) (mesurability). 
The other symmetries are needed for Theorems~\ref{expdec} (decay of white one-arms)  and~\ref{muirhead2018sharp} (strong decorrelation of events) below. The latter theorems also need condition~(\ref{sr}).
\item 
The positivity condition~(\ref{wp}) is important and needed in Theorem~\ref{expdec} (decay of white one-arms). It  implies FKG inequality, see Theorem~\ref{pitt}, which is crucial in this kind of work.
Note that Theorem~\ref{expdec} ensures that 
$\sigma_p$ satisfies conditions~(\ref{wannuli}) (decay of white one-arms), see Corollary~\ref{expdecann}.
The stronger condition~(\ref{sp}) is here for a historical remark given below. 
\item Condition~(\ref{ed}) (exponential decay of correlations) is also  important and used in Theorem~\ref{muirhead2018sharp} which ensures an exponential decay of the correlation between monotonic events. This theorem implies that $\sigma_p$ satisfies condition~(\ref{AI}) (quasi independence), see Corollary~\ref{coroind}.  Recall that this condition is needed to counterbalance some combinatorial term growing exponentially fast with the observed scale, see (\ref{strass}) in Proposition~\ref{comparison}.
\item Condition~(\ref{ed}) is also used in Theorem~\ref{expdec}. However, in its original form, this theorem only needs a polynomial decay with small degree.       
\item Condition~(\ref{isotropy}) is needed only to prove that in the case of positive time constant, the pseudo-ball $K$ of Corollary~\ref{ballshapeBF} defined by the pseudometric $T$ related to $\sigma_p$ is an Euclidean ball. 
\item Finally let us point out that the full list of conditions from~(\ref{symmetries}) to~(\ref{isotropy}) are satisfied by the  Bargmann-Fock field~(\ref{BF}).
\end{itemize}

\section{Proof of the general theorems}\label{secproof}

\subsection{Existence of the time constant}\label{exicon}

In this subsection, we prove Theorem~\ref{fixeddirT} (existence of the time constant). The main tool is Kingman's subadditive ergodic Theorem. 
Before its statement, we recall some elements of ergodicity. 
Let $(X,\mathbb P)$ a probability space, and $F: X\to X$ be a mapping preserving the measure. Then $F$ is said to be \emph{ergodic} if 
for any event $A$ which is invariant under $F$, $A$ has measure $0$ or $1$.
\begin{thm}\label{subadditive}\cite[Theorem 3.3.3]{viana_oliveira_2016}
Let $(X,\PP)$ be a probability space. 
Let $F: X \to X$ be a measurable transformation of $X$, ergodic for $\PP$.
Let $(\varphi_n)_{n \in \NN}$ be a family of real non-negative valued random variables such that
$\EE\varphi_1$ is finite and 
\begin{equation}\label{subadd}
\forall n,m\in \NN, \forall x\in X, \ 
\varphi_{n+m}(x) \leq \varphi_n(x)+\varphi_m(F^n(x)).
\end{equation}
Then there exists $\mu$ in $[0,\infty)$ such that:
$$
\lim_{n \to +\infty} \frac{1}n\varphi_n = \mu \text{  almost surely and $L^1$}.
$$
\end{thm}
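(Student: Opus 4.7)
The plan is to follow the classical proof of Kingman's subadditive ergodic theorem, exploiting subadditivity~(\ref{subadd}), the integrability of $\varphi_1$, and the ergodicity of $F$.

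I would first establish the candidate limit: the $F$-invariance of $\PP$ combined with~(\ref{subadd}) gives $\EE[\varphi_{n+m}] \leq \EE[\varphi_n] + \EE[\varphi_m]$, so Fekete's lemma yields $\mu := \lim_n \EE[\varphi_n]/n = \inf_n \EE[\varphi_n]/n \in [0, \EE\varphi_1]$. I would then introduce $\underline{\varphi} := \liminf_n \varphi_n/n$ and $\overline{\varphi} := \limsup_n \varphi_n/n$. Using~(\ref{subadd}) in the form $\varphi_{n+1}(x) \leq \varphi_1(x) + \varphi_n(Fx)$, both quantities are $F$-invariant almost surely, hence almost surely constant by ergodicity of $F$. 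Iterating~(\ref{subadd}) also yields the pointwise bound $\varphi_n(x)/n \leq \frac{1}{n}\sum_{k=0}^{n-1}\varphi_1(F^k x)$, whose right-hand side converges almost surely and in $L^1$ to $\EE\varphi_1$ by Birkhoff's ergodic theorem; in particular the sequence $(\varphi_n/n)$ is uniformly integrable.

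The hard part will be the matching bounds $\overline{\varphi} \leq \mu$ and $\underline{\varphi} \geq \mu$ almost surely, which together form the heart of Kingman's theorem. For the upper bound I would use the standard truncation argument: for $\epsilon > 0$, fix $M$ with $\EE[\varphi_M]/M < \mu + \epsilon$, and define $\tau(x) \in \{1, \ldots, M\}$ to be the smallest $n \leq M$ with $\varphi_n(x)/n < \mu + \epsilon$, setting $\tau = 1$ on the exceptional set $A$ (whose probability is controllably small when $M$ is chosen large). For $N \gg M$, iterating~(\ref{subadd}) along the orbit $x, Fx, F^2 x, \ldots$ decomposes $\varphi_N(x)$ into contributions of blocks of length $\tau(F^k x)$, each bounded by $(\mu+\epsilon)\tau(F^k x)$ on $A^c$ and by $\varphi_1(F^k x)$ on $A$, plus a tail of length at most $M$. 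Dividing by $N$ and applying Birkhoff's ergodic theorem separately to $\mathbf{1}_A$ and to $\varphi_1$ yields $\overline{\varphi} \leq \mu + \epsilon + \EE[\varphi_1 \mathbf{1}_A]$; sending $M \to \infty$ and then $\epsilon \to 0$ gives $\overline{\varphi} \leq \mu$. The lower bound $\underline{\varphi} \geq \mu$ is proved by a dual stopping-time argument, exploiting the reverse inequality $\EE[\varphi_n]/n \geq \mu$ to control the mass of the set where $\varphi_n/n$ is too small.

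Once $\underline{\varphi} = \overline{\varphi} = \mu$ almost surely, the $L^1$ convergence follows from the almost sure convergence together with the uniform integrability established above, via dominated convergence. The main obstacle is the truncation step: the block decomposition of the orbit by $\tau$ must be balanced carefully against the error contributed by the exceptional set $A$, and the same delicate balance is required for the matching lower bound.
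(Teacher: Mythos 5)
The paper does not prove this statement: it is Kingman's subadditive ergodic theorem, cited as a black box from Viana--Oliveira. So there is no paper proof to compare against; the question is only whether your sketch is a viable proof. Broadly, you have the right plan (the Steele-style block decomposition), but the two bounds are not as symmetric as your write-up suggests, and the lower bound step as described is too vague to be convincing.

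Concretely, your upper-bound argument does work, but it has a hidden dependence you should surface: for the stopping time $\tau(x)=\min\{n\le M:\varphi_n(x)/n<\mu+\epsilon\}$ to have $\PP(A)\to 0$ as $M\to\infty$, you need $\underline\varphi<\mu+\epsilon$ almost surely, which is not free — it comes from Fatou's lemma, $\EE[\underline\varphi]\le\liminf_n\EE[\varphi_n]/n=\mu$, together with ergodicity (so $\underline\varphi$ is a.s.\ the constant $\EE[\underline\varphi]\le\mu$). Similarly, the invariance step (\emph{``both quantities are $F$-invariant a.s.''}) only gives the one-sided inequalities $\underline\varphi\le\underline\varphi\circ F$ and $\overline\varphi\le\overline\varphi\circ F$ from the subadditivity relation; to upgrade to a.s.\ equality you integrate and use $F$-invariance of $\PP$, which requires integrability of $\underline\varphi$ (Fatou) and $\overline\varphi$ (Birkhoff domination). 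You gesture at the right ingredients but do not connect them.

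The real gap is the lower bound $\underline\varphi\ge\mu$. Your description — ``a dual stopping-time argument, exploiting $\EE[\varphi_n]/n\ge\mu$ to control the mass of the set where $\varphi_n/n$ is too small'' — does not describe a workable argument: $\EE[\varphi_n]/n\ge\mu$ by itself gives no control on $\PP(\varphi_n/n<\mu-\delta)$ without additional structure. The correct argument is not dual to your upper bound but is, if anything, the primary one: define the stopping time $\tau(x)=\min\{n\le M:\varphi_n(x)/n<\underline\varphi(x)+\epsilon\}$ (note $\underline\varphi+\epsilon$, not $\mu+\epsilon$; it is automatically a.s.\ finite as $M\to\infty$ by the definition of $\liminf$), perform the same block decomposition, and use $F$-invariance of $\underline\varphi$ to replace $\underline\varphi(F^{T_i}x)$ by $\underline\varphi(x)$ inside each block. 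Then \emph{take expectations} (rather than arguing pointwise): this gives $\EE[\varphi_N]/N\le\EE[\underline\varphi]+\epsilon+\EE[\varphi_1\mathbf 1_A]+o(1)$, hence $\mu\le\EE[\underline\varphi]$ after $N\to\infty$, $M\to\infty$, $\epsilon\to0$. Combined with ergodicity, $\underline\varphi=\EE[\underline\varphi]\ge\mu$ a.s. With your upper bound $\overline\varphi\le\mu$ a.s., the squeeze closes. So the plan is sound, but as written the lower-bound step is a placeholder rather than a proof, and the logical ordering of the steps (Fatou before the stopping time; integrability before invariance) needs to be made explicit.
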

\begin{proof}[Proof of Theorem~\ref{fixeddirT}]
Let $\mathcal T$ be the set of realisations of the pseudometrics $T$. 
Let $v\in \mathbb S^{n-1}\subset \RR^d$, $\tau_v$ the translation by $v$, which acts on $\mathcal T $ as~(\ref{actionT}). Then, by condition~(\ref{homT}) (ergodicity), $\tau_v$ is ergodic for the law of $T$.
For any $n\in \NN$, let 
$$\varphi_n:=T(0,nv).$$
By the triangle inequality for $T$, the subadditivity inequality~(\ref{subadd}) holds. 
Nonnegativity of $T$ is trivial, and the finiteness of the expectation $\mathbb E(\varphi_1)$ is provided by condition~(\ref{moment}).
Hence by Theorem~\ref{subadditive} there exists $\mu(v)\in \RR_+$, such that 
$$\frac{1}n T(0,nv) \to_{n\to \infty} \mu(v)$$
almost surely  an $L^1$.
Moreover for $t\in \QQ$, we have:
\begin{equation}\label{grou}
\lim\limits_{n\to+\infty} \frac{1}n T(0,nt v) =|t|\lim\limits_{x\to\infty} \frac{1}n T(0,nv)=|t|\mu(v).
\end{equation}
Now,  
$$ \forall x,y\in \RR^d,  \ \forall n\in \NN, \ T(0,ny)\leq T(0,nx)+T(nx,ny).$$
Dividing by $n$ and taking the average, by the invariance under translations
of the law of $T$, this gives:
\begin{equation}\label{pog}
\mu(y)-\mu(x) \leq \mu(x-y),
\end{equation}
which implies, using that $\mu$ is even,
$ \mu(x+y)\leq \mu(x)+\mu(y).$ We thus proved that $\mu$
is a semi-norm. 
\end{proof}
\begin{lem}\label{mulip}
	Under the hypotheses of Theorem~\ref{fixeddirT}, 
	the semi-norm $\mu$ is Lipschitz.
	\end{lem}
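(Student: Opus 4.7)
The plan is to reduce, via the triangle inequality for $\mu$ established in the proof of Theorem~\ref{fixeddirT} (inequality~(\ref{pog}) together with the evenness of $\mu$), to showing that $\mu(u)\leq C\|u\|$ for a universal constant $C$ and every $u\in\R^d$. Indeed, once such a linear bound is available, $|\mu(v)-\mu(w)|\leq \mu(v-w)\leq C\|v-w\|$ gives Lipschitz continuity at no extra cost.

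First I would establish the bound on $\ZZ^d$. Condition~(\ref{moment}) provides $\mathbb E T(0,e_i)<\infty$ for each standard basis vector; set $C_0:=\max_i \mathbb E T(0,e_i)$. For $u\in\ZZ^d$, subadditivity of $T$ along a coordinate-by-coordinate path from $0$ to $u$, combined with the translation invariance of the law of $T$ (condition~(\ref{homT})), yields $\mathbb E T(0,u)\leq C_0\|u\|_1$. Since $T(0,nu)\leq \sum_{k=0}^{n-1}T(ku,(k+1)u)$ gives $\mathbb E T(0,nu)/n\leq \mathbb E T(0,u)$, passing to the Kingman limit produces $\mu(u)\leq \mathbb E T(0,u)\leq C_0\|u\|_1$. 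The $\QQ$-homogeneity of $\mu$ extends this to every $u\in\QQ^d$, so $\mu(u)\leq C\|u\|$ on $\QQ^d$ with $C=\sqrt d\, C_0$.

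Next I pass from $\QQ^d$ to $\R^d$. Approximate $u\in\R^d$ by $u_N:=\lfloor Nu\rfloor/N\in\QQ^d$, so that $\|u-u_N\|\leq \sqrt d/N$. The triangle inequality for $\mu$ gives
$$\mu(u)\leq \mu(u_N)+\mu(u-u_N)\leq C\|u_N\|+\mu(u-u_N),$$
and $\QQ$-homogeneity applied to $N(u-u_N)=\{Nu\}\in[0,1)^d$ yields $\mu(u-u_N)=\mu(\{Nu\})/N$. So it suffices to prove $M:=\sup_{w\in[0,1]^d}\mu(w)<\infty$, for then letting $N\to\infty$ in the inequality above produces $\mu(u)\leq C\|u\|$.

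The main obstacle is this last local boundedness. To clear it, my strategy is to note that $w\mapsto \mathbb E T(0,w)$ is subadditive (by the triangle inequality for $T$ combined with stationarity), measurable, and everywhere finite by~(\ref{moment}); a classical local boundedness result for measurable subadditive functions then yields that it is bounded on $[0,1]^d$. Since $\mu(w)\leq \mathbb E T(0,w)$, the required bound on $\mu$ follows, which completes the proof and in passing promotes $\mu$ from a $\QQ$-seminorm to a genuine $\R$-seminorm by continuity.
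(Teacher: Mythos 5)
Your proof is correct, and it takes a genuinely different --- and in fact more careful --- route than the paper's. The paper's argument is one line: since $\mu$ is a semi-norm, $\mu(x)\leq\sum_{i=1}^d|x_i|\mu(e_i)\leq\|x\|_1\max_i\mu(e_i)$. This is instantaneous, but the step $\mu(x_ie_i)=|x_i|\mu(e_i)$ for real $x_i$ uses homogeneity over $\R$, whereas Theorem~\ref{fixeddirT} and its proof (see~(\ref{grou})) only establish $\QQ$-homogeneity; read strictly, the paper is already using the regularity that Lemma~\ref{mulip} is supposed to deliver. Your version confronts this head-on: you bound $\mu$ on $\ZZ^d$ via $\mu\leq\EE T(0,\cdot)$ and coordinate-wise subadditivity, extend to $\QQ^d$ by $\QQ$-homogeneity, and then pass to $\R^d$ by the triangle inequality together with local boundedness of $\EE T(0,\cdot)$ on $[0,1]^d$. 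The one step you leave as a black box --- the ``classical local boundedness result for measurable subadditive functions'' --- is a genuine, if folklore, fact and is worth spelling out: $w\mapsto\EE T(0,w)$ is even, finite and (granting joint measurability of $T$, implicit in the notion of a random pseudometric) measurable, so some superlevel set $\{w:\EE T(0,w)\leq M\}$ has positive Lebesgue measure, by Steinhaus' theorem its difference set contains a neighbourhood of the origin, on which evenness and subadditivity bound $\EE T(0,\cdot)$ by $2M$, and subadditivity then propagates this to any compact. The trade-off is clear: the paper gains brevity, while your argument has the merit of not tacitly assuming the continuity that is being proved.
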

\begin{proof} 
	Since $\mu$ is a semi-norm, 
	 $$ \forall x\in \RR^d, \ \mu(x) \leq \sum_{i=1}^d |x_i| \mu(e_i)\leq \|x\|_1 \max_i \mu(e_i),$$
	where $(e_i)_i$ denotes the standard basis of $\RR^d$
	and $x=\sum_i x_i e_i$.
	\end{proof}

\subsection{Positivity of the time constant}\label{posit}
 Theorem~\ref{positifT}, which asserts that $\mu$ is a norm if the ergodic pseudometric $T$ satisfies condition~(\ref{annuliT}) (decay of instant one-arms) and~(\ref{AI}) (quasi-independence), 
is a consequence of the following Proposition~\ref{keyprop}.
Recall that for any $M>1$, $A_M$ denotes the spherical shell centered at $0$ of inner radius $1$ and outer radius $M$, see~(\ref{spsh}), and $T(A_M)$ denotes the minimal time of a path from the interior sphere to the outside of the shell $A_M$, see~(\ref{TAR}).
\begin{prop}\label{keyprop}
Let $T: (\RR^d)^2 \to \R_+$ be a random pseudometric satisfying conditions~(\ref{homT}) (ergodicity), (\ref{mesuT}) (annular mesurability),  ~(\ref{annuliT}) (decay of instant one-arms) for $\eta>0$ and~(\ref{AI}) (quasi-independence).
Then, there exists an unbounded positive increasing sequence $(M_n)_n$ and a positive number $c$ such that
$$
 \forall n\in \NN, \ \PP\left[\frac{T(A_{M_n})}{M_n}<c\right]\leq \frac{1}{M_n^{d-1+\eta}}.
$$
\end{prop}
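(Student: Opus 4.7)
The plan is a multi-scale renormalisation that converts the zero-time-crossing bound~(\ref{annuliT}) into a bound on slow crossings. At a large scale $M$, any putative path from $S(0,1)$ to $S(0,M)$ with $T$-length less than $cM$ is decomposed into many disjoint short sub-crossings at an intermediate scale $Q \ll M$ (typically $M = Q^{1+\alpha}$ for a small $\alpha > 0$). The combinatorial explosion from the number of possible sub-crossing configurations will then be balanced against their quasi-independent small probabilities using~(\ref{AI}), with~(\ref{annuliT}) providing the base case.

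First I would perform a greedy extraction of centres on the fast path $\gamma$: pick $x_1, \ldots, x_N$ on $\gamma$ at pairwise Euclidean distance at least $3Q$. Since $\gamma$ has Euclidean diameter at least $M-1$, this yields $N \geq M/(3Q)$ centres. For each $i$, the sub-arc of $\gamma$ from $x_i$ to its first exit of $B(x_i,Q)$ crosses the translated annulus $A(x_i,1,Q)$, so its $T$-length is at least $T(x_i,A(1,Q))$, the natural translate of $T(A_Q)$. Disjointness of the balls $B(x_i,Q)$ makes these sub-arcs disjoint, so $\sum_{i=1}^N T(x_i,A(1,Q)) < cM$, and a pigeonhole argument forces at least $N/2$ of the indices to satisfy $T(x_i,A(1,Q)) \leq 6cQ$. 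A discretisation step then replaces each random $x_i$ by a nearby vertex $y$ of a deterministic grid $\Lambda \subset A_M$ of mesh $Q$, with $|\Lambda|$ of order $(M/Q)^d$; up to a slight enlargement of the annular radii, this turns the random event into a decreasing local event $F_y$ of diameter $O(Q)$ to which the sigma-algebra argument is applicable by condition~(\ref{mesuT}).

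A union bound over $(N/2)$-subsets $S \subset \Lambda$ at pairwise distance at least $Q$ then gives
\[
\PP\bigl[T(A_M)/M < c\bigr] \leq \binom{|\Lambda|}{N/2}\;\sup_{|S|=N/2}\PP\Bigl(\bigcap_{y\in S} F_y\Bigr).
\]
Iterating~(\ref{AI}) hierarchically (each fusion of two decreasing events of diameter $\lesssim Q$ at mutual distance $\geq Q$ contributes an additive error $\exp(-Q^\beta)$, and the growing clusters remain of diameter at most $Q^{1+\alpha}$ provided $M \leq Q^{1+\alpha}$), the joint probability factorises, yielding the bound $\PP(\bigcap_{y\in S} F_y) \leq \PP[T(A_Q) \leq 6cQ]^{N/2}$ up to subexponentially small corrections which are negligible against the polynomial target $Q^{-(d-1+\eta)}$.

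The main obstacle, and the crux of the argument, is producing a \emph{uniform} constant $c$ along the selected subsequence. Define $c_Q = \sup\{c \geq 0 : \PP[T(A_Q) < cQ] \leq 2Q^{-(d-1+\eta)}\}$. By~(\ref{annuliT}) and right-continuity of the distribution function of $T(A_Q)$ at $0^+$, $c_Q$ is strictly positive for every $Q$ large enough. If $\limsup_{Q\to\infty} c_Q \geq c_0 > 0$, pick $(Q_n)$ along which $c_{Q_n} \geq c_0$, set $M_n = Q_n^{1+\alpha}$ with $\alpha > 0$ fixed so that $\alpha(d-1) < \eta$, and take $c = c_0/6$; then the combinatorial factor is at most $\exp(O(Q_n^\alpha \log Q_n))$ while the product factor is $(2Q_n^{-(d-1+\eta)})^{M_n/(2Q_n)}$, and the choice of $\alpha$ makes the latter dominate, yielding $\PP[T(A_{M_n})/M_n < c] \leq M_n^{-(d-1+\eta)}$ for $n$ large. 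The remaining task is to exclude the degenerate scenario $c_Q \to 0$: feeding any fixed $c > 0$ back into the same multi-scale estimate produces, at scale $M = Q^{1+\alpha}$, a bound that would in turn force $\PP[T(A_M) = 0] \leq M^{-(d-1+\eta-o(1))}$ to be strictly \emph{smaller} than what~(\ref{annuliT}) provides at that next scale, a contradiction once iterated; this rules out $c_Q \to 0$ and completes the extraction.
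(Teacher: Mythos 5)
Your renormalisation step (extract $\gtrsim M/Q$ well-separated fast sub-crossings at scale $Q$ from a $cM$-fast crossing of $A_M$, discretise, union-bound, iterate quasi-independence) is sound in spirit and is the same engine that drives the paper's Proposition~\ref{comparison}, although the paper packs nested concentric annuli rather than picking greedy centres on the path. The genuine gap is in how you produce the \emph{uniform} constant $c$.

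You define $c_Q$ as the best threshold available at scale $Q$, perform a \emph{single} renormalisation step $Q \mapsto Q^{1+\alpha}$, and then split into cases according to whether $\limsup_Q c_Q > 0$. The first case would indeed finish, but you have no independent proof that it occurs, and your argument to exclude $c_Q \to 0$ does not work: you assert that iterating would force $\PP[T(A_M)=0]$ to be \emph{smaller} than what condition~(\ref{annuliT}) provides, ``a contradiction''. But (\ref{annuliT}) is an \emph{upper} bound; deriving a stronger upper bound is not a contradiction, so nothing is excluded. The case $c_Q\to 0$ remains open, and with it the whole claim. What is missing is the idea that saves the paper: rather than hoping for a favourable $\limsup$, one fixes a single starting scale $R_0$ (where Lemma~\ref{cross} produces \emph{some} $c_{R_0}>0$ by right-continuity of the distribution function of $T(A_{R_0})$ at $0$), and then iterates the comparison step \emph{infinitely many times} along $M_k = R_0^{(1+\epsilon)^{k-1}}$. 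Each step degrades the constant only by a factor $(1 + M_j^{-\epsilon/2})^{-1}$, and because $M_j$ grows super-exponentially the infinite product $\prod_j (1+M_j^{-\epsilon/2})^{-1}$ converges to a strictly positive $\gamma$; the resulting uniform constant is $c=\gamma\, c_{R_0}$. This bookkeeping of a convergent product of degradation factors is precisely the mechanism your proposal lacks, and it simultaneously makes your case distinction vacuous (it in fact proves $\limsup_Q c_Q>0$ a posteriori). You should replace the $c_Q\to 0$ discussion with this iteration, stating the one-step comparison as a lemma with an explicit multiplicative loss in the constant and then summing the losses along the super-exponential scale sequence.
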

\begin{rem}\label{large}  Note that a large deviation result would suffice to get an exponential decay. 
	 It is possible that the wide applicability range of Theorem~\ref{positifT} is a consequence of the leniency of this result. Moreover, its provides Corollary~\ref{cegT}, which was the best general result known for percolation. \rk{à voir} 
	\end{rem}
Given this proposition, we can prove the main Theorem~\ref{positifT}.
\begin{proof}[Proof of Theorem~\ref{positifT}.]
Let $v\in \mathbb S^{d-1}$ and $(M_n)_n$ the sequence given by~Theorem~\ref{keyprop}. By Theorem~\ref{fixeddirT} there exists a constant $\mu(v)\geq 0$ such that 
$$
 \frac{1	}{\lfloor M_n\rfloor +1}T(0,(\lfloor M_n\rfloor +1)v)\xrightarrow[n\to\infty]{L^1}\mu(v).
$$
Since for any $n$, 
$
T(A_{M_n})\leq T(0,(\lfloor M_n\rfloor +1)v),$ the latter limit and Proposition~\ref{keyprop}
imply that $\mu(v)\geq c$ and thus $\mu(v)>0$. 
\end{proof}
Proposition~\ref{keyprop} will be proved by induction over scales. However we will need to renormalize the constant $c$, see Corollary~\ref{corocomp}
below. 
To this end, we begin by proving the following 
Proposition~\ref{comparison} which compares the crossing time probabilities of two spherical shells with different exterior radii. 
\begin{prop}\label{comparison}
Let $T$ be a random pseudometric over $\RR^d$ satisfying assumption~(\ref{homT}) (ergodicity) and (\ref{mesuT}) (annular mesurability).
Then, 
for any $1\leq Q<R<S$ and any positive constant $\delta$,
\begin{equation}\label{strass} 
\PP\left[\frac{T(A_S)}S< \frac{\delta}{1+\frac{Q}{R}}\right]\leq  
\left(c_d S^{d-1}\frac{R}{Q}\right)^n
\left(  \PP\left[\frac{T(A_R)}R< \delta \right]^{n}
+n \Ind^-( Q,S)
\right),
\end{equation}
where $c_d>0$ is a constant depending only on the dimension $d$,  where 
 $ n=\lfloor N\frac{Q}{2R+2Q}\rfloor $  with $N= \lfloor \frac{S-1}{2R+Q}\rfloor $, and where $\Ind^-$ is defined by~(\ref{ind}).
\end{prop}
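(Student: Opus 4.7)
I will prove the bound via a renormalisation scheme: a near-optimal witness of a fast crossing of the big shell $A_S$ is forced to provide many disjoint fast crossings of translated copies of the small shell $A_R$, whose joint probability is then controlled by iterating condition~(\ref{AI}).

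\textbf{Discretisation.} My first step is to take an angular $1/S$-net $D\subset\mathbb S^{d-1}$ with $|D|\leq c_d S^{d-1}$ and, along each ray in direction $v\in D$, to introduce candidate centres $c_{v,k}:=(1+R+k(2R+Q))v$ for $k=0,\dots,N-1$. The translated shells $\tau_{c_{v,k}}(A_R)=B(c_{v,k},R)\setminus B(c_{v,k},1)$ are then pairwise disjoint at Euclidean distance $\geq Q$, of diameter $\leq 2R\leq S$, and contained in $B(0,S)$. An ``admissible chain'' is a pair $(v,K)$ with $v\in D$ and $K\subset\{0,\dots,N-1\}$, $|K|=n$; refining to an effective spacing $Q$ between consecutive retained centres will show the number of admissible chains is at most $(c_d S^{d-1}R/Q)^n$.

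\textbf{Deterministic core.} The main step will be to show that whenever $T(A_S)/S<\delta/(1+Q/R)$ some admissible chain $(v,K)$ satisfies $\tau_{c_{v,k}}(T)(A_R)<\delta R$ for every $k\in K$. Starting from a near-optimal pair $(a,b)\in S(0,1)\times S(0,S)$ realising $T(a,b)<\delta S R/(R+Q)$, I will take $v\in D$ aligned with $(b-a)/\|b-a\|$ and use the triangle inequality along the candidate centres on the corresponding ray: at most $N-n$ of them can have $\tau_{c_{v,k}}(T)(A_R)\geq \delta R$ before the accumulated lower bound on $T(a,b)$ exceeds $\delta SR/(R+Q)$, so at least $n$ centres must yield fast crossings. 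The arithmetic of this pigeonhole bookkeeping is precisely what produces the value $n=\lfloor N\,Q/(2R+2Q)\rfloor$ appearing in the statement.

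\textbf{Quasi-independence and union bound.} For a fixed chain, the events $E_k=\{\tau_{c_{v,k}}(T)(A_R)<\delta R\}$ ($k\in K$) are decreasing in the partial order on $\mathcal T$ and, by annular measurability, depend only on $T$ restricted to disjoint balls pairwise $Q$-separated and of diameter $\leq S$. Iterating~(\ref{AI}) $n$ times to detach the events one at a time yields
\[
\PP\!\left[\bigcap_{k\in K} E_k\right]\leq \PP[T(A_R)<\delta R]^n + n\,\Ind^-(Q,S).
\]
Summing this estimate over the $(c_d S^{d-1}R/Q)^n$ admissible chains will then assemble~(\ref{strass}).

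The principal obstacle I anticipate is the deterministic extraction in the second step: since $T$ carries no Lipschitz or path structure, deducing many short local crossings from a single short $T$-distance between faraway points is the subtle ingredient, and it is precisely what dictates the ray-based geometry chosen in the first step so that the triangle-inequality argument becomes effective. The remaining steps are routine counting and a careful but standard iteration of the quasi-independence estimate.
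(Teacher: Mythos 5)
Your second step (the ``deterministic core'') does not work, and it is exactly the step you flag as delicate. You place the translated copies $\tau_{c_{v,k}}(A_R)$ along the single ray in direction $v\in D$, then for a near-optimal pair $(a,b)$ you align $v$ with $b-a$ and claim the triangle inequality forces many of these ray-aligned shells to have small crossing time (else ``the accumulated lower bound on $T(a,b)$'' grows). But the triangle inequality in a pseudometric only yields \emph{upper} bounds on $T(a,b)$ via intermediate points; there is no inequality of the form $T(a,b)\geq\sum_{k}\tau_{c_{v,k}}(T)(A_R)$, so no lower bound is ``accumulating.'' And even granting the length-space structure the paper's own proof tacitly uses, a near-minimizing connection from $a\in S(0,1)$ to $b\in S(0,S)$ has no reason to pass anywhere near the ray $\R_+ v$: it can wander across $A_S$ and avoid every one of your chosen shells $\tau_{c_{v,k}}(A_R)$, in which case your pigeonhole has nothing to pigeonhole.

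The paper's construction is different precisely so that this cannot happen. It decomposes $A_S$ into $N$ \emph{concentric} spherical shells $B_1,\dots,B_N$ of width $2R$, separated by buffers of width $Q$, and inside each $B_j$ it places enough translates of $A_R$ so that the closure of the union of their \emph{interior} balls covers the entire middle sphere $S\bigl(0,1+(j-1)(2R+Q)+R\bigr)$. Any continuous crossing of $B_j$ must hit this middle sphere, hence enter and leave the interior ball of at least one covering translate, giving two $T(a_j)$-crossings in each $B_j$ \emph{wherever} the path wanders; this is what makes the lower bound $T(A_S)\geq\sum_j 2\,T(a_j)$ available and the choice of $n$ meaningful, and it is also why the factor $c_d S^{d-1}$ appears raised to the $n$-th power (one choice of covering annulus per selected shell). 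Your third step (the $n$-fold iteration of condition~(\ref{AI}) on decreasing events, followed by a union bound) is sound and matches the paper; the repair needed is local and consists in replacing the ray-based placement by the sphere-covering one.
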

\begin{proof}
Let $\{B_1,\cdots,B_{N}\}$ be a maximal set of disjoint spherical shells, centered on $0$, included in $A_S$, of increasing radii, of width $2R$, such that the interior sphere of $B_1$ is the unit sphere, and separated by a sequence $(C_1,\cdots,C_{N})$ of spherical shells centered on $0$, of width $Q$ and of increasing radii, see Figure~\ref{path}.
	 We have 
	 $$N= \left\lfloor \frac{S-1}{2R+Q}\right\rfloor.$$
	  For any $j\in \{1, \cdots , N\}$,  we consider a minimal set of $k_j$  translates of $A_{R}$ inside $B_j$, such that the closure of the union of their interior  disks contains the middle sphere of $B_j$, that is $S\left(0,1+(j-1)(2R+Q)+R\right)$. These conditions ensure that any continuous crossing of $B_j$ crosses at least twice at least  one of the $k_j$ copies of $A_R$ inside $B_j$. It is true that there exists $c_d>0$ depending only on the dimension $d$, such that
	  \begin{equation}\label{sumkj}
	  \forall 1\leq j\leq N, \ k_j\leq c_d S^{d-1}.
	  \end{equation}
	  	  Let $\gamma$ be  a minimizing path across the shell $A_S$. 
	  	   By the previous remark, $\gamma$ necessarily crosses one copy of $A_{R}$ in each $B_j$, once to enter the interior ball, and then once more to leave it. It thus crosses at least $N$ such shells, each of them twice. We call $(a_1,\cdots,a_{N})$ the sequence of the first copies of $A_{R}$ it crosses in each $B_j$, see Figure~\ref{path}.
	  	   We have 
	  	   \begin{equation}\label{sss}
	  	   T(A_S)\geq \sum_{j=1}^{N}2 T(a_j).
	  	   \end{equation}
	  	    Now, for any $j\in \{1, \cdots, N\}$, consider the corrresponding event:
	\begin{figure}
		\centering
		\includegraphics[width=8cm]{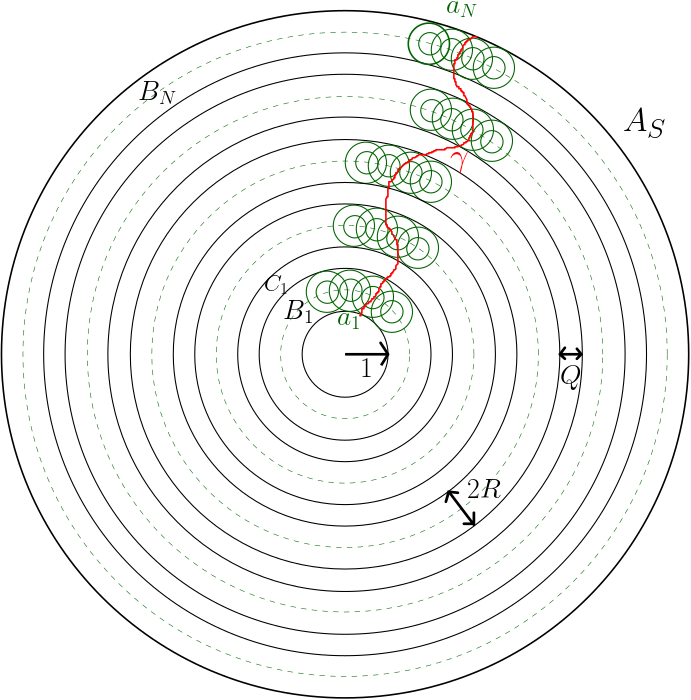}
		\caption{The path $\gamma$ going across $A_S$ crosses a certain number of copies of $A_R$.}
		\label{path}
	\end{figure}
	$$
	E_j:=\left\{\frac{T(a_j)}{R}<\delta (1+\frac{Q}R)\right\}.
	$$
When  the event $\left\{\frac{T(A_S)}S<\delta \right\}$ occurs, at least 
$$n=\lfloor N\frac{Q}{2R+2Q}\rfloor $$ events of the form $E_j$ occur. 
	Indeed, otherwise we would have by~(\ref{sss})
	$$
	\frac{T(A_S)}S \geq 2\frac{R}S (N-N\frac{Q}{2Q+2R}) \delta (1+\frac{Q}R)\geq \delta.
	$$
	Assume from now on that $n\geq 1$. Note that if $n=0$ then (\ref{strass}) is trivially true. 
	Using~(\ref{sumkj}),
	$$
	\PP\left(\frac{T(A_S)}{S}<\delta \right)\leq 
	{N \choose n }(c_d S^{d-1})^n  
	\sup_{\substack{a_1,\cdots,a_ {n}\text{ copies of }A_{R}
			\\
			\text{ on disjoint shells $B_j$}}}\PP\left[\bigcap\limits_{j=1}^{n}E_j\right].
	$$ 
	Indeed, there is ${N \choose n}$ ways to choose the $n$ annuli $(B_{j_1}, \cdots B_{j_n})$ where $E_{j_1}, \cdots, E_{j_n}$ happen, and for any $i=1, \cdots, n$, there is at most $c_dS^{d-1}$ choices for the small 
	annulus $a_{j_i}$. Now, given such a deterministic sequence $a_1,\cdots,a_{n}$, since by definition the distance between any two of the shells $B_j$ is at least $Q$, the distance between any two of the $a_j$'s has the same lower bound. 	
By definition of $\Ind^-$, using the fact that a finite intersection
of $E_j$'s is a decreasing event, 
for all $S>R> Q\geq 1$, 
$$\forall i\in \{1, \cdots, n\}, \ 
\PP\left[E_i\cap \bigcap_{j=i+1}^{n} E_j\right]\leq \PP[E_j]\PP\left[\bigcap_{j=i+1}^{n} E_j\right]
+ \Ind^-(Q,S).
$$
By an immediate induction, this implies
$$
	\PP\left[\bigcap_{j=1}^{n}E_j\right]\leq\left(\PP\left[E_1\right]\right)^{n}
	+ n \Ind^-(Q,S).
	$$
	By 
	the classical inequality 
	\begin{equation*}
\forall 1\leq n\leq N, 	{N \choose n}\leq
	\left(\frac{eN}n\right)^n
	\end{equation*}
and the definition of $n$,	the combinatorial term satisfies
	$$ {N \choose n }(c_d S^{d-1})^n \leq \left(\frac{4c_d S^{d-1}R}{Q}\right)^n.$$
		Replacing $\delta $ with $\delta(1+Q/R)^{-1}$, we obtain the result.
		\end{proof}
In the next Corollary~\ref{corocomp}, Proposition~\ref{comparison} 
is applied to a sequence of growing scales, threatening the inductive renormalized
constant $\delta$ to drop to zero. However, the sequence is chosen so that
the infinite product of the renormalization factors converges to a positive constant.
\begin{cor}\label{corocomp}
	Let $\eta>0$ and $T$ be a random pseudometric satisfying assumptions~(\ref{homT}) (ergodicity), (\ref{mesuT}) (annular mesurability) and (\ref{AI}) (quasi-independence) for $\beta>0$.
Let
		$$\epsilon = \frac{1}2\min (1, \frac{\eta}d, \beta).$$  
		Then there exists $R_0>0$, such that 
	for any positive constant $\delta$ and any $R\geq R_0$,
	\begin{equation}\label{eq}
 \PP\left[\frac{T(A_R)}{R}< \delta \right]\leq \frac{1}{R^{d-1+\eta}} \Rightarrow 
	\PP\left[\frac{T(A_{R^{1+\epsilon}})}{R^{1+\epsilon}}< \frac{\delta}{1+R^{-\frac{\epsilon}2}}\right]\leq  
	\frac{1}{(R^{1+\epsilon})^{d-1+\eta}}.
	\end{equation}	
\end{cor}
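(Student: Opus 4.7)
The plan is to apply Proposition~\ref{comparison} exactly once, with the scales $S := R^{1+\epsilon}$ and $Q := R^{1-\epsilon/2}$, chosen precisely so that the renormalization factor $1+Q/R = 1+R^{-\epsilon/2}$ appearing in that proposition matches the one demanded in the statement. With these choices, a direct estimate gives $N = \lfloor (S-1)/(2R+Q)\rfloor \sim R^{\epsilon}/2$ and $n = \lfloor NQ/(2R+2Q)\rfloor \sim R^{\epsilon/2}/4$ for $R$ large, so $n$ grows polynomially in $R$, while the combinatorial prefactor satisfies $c_d S^{d-1}R/Q \asymp R^{(1+\epsilon)(d-1)+\epsilon/2}$.

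For the main probabilistic term, the hypothesis $\PP[T(A_R)/R<\delta]\leq R^{-(d-1+\eta)}$ yields, after multiplying by the prefactor and raising to the $n$-th power, a bound of order $c_d^n\, R^{n[\epsilon(d-1/2)-\eta]}$. The constraint $\epsilon \leq \eta/(2d)$ (built into the definition of $\epsilon$) forces $\epsilon(d-1/2) < \eta/2$, so this exponent is at most $-n\eta/2$; combined with $n \gtrsim R^{\epsilon/2}$ this beats the target $R^{-(1+\epsilon)(d-1+\eta)}$ by a stretched-exponential margin for $R$ large enough, so this contribution is at most $\tfrac{1}{2}R^{-(1+\epsilon)(d-1+\eta)}$.

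For the quasi-independence remainder $n \cdot (\text{prefactor})^n \cdot \Ind^-(Q,S)$, the trick is to choose $\alpha>0$ with $(1-\epsilon/2)(1+\alpha)\geq 1+\epsilon$, so that $S\leq Q^{1+\alpha}$; algebraically this means $\alpha \geq 3\epsilon/(2-\epsilon)$, and for instance $\alpha = 3\epsilon$ works since $\epsilon\leq 1/2$. Condition~(\ref{AI}) applied to this $\alpha$, together with the obvious monotonicity of $\Ind^-$ in its diameter argument, then yields $\Ind^-(Q,S)\leq \Ind^-(Q,Q^{1+\alpha}) \leq \exp(-Q^\beta)=\exp(-R^{\beta(1-\epsilon/2)})$ once $R$ is large. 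Since $\epsilon\leq\beta/2$ forces $\beta(1-\epsilon/2)\geq \epsilon > \epsilon/2$, this stretched-exponential decay crushes the polynomial prefactor $(\text{prefactor})^n = \exp(O(R^{\epsilon/2}\log R))$, giving another bound of order $\tfrac{1}{2}R^{-(1+\epsilon)(d-1+\eta)}$. The main obstacle is exactly the simultaneous tuning of $\epsilon$, $Q$ and $\alpha$ so that the three scaling exponents $\epsilon(d-1/2)$, $\epsilon/2$ and $\beta(1-\epsilon/2)$ each land on the correct side of the relevant inequalities involving $\eta$ and $\beta$; beyond this bookkeeping, the argument is a single invocation of Proposition~\ref{comparison}.
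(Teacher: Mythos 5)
Your proof is correct and follows the same route as the paper: apply Proposition~\ref{comparison} once with $(Q,R,S)=(R^{1-\epsilon/2},R,R^{1+\epsilon})$, then verify that the definition of $\epsilon$ makes both the main term and the quasi-independence remainder decay strictly faster than the target. You actually fill in a detail the paper leaves implicit, namely the explicit choice $\alpha=3\epsilon$ (valid since $\epsilon\le 1/2$) so that $Q^{1+\alpha}\ge S$ and condition~(\ref{AI}) can be invoked, together with the monotonicity of $\Ind^-$ in its second argument.
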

\begin{proof}
	We apply Proposition~\ref{comparison} with
	$(Q,R,S)= (R^{1-\frac{\epsilon}2}, R, R^{1+\epsilon})$
	so that there exists $R_1>0$ depending only on $\eta$ and $\beta$, such that for $R\geq R_1$,
	$$ \frac{R^\epsilon}{4}\leq  N\leq \frac{R^\epsilon}{2}, \ 
	\frac{R^{\frac{\epsilon}2}}{8}\leq  n\leq \frac{R^{\frac{\epsilon}2}}{4}
	\text{ and }
	\left(c_dS^{d-1}\frac{R}{Q}\right)^n \leq  
	(c_dR^{(d-1)(1+\epsilon)+\frac{\epsilon}2})^{n}.$$
	Hence, by Proposition~\ref{comparison} and condition~(\ref{AI}),  there exists $R_2\geq R_1$ depending
	only on $\eta$ and $\beta$, such that 
	for any $R\geq R_2$, if the left-hand side of~(\ref{eq}) holds, then
	$$	\PP\left[\frac{T(A_{R^{1+\epsilon}})}{R^{1+\epsilon}}
	< \frac{\delta}{1+R^{-\frac{\epsilon}2}}\right]
	\leq 
	(c_dR^{(d-1)(1+\epsilon)+\frac{\epsilon}2})^{n}
	\big( R^{-(d-1+\eta)n}+ R^{\frac{\epsilon}2}e^{-R^{(1-\frac{\epsilon}2)\beta}}\big).
	$$
Hence, 
	 there exists $R_3\geq R_2$ depending only on $d$, $\beta$ and  $\eta$, 
	such that for any $R\geq R_3$, 
	the right-hand side is bounded above by 
	$ R^{-(d-1+\eta)(1+\epsilon)}.$ 
\end{proof}

To implement the implication~(\ref{eq}), we  need to find a scale where the left-hand side holds. This is done by the following lemma:
\begin{lem}\label{cross} Let $T $ be a random pseudometric over $\RR^d$ satisfying conditions~(\ref{homT}) (ergodicity), (\ref{mesuT}) (annular mesurability)  and~(\ref{annuliT}) (decay of instant one-arms) for some $\eta, R_0>0$. Then,
	there exists $M_0>0$ such that  
	$$ \forall M\geq M_0, \ \exists c_M, \ \forall c\leq c_M, 
	\PP\left[\frac{T(A_M)}{M}\leq c\right]\leq \frac{1}{M^{d-1+\eta/2}}.$$
\end{lem}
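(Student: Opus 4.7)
The idea is to use a straightforward continuity-of-measure argument to pass from the hypothesis on \emph{vanishing} crossing times provided by condition~(\ref{annuliT}) to a bound on \emph{small} crossing times. Fix $M \geq R_0$ and consider the decreasing family of events
\[
E_M(c) := \left\{ \frac{T(A_M)}{M} \leq c \right\}, \qquad c > 0,
\]
which by condition~(\ref{mesuT}) (annular measurability) are measurable. As $c \downarrow 0$, these events decrease to
\[
\bigcap_{c > 0} E_M(c) = \{ T(A_M) \leq 0 \} = \{ T(A_M) = 0 \},
\]
since $T$ is non-negative.

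The first step will be to apply continuity of the probability measure from above to deduce
\[
\lim_{c \downarrow 0} \PP[E_M(c)] = \PP[T(A_M) = 0].
\]
The second step is to invoke condition~(\ref{annuliT}) (decay of instant one-arms), which for $M \geq R_0$ gives
\[
\PP[T(A_M) = 0] \leq \frac{1}{M^{d-1+\eta}}.
\]
The third (and only remaining) step is to compare exponents: provided $M \geq M_0$ where $M_0$ is chosen large enough that $M^{-(d-1+\eta)} < M^{-(d-1+\eta/2)}$ strictly (any $M_0 > 1$ works), the limit above is strictly smaller than $M^{-(d-1+\eta/2)}$. The definition of the limit then furnishes a (possibly very small) $c_M > 0$ such that
\[
\PP[E_M(c_M)] \leq \frac{1}{M^{d-1+\eta/2}}.
\]
Finally, for any $c \leq c_M$ we have $E_M(c) \subset E_M(c_M)$ by monotonicity, so the same bound holds for all such $c$, which is the conclusion of the lemma.

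There is no real obstacle here: the whole argument is just continuity of measure combined with the strict inequality $d-1+\eta > d-1+\eta/2$, which provides the ``room'' to move from the zero-distance probability to a small-but-positive threshold. Note that the constant $c_M$ depends on $M$ and may degenerate as $M \to \infty$; this is harmless for the intended application, since Lemma~\ref{cross} will only be used to initialize the renormalization scheme at a \emph{single} scale, after which Corollary~\ref{corocomp} propagates the estimate to all larger scales.
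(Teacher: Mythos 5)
Your proof is correct and follows essentially the same route as the paper: both establish a strict gap between the bound $M^{-(d-1+\eta)}$ furnished by condition~(\ref{annuliT}) at $c=0$ and the target $M^{-(d-1+\eta/2)}$, then use continuity of measure from above (equivalently, right-continuity of the CDF of $T(A_M)$, which is how the paper phrases it) to push the bound to a small positive threshold $c_M$. The only cosmetic difference is that the paper manufactures its gap via an explicit factor of $1/2$ while you use the strict inequality $M^{-(d-1+\eta)}<M^{-(d-1+\eta/2)}$ for $M>1$; these are interchangeable.
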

\begin{proof}
	By condition~(\ref{annuliT}), there exists $M_0>0$ such that for all $M\geq M_0$,
	$$\PP\big[T(A_M)=0\big]\leq \frac{1}{2M^{d-1+\eta/2}}.$$
	Since
	for a real-valued random variable $X$, the function $x\mapsto \PP(X\leq x)$ is right continuous, we obtain the result.
\end{proof}

We can now prove Proposition~\ref{keyprop}.
\begin{proof}[Proof of Proposition~\ref{keyprop}]
	By condition~(\ref{annuliT}) (decay of instant one-arms) and Lemma~\ref{cross},
	there exists $M_0$ and $\eta>0$ such that 
	\begin{equation}\label{cm}
	 \forall M\geq M_0, \ \exists c_M, \ 
	\PP\left[\frac{T(A_M)}{M}\leq c_M\right]\leq \frac{1}{M^{d-1+\eta}}.
	\end{equation}
	Moreover, by Corollary~\ref{corocomp} there exists $R_0\geq M_0$ and $\epsilon>0$ such that  
	such that for any $R\geq R_0$ and any $\delta>0$,
the implication (\ref{eq}) holds. Let $$\delta:= c_{R_0}$$ 
be defined and given by~(\ref{cm})
and for any integer $k\geq 1$, define:
$$
M_k:=R_0^{(1+\epsilon)^{k-1}}.
$$
Then by an immediate induction and Corollary~\ref{corocomp},
$$ \forall k\geq 1, \ 
	\PP\left[\frac{T(A_{M_k})}{M_k}\leq \delta\prod_{j=0}^{k-1} (1+M^{-\frac{\epsilon}{2}}_j)^{-1}\right]\leq \frac{1}{M_k^{d-1+\eta}}.$$
	Now note that $M_k^{-\frac{\epsilon}{2}} = M^{-(1+\epsilon)^k \frac{\epsilon}{2}},$
so that the product $\prod_{j=0}^{\infty} (1+M_j^{-\frac{\epsilon}{2}})^{-1}$ converges to a constant $\gamma>0$. Hence,
we then obtain 
\begin{equation}\label{grr}
 \forall k\geq 1, 
\PP\left[\frac{T(A_{M_k})}{M_k}\leq \delta \gamma \right]\leq \frac{1}{M_k^{d-1+\eta}},
\end{equation}
which implies the result.
\end{proof}

\paragraph{Critical exponent.} 
We finish this paragraph with the proof of the estimate for the one-arm decay. 
\begin{proof}[Proof of Corollary~\ref{cegT}] If the conclusion does not hold, then $T$ satisfies condition~(\ref{annuliT}) (decay of instant one-arm), so that by Theorem~\ref{positifT}, $\mu>0$, which is a contradiction. 
\end{proof}

\subsection{Vanishing of the time constant}\label{subz}

\paragraph{Instant rescaled annuli crossings.}
We explain why Theorem~\ref{muzeroT} proved in a Boolean setting extends to ours, that is $\mu=0$ if, among others, condition~(\ref{vannuli}) (instant crossings of annuli) is satisfied.
\begin{proof}[Proof of  Theorem~\ref{muzeroT}.]
	Under isotropy and condition~(\ref{lipT}) (Lipschitz), the convergence given by Theorem~\ref{fixeddirT} is uniform. This is proved by~\cite[Theorem 1.1]{gouere2017positivity} for the Boolean setting, but the proof given by \cite[\S B]{gouere2017positivity} only uses the Lipschitz property of $T$ and isotropy. Now, in~\cite[\S 2]{gouere2017positivity}, the authors proved 
	\begin{equation}\label{GT}
	\mu \text{ is a norm } \Rightarrow \PP[ T(A(R,2R))=0]\to_R 0.
	\end{equation}
	In fact they wrote a weaker conclusion, namely $$\mu \text{ is a norm }  \Rightarrow \PP [\Cross_0 (A(R,2R)]\to_R 0,$$ but their proof gives the stronger~(\ref{GT}). Now, under isotropy, $\mu$ is a norm or vanishes, so that the contraposition of~(\ref{GT}) gives the result. 
\end{proof}

\paragraph{White rectangle crossings.}
For random colourings, we needed Proposition~\ref{muzero2}, which provides another criterion given by Russo-Seymour-Welsh conditions. More precisely, that $\mu$ vanishes if $\sigma$ satisfies, among others, condition~(\ref{wRSW}) (weak RSW).
\begin{proof}[Proof of  Proposition~\ref{muzero2}.]
	Let $v\in \mathbb S^{d-1} \subset \RR^d$ and $\epsilon>0$. 
	Fix  
	$$R_{\epsilon}:= [0,1]\times \left[-\frac{1}{2\sqrt{d-1}}\epsilon, \frac{1}{2\sqrt{d-1}}\epsilon\right]^{d-1}$$
	and a rotation $r_v: \RR^d \to \RR^d$ sending $(1,0, \cdots, 0) $ to $v$. 
	Then 
	$$ \forall n\geq 1, \ \Cross_0(n r_v R_{\epsilon}) \subset \{T(0,nv)\leq  n\epsilon \}.$$
	Indeed, the white crossing together with the  the smaller sides of $nr_vR_{\epsilon}$ 
	provide a path of time less than $n\epsilon.$
	We illustrate this in Figure~\ref{rect}.
	\begin{figure}
		\centering
		\includegraphics[width=6.5cm]{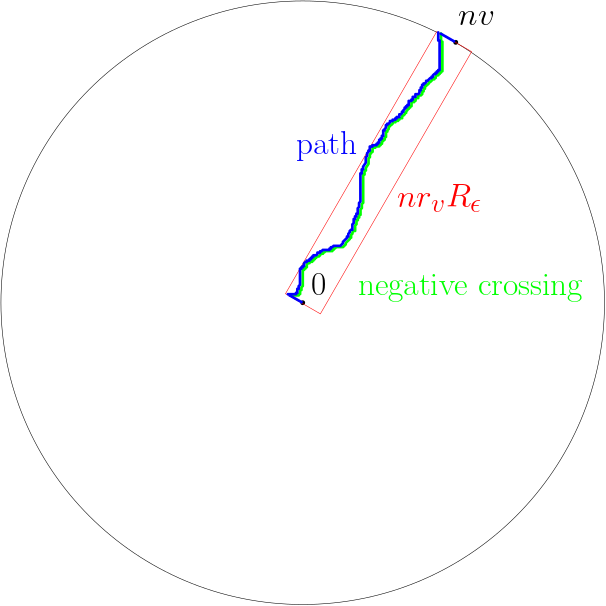}
		\caption{A picture of an event of condition~(\ref{sRSW}) in dimension 2: the narrow rectangle around the line segment $[0,nv]$ is uniformly crossed.}
		\label{rect}
	\end{figure}
	By condition~(\ref{wRSW}) (weak RSW), there exists $c>0$ and a increasing sequence $(s_n)_n$ of integers diverging to infinity such that for any $n$
	\begin{equation}\label{minprob}
	\PP\left[T(0,s_n v)\leq s_n\epsilon\right]\geq \PP\left[\Cross_0(s_n r_vR_{\epsilon})\right] \geq c.
	\end{equation}
	Now, suppose by contradiction that there exists some $\epsilon>0$ with $\mu(v) > \epsilon$. 
	By Corollary~\ref{fixeddirS} we have:
	$$
	\PP\left[\frac{1}n T(0,nv)  \to_{n\to\infty} \mu \right]=1,
	$$
	that is
	$$P:=\PP\left[\bigcap\limits_{m\in \NN}\bigcup\limits_{N\in \NN}\bigcap\limits_{n\geq N}\left|\frac{1}n T(0,nv) -\mu\right|<\frac{1}{m} \right]=1.$$
	The inequality (\ref{minprob}) justifies that for $m$ large enough and for any $n$:
	$$
	\PP\left[ \left|\frac{1}{s_n} T(0,s_nv) -\mu\right|<\frac{1}{m} \right]\leq 1-c.
	$$
	Thus, 
	\begin{eqnarray*}
		P&\leq& \limsup\limits_{m\to\infty}\limsup\limits_{N\to\infty}\PP\left[\bigcap\limits_{n\geq N} \left|\frac{1}{s_n} T(0,s_nv) -\mu\right|<\frac{1}{m} \right]\leq 1-c.
	\end{eqnarray*}
	Hence we get a contradiction, so that $\mu(v)=0$.
\end{proof}
\begin{rem}For this case we only know planar examples. 
\end{rem}

\paragraph{A planar setting.}
We finish this section with the proof of Corollary~\ref{remtassion}. Note that it only concerns vanishing time constants. 
We begin by a result proved by V. Tassion, which holds under pretty weak conditions:
\begin{thm}\cite[\S 2\text { and Remark 3}]{tassion2016crossing}\label{tassioncrossing}
	Let $\sigma: \R^2 \to \{0,1\}$ be a planar colouring. Under the conditions of Corollary~\ref{remtassion}, 
	$\sigma$ satisfies condition~(\ref{wRSW}) (weak RSW).
	\end{thm}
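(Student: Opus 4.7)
The plan is to reproduce Tassion's crossing argument from \cite{tassion2016crossing} in our abstract planar colouring setting. The hypotheses supply exactly Tassion's required ingredients: FKG (condition~(\ref{FKG})) lets us intersect monotone crossing events productively; colour invariance (condition~(\ref{col})) gives $\PP[\Cross_0(R)] = \PP[\Cross_1(R)]$ for every rectangle $R$; weak symmetries (condition~(\ref{wsym})) provide vertical/horizontal exchange so that in a square $S$ horizontal and vertical white crossings are equiprobable; and (\ref{sq}) supplies a uniform lower bound $c>0$ on white square crossings at all scales. Ergodicity (\ref{hom}) gives translation invariance of crossing probabilities.

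The reduction step: it suffices to show that for each aspect ratio $\rho>0$, there exist $c_\rho>0$ and a sequence $n_k\to\infty$ with $\PP[\Cross_0([0,\rho n_k]\times [0,n_k])]\geq c_\rho$. The case $\rho<1$ reduces to $\rho\geq 1$ by combining (\ref{col}) with (\ref{wsym}) to exchange horizontal and vertical crossings. For $\rho\geq 1$ one iteratively builds crossings of longer rectangles by gluing shorter ones, so it is enough to produce a single aspect ratio $\rho_0>1$ with the uniform lower bound along a subsequence; iteration and FKG then yields all $\rho\geq 1$.

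The core argument is Tassion's crossing lemma: for each scale $n$ one defines a family of partial-arc crossing functions $\alpha_n(\eta)$ measuring the probability that a white path enters a square $[0,n]^2$ through an arc of length $\eta$ on one side and exits the opposite side. Monotonicity of $\eta \mapsto \alpha_n(\eta)$ together with the uniform bound $\PP[\Cross_0([0,n]^2)]\geq c$ from~(\ref{sq}) forces, along some subsequence of scales, the existence of a threshold $\eta^*\in (0,n)$ where $\alpha_n(\eta^*)$ is bounded below independently of $n$. Gluing two appropriately rotated and shifted such partial crossings via~(\ref{FKG}), and reflecting using the symmetries of~(\ref{wsym}), produces a white crossing of a rectangle whose aspect ratio strictly exceeds~$1$ with probability bounded below. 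Iterating this "aspect ratio boost" and combining with crossings of squares by FKG produces crossings of rectangles of any prescribed aspect ratio along a subsequence of scales, which is exactly~(\ref{wRSW}).

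The main obstacle is transferring Tassion's combinatorial argument, originally written for site percolation on the triangular lattice, to the continuous colouring framework: the pigeonhole step defining $\eta^*$ must be reformulated as a continuous parameter argument, requiring a measurability and monotonicity check on $\eta\mapsto \alpha_n(\eta)$ (which follows from~(\ref{mesu}) and the fact that enlarging the entrance arc only increases the white crossing event). Crucially, since~(\ref{wRSW}) only requires a $\limsup$ and not a $\liminf$, we have the freedom to pass to subsequences at each step: any failure of uniform control at some scale is compensated by extracting a subscale where the threshold $\eta^*$ behaves well, so the delicate continuity estimates of Tassion's original proof can be bypassed by subsequential extraction.
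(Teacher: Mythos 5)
The paper does not actually reproduce a proof of Theorem~\ref{tassioncrossing}: it is a citation to Tassion's crossing paper (\S 2 and Remark 3 of~\cite{tassion2016crossing}), and the text invokes the result without re-deriving it. Your proposal therefore does something the paper declines to do, namely sketch the internal argument of~\cite{tassion2016crossing}. As a recollection of that argument it is essentially faithful: the reduction from a general aspect ratio to a single subsequential gain of aspect ratio, the family of arc-to-side crossing probabilities inside a square, the monotonicity/pigeonhole step producing a good threshold $\eta^*$ along a subsequence, and the FKG-plus-symmetry gluing producing a long rectangle crossing are exactly the ingredients in Tassion's Section~2, and the subsequential ($\limsup$) nature of the conclusion is precisely why condition~(\ref{wRSW}) rather than~(\ref{sRSW}) is obtained in the general setting discussed in his Remark~3.

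Two points deserve flagging, not as errors in your sketch but as hypotheses you invoke more generously than the paper literally grants. First, condition~(\ref{FKG}) as written applies only to events \emph{of the form} $\Cross_1(R)$ for rectangles $R$, whereas the gluing you describe (and Tassion's argument) applies positive association to arc-to-arc crossings inside a square, which are not rectangle crossings. Tassion assumes FKG for all increasing events; the paper's narrower wording should be read as shorthand for the same. Second, the duality step that pins $\PP[\Cross_0(S)]$ for a square via colour invariance plus symmetries uses the topological fact that the absence of a black horizontal crossing forces a white vertical crossing; this requires some regularity of the colouring (cf.\ condition~(\ref{blackreg})) which Corollary~\ref{remtassion} does not list. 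Again this is implicit in the paper's reliance on Tassion, so your outline is consistent with what the paper is invoking, but a fully self-contained proof would need to make both of these precise.
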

\begin{proof}	[Proof of Corollary~\ref{remtassion}]			
	By Theorem~\ref{tassioncrossing}, $\sigma$ satisfies condition~(\ref{wRSW}) (weak RSW). By Proposition~\ref{muzero2} (RSW implies vanishing $\mu$), this implies that $\mu=0$ and by Corollary~\ref{ballshapeBF} (ball shape theorem), the pseudo-balls defined by $T$ grow faster than the Euclidean ones.
\end{proof}

\subsection{The ball shape theorem}

We set out to prove Theorem~\ref{ballshapeBFT} (ball shape theorem).
Firstly, let us define a particular event:
\begin{itemize} 
	\item Let $T$ be a random pseudometric over $\R^d$ satisfying conditions~(\ref{homT}) (ergodicity) and (\ref{moment}) (finite moment). 
Denote by $E$  the event 
	\begin{equation}\label{conv}
	E:=\left\{\forall b\in \QQ^d, \ \frac{1}n T(0,nb)\to_{n\to+\infty}\mu(b)
	\right\},
	\end{equation}
	where $\mu$ is the time constant defined by Theorem~\ref{fixeddirT}.
	\end{itemize}
Note that by Theorem~\ref{fixeddirT}, $E$ happens almost surely.
For both cases of Theorem~\ref{ballshapeBFT}, $\mu=0$ or $\mu>0$, we will use the same compacity lemma:
\begin{lem}\label{lemlem}
	Let $T$ be a random pseudometric satisfying conditions~(\ref{homT}) (ergodicity),    (\ref{moment}) (finite moment) and~(\ref{lipT}) (Lipschitz). 	
	Assume $E$ is satisfied, 
	and 
	let 
	$(z_n)_n$ be a sequence in $\R^d$ such that	
	$\|z_n\|\to_n +\infty$. 
		Then, there exists a subsequence $(y_n)_n$ of $(z_n)_n$ and $a\in \mathbb S^{d-1}$ such that
			\begin{equation}\label{finish}
			\frac{y_n}{\|y_n\|}\to_n a \text{ and }	\frac{1}{\|y_n\|} T(0,y_n)  \to_n \mu(a).
		\end{equation}
	\end{lem}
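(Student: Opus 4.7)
The plan is as follows. Since the unit sphere $\mathbb{S}^{d-1}$ is compact and $\|z_n\| \to +\infty$, I first extract a subsequence $(y_n)_n$ of $(z_n)_n$ such that $y_n/\|y_n\| \to a$ for some $a \in \mathbb{S}^{d-1}$; this already delivers the first half of (\ref{finish}). The real work is to prove $T(0,y_n)/\|y_n\| \to \mu(a)$. The idea is to approximate $a$ by rational directions $b \in \QQ^d$, compare $T(0, y_n)$ with $T(0, m_n b)$ where $m_n := \lfloor \|y_n\| \rfloor$, use the hypothesis that the event $E$ holds to take $n \to \infty$ along the integer sequence $(m_n)$, and finally let $b \to a$ using the continuity of $\mu$.

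More concretely, fix $b \in \QQ^d$ and set $m_n = \lfloor \|y_n\| \rfloor$, which tends to infinity. Writing $y_n - m_n b = \|y_n\|(y_n/\|y_n\| - b) + (\|y_n\| - m_n) b$, the Lipschitz hypothesis (\ref{lipT}) yields
\begin{equation*}
|T(0, y_n) - T(0, m_n b)| \;\leq\; C \|y_n - m_n b\| \;\leq\; C \|y_n\| \Bigl\|\frac{y_n}{\|y_n\|} - b\Bigr\| + C \|b\|,
\end{equation*}
so after dividing by $\|y_n\|$ and sending $n \to \infty$ the right-hand side converges to $C \|a - b\|$. On the other hand, writing $T(0, m_n b)/\|y_n\| = (m_n/\|y_n\|) \cdot T(0, m_n b)/m_n$ and using $m_n \to \infty$, $m_n/\|y_n\| \to 1$ together with the assumption $E$, one obtains $T(0, m_n b)/\|y_n\| \to \mu(b)$. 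Combining these two facts gives
\begin{equation*}
\limsup_{n \to \infty} \left| \frac{T(0, y_n)}{\|y_n\|} - \mu(b) \right| \;\leq\; C \|a - b\|.
\end{equation*}

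To close the argument, recall that by Lemma~\ref{mulip}, $\mu$ is Lipschitz on $\RR^d$, so $|\mu(b) - \mu(a)| \leq \Lip(\mu)\, \|b - a\|$. Hence for every $b \in \QQ^d$,
\begin{equation*}
\limsup_{n \to \infty} \left| \frac{T(0, y_n)}{\|y_n\|} - \mu(a) \right| \;\leq\; \bigl(C + \Lip(\mu)\bigr) \|a - b\|,
\end{equation*}
and taking $b \in \QQ^d$ arbitrarily close to $a$ forces this limsup to vanish, proving (\ref{finish}). I do not anticipate any serious obstacle: this is essentially a routine compactness plus rational approximation. The one subtlety worth highlighting is that the almost-sure convergence encoded in $E$ is indexed only by integer multiples of rational vectors, which forces the use of the floor $m_n$ in place of the real scalar $\|y_n\|$; the error this substitution introduces is exactly what the Lipschitz hypothesis (\ref{lipT}) is there to absorb.
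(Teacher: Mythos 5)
Your proof is correct and follows essentially the same route as the paper: extract a subsequence so that $y_n/\|y_n\|$ converges to some $a \in \mathbb S^{d-1}$, approximate $a$ by rational directions $b$, apply the almost-sure convergence encoded in $E$ along a diverging integer sequence, and absorb the two approximation errors (direction and scale) using the Lipschitz property of $T$ and of $\mu$ (Lemma~\ref{mulip}). The one place you are a touch more careful than the paper is in passing through $m_n = \lfloor\|y_n\|\rfloor$ rather than using $\|y_n\|$ directly inside $E$, which is indexed by integers; the paper glosses over this and your floor-plus-Lipschitz bridging is the clean way to make that step airtight.
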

\begin{proof} 
	By Lemma~\ref{mulip}, there exists $C_\mu>0$ such that $\mu $ is $C_\mu$-Lipschitz, and 
	by condition~(\ref{lipT}) there exists $C_T>0$ such that $T$ is $C_T$-Lipschitz. 
		By compactness,
	we can assume that there exists a subsequence  $(y_n)_n$ of $(z_n)_n$ and $a\in \mathbb S^{d-1}$, such that
	\begin{equation}\label{compact}
	\frac{y_n}{\|y_n\|}\to_{n\to\infty}a.
	\end{equation}
	Let  $\eta>0$ and $b=b(\eta)\in\QQ^d$ be such that 
	\begin{equation}\label{zed}
	\|a-b\|<\frac{\eta}{9\max(C_T, C_\mu)}.
	\end{equation}
	Let  $N$ be so large that 
	\begin{equation}\label{xy}
	\forall n\geq N,\ \left\|\frac{y_n}{\|y_n\|}-a\right\|<\frac{\eta}{3\max(C_\mu, C_T)}.
	\end{equation}
	Since $\mu$ is $C_\mu$-Lipschitz, (\ref{xy}) implies that 
	\begin{equation}\label{mu}
	\forall n\geq N,  \ 	\big|\mu(y_n)-\|y_n\|\mu(a)\big|<\frac{\eta}{3}\|y_n\|.
	\end{equation}
	Since $E$ given by (\ref{conv}) holds, there exists $N_\eta\geq N$, such that 
	$$
	\forall n\geq N_\eta, \ 		\left|\frac{T(0,\|y_n\|b)}{\|y_n\|}-\mu(b)\right|<\frac{\eta}{9}.
	$$
	Moreover by (\ref{zed}) and since $T$ is $C_T$-Lipschitz, 
	$$\forall n\in \NN, \ \left|\frac{T(0,\|y_n\|a)}{\|y_n\|}-\frac{T(0,\|y_n\|b)}{\|y_n\|}\right|<
	\frac{\eta}{9},
	$$
	so that we have for all $n\geq N_\eta$, using again (\ref{zed}) and that $\mu$ is $C_\mu$-Lipschitz for the last term,
	\begin{eqnarray}\label{y}
	\left|\frac{T(0,\|y_n\|a)}{\|y_n\|}-\mu(a)\right|&\leq &
	\left|\frac{T(0,\|y_n\|a)}{\|y_n\|}-\frac{T(0,\|y_n\|b)}{\|y_n\|}\right| + \left|\frac{T(0,\|y_n\|b)}{\|y_n\|}-\mu(b)\right| \nonumber \\ && + \left|\mu(b)-\mu(a)\right|<\frac{\eta}{3}.
	\end{eqnarray}
	Now, for all $n\geq N_\eta$:
	\begin{eqnarray*}
		\Bigl|T(0,y_n)-\mu(y_n)\Bigr|&\leq& \Bigl\lvert T(0,y_n)-T(0,\|y_n\|a)\Bigr\rvert+\Bigl\lvert T(0,\|y_n\|a)-\mu(\|y_n\|a)\Bigr\rvert\\
		&&+\Bigl\lvert\mu(\|y_n\|a)-\mu(y_n)\Bigr\rvert.
	\end{eqnarray*}
	Since $T$ is $C$-Lipschitz and by (\ref{xy}), for any $n\geq N$ the first term is upper bounded by
	$\frac{\eta}{3}\|y_n\|$.
	By (\ref{y}), for any $n\geq N_\eta$ the second term is bounded by
	$\frac{\eta}{3}\|y_n\|.$
	By (\ref{mu}) the third term is less than $\frac{\eta}{3}\|y_n\|$ for all $n\geq N$.
	We deduce that 
	$$\forall n\geq N_\eta, \ 
	|T(0,y_n)-\mu(y_n)|<\eta\|y_n\|.
	$$
	Hence, we have proved that
	\begin{equation}
	\frac{1}{\|y_n\|} T(0,y_n) -\mu(\frac{y_n}{\|y_n\|}) \to_n 0,
	\end{equation}
	which implies by continuity of $\mu$ and~(\ref{compact}) that 
	\begin{equation}\label{limlim}
	\frac{1}{\|y_n\|} T(0,y_n)\to_n \mu(a) .
	\end{equation}
	
	\end{proof}
\begin{proof}[Proof of Theorem~\ref{ballshapeBFT}]
First, the compact $K$ defined by~(\ref{KK}) is convex. 
	Indeed, since $\mu$ is a semi-norm, for any $x,y\in\RR^d$ and $t\in[0,1]$,
	$$
		\mu\left(tx+(1-t)y\right)\leq\mu\left(tx\right)+\mu\left( (1-t)y\right)=t\mu(x)+(1-t)\mu(y).
	$$
For the rest of the proof, we begin
 with general implications. 
Firstly,
\begin{eqnarray}\label{implik1}
 \forall \epsilon, t>0, \forall x\in \RR^d, \ x\in \frac{1}t B_t\setminus (1+\epsilon)K 
&\Rightarrow &
\mu(tx) -T(0,tx) > \epsilon t\\ \label{implik2}
\text{ and } x\in  (1-\epsilon)K \setminus \frac{1}t B_t
&\Rightarrow&
T(0,tx)-\mu(tx) > \epsilon t.
\end{eqnarray}
Moreover, by Lemma~\ref{mulip}, there exists $C_\mu>0$ such that $\mu $ is $C_\mu$-Lipschitz, 
so that 
\begin{eqnarray}\label{ksmu}
\forall \epsilon, t>0, \forall x\in \RR^d, \ x\in \frac{1}t B_t\setminus (1+\epsilon)K 
&\Rightarrow&
\|x\|\geq \frac{1}{C_\mu}.
\end{eqnarray}
Besides under condition~(\ref{lipT}) there exists $C_T>0$ such that $T$ is $C_T$-Lipschitz, so that
\begin{eqnarray}\label{ksT}
\forall \epsilon, t>0, \forall x\in \RR^d, \ x\in  (1-\epsilon)K \setminus \frac{1}t B_t
&\Rightarrow&
\|x\|\geq \frac{1}{C_T}.
\end{eqnarray}
Lastly, if $\mu$ is a norm, 
\begin{eqnarray}\label{ks3}
\forall \epsilon\in ]0,1[, t>0, \forall x\in \RR^d, \ x\in  (1-\epsilon)K \setminus \frac{1}t B_t
&\Rightarrow&
\|x\|\leq \frac{1-\epsilon}{\mu(\frac{x}{\|x\|})}.
\end{eqnarray}
We now prove the second assertion of Theorem~\ref{ballshapeBFT}.
Let $E$ be the event defined by~(\ref{conv}).
By Theorem~\ref{fixeddirT} (existence of $\mu$), $\PP(E) = 1$. 
For any $\epsilon\in ]0,1[$, 
define
$$ I_\epsilon:= \left\{(1-\epsilon)K\subset\frac{1}tB_t\subset(1+\epsilon)K\text{ for all $t$ large enough}\right\}.$$
 It is enough to prove that 
$$E\subset I_\epsilon.$$ 
Assume on the contrary that there exists $\epsilon>0$ such that $E$ happens but not $I_\epsilon$. By~(\ref{implik1}) and~(\ref{implik2}), it implies that there exists a sequence $(t_n)_n$ of positive reals such that
\begin{equation}\label{tni}
 t_n \to_n +\infty,
\end{equation}
 and a sequence $(x_n)_n \in (\R^d)^\NN$, such that 
\begin{equation}\label{iksen}
\forall n\in \NN, \ x_n \in \left(\frac{B_{t_n}}{t_n}\setminus (1+\epsilon)K \right)
\cup \left((1-\epsilon)K \setminus \frac{B_{t_n}}{t_n}\right) 
\end{equation}
and 
	\begin{equation}\label{debut}
	|T(0,x_nt_n)-\mu(t_nx_n)|\geq \epsilon t_n.
	\end{equation}
For any integer $n$, let
	$$z_n:= t_n x_n.$$ Note that for any $n$, $z_n\not=0$ and
	by~(\ref{tni}), (\ref{iksen}), (\ref{ksmu}) and~(\ref{ksT}), 
	$$\|z_n\|	\to_n +\infty.$$ 
By Lemma~\ref{lemlem}, 
there exists $a\in \R^d$ and a subsequence $(y_n)_n$ of $(z_n)_n$ such that 
\begin{equation}\label{limlimbis}
\frac{y_n}{\|y_n\|} \to_n a \text{ and } \frac{1}{\|y_n\|} T(0,y_n)\to_n \mu(a) .
\end{equation}
Since $\mu$ is a norm,  
 there exists 
	$N'$, such that for $n\geq N'$, 
			\begin{equation}\label{undemi}
			 \frac{1}{\|y_n\|} T(0,y_n)>\frac{1}2\mu(a) .
			 \end{equation}
Fix $n\in \NN$. If $x_n \in \frac{1}{t_n} B_{t_n}\setminus (1+\epsilon) K$, then 
$T(0,y_n) \leq t_n$  so that by~(\ref{undemi}),
$$ \|x_n\| \leq 2\mu(a).$$
If on the contrary $x_n \in (1-\epsilon) K\setminus \frac{1}t B_t,$
by (\ref{ks3})
$$\|x_n\|\leq \frac{1-\epsilon}{\inf_{\mathbb S^{d-1}} \mu}.$$
In all cases,  we see that $(x_n)_n$ is bounded 
so that by~(\ref{finish}) and the continuity of $\mu$ at $a$,
$$	 \frac{1}{t_n} T(0,y_n) -\mu(\frac{y_n}{t_n}) \to_n 0,$$
		which contradicts~(\ref{debut}) and proves the second assertion of Theorem~\ref{ballshapeBFT}.
	
	We prove now the first assertion of the theorem, again by contradiction.
	Assume $\mu=0$, that $E$ is satisfied and that there exists $M>0$ and a sequence $(t_n)_n$ diverging to infinity, such that
	$\forall n,\  \frac{1}{t_n}B(t_n)$ does not contain  $M\mathbb B.$
	Hence, there exists $(x_n) \in (M\mathbb B)^\NN$ such that 
 \begin{equation}\label{bjork}
 \forall n, \ T(0,x_n t_n)> t_n.
 \end{equation}
  As before,
	let $(z_n)_n:= (t_n x)_n$. Then again $\|z_n\|\to_n +\infty$. 
	By Lemma~\ref{lemlem},
	there exists a subsequence $(z_n)_n$ of $(y_n)_n$ such that 
	\begin{equation}\label{limlimbis}
	\frac{1}{\|y_n\|} T(0,y_n)\to_n 0 .
	\end{equation}
	 Because again $(x_n)_n$ is bounded, this implies that 
	$\frac{1}{t_n}T(0,y_n)\to_n 0,$
	which contradicts~(\ref{bjork}).
	\end{proof}

\subsection{Random densities}

\paragraph{Proof of the main results. }
We begin by the proofs of the main three corollaries. 
\begin{proof}[Proof of Corollary~\ref{fixeddirS} (existence of $\mu$)]
	By condition~(\ref{mesu}) (mesurability),
	the random pseudometric $T$ associated with $\sigma$ is well defined and satisfies condition~(\ref{homT}) (ergodicity) because $\sigma$ satisfies~(\ref{hom}) (ergodicity). Condition~(\ref{mesuT}) (annular mesurability) is also fullfilled. Condition~(\ref{momentS}) (finite moment) implies that $T$ satisfies condition~(\ref{moment}). Hence, the first assertion of Theorem~\ref{fixeddirT} can be then applied.
	Finally, $T$ satisfies~(\ref{isoT}) (isotropy) because $\sigma$ satisfies~(\ref{iso}) (isotropy). 
\end{proof}

For densities, we will need the following simple lemma:
\begin{lem}\label{lmesu} Let $\sigma : \R^d \to \R^+$ be a random density satisfying condition~(\ref{mesu}) (mesurability). Then, the associated pseudometric defined by~(\ref{defmetric}) satisfies condition~(\ref{mesuT}) (annular mesurability).
	\end{lem}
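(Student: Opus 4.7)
The plan is to realize $T(A_{r,R})$ as the infimum of a countable family of manifestly measurable functionals of $\sigma$. Since the pseudometric $T$ is a deterministic function of $\sigma$, the $\Sigma$-algebra generated by $T$ is a sub-$\Sigma$-algebra of that of $\sigma$, so it is enough to check measurability with respect to $\sigma$.

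First I would fix countable dense subsets $D_r \subset S(0,r)$ and $D_R \subset S(0,R)$, and let $\Gamma$ be the countable family of finite piecewise-affine paths whose initial vertex lies in $D_r$, terminal vertex in $D_R$, and intermediate vertices in $\mathbb{Q}^d$. For each such $\gamma$, parametrised as a finite concatenation of affine segments $\gamma_i:[a_i,b_i]\to \R^d$, the integral
\[
\int_\gamma \sigma \;=\; \sum_i \int_{a_i}^{b_i} \sigma(\gamma_i(t))\,\|\gamma_i'(t)\|\,dt
\]
is a well-defined measurable functional of $\sigma$ by Tonelli (using the Borel measurability given by~(\ref{mesu}) and that each evaluation $\sigma \mapsto \sigma(\gamma_i(t))$ is trivially measurable). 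Consequently
\[
T^{\ast}(A_{r,R}) \;:=\; \inf_{\gamma \in \Gamma} \int_\gamma \sigma
\]
is measurable as a countable infimum of measurable random variables.

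It then remains to establish that $T^{\ast}(A_{r,R}) = T(A_{r,R})$ almost surely. The inequality $T^{\ast}(A_{r,R}) \geq T(A_{r,R})$ is immediate, since $\Gamma$ consists of admissible competitors in the definition~(\ref{defmetric}). For the reverse inequality, given any piecewise $C^1$ path $\gamma_0$ from $x \in S(0,r)$ to $y \in S(0,R)$ with $\int_{\gamma_0}\sigma \leq T(A_{r,R}) + \varepsilon$, I would produce a piecewise-affine $\gamma \in \Gamma$ from nearby $x' \in D_r$ and $y' \in D_R$ whose integral does not exceed $\int_{\gamma_0}\sigma$ by more than $\varepsilon$.

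The main obstacle is this last approximation: when $\sigma$ is merely measurable, a uniform approximation of $\gamma_0$ by polygonal paths with rational vertices does not automatically yield convergence of the path integrals, since $\sigma$ may behave pathologically on measure-zero sets intersected by the approximants. I would handle this by enlarging $\Gamma$ to contain polygonal approximants on arbitrarily fine rational grids, and by a Fubini-type averaging argument over small translations of the approximant. This guarantees that, in any tubular neighbourhood of $\gamma_0$, \emph{some} rational polygonal path has integral within $\varepsilon$ of $\int_{\gamma_0}\sigma$: the averaged integral over translations equals $\varepsilon^{-d}$ times the integral of $\sigma$ over a tube shrinking to $\gamma_0$, which tends to $\int_{\gamma_0}\sigma$. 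This technical averaging is the main core of the argument.
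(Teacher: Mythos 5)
Your plan mirrors the paper's: express $T(A_{r,R})$ as a countable infimum of polygon integrals over piecewise-affine paths with rational vertices, and then check that each such integral is a measurable functional of $\sigma$ (the paper approximates $\sigma\circ\gamma$ from below by simple functions, you invoke Tonelli; both are adequate for that step, which is the unproblematic part of the argument).

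The gap is in the reduction step $T^{\ast}(A_{r,R})=T(A_{r,R})$, i.e.\ replacing the uncountable class of piecewise-$C^1$ competitors by the countable polygonal family $\Gamma$. You are right to flag this as the delicate point --- the paper in fact records the corresponding displayed identity without justification --- but the translation-averaging device you sketch does not close the gap. What Fubini together with Lebesgue differentiation give is that the normalized integral of $\sigma$ over a shrinking tube around $\gamma_0$ tends to $\int_{\gamma_0}\tilde\sigma$, where $\tilde\sigma$ agrees with $\sigma$ only Lebesgue-a.e.\ on $\RR^d$; since the curve $\gamma_0$ is an $\RR^d$-Lebesgue null set, $\tilde\sigma$ can differ from $\sigma$ on all of $\gamma_0$, and a near-optimal path is precisely the sort of non-generic curve on which $\sigma$ is forced to take exceptionally small values. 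Concretely, take $\sigma=\mathbbm{1}_{\gamma_0^{\compl}}$ with $\gamma_0$ a fixed semicircular (hence nowhere affine) $C^1$ arc crossing $A_{r,R}$: then $T(A_{r,R})=0$, yet every rational polygon meets $\gamma_0$ in a one-dimensional null set, so $\int_\gamma\sigma=\Leng(\gamma)\ge R-r$ and $T^{\ast}(A_{r,R})\ge R-r>0$. Thus the countable reduction genuinely fails for a general measurable density, and some regularity of $\{\sigma>0\}$ in the spirit of condition~(\ref{blackreg}) (compare Lemma~\ref{3a3b}) is tacitly needed for it to hold; your averaging argument does not supply a substitute.
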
 
\begin{proof}
	For any finite set of points $x_1,...,x_n$, we denote by $\gamma_{x_1,...,x_n}$ the piecewise affine path starting at $x_1$, ending at $x_n$, going through $x_2,...,x_{n-1}$ in order and following the straight line in between two consecutive $x_i$'s with speed $1$.
	$$
	T(A_{r,R})=\inf\limits_{n\in\NN}\inf\limits_{\substack{x_2,...,x_{n-1}\in\QQ^d\cap A_{r,R}\\ x_1\in\QQ^d\cap \BB_r,x_n\in\QQ^d\setminus \BB_R}}\int_{(\gamma_{x_1,...,x_n})}\sigma.
	$$
	Since any infimum of a sequence of mesurable maps is mesurable, it suffices to show that for a fixed piecewise affine $\gamma: [0,L]\to \R^d$ with $\|\gamma'\|=1$ almost everywhere, the mapping
	$$
	\sigma\mapsto\int_\gamma \sigma
	$$
	is measurable, where the $\Sigma-$algebra for $\sigma$ is the one generated by events depending on a finite number of points.
	For this, we use the approximation by simple functions. Using the condition~(\ref{mesu}) (measurability) and the non-negativity assumption for $\sigma$ we can write
	$$
	\int_{\gamma}\sigma=\sup\limits_{\substack{f\text{ simple function}\\ f\leq\sigma\circ\gamma}}\int_{[0,L]}f.
	$$
	Now, $\sup\int_{[0,L]}f$ is the supremum of terms which depend on the infimums of $\sigma$ on a finite number of segments, which is clearly measurable with respect to the $\Sigma-$algebra associated to $\sigma$.
	In conlusion, $T(A_{r,R})$ is measurable.
	\end{proof}
\begin{proof}[Proof of Corollary~\ref{positif} (positivity of $\mu$)]
By Lemma~\ref{lmesu}, $T$ satisfies condition~(\ref{mesuT}). 
	Repeating the implications of the proof of Corollary~\ref{fixeddirS} and adding the new conditions of $\sigma$ shows that the hypotheses for the $T$ in Theorem~\ref{positifT} are satisfied, so that Corollary~\ref{positif} holds.
\end{proof}

\begin{proof}[Proof of Corollary~\ref{muzero} (vanishing of $\mu$)]
	Since $\sigma$ satisfies condition~(\ref{vannuli}) (white crossings of large annuli), then the associated $T$ satisfies condition~(\ref{vannuliT}) (instant crossings of large annuli). Indeed, 
	$$\{\Cross_0(R,2R) \} \subset \{ T(R,2R) = 0\}.$$
\end{proof}
For the applications, we will need the following general Lemma which ensures that a minimial regularity of the positive region
implies equivalence of the two conditions~(\ref{wannuli}) (decay of white one-arms) and~(\ref{annuliT}) (decay of instant one-arms).
\begin{lem} \label{3a3b}Let $\sigma: \RR^p \to \R_+$ be a random density satisfying condition~(\ref{blackreg}) (positive region regularity). Then it satisfies 
	condition~(\ref{wannuli}) (decay of white one-arms) if and only if it satisfies condition~(\ref{annuliT}) (decay of instant one-arms).
\end{lem}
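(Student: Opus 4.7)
My plan is to establish the pointwise identity $\{T(A_R)=0\} = \Cross_0(A_R)$ on realisations satisfying~(\ref{blackreg}); taking probabilities then immediately yields the equivalence of~(\ref{wannuli}) and~(\ref{annuliT}) with the same exponent $\eta$. One inclusion is free: any continuous path contained in $\{\sigma=0\}$ and crossing $A_R$ has zero $\sigma$-integral, so $\Cross_0(A_R) \subset \{T(A_R)=0\}$ deterministically, whence (\ref{annuliT}) $\Rightarrow$ (\ref{wannuli}).

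For the reverse inclusion, on a realisation with $T(A_R)=0$ I will pick piecewise $C^1$ arc-length parameterised paths $\gamma_n$ from $S(0,1)$ to $S(0,R)$, which we may restrict to $\overline{A_R}$, with $\int_{\gamma_n}\sigma\to 0$. Their images $\Gamma_n$ are compact connected subsets of the compact set $\overline{A_R}$, each meeting $S(0,1)$ and $S(0,R)$. By the compactness of the Hausdorff metric on compact subsets of a compact metric space, together with the fact that Hausdorff limits of connected sets remain connected, after extraction $\Gamma_n$ converges to a compact connected set $K\subset\overline{A_R}$ that still meets both spheres. The next step will be to show $K\subset\{\sigma=0\}$, from which the local structure of $\{\sigma=0\}$ provided by~(\ref{blackreg}) should produce a continuous white path joining the two spheres.

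The core step, which I expect to be the main obstacle, is the inclusion $K\subset\{\sigma=0\}$. I will argue by contradiction: if some $x_0\in K$ satisfies $\sigma(x_0)>0$, then by~(\ref{blackreg}), $x_0$ lies in a $d$-dimensional submanifold $W$ of $\{\sigma>0\}$, and the non-degeneracy clause (distinct $W_i$ have disjoint closures or overlap on an open set) allows the selection of a nearby point $x$ in the interior of $W$, together with a Euclidean ball $B(x,r)\subset W$ on which $\sigma$ is bounded below by some $\delta>0$. Hausdorff convergence $\Gamma_n\to K$ forces $\gamma_n$ to enter $B(x,r/2)$ for large $n$; since the endpoints of $\gamma_n$ lie outside $B(x,r)$, the arc length of $\gamma_n$ inside $B(x,r)$ is at least $r/2$, yielding $\int_{\gamma_n}\sigma\geq \delta r/2$ and contradicting $\int_{\gamma_n}\sigma\to 0$.

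To finish, I will exploit that, thanks to~(\ref{blackreg}), $\{\sigma=0\}$ sits as the complement (inside $\overline{A_R}$) of a locally finite union of $d$-submanifolds and is locally path connected, so the connected compact set $K\subset\{\sigma=0\}$ lies in a single path component of $\{\sigma=0\}$. This component meets both $S(0,1)$ and $S(0,R)$, and therefore contains a continuous path realising $\Cross_0(A_R)$. The delicate point to monitor is how $K$ may cluster along the boundaries $\partial W$, where $\sigma$ might be arbitrarily small; the non-degeneracy clause in condition~(\ref{blackreg}) is precisely calibrated to rule out such pathologies and to supply the interior point required in the contradiction argument.
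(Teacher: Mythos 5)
Your route is genuinely different from the paper's. The paper argues directly from a hypothetical zero-cost piecewise $C^1$ path $\gamma$ with $\sigma|_\gamma=0$ almost everywhere, showing that any positive point on $\gamma$ is either interior to a positive component $W$ (giving an open arc with $\sigma>0$, a contradiction) or on a boundary $\partial W$ to which $\gamma$ is tangent, whence $\gamma$ can be perturbed off the positive region. This is shorter but implicitly assumes that $T(A_R)=0$ is attained by an actual path, which does not follow from the infimum definition. Your extraction of a Hausdorff limit $K$ of a minimizing sequence $\Gamma_n$ is a more careful treatment of precisely that attainment issue; in that respect your plan is an improvement.

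However, two steps in your argument do not go through as written. First, the lower bound ``$\sigma\geq\delta$ on $B(x,r)$'' does not follow from condition~(\ref{blackreg}), which constrains only the \emph{set} $\{\sigma>0\}$ and not the \emph{values} of $\sigma$; it is free for colourings (the only applications the paper actually uses) or for continuous positive densities, but for a general measurable density you would need an extra regularity hypothesis. Second, and more seriously, your handling of the boundary case $x_0\in\partial W$ is wrong: choosing a nearby interior point $x\in\Int W$ and a ball $B(x,r)\subset\Int W$ gives you no leverage, because Hausdorff convergence $\Gamma_n\to K$ only forces $\gamma_n$ to enter neighbourhoods of points of $K$, and $x\notin K$ in general. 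And $K$ can genuinely accumulate along $\partial W$ while avoiding $\Int W$: the minimizing paths may graze the boundary for arbitrarily small cost, and if $\sigma>0$ on $\partial W$ (as in the Boolean model, where $\{\sigma=1\}$ is closed), then $K\not\subset\{\sigma=0\}$ and the contradiction never materialises. This boundary case is exactly where the paper invokes its tangency-and-perturbation step; your proof would need to import an analogue of that step, or replace it with an argument showing that positive points of $K$ confined to codimension-one boundaries do not obstruct the existence of a white $C^0$ path. The closing appeal to local path connectedness of $\{\sigma=0\}$ to upgrade the connected compact $K$ into a genuine crossing path is the right idea but also deserves a sentence deriving it from~(\ref{blackreg}).
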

\begin{proof} Let us prove the stronger assertion that almost surely,
	the events $ \{\Cross_0(A_R)\}$ and $\{T(A_R)=0\}$ happen simultaneously. 
	Indeed, assume that there exists a piecwise $C^1$ path $\gamma$ from $S(0,1)$ to $S(0,R)$, such that $\sigma_{|\gamma}= 0$ almost everywhere. The absence of the first event would imply that there exists a positive  point $x$ in $\gamma$. If $x$ lies the in interior of one of the positive 0-codimension submanifolds given by condition~(\ref{blackreg}), then there exists an open subset of $\gamma$ over which $\sigma >0$, which is a contradiction. If $x $ is not in the interior of the submanifolds, it is on the boundary of one, to which $\gamma$ is necessarily tangent. Then, it can be moved a bit such that $\gamma$ misses the positive region. 
\end{proof}

\paragraph{Critical exponent.} 
\begin{proof}[Proof of Corollary~\ref{ceg}] This is a direct consequence of Corollary~\ref{cegT}. 
	\end{proof}
We now give Theorem~\ref{hugo}, which asserts that the conclusion of Corollary~\ref{ceg} can be obtained in a far shorter and direct way. The proof of this theorem has been provided to us by Hugo Vanneuville.
Moreover, the statement holds with a far milder decorrelation condition:
\begin{enumerate}[resume=condi]
	\item \label{stat} (stationarity) the law of $\sigma$ is invariant under translations.
	\item \label{AI2}  (very weak asymptotic independence)
	$ \Ind^-(Q, 2Q)\to_{Q\to \infty} 0,$
\end{enumerate}
where $\Ind^-$ is defined by~(\ref{ind}). 
\begin{rem}
	\begin{enumerate}
		\item Condition~(\ref{AI2}) is trivially fullfilled by Bernoulli percolation, and is true for the models we handle with in this paper. 
		\item Note also that this condition is far weaker than our condition~(\ref{AI}) (asymptotic independence). For instance, by \cite[Remark 4.3]{muirhead2018sharp}, 
		this (\ref{AI2}) holds for Gaussian fields whose correlation function has a polynomial decay with degree greater than $2$. 
	\end{enumerate}
\end{rem}
\begin{thm}\cite{hugo}\label{hugo} Let $\epsilon, p_0\in \R$
	$(\sigma_p)_{p_0\leq p\leq p_0+\epsilon} : \R^d \to \R^+$ a  family of random densities which is weakly continuous in the parameter $p$, such that   for any $p$, $\sigma_p$ satisfies conditions~(\ref{stat}) and (\ref{AI2}). 
	Assume that for any $p>p_0$, almost surely there exists a continuous path in  $\{\sigma_p=0\}$ from the  origin to infinity. Then, for $\sigma_{p_0},$
	$$ \limsup_{R\to \infty} R^{d-1} \PP[\Cross_0(A_R)]>0.$$
\end{thm}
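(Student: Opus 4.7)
The plan is to combine an elementary covering estimate with the supercritical hypothesis, arguing entirely at the critical level $p_0$. Fix $R$ large and choose $N = O(R^{d-1})$ points $x_1,\dots,x_N \in S(0,3R/2)$ forming a $1$-net, so that the unit balls $B(x_i,1)$ cover $S(0,3R/2)$. Any continuous zero-path realizing a crossing of the annulus $A_{R,2R}$ must enter some $B(x_i,1)$ when crossing $S(0,3R/2)$; from that entry point the path extends to both $S(0,R)$ and $S(0,2R)$, each at Euclidean distance at least $R/2-1$ from $x_i$. Hence a translated one-arm event $\Cross_0$ of $A_{R/2-1}$ centered at $x_i$ occurs for some $i$, and by stationarity~(\ref{stat}) a union bound yields
$$
\PP_{p_0}\bigl[\exists\,\text{zero-crossing of } A_{R,2R}\bigr] \leq N\,\PP_{p_0}\bigl[\Cross_0(A_{R/2-1})\bigr].
$$

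Next I would lower-bound the left-hand side by exploiting the supercritical hypothesis. For every $p > p_0$ the almost sure existence of a continuous zero-path from the origin to infinity forces a zero-crossing of $A_{R,2R}$, so $\PP_p[\exists\,\text{zero-crossing of }A_{R,2R}] = 1$ for every $R$. Weak continuity of $p \mapsto \sigma_p$ at $p_0$, combined with upper semi-continuity of the event ``there exists a continuous path in $\{\sigma = 0\}$ between two disjoint compact sets'' (a closed condition on the zero set in the Fell topology on closed subsets of $\R^d$), then gives
$$
\PP_{p_0}\bigl[\exists\,\text{zero-crossing of } A_{R,2R}\bigr] \geq \limsup_{p\downarrow p_0}\PP_p\bigl[\exists\,\text{zero-crossing of } A_{R,2R}\bigr] = 1
$$
uniformly in $R$. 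Combining with the previous step yields $R^{d-1}\,\PP_{p_0}[\Cross_0(A_R)] \geq c' > 0$ for all sufficiently large $R$, which is strictly stronger than the claimed positive $\limsup$.

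The main obstacle is justifying the passage $p \downarrow p_0$: weak continuity of the law of $\sigma_p$ does not on its own imply that the probability of the crossing event passes to the limit, because the crossing event is not a continuity set in general. I would address this by realizing $\sigma_p$ jointly with its closed zero set and exploiting that a.s.\ convergence in the Fell topology together with upper semi-continuity of path connectivity yields the desired inequality; in the natural applications (Gaussian, Voronoi, Boolean) one can also short-circuit the issue via an explicit monotone coupling in $p$. Condition~(\ref{AI2}) does not enter the core estimate directly; together with~(\ref{stat}) it provides the ergodicity and $0$--$1$ law that make ``almost sure existence of an infinite zero-cluster'' a meaningful notion, but it plays no active role in either the union bound or the limit argument above.
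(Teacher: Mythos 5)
Your proposal diverges from the paper's argument at the crucial step and the gaps you flag are genuine, not cosmetic. The covering estimate is fine (it is essentially the final packing step in the paper's proof), but the core of your argument --- deducing $\PP_{p_0}[\exists\,\text{zero-crossing of }A(R,2R)] = 1$ for all $R$ from the supercritical hypothesis via upper semi-continuity in $p$ --- does not work. First, the conclusion you would obtain is demonstrably too strong: for planar critical Gaussian, Voronoi, or Bernoulli percolation (the very models the paper applies this theorem to), RSW estimates show the zero annulus-crossing probability is bounded away from $1$ at criticality. This signals that the hypothesis ``a.s.\ a continuous zero-path from the origin to infinity'' must be read as ``a.s.\ an unbounded zero cluster exists,'' under which you no longer get $\PP_p[\Cross_0(A(R,2R))] = 1$ for $p>p_0$ at any fixed scale, and your limit argument collapses. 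Second, even granting the literal reading, ``weak continuity of the law'' does not give the semi-continuity inequality you need (Hausdorff limits of crossing paths are continua, not paths, so the crossing event is not obviously closed); you acknowledge this but do not close it, and the paper only needs continuity of $\PP_p[\Cross_0(A(R_0,2R_0))]$ in $p$ for \emph{one} fixed $R_0$, used only as a perturbation, not as a limit.

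Most importantly, you use condition~(\ref{AI2}) nowhere, yet it is the engine of the actual proof. The paper shows by a packing argument that a crossing of $A(10R,20R)$ forces two well-separated crossings of translates of $A(R,2R)$, and (\ref{AI2}) then yields the renormalization inequality $\PP[\Cross_0 A(10R,20R)]\leq N^2\bigl(\PP[\Cross_0 A(R,2R)]^2 + \Ind^-(R,2R)\bigr)$. Iterating shows that once the annulus crossing probability at some $p$ drops below $\delta = 1/(2N^2)$ at a scale $R\geq R_1$, it stays below $\delta$ at all scales $10^kR$, which is incompatible with an a.s.\ unbounded zero cluster at $p$. Assuming the conclusion fails at $p_0$, the continuity in $p$ at a single scale then places a supercritical $p>p_0$ below the threshold $\delta$, a contradiction. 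Without setting up this iteration via (\ref{AI2}), you have no mechanism to propagate smallness across scales, and the proof does not go through.
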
 
Note that this theorem does not demand positive correlation of crossings. 
\begin{proof}[Proof of Theorem~\ref{hugo}]
	
	Firstly, a simple $d$-dimensional packing argument shows that there exists $N$ depending only on $d$ such that for any $R\geq 1$, $A(10R, 20R)$ contains at most $N$ translates $(A_1, \cdots, A_{N})$ of $A(R,2R)$ such that 
	\begin{equation*}
	\{\Cross_0 A(10R,20R)\}\subset \bigcup_{1\leq i<j\leq N
		\atop \dist(A_i, A_j) \geq R  
	}\{\Cross_0(A_i)\}\cap  \{\Cross_0(A_j)\}
	\end{equation*}
	Summing up over the choices of pairs, for any $R\geq 1$, 
	\begin{equation}\label{ret}
	\PP[\Cross_0 A(10R,20R)]\leq N^2 \left(\PP[\Cross_0 A(R,2R)]^2 +\Ind^-(R, 2 R)\right).
	\end{equation}
	Let 
	$$\delta =\frac{1}{2N^2}.$$ 
	By condition~(\ref{AI2}),
	$\exists R_1\geq 1, \forall R\geq R_1, \ \Ind^-(R,2R)\leq \frac{\delta}{2N^2}.$
	Hence by~(\ref{ret}),
	\begin{equation}\label{pastek}
	\forall R\geq R_1, \ \PP[\Cross_0A(R,2 R)]\leq \delta\Rightarrow 
	\PP[\Cross_0A(10R,20R)]\leq \delta.
	\end{equation}
	By an immediate induction, 
	$$\exists R\geq R_1, \PP[\Cross_0A(R,2 R)]\leq \delta \Rightarrow \forall k\geq 1,  \PP[\Cross_0A(10^kR,2 \cdot10^kR)]\leq \delta.$$ 
	Hence, we have proved 
	$$\exists R\geq R_1, \PP[\Cross_0A(R,2 R)]\leq \delta \Rightarrow 
	p=p_0.$$
	
	Now, assume that $p=p_0$ and $\PP[\Cross_0A(R,2 R)]\to_{R\to \infty} 0$. 
	Then there exists $R_0\geq R_1$ such that 
	$$ \PP[\Cross_0A(R_0,2 R_0)]\leq \delta/2.$$
	By continuity in $p$, 
	$ \PP[\Cross_0A(R_0,2 R_0)]\leq \delta$
	holds for any close enough parameter $p>p_0$.
	But the previous argument implies $p=p_0$, a contradiction.

	Finally, a simple $(d-1)$-dimensional packing argument shows that there exists $C>0$ depending only on the dimension $d$ such that for any $R>1$, there exist  $N_R\leq CR^{d-1}$ translates $(a_1,\cdots, a_{N_R})$ of $A_{R/2}$ (see~(\ref{spsh})), such that for any $i, a_i\subset A(R, 2R) $ and the inclusion of events holds:
	$$ \{\Cross_0 A(R,2R)\}\subset \bigcup_1^{N_R}\{ \Cross_0a_i)\},$$
	so that 
	$ \PP[\Cross_0 A(R,2R)]\leq C R^{d-1}\PP[\Cross_0 A_{R/2})].$
	Assume that Corollary~\ref{cebp} does not hold. Then
	$$\PP[\Cross_0 A(R,2R)]\to_{R\to \infty} 0,$$
	which is a contradiction by the latter paragraph.	
\end{proof}

We move now to the applications.

\section{Proof of the applications}\label{secapp}

\subsection{Classical FPP}\label{proofclass}
\begin{proof}[Sketch of proof of Corollary~\ref{corclassical}]
	The existence of $\mu_\nu$ still holds using the classical proof. 
	We can associate to $\sigma_\nu$ the more simple Bernoulli FPP
	$\sigma$ defined by $\sigma(e)= 0$ for an edge $e\in \mathbb E^d$ if $\nu(e)=0$, and $\sigma(e)=1$ in the other case. 
	Condition~(\ref{AI}) (quasi independence) for $\sigma_\nu$ and $\sigma$ is fullfilled because the times are given independently.

	Assume  that $\PP [\nu=0]< p_c(d).$ 
	Then 
	the probability of white one-arm for the Bernoulli case decreases exponentially fast~\cite[Theorem 5.4]{grimmett}, so that 
	condition~(\ref{annuliT}) (decay of instant one-arms) holds for $\sigma_\nu$. 
	Now it happens that our main Theorem~\ref{positifT} and Corollary~\ref{positif} hold in the lattice setting. We did not write down this fact because our main purpose is continuous FPP. 
	By a lattice version of Corollary~\ref{positif}, $\mu_\nu$ is a norm.
\end{proof}
\begin{proof}[Proof of Corollary~\ref{cebp}]
	If $p=p_c$, Theorem~\ref{tp} implies that $\mu=0$. Since
	condition~(\ref{AI}) is satisfied for Bernoulli percolation, 
the hypotheses of Corollary~\ref{ceg} are satisfied, so that Corollary~\ref{cebp} holds. 
	\end{proof}
Theorem~\ref{hugo} provides a shorter and more direct proof of Corollary~\ref{cebp}.

\subsection{Gaussian FPP}

\paragraph{Regularity.}

We begin by recalling two important classical regularity results. 
The first one concerns analytic regularity:
\begin{thm}\cite[\S A.3]{nazarov2}\label{kolmogorov}
	Let $k\in \NN^*$ and $f: \RR^d\to \RR$ be a Gaussian field with covariance $e$, such that
	$e$ can be differentiated at least $k$ times in  $x$ and $k$ times in $y$, and that these derivatives are continuous. Then, almost surely 
	$f$ is $C^{k-1}$. 
\end{thm}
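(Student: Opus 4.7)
The natural approach is to combine the Kolmogorov--Chentsov continuity criterion with the fact that partial derivatives of a Gaussian field are themselves Gaussian fields whose covariance is a mixed derivative of the original covariance.

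As a first step I would handle the base case $k=1$, i.e.\ continuity. For a centered Gaussian field $g:\RR^d\to \RR$ with covariance $e_g$, the increment has variance
$$\EE[(g(x)-g(y))^2] = e_g(x,x) + e_g(y,y) - 2e_g(x,y).$$
If $e_g$ is $C^1$ in $x$ and in $y$ with continuous derivatives, a Taylor expansion of the right-hand side around the diagonal yields, on any compact $K\subset \RR^d$, a constant $C_K$ with $\EE[(g(x)-g(y))^2]\leq C_K\|x-y\|$. Because $g(x)-g(y)$ is Gaussian, all its moments are comparable to powers of the variance, so $\EE[|g(x)-g(y)|^p]\leq C_p\,C_K^{p/2}\|x-y\|^{p/2}$ for every $p\geq 1$. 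Picking $p$ with $p/2>d$, the Kolmogorov--Chentsov criterion furnishes a continuous modification of $g$, which settles $k=1$.

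For general $k$, I would construct the mean-square partial derivatives of $f$ as Gaussian fields. For $|\alpha|\leq k$, the kernel $(x,y)\mapsto \partial_x^\alpha\partial_y^\alpha e(x,y)$ is symmetric, continuous, and positive semi-definite (it is the covariance of the formal derivative), hence defines a centered Gaussian field $f_\alpha$. Iterating the identity $\EE[(D_{h,i}f(x)-\partial_i f(x))^2]\to 0$ as $h\downarrow 0$, where $D_{h,i}$ is the symmetric finite difference, one identifies $f_\alpha$ with the $L^2$-derivative $\partial^\alpha f$, since the covariance computation reduces precisely to the hypothesis that $e$ has $\alpha$ continuous derivatives in $x$ and in $y$.

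Finally, for $|\alpha|\leq k-1$, the covariance $\partial_x^\alpha\partial_y^\alpha e$ of $f_\alpha$ still has one additional continuous derivative in each slot, so the base-case argument applies and yields a continuous modification $\tilde f_\alpha$. Doing this simultaneously for the finite family $\{\alpha:|\alpha|\leq k-1\}$, one obtains a modification of $f$ together with continuous candidates for all its partial derivatives up to order $k-1$. The main obstacle, and the step I would be most careful with, is the last identification: one must check that these pointwise continuous modifications assemble into a genuinely $C^{k-1}$ function, i.e.\ that $\tilde f_\alpha$ really is the classical $\alpha$-th partial derivative of $\tilde f$. The standard way around this is to integrate $\tilde f_{\alpha+e_i}$ along axis-parallel segments, apply Fubini to exchange the integral with expectation, and use the $L^2$ identification together with continuity to conclude via the fundamental theorem of calculus that $\partial_i \tilde f_\alpha = \tilde f_{\alpha+e_i}$ pointwise.
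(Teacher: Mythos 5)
The paper does not give its own proof of this theorem; it is cited as an external result from Nazarov and Sodin. Your argument is a correct rendering of the standard proof and matches the approach one would expect to find in the cited reference: establish continuity via the Kolmogorov--Chentsov criterion using Gaussian moment comparison, construct the mean-square partial derivatives as Gaussian fields whose covariances are the appropriate mixed derivatives of $e$, apply the base case to each of these (for $|\alpha|\leq k-1$, since the covariance $\partial_x^\alpha\partial_y^\alpha e$ retains one more continuous derivative in each slot), and then upgrade the $L^2$-identification to a pointwise one by the Fubini--fundamental-theorem-of-calculus argument. The one point worth stating explicitly, which you gesture at but do not fully spell out, is that for $|\alpha|\leq k-2$ one proves $\EE\bigl[\bigl(\tilde f_\alpha(x+te_i)-\tilde f_\alpha(x)-\int_0^t \tilde f_{\alpha+e_i}(x+se_i)\,ds\bigr)^2\bigr]=0$ by expanding the square into covariances of the form $\partial_x^{\beta}\partial_y^{\gamma}e$ with $|\beta|,|\gamma|\leq k$, all of which exist and are continuous by hypothesis; the a.s.\ identity for fixed $(x,t)$ then extends to all $(x,t)$ simultaneously by continuity of the modifications, and iterating over $i$ and $\alpha$ gives $\tilde f\in C^{k-1}$.
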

The second one concerns the geometric regularity of the vanishing locus of the field:
\begin{thm}\label{regular}\cite[Lemma 12.11.12]{adler-taylor}
	Let $f: \RR^d\to \RR$ be a Gaussian field, almost surely $C^1$. Then, almost surely $f$ vanishes transversally. In particular, $\{f=0\}$ is empty or has codimension 1.
\end{thm}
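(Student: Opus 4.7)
The plan is to deduce the statement from Bulinskaya's lemma applied to the extended random field
$F := (f,\nabla f)\colon \RR^d \to \RR^{d+1}$. Bulinskaya's lemma says that an a.s.\ $C^0$ random field taking values in a space of dimension strictly larger than its domain, and whose marginal law $F(x)$ admits a bounded density, almost surely never takes the value $0$. Once I know that $\PP[\exists x:\; f(x)=0,\;\nabla f(x)=0]=0$, transversality of $f$ at every zero is automatic, and the implicit function theorem then guarantees that on this full-measure event the set $\{f=0\}$ is either empty or a $C^1$ submanifold of $\RR^d$ of codimension $1$.

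To prove the Bulinskaya bound I would proceed by a standard covering argument: cover $\RR^d$ by countably many closed unit cubes $K$; on each $K$, partition into $N^d$ sub-cubes $Q_i$ of side $1/N$ with centres $x_i$. If $F$ vanishes somewhere in $Q_i$, then $|F(x_i)|\leq (\sqrt{d}/N)\sup_K\|DF\|$. Since by the non-degeneracy discussed below the Gaussian vector $F(x_i)\in\RR^{d+1}$ has a density bounded by some $C>0$, one has $\PP[\,|F(x_i)|\leq \varepsilon\,]=O(\varepsilon^{d+1})$. A union bound over the $N^d$ sub-cubes, combined with a tail estimate on $\sup_K\|DF\|$ (finite a.s.\ since $f$ is a.s.\ $C^1$), yields a bound of order $N^d\cdot N^{-(d+1)}=1/N$, which tends to $0$. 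Letting $N\to\infty$ and taking a countable union over $K$ gives the desired conclusion.

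The main technical obstacle is verifying the non-degeneracy hypothesis: the covariance matrix of $(f(x),\partial_1 f(x),\dots,\partial_d f(x))$ must be invertible, so that this Gaussian vector has a bounded density on $\RR^{d+1}$. By stationarity it suffices to check this at $x=0$. A non-trivial linear relation among $f(0)$ and the $\partial_i f(0)$ would translate, via the spectral representation $\kappa=\mathcal{F}[\rho^2]$, into the vanishing of an affine polynomial in the frequency variable on the support of $\rho^2$; if the support of $\rho$ contains a neighbourhood of the origin—which is precisely what condition~(\ref{sr}) (strong regularity) assumes—this is impossible. Hence the Gaussian vector $(f(0),\nabla f(0))$ is non-degenerate, its law has a bounded $C^\infty$ density, and the covering argument above applies.
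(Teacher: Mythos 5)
The paper does not give a proof of this statement; it is quoted from \cite{adler-taylor} and used as a black box, so there is no internal argument to compare against. Your Bulinskaya-style approach is the standard one, but the covering step has a genuine regularity gap: bounding $|F(x_i)|$ by $(\sqrt d/N)\,\sup_K\|DF\|$ requires $F=(f,\nabla f)$ to be $C^1$, i.e.\ $f$ to be $C^2$, whereas the hypothesis is only a.s.\ $C^1$. Your parenthetical justification ``finite a.s.\ since $f$ is a.s.\ $C^1$'' is wrong, since $\|DF\|$ involves $D^2 f$, which need not exist under $C^1$. To run the argument at $C^1$ regularity you would need a quantitative modulus of continuity for $\nabla f$, roughly $\omega_{\nabla f}(h)=o(h^{d/(d+1)})$, and ``a.s.\ $C^1$'' by itself does not furnish any rate. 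In the paper's actual applications condition~(\ref{sr}) gives ample regularity, so the gap is invisible there, but it is a gap in a proof of the statement as quoted.

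On the other hand, you correctly identify a real defect in the statement: some non-degeneracy of $(f(x),\nabla f(x))$ must be assumed, since without it the conclusion fails --- e.g.\ $f(x)=a\,x_1^2$ with $a$ a standard Gaussian is a $C^\infty$ Gaussian field that a.s.\ vanishes non-transversally along $\{x_1=0\}$. You repair this by invoking stationarity and condition~(\ref{sr}), consistent with how the paper applies the lemma. Stationarity itself is not essential here (continuity and uniform non-degeneracy of the covariance of $(f,\nabla f)$ on compacts suffice for the uniform density bound), but a non-degeneracy hypothesis is indeed needed, and the statement as transcribed in the paper omits it.
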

For any $p\in \RR$, recall that 
\begin{equation}
\sigma_p = \frac{1}2\left(1+\text{sign} (f+p)\right),
\end{equation} where the sign is considered as $-1$ over $\{f=0\}$. 
By condition~(\ref{sr}) (strong regularity), $f$ is almost surely $C^1$, so that by Lemma~\ref{regular}, the vanishing locus has a vanishing Lebesgue measure, so that the previous choice has no influence on the value of the random pseudometric $T$ for $\sigma_p$ defined by~(\ref{defmetric}).
Theorems~\ref{kolmogorov} and~\ref{regular} have the useful corollary in our FPP situation:
\begin{cor}\label{equiarm}
Let $f: \RR^d \to \RR$ be be a Gaussian field satisfying assumptions (\ref{symmetries}) (symmetries) and (\ref{sr}) (strong regularity). Then for any $p\in \RR$, 
$\sigma_p$ satisfies condition~(\ref{wannuli}) (decay of white one-arms) if and only if it satisfies condition~(\ref{annuliT}) (decay of instant one-arms).
\end{cor}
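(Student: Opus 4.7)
The plan is to deduce Corollary~\ref{equiarm} from the general Lemma~\ref{3a3b}, once I verify that under the stated Gaussian hypotheses the colouring $\sigma_p$ satisfies the geometric condition~(\ref{blackreg}) (positive region regularity). I would first invoke regularity: by condition~(\ref{sr}) the covariance $\kappa = q \star q$ is sufficiently smooth (in particular twice differentiable in each variable), so Theorem~\ref{kolmogorov} gives that almost surely $f$ is $C^1$. Then Theorem~\ref{regular} applied to the Gaussian field $f+p$ yields that almost surely $f+p$ vanishes transversally, so $\{f=-p\}$ is a $C^1$ submanifold of codimension $1$ (possibly empty), and $\{\sigma_p>0\}=\{f>-p\}$ is a $C^1$ open submanifold whose boundary is this level set.

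Next I would verify the two remaining parts of condition~(\ref{blackreg}). Let $(W_i)_i$ be the connected components of $\{f>-p\}$, each a $d$-dimensional $C^1$ submanifold. For local finiteness: on any compact ball $K$, transversality of $f+p$ together with compactness imply that $\{f=-p\}\cap K$ is a compact $C^1$ submanifold-with-boundary, hence has finitely many connected components, which in turn bounds the number of $W_i$ intersecting $K$. For the non-touching property, suppose $W_1\neq W_2$ are distinct components and $x\in \overline{W_1}\cap \overline{W_2}$; by continuity $f(x)=-p$, and transversality provides a neighbourhood of $x$ in which $\{f=-p\}$ is a smooth hypersurface with $\{f>-p\}$ equal to a single open half-side, so only one component of $\{f>-p\}$ can meet that neighbourhood, contradicting $x\in \overline{W_1}\cap \overline{W_2}$. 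Therefore $\overline{W_1}\cap \overline{W_2}=\emptyset$, as required.

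With condition~(\ref{blackreg}) established for $\sigma_p$, Lemma~\ref{3a3b} applies and yields the equivalence of conditions~(\ref{wannuli}) and~(\ref{annuliT}) for $\sigma_p$. The likely sticking point is not the direct comparison of the two probability bounds — they coincide once the events $\{\Cross_0(A_R)\}$ and $\{T(A_R)=0\}$ are shown to coincide almost surely — but the clean deduction of the geometric condition~(\ref{blackreg}) from transversality; in particular, the argument that distinct components cannot share boundary points relies crucially on $f+p$ having no zero of critical type, which is exactly what Theorem~\ref{regular} provides under~(\ref{sr}).
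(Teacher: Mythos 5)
Your proposal is correct and follows essentially the same route as the paper: invoke Theorem~\ref{kolmogorov} to get $C^1$ regularity of $f$, then Theorem~\ref{regular} to get transversal vanishing (so $\{\sigma_p>0\}$ has smooth boundary), deduce condition~(\ref{blackreg}), and conclude via Lemma~\ref{3a3b}. The paper states the verification of~(\ref{blackreg}) more briefly, whereas you spell out the local finiteness and the non-touching property; your supplementary details (using transversality to rule out two components sharing a boundary point) are sound and align with what the paper leaves implicit.
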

\begin{proof} By Theorems~\ref{kolmogorov} and then Theorem~\ref{regular}, almost surely the positive region 
	$\{\sigma=1\}$ is a $d$-submanifold with smooth boundary. Consequently, $\sigma$ satisfies condition~(\ref{blackreg}) (positive region regularity).  The result is then a consequence
	of Lemma~\ref{3a3b}.
	\end{proof}
\paragraph{FKG inequality.}

For Gaussian fields, FKG inequality reads:
\begin{thm}\label{pitt}\cite{Pitt1982}, \cite[Lemma A.12]{rivera2019quasi}
Let $f: \RR^d\to \RR$ be a Gaussian field  satisfiying conditions~(\ref{symmetries}) (symmetries), (\ref{sr}) (strong regularity) and (\ref{wp}) (weak positive correlations). Then for any $p\in  \RR$, $\sigma_p$ defined by~(\ref{sip})  satisfies condition (\ref{FKG}) (FKG).
\end{thm}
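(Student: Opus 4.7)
The plan is to deduce this from Pitt's classical theorem: if $(X_1,\dots,X_n)$ is a centered Gaussian vector whose covariance matrix has all non-negative entries, then increasing functions of $(X_1,\dots,X_n)$ are positively associated. Under condition~(\ref{wp}), the covariance $e(x,y)=\kappa(x-y)=(q\star q)(x-y)$ is pointwise non-negative, so any finite-dimensional marginal $(f(x_1),\dots,f(x_n))$ has a covariance matrix with non-negative entries. Pitt's theorem then applies, yielding
\[
\EE\bigl[F(f(x_1),\dots,f(x_n))G(f(x_1),\dots,f(x_n))\bigr]\geq \EE[F]\,\EE[G]
\]
for every pair of coordinatewise non-decreasing bounded measurable $F,G$.

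The next step is to upgrade this finite-dimensional inequality to an inequality between crossing events of the form $\Cross_1(R)$. Since $\sigma_p$ is an increasing function of $f$ (the threshold $\{f+p>0\}$ being increasing in $f$), and since for any rectangle $R$ the event $\Cross_1(R)$ is increasing in $\sigma_p$, it is itself an increasing event with respect to the pointwise order on realisations of $f$. I would approximate $\mathbbm{1}_{\Cross_1(R)}$ by cylindrical functionals depending only on the values of $f$ at a finite net of points $\{x_1,\dots,x_n\}\subset R$: using that $f$ is almost surely $C^1$ (Theorem~\ref{kolmogorov}, available by condition~(\ref{sr})) and vanishes transversally (Theorem~\ref{regular}), a black crossing exists if and only if, for a sufficiently fine net, one can connect the two sides of $R$ by a discrete sequence of net-points at which $f+p$ is positive. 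Denoting the corresponding cylindrical events by $\Cross_1^{(n)}(R)$, they are increasing functions of $(f(x_1),\dots,f(x_n))$, and they increase (or can be arranged to converge monotonically) to $\Cross_1(R)$ almost surely.

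Combining the two ingredients: Pitt's theorem gives
\[
\PP\bigl[\Cross_1^{(n)}(R_1)\cap\Cross_1^{(n)}(R_2)\bigr]\geq \PP\bigl[\Cross_1^{(n)}(R_1)\bigr]\,\PP\bigl[\Cross_1^{(n)}(R_2)\bigr]
\]
for each finite net refining the ones used for $R_1$ and $R_2$. Passing to the limit via monotone (or dominated) convergence yields
\[
\PP[E_1\cap E_2]\geq \PP[E_1]\,\PP[E_2]
\]
for $E_i=\Cross_1(R_i)$, which is condition~(\ref{FKG}).

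The main obstacle is the approximation step: writing $\Cross_1(R)$ as a monotone limit of cylindrical increasing events compatible with both the almost sure $C^1$ regularity of $f$ and the transversality of $\{f+p=0\}$. This is the standard sticking point in applying Pitt-type inequalities to geometric events; once one fixes a careful enumeration of $\QQ^d$-dense nets and exploits transversality to ensure that crossings at the continuous level are captured by crossings in a sufficiently fine discrete approximation, the finite-dimensional inequality propagates to the limit without difficulty. The rest is essentially bookkeeping.
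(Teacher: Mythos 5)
The paper does not actually prove this theorem: it is imported by citation (Pitt for the finite-dimensional statement, Rivera--Vanneuville, Lemma~A.12, for the continuum version), and the only remark the paper adds is that Pitt's 1982 result was stated for Gaussian vectors. Your reconstruction is exactly the route that the cited references take: non-negativity of $\kappa$ gives non-negative finite-dimensional covariance matrices, Pitt's theorem gives positive association of increasing functionals of those finite-dimensional marginals, and then one upgrades to the continuum by observing that $\Cross_1(R)$ is an increasing event in $f$ and approximating it by cylindrical increasing events. So this is not a ``different route'' --- it is the standard one, and it is correct.

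One caveat worth keeping in mind on the step you yourself flag as the sticking point. A discretized crossing event built from net-points where $f+p>0$, with adjacency declared between nearby net-points, is not literally monotone in the mesh, nor does a chain of black net-points automatically yield a continuous black path. The cleaner version of the approximation --- and the one that makes the monotone/bounded convergence step airtight --- is to approximate from inside by the cylindrical increasing events $\{\text{there is a net-chain along which } f+p\geq\varepsilon\}$ and let $\varepsilon\downarrow 0$ together with the mesh, using $C^1$ regularity (via Theorem~\ref{kolmogorov}) and transversality (Theorem~\ref{regular}) to guarantee that an actual black crossing is eventually captured with a uniform positive margin, and conversely that a black net-chain with margin $\varepsilon$ and mesh $\ll\varepsilon/\|\nabla f\|_\infty$ produces a genuine continuous black path. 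Alternatively, one can avoid discretizing the event altogether and instead appeal to the extension of Pitt's inequality to arbitrary measurable increasing events of a continuous Gaussian process on a separable space, which is essentially what \cite[Lemma A.12]{rivera2019quasi} records. Either way the argument closes; the point is only that the monotonicity of the discrete approximants needs this extra $\varepsilon$-padding to be literally true.
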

Otherwise stated, for Gaussian fields with non-negative correlations, positive crossing events are positively correlated. Note that \cite{Pitt1982} was written for Gaussian vectors.

\paragraph{Exponential decay of crossing probabilities.}

When $p=0$, Theorem~\ref{RSW_2} asserts that both probabilities of $\Cross_0(nR)$ and $\Cross_1(nR)$ are uniformly lower bounded by a positive constant when $n$ goes to infinity. When $p\not=0$, this situation changes drastically:
\begin{thm}\label{expdec} \cite[Theorem 9]{rivera2019talagrand}
	Let $f:\R^2 \to \R $ be a planar Gaussian field satisfying assumptions (\ref{symmetries}) (symmetries), (\ref{sr}) (strong regularity), (\ref{wp}) (weak positive correlations) and (\ref{ed}) (strong decay of correlation). For any $p\in \RR$, let $\sigma_p $ be the associated random planar colouring defined by~(\ref{sip}), and  $R\subset \RR^2 $ be a rectangle. 
	Then 
	$$p>0\Leftrightarrow \exists c>0, M_0>0,  \forall M\geq M_0, \ \PP\Big[\Cross_0(MR)\Big]\leq e^{-cM}.$$
\end{thm}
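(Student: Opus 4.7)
The plan is to prove the two implications separately. For the reverse direction I argue by contraposition: since $\{\sigma_p=0\} = \{f\leq -p\}$ is non-increasing in $p$, the event $\Cross_0(MR)$ is decreasing in $p$, hence so is $\PP[\Cross_0(MR)]$. The first assertion of Theorem~\ref{RSW_2} applied at $p=0$ gives $\liminf_{M\to\infty} \PP[\Cross_0(MR)] > 0$, so for every $p \leq 0$ this crossing probability remains bounded below by a positive constant as $M\to\infty$, ruling out any exponential decay.

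For the forward direction, assume $p>0$ and set $\theta_M(p) := \PP[\Cross_0(MR)]$. The first step is to show $\theta_M(p)\to 0$ as $M\to\infty$. Applying Theorem~\ref{uniqcomp} (or an appropriate extension to our Gaussian field under the standing hypotheses~(\ref{symmetries}), (\ref{sr}), (\ref{wp}), (\ref{ed})) gives the almost sure absence of an unbounded component of $\{\sigma_p = 0\}$; combined with the FKG inequality of Theorem~\ref{pitt} and a standard annulus-gluing argument, this forces $\PP[\Cross_0(A_R)]\to 0$ and in turn $\theta_M(p)\to 0$. The crucial second step is a Talagrand/KKL-type sharp-threshold inequality for $p\mapsto \theta_M(p)$, namely a differential inequality of the form $\theta_M'(p) \geq c\,\log(1/J_{\max}(M,p))\,\theta_M(p)(1-\theta_M(p))$, where $J_{\max}(M,p)$ denotes the maximum, suitably defined, influence of a unit-scale region on $\Cross_0(MR)$. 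I would derive this inequality using the Gaussian integration-by-parts formula, a Hermite expansion of the crossing indicator, and crucially hypothesis~(\ref{ed}) to control cross-correlations between influences at distant locations, playing the role of the independence used in classical product-measure Talagrand. Integrating from the critical window down to a fixed $p>0$, combined with a renormalization on doubling scales exploiting stationarity, then produces the desired exponential bound $\theta_M(p)\leq e^{-c(p)M}$ for large $M$.

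The main obstacle is the sharp-threshold step. Classical Talagrand exploits the product structure of the underlying probability space, which is absent here; replacing independence by the quantitative decorrelation~(\ref{ed}) requires delicate control of off-diagonal influence terms and of the behaviour of the influences uniformly over $p$ in a neighbourhood of the critical value. This is precisely the point where the regularity, positivity and exponential-decay hypotheses~(\ref{sr}), (\ref{wp}), (\ref{ed}) enter in an essential and intertwined manner, and where the bulk of the technical work lies.
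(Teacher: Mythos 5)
The paper does not prove Theorem~\ref{expdec}: it is imported verbatim as \cite[Theorem 9]{rivera2019talagrand} and used as a black box in Corollary~\ref{expdecann} and the proof of Theorem~\ref{posBF}. There is consequently no in-paper argument against which to check your proposal, and reproving the statement from scratch is work the paper itself explicitly delegates to the external reference.

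With that caveat, your sketch is a reasonable high-level reconstruction of the strategy behind \cite{rivera2019talagrand} and its predecessors \cite{rivera2017critical,muirhead2018sharp}: monotonicity of $\Cross_0(MR)$ in $p$ together with a box-crossing estimate at $p=0$ for the reverse implication, and a Talagrand-type sharp-threshold argument plus renormalization for the forward one. Two issues are worth flagging. First, for the reverse direction you cite Theorem~\ref{RSW_2}, but its stated hypotheses require an isotropic kernel (dependence only on distance) and a polynomial decay of degree $325$, neither of which is assumed in Theorem~\ref{expdec}; under conditions~(\ref{symmetries}), (\ref{sr}), (\ref{wp}), (\ref{ed}) you should instead invoke the Tassion-style RSW of Theorem~\ref{tassioncrossing} via FKG (Theorem~\ref{pitt}), or one of the improved versions noted in the remarks, which work under precisely these weaker symmetry and decay assumptions. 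Second, and more substantially, the forward direction compresses the entire technical heart of \cite{rivera2019talagrand} --- establishing a quantitative Talagrand/KKL inequality for a correlated Gaussian field, controlling off-diagonal influence contributions using~(\ref{ed}), obtaining uniform a priori bounds on the maximal influence near the critical window, and running a multiscale bootstrap --- into a single paragraph that you yourself label the main obstacle. As a plan this is pointed in the right direction, but it is not a proof; in the context of this paper the appropriate step, and the one the authors take, is simply to cite the theorem.
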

In fact, the assumptions in~\cite{rivera2019talagrand} are far weaker. Former versions of this theorem have been proved before, see \cite[Theorem 1.7]{rivera2017critical} for the Bargmann-Fock field and \cite[Theorem 6.1]{muirhead2018sharp} for fields satisfying the stronger positivity condition~(\ref{sp}). 
We state now a simple corollary of Theorem~\ref{expdec} which will be used for the proof of Theorem~\ref{posBF}, and which relies only on the FKG condition.
\begin{cor}\label{expdecann}Let $f: \R^2 \to \R$ be a planar Gaussian field satisfying assumptions (\ref{symmetries}), (\ref{sr}), (\ref{wp}) and (\ref{ed}), and let $p>0$.
	Then, there exist positive constants $c,M_0$ such that 
	$$
	\forall M\geq M_0, \ 	\PP\left[\Cross_0(A_M)\right]\leq e^{-cM}.
	$$
In particular, $\sigma_p $ satisfies condition~(\ref{wannuli}) (decay of white one-arms).
\end{cor}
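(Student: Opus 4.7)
The plan is to bound $\PP[\Cross_0(A_M)]$ from above by exhibiting, with large probability, a black circuit around the origin inside $A_M$, which must block any white radial crossing. I would place four overlapping axis-aligned bars $R_T,R_B,R_L,R_R$ inside $A_M$, for instance $R_R=[M/8,M/4]\times[-M/4,M/4]$ together with its three $\pi/2$-rotated images. For $M\geq 8$ each bar lies in $A_M$ (since its points satisfy $1\leq\|x\|_\infty\leq M/4$, hence $1\leq\|x\|_2<M$), all four share the same $1{:}4$ aspect ratio, and they pairwise overlap in four corner squares of side $M/8$.

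The topological input is that if each bar carries a black lengthwise crossing (a path in $\{f+p>0\}$ running in its long direction), then in each corner square the two orthogonal crossings must meet by the Jordan curve theorem, and the concatenation of the four crossings produces a black closed curve surrounding the origin inside $A_M$. Since the white region $\{f+p\leq 0\}$ is disjoint from this curve, no white path from $S(0,1)$ to $S(0,M)$ can cross it, yielding
$$\bigcap_{i\in\{T,B,L,R\}}\Cross_1^{\text{long}}(R_i)\subset\Cross_0(A_M)^{c}.$$
As each $\Cross_1^{\text{long}}(R_i)$ is an increasing event, the FKG inequality (available under~(\ref{wp}) via Theorem~\ref{pitt}), combined with the $\pi/2$-rotation and reflection symmetries of~(\ref{symmetries}), yields $\PP[\Cross_0(A_M)]\leq 1-\PP[\Cross_1^{\text{long}}(R_R)]^{4}$.

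It remains to show $\PP[\Cross_1^{\text{long}}(R_R)^{c}]\leq e^{-cM}$. For this I would invoke planar duality: for a smooth Gaussian field, whose almost-sure $C^1$ regularity and transversality are granted by condition~(\ref{sr}) together with Theorems~\ref{kolmogorov} and~\ref{regular}, the absence of a black lengthwise crossing of a rectangle is almost surely equivalent to a white crossing in the transverse short direction. Applying Theorem~\ref{expdec} to a fixed reference rectangle of the same $1{:}4$ aspect ratio scaled by $M$, and using stationarity together with the $\pi/2$-rotation symmetry, produces $\PP[\Cross_0^{\text{short}}(R_R)]\leq e^{-cM}$. The elementary bound $1-(1-x)^{4}\leq 4x$ finally gives $\PP[\Cross_0(A_M)]\leq 4e^{-cM}$ for $M$ large, which a fortiori implies condition~(\ref{wannuli}) since exponential decay dominates the $R^{-(1+\eta)}$ rate required there. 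The step I expect to need the most care is the rigorous justification of the planar duality for the continuous colouring $\sigma_p$; the remaining ingredients (construction of the bars, FKG, symmetry, and application of Theorem~\ref{expdec}) are routine.
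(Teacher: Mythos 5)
Your argument is correct and follows essentially the same route as the paper: construct four axis-aligned bars inside $A_M$ whose black lengthwise crossings concatenate into a black circuit separating $S(0,1)$ from $S(0,M)$, lower-bound the probability of that intersection via Pitt's FKG inequality (Theorem~\ref{pitt}) together with the $\pi/2$-rotation symmetry, and then feed Theorem~\ref{expdec} through the continuum duality between the absence of a black lengthwise crossing and the presence of a white widthwise one. The only difference is presentational: you give explicit coordinates for the bars and spell out the Jordan-curve step in the overlap squares, whereas the paper fixes four rectangles in $A(1,2)$ and scales them by $M$; both are the same construction.
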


\begin{proof}
	\begin{figure}
		\centering
		\includegraphics[width=7cm]{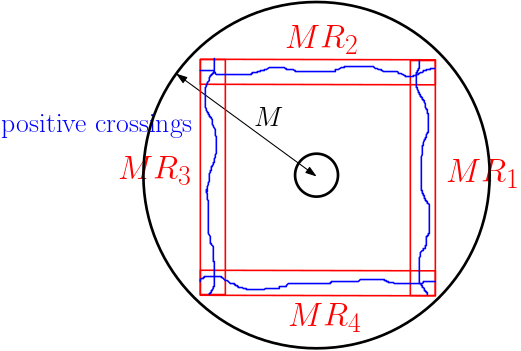}
		\caption{Positive crossings of the four rectangles implies no negative crossing of the annulus}
		\label{ann}
	\end{figure}
	Consider four fixed horizontal or vertical rectangles $(R_i)_{i=1,\cdots,4}$  inside $A_2=A(1,2)$, and such that their open union contains a closed circuit around $A_2$, see Figure~\ref{ann}. Note that
	for all $M\geq 2$ the union of the four copies $MR_1, \cdots MR_4$ 
	lies in $A_M.$
By  Theorem~\ref{expdec}, there exists  $M_0>0$ and $c>0$ such that 
	$$\forall i\in \{1, \cdots, 4\}, \forall M\geq M_0, \ \PP\Bigl[\Cross_1(MR_j)\Bigr]\geq 1-e^{-cM}.
	$$
		Here we used the symmetry of the law under rotation of right angle and by translations.
We also used that a lengthwise positive crossing is the complement event of there being a widthwise negative crossing. By Theorem~\ref{pitt} (FKG) the colouring $\sigma_p$ satisfies condition~(\ref{FKG}) (FKG), so that 
	noting that a positive circuit inside the union of the four rectangles prevents any negative crossing of the annulus,
	\begin{eqnarray*}
		\forall M\geq M_0, \ 		\PP(\Cross_0(A_M)^{\compl })
		\geq \PP\left[\bigcap_{i=1}^4\Cross_1(R_i)\right]\geq (1-e^{-cM})^4.
	\end{eqnarray*}
	Thus there exists $M_1\geq M_0$ and $c'>0$ such that 	
	$$ \forall M\geq M_1, \ 		\PP(\Cross_0(A_M))	\leq e^{-c'M}.$$
\end{proof}
We finish this paragraph with the conclusion:
\begin{cor}\label{conclusion}Let $f: \R^2 \to \R $ be a planar Gaussian field satisfying assumptions (\ref{symmetries}) (symmetries), (\ref{sr}) (strong regularity), (\ref{wp}) (weak positivity) and (\ref{ed}) (strong decay of correlations), and let $p>0$.
	Then $\sigma_p$ satisfies condition~(\ref{annuliT}).
	\end{cor}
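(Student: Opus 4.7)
The plan is to chain together two results already established in the excerpt. First, I would invoke Corollary~\ref{expdecann}, whose hypotheses coincide exactly with those of the present corollary (stationarity/symmetries, strong regularity, weak positivity, strong decay of correlations, and $p>0$). That corollary yields constants $c,M_0>0$ such that
\begin{equation*}
\forall M\geq M_0,\quad \PP\!\left[\Cross_0(A_M)\right]\leq e^{-cM}.
\end{equation*}
In particular, for any $\eta>0$, exponential decay beats the polynomial rate $M^{-(d-1+\eta)}=M^{-(1+\eta)}$ (here $d=2$), so $\sigma_p$ satisfies condition~(\ref{wannuli}) (decay of white one-arms) with any chosen exponent.

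Next, I would pass from a white-path statement to an instant-path statement using Corollary~\ref{equiarm}, which asserts under assumptions (\ref{symmetries}) and (\ref{sr}) that condition~(\ref{wannuli}) and condition~(\ref{annuliT}) are equivalent for $\sigma_p$. Both of these regularity hypotheses are part of the standing assumptions of the corollary, so the equivalence applies. Combining the two steps gives the existence of $\eta, R_0>0$ such that $\PP[T(A_R)=0]\leq R^{-(d-1+\eta)}$ for all $R\geq R_0$, which is exactly condition~(\ref{annuliT}).

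Since every input is already in place, there is no substantive obstacle: the proof is essentially a one-line citation of Corollary~\ref{expdecann} followed by an application of Corollary~\ref{equiarm}. The only thing to check is that the polynomial rate required in condition~(\ref{annuliT}) is weaker than the exponential rate obtained, which is immediate. The conceptual content of the corollary is thus entirely contained in its inputs: the FKG-based argument of Corollary~\ref{expdecann} provides the decorrelation-driven exponential decay of the \emph{white} one-arm, while Corollary~\ref{equiarm} (through Theorems~\ref{kolmogorov}, \ref{regular} and Lemma~\ref{3a3b}) provides the geometric regularity of $\{f+p>0\}$ needed to identify a white path with an instant path.
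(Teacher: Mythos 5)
Your argument coincides with the paper's own proof: cite Corollary~\ref{expdecann} to get exponential (hence polynomial) decay of the white one-arm, i.e.\ condition~(\ref{wannuli}), and then invoke Corollary~\ref{equiarm} to upgrade this to condition~(\ref{annuliT}). The proposal is correct, complete, and takes essentially the same route.
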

\begin{proof}
	By Corollary~\ref{expdecann}, $\sigma_p$ satisfies condition~(\ref{wannuli}). By Corollary~\ref{equiarm}, it thus satisfies condition~(\ref{annuliT}).
	\end{proof}
We emphasize that this corollary is true under much weaker (polynomial with small degree) conditions on the correlation decay, see~\cite{muirhead2018sharp} and~\cite{rivera2019talagrand}. We just did not want to add more conditions.

\paragraph{Asymptotic independence.}

In Bernoulli percolation, the random assignation of a sign to a vertex or an edge is made independently. 
For continuous Gaussian fields, in general the correlation range is infinite, though in our context the correlation converges to zero with the distance. 
In~\cite{beffara2016v}, the authors proved that crossing events in two homothetical large copies of a pair of two  disjoint rectangles were
asymptotically independent if the correlation decay was strong enough. In~\cite{rivera2019quasi}, this result was amended with a simpler and different proof which was in fact close to the one used by V. Piterbarg~\cite{piterbarg}.
In order to ensure that the sign of $\sigma_p$ satisfies the quasi independence condition~(\ref{AI}), we will use another quantitative dependence theorem due to S. Muirhead and H. Vanneuville. Their method differs  from the previous two and has the great advantage for us of holding for general increasing events, not only crossing ones. Note however that it does not need positive correlations of the Gaussian field. 
\begin{thm}\cite[Theorem 4.2]{muirhead2018sharp}\label{muirhead2018sharp}
	Let $f:\R^2 \to \R$ be a planar Gaussian field satisfying conditions (\ref{symmetries})(symmetries), (\ref{sr}) (strong regularity), and (\ref{ed}) (strong decay of correlation) for a certain $\beta>0$. 
	Then, there exists $c, R_0>0$ such that for any $R\geq R_0$, $r\geq 1$ and $t\geq \log R$, for any pair of compact sets $A_1$ and $A_2$ of diameters bounded above by $R$ with $\text{dist}(A_1, A_2)\geq r$, for any events $E_1$, $E_2$ which are both increasing or both decreasing events, and depending only of the field $f$ over $A_1$ and  $A_2$ respectively, we have:
	$$
	\Bigl|\PP(E_1\cap E_2)-\PP(E_1)\PP(E_2)\Bigr|\leq cRtre^{- r^\beta} +ce^{-ct^2}.
	$$
\end{thm}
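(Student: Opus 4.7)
The plan is to combine three classical ingredients: the white-noise representation $f = q \star W$ afforded by condition~(\ref{sr}), Borell-TIS concentration for Gaussian suprema, and the monotonicity of $E_1, E_2$ as in FKG-type sandwiching. The goal is to replace $f$ on $A_1 \cup A_2$ by $f_1 + f_2$, where $f_1, f_2$ are independent Gaussian fields built from disjoint pieces of $W$, at the cost of a small "remote noise" term whose supremum is controlled by condition~(\ref{ed}).

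First I would set up the split. Let $B_i := \{x \in \R^2 : \dist(x, A_i) \leq r/3\}$ for $i = 1, 2$; since $\dist(A_1, A_2) \geq r$ these are disjoint. Write $W = W_1 + W_2 + W_0$, the restrictions of the white noise to $B_1, B_2$, and $B_0 := \R^2 \setminus (B_1 \cup B_2)$, and set $f_i := q \star W_i$, $h := q \star W_0$, so that $f = f_1 + f_2 + h$ with the three summands jointly independent. For $x \in A_1$, the "noise"  $\eta_1(x) := (f_2 + h)(x)$ is independent of $f_1\vert_{A_1}$, and for every $y$ contributing to $\eta_1(x)$ one has $|x-y| \geq r/3$. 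By condition~(\ref{ed}),
\begin{equation*}
\Var(\eta_1(x)) = \int_{B_2 \cup B_0} q(x-y)^2\, dy \leq C(R + r)^2 e^{-c r^\beta},
\end{equation*}
and the analogous bound holds for $\eta_2 := (f_1 + h)\vert_{A_2}$ and for the derivatives of $q$, which control the modulus of continuity of $\eta_i$.

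Next I would apply Borell-TIS to the centered Gaussian process $\eta_i$ on $A_1 \cup A_2$. Setting $\sigma_{R,r}^2 := C R^2 e^{-c r^\beta}$ (absorbing $(R+r)^2$ into $R^2$ for $r \leq R$ and symmetrically otherwise), Dudley's entropy bound gives $\EE\sup_{A_1 \cup A_2}|\eta_i| \lesssim \sigma_{R,r}\sqrt{\log R}$, after which
\begin{equation*}
\PP\Big[\sup_{A_1 \cup A_2} |\eta_i| \geq C\sigma_{R,r}\bigl(\sqrt{\log R} + t\bigr)\Big] \leq e^{-t^2/2}.
\end{equation*}
For $t \geq \log R$ the deterministic threshold is bounded by $\delta := c R r t e^{-r^\beta}$, where the extra factor $r$ absorbs the discrepancy between $e^{-c r^\beta/2}$ and $e^{-r^\beta}$ via a polynomial loss. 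Let $G := \{\sup_{A_1 \cup A_2}|\eta_1| \leq \delta\} \cap \{\sup_{A_1\cup A_2}|\eta_2| \leq \delta\}$; then $\PP[G^c] \leq 2 e^{-t^2/2}$.

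Finally I would close the argument by monotone sandwiching and factorization. On $G$, $f_i - \delta \leq f \leq f_i + \delta$ pointwise on $A_i$; since $E_i$ is increasing in the field ordering,
\begin{equation*}
E_1(f_1 - \delta) \cap E_2(f_2 - \delta) \cap G \;\subset\; E_1(f) \cap E_2(f) \cap G \;\subset\; E_1(f_1 + \delta) \cap E_2(f_2 + \delta),
\end{equation*}
and analogously for the marginal events. Because $f_1 \perp f_2$ and $E_i(f_i \pm \delta)$ depends only on $f_i$, the product events factor, yielding
\begin{equation*}
|\PP[E_1 \cap E_2] - \PP[E_1]\PP[E_2]| \leq 3\,\PP[G^c] + \text{(cross terms)} \leq C e^{-t^2/2} + C R r t\, e^{-r^\beta},
\end{equation*}
where the second term collects the residual error between $\PP[E_i(f)]$ and $\PP[E_i(f_i \pm \delta)]$. (The case of both events decreasing is identical; for one increasing and one decreasing FKG gives the wrong sign and is not covered.)

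The main technical obstacle is the \emph{stability of arbitrary monotone events under the $\pm\delta$ shift}: since $E_i$ is not assumed smooth, the gap $|\PP[E_i(f_i + \delta)] - \PP[E_i(f_i - \delta)]|$ cannot in general be bounded by the Gaussian anticoncentration of a single coordinate. This is circumvented here by using monotonicity \emph{twice}, once to sandwich $E_i(f)$ between $E_i(f_i \pm \delta)$ (pointwise, on $G$), and once to observe that the complement of the sandwich is contained in $G^c$, whose probability is controlled by Borell-TIS. Balancing Dudley's entropy growth $\sqrt{\log R}$ against the exponential decay $e^{-r^\beta}$ is what produces the two-term form $cRtr e^{-r^\beta} + c e^{-ct^2}$ of the stated bound.
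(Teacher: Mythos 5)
The paper's own ``proof'' of this statement is not a proof at all: it is a one-paragraph note observing that \cite[Theorem~4.2]{muirhead2018sharp} already proves the result under a polynomial decay assumption on $q$, and that the exponential version follows by replacing the function $G$ in their Lemma~3.13 with $G(x)=e^{-\frac12\|x\|^\beta}$, which turns the polynomial factor $r^{1-\beta}$ into $re^{-r^\beta}$. You, by contrast, attempt to reconstruct the Muirhead--Vanneuville argument from scratch, so the two texts take entirely different routes; that in itself is fine, and your skeleton (white-noise splitting $f=f_1+f_2+h$, Borell--TIS control of the remote noise $\eta_i$ on a good event $G$, monotone sandwiching $E_i(f_i-\delta)\subset E_i(f)\subset E_i(f_i+\delta)$ on $G$, and factorization via $f_1\perp f_2$) is faithful to the structure of the original proof.

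The gap is that you identify the real difficulty and then do not close it. After the sandwich and factorization you are left needing a bound of the form
\begin{equation*}
\PP\bigl[E_1(f_1+\delta)\bigr]\PP\bigl[E_2(f_2+\delta)\bigr]-\PP\bigl[E_1(f_1-\delta)\bigr]\PP\bigl[E_2(f_2-\delta)\bigr]\;\leq\;\sum_{i=1,2}\bigl(\PP\bigl[E_i(f_i+\delta)\bigr]-\PP\bigl[E_i(f_i-\delta)\bigr]\bigr),
\end{equation*}
so the per-event shift sensitivity $\PP[E_i(f_i+\delta)]-\PP[E_i(f_i-\delta)]$ must itself be estimated. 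Your claim that ``monotonicity twice'' circumvents this is not correct: on $G$ the event $E_i(f_i+\delta)\setminus E_i(f_i-\delta)$ is not contained in $G^\compl$, nor is it controlled by Borell--TIS; it is a question about the stability of an arbitrary increasing event under a small additive shift of the Gaussian field, which is a genuine anticoncentration statement. In \cite{muirhead2018sharp} this is handled by a separate lemma (a Cameron--Martin/finite-energy argument) which crucially uses the part of condition~(\ref{sr}) stating that the support of the spectral density $\rho$ contains a neighbourhood of the origin; this guarantees a nondegenerate constant direction in the Cameron--Martin space, so the $\pm\delta$ shift can be absorbed into a one-dimensional Gaussian coordinate whose density bounds the shift sensitivity by $C\delta$. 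Your proposal never invokes this hypothesis and never produces the bound, so as written the argument does not yield the stated $cRtre^{-r^\beta}$ term. Adding that anticoncentration lemma (and tracking how its constant depends on $R$, $r$, $t$) is what is needed to complete the reconstruction.
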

\begin{proof}
	In~\cite{muirhead2018sharp}, this result was proved with the covariance kernel satisfying a polynomial decay, and not for an exponential one as we need in this paper.
	The first part of the right-hand side was thus polynomial instead of exponential. However, the proof holds in the same way, changing the function $G$ in their Lemma 3.13 to
	$
	G(x)=e^{-\frac12 \|x\|^\beta}. 
	$ The only change in \cite[Theorem 4.2]{muirhead2018sharp} is the $r^{1-\beta}$ term which turns into $re^{-r^\beta}.$ 
	\end{proof}
\begin{cor}\label{coroind} Let $f$ be satisfying the conditions of Theorem~\ref{muirhead2018sharp}. Then 
	for any $0<\beta'<\beta$, and any $p\in \RR$, $\sigma_p$ defined by~(\ref{sip}) satisfies condition~(\ref{AI}) (quasi independence) for $\beta'$. 	
	\end{cor}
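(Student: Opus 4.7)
The plan is to translate decreasing events in $T$ into monotone events for the underlying Gaussian field $f$, and then apply Theorem~\ref{muirhead2018sharp} with parameters tuned so that both error terms in its right-hand side are dominated by $e^{-Q^{\beta'}}$.

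First I would establish the monotonicity chain: the map $f \mapsto \sigma_p = \frac{1}{2}(1+\mathrm{sign}(f+p))$ is pointwise non-decreasing, and $\sigma \mapsto T$ defined by~(\ref{defmetric}) is also pointwise non-decreasing, since a larger density increases $\int_\gamma \sigma$ on every admissible path and hence increases the infimum. Composing, $f \mapsto T$ is pointwise non-decreasing. Consequently any decreasing event in $T$ depending only on the values of $T$ over a compact set $A$ corresponds to a decreasing event in $f$ depending only on the values of $f$ over $A$ (after, if needed, enlarging $A$ by a bounded neighbourhood to accommodate the competing paths; this only multiplies the diameter by a bounded factor and is harmless in what follows).

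Next, fix $\alpha>0$ and take $A,B\subset\mathbb{R}^2$ with $\Diam(A),\Diam(B)\leq Q^{1+\alpha}$ and $\dist(A,B)\geq Q$, together with $E_A\in\mathcal{A}^-$ and $E_B\in\mathcal{B}^-$. By the previous paragraph these are (contained in) decreasing events for $f$ over compacts of diameter $O(Q^{1+\alpha})$. Choose a real $\gamma$ with $\gamma > \beta'/2$ and set $t=Q^{\gamma}$, $R=Q^{1+\alpha}$, $r=Q$; the constraint $t\geq \log R$ holds for $Q$ large. Theorem~\ref{muirhead2018sharp} then yields
\[
\bigl|\PP(E_A\cap E_B)-\PP(E_A)\PP(E_B)\bigr| \;\leq\; c\,Q^{2+\alpha+\gamma}\,e^{-Q^\beta} \;+\; c\,e^{-c Q^{2\gamma}}.
\]
Since $2\gamma>\beta'$, the second term is at most $\tfrac{1}{2}e^{-Q^{\beta'}}$ for $Q$ large; since $\beta>\beta'$, the gap $Q^\beta-Q^{\beta'}\to+\infty$ absorbs the polynomial prefactor and bounds the first term by $\tfrac{1}{2}e^{-Q^{\beta'}}$ as well. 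Taking the supremum over $(A,B,E_A,E_B)$ gives $\Ind^-(Q,Q^{1+\alpha})\leq e^{-Q^{\beta'}}$ for $Q\geq Q_0$, which is exactly condition~(\ref{AI}) with exponent $\beta'$.

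The only delicate point is the translation step: one must verify that an event depending on $T$ over $A$ is measurable with respect to $f$ over a set of diameter comparable to $\Diam(A)$. For the events actually used downstream --- finite intersections of events of the form $\{T(A_R)<\delta\}$, as noted in the remark following condition~(\ref{AI}) --- this is transparent, since the competing paths may be confined to a bounded enlargement of the annulus. The rest of the argument is the arithmetic of the error estimate above.
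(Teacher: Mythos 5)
Your overall strategy---compose the pointwise monotonicities and then apply Theorem~\ref{muirhead2018sharp} with parameters tuned so both error terms beat $e^{-Q^{\beta'}}$---is the same as the paper's, and your arithmetic checks out (the paper takes $t=Q^\beta$, you take $t=Q^\gamma$ with $\gamma>\beta'/2$; both choices work and the gap $\beta>\beta'$ absorbs the polynomial prefactor). But there is a genuine gap in the translation step, and it comes from the fact that you aim at decreasing events \emph{in $T$} rather than in $\sigma_p$.

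The corollary is stated for $\sigma_p$, i.e.\ for the $\mathcal{F}$-version of $\Ind^-$, and the paper's proof goes in one step: since $\sigma_p(x)=\tfrac12(1+\mathrm{sign}(f(x)+p))$ depends only on $f(x)$ and is non-decreasing in it, an event that is decreasing in $\sigma_p$ and measurable w.r.t.\ $\sigma_p|_A$ is automatically a decreasing event in $f$ measurable w.r.t.\ $f|_A$. That translation is pointwise and therefore exactly local; nothing needs enlarging. Your proof instead tries to translate events depending on $T|_A$ into events depending on $f$ over a ``bounded enlargement'' of $A$. This does not hold: for $x,y\in A$, the infimum defining $T(x,y)$ ranges over paths that may leave any fixed neighbourhood of $A$ (a distant low-cost region lowers $T(x,y)$ without changing $f$ near $A$), so $\{T(A_R)<\delta\}$ is \emph{not} in general measurable w.r.t.\ $f$ on a bounded enlargement of $A_R$---restricting paths produces a larger quantity $T^{\mathrm{loc}}(A_R)\geq T(A_R)$, hence only a one-sided inclusion of events. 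Your parenthetical remark that ``this only multiplies the diameter by a bounded factor and is harmless'' is precisely the step that fails. The fix is simply to prove the statement the corollary actually asserts: verify condition~(\ref{AI}) for $\sigma_p$ (as the paper does), not for $T$; the passage from the density condition to the pseudometric one is a separate step handled elsewhere in the paper's logic, and should not be smuggled into this corollary.
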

\begin{proof}
Firstly, notice that a decreasing event $E$ depending only on the value of $\sigma_p$ over some subset $A\subset \R^d$ is also decreasing for $f$. Hence, by the definition of $\Ind^-$ given by~(\ref{ind}) and Theorem~\ref{muirhead2018sharp} taking $R=S$, $r=Q$ and $t=Q^{\beta}$,
	there exists $Q_0\geq 0$, such that 
	$$ \forall Q\geq (\log S)^{1/\beta}, \ \Ind^-(Q,S) \leq S \exp(-\frac{1}2 Q^\beta).$$
In particular, $\sigma_p$  satisfies condition~(\ref{AI}) for any positive $\beta'<\beta$. 
\end{proof}

\paragraph{Proof of the main Gaussian theorem.}
We can now prove the first main application of this paper.
\begin{proof}[Proof of Theorem~\ref{posBF}.] By condition~(\ref{sr}) (strong regularity)  and Theorem~\ref{kolmogorov} (regularity), almost surely $f$ is continuous, hence locally integrable, so $\sigma_p$ defined by~(\ref{sip}) satisfies condition~(\ref{mesu}) (mesurability). By~\cite[Theorem 6.5.4]{adler}, condition~(\ref{hom}) (ergodicity) holds for centered Gaussian fields which are stationary, which is assumed by condition~(\ref{symmetries}) (symmetries), almost surely continuous, which is true as said before,  and whose correlation function converges to zero at infinity, which is implied by condition~(\ref{wd}) (weak decay). This implies that $\sigma_p$ also satisfies condition~(\ref{hom}).  
By Theorem~\ref{pitt} (FKG) and condition~(\ref{wp}) (weak positivity), $\sigma_p$ satisfies condition~(\ref{FKG}) (FKG). 
	
	If $p=0$, $\sigma_0$ satisfies the symmetry hypotheses of Corollary~\ref{remtassion}, since $f$ satisfies condition~(\ref{symmetries}) (symmetries). Moreover almost surely $f$ is continuous, so that for any horizontal square $S$, $\Cross_1(S)$ occurs if and only if a vertical white crossing of the square does not occur. By symmetries, both have the same probability, which thus is 1/2. Hence, $\sigma_0$ satisfies the conditions of 
	Corollary~\ref{remtassion}, so that $\mu_0=0$ and the first assertion of Theorem~\ref{posBF} is proved for $p=0$. Moreover by Theorem~\ref{tassioncrossing}, $\sigma_0$ also satisfies condition~(\ref{wRSW}) (weak RSW). For $p<0$, since the white crossing probabilities decrease with $p$, $\sigma_p$ satisfies condition~(\ref{wRSW}) (weak RSW), hence the result from Proposition~\ref{muzero2}.
	
	Assume now that $p>0$ and that $f$ satisfies the further condition~(\ref{ed}) (exponential decay of correlations). Then,  Corollary~\ref{conclusion} implies that $\sigma_p$ satisfies condition~(\ref{annuliT}) (decay of instant one-arms). 
		By  Corollary~\ref{coroind}, for any $0<\beta'<\beta$, $\sigma_p$ satisfies condition~(\ref{AI}) (quasi independence) for $\beta'$.
	Corollary~\ref{positif} then implies the second assertion of Theorem~\ref{posBF}. 
	\end{proof}

\subsection{Voronoi FPP}

As in Corollary~\ref{conclusion} for Gaussian fields, we begin with the link between conditions (\ref{wannuli}) (decay of white one-arms) and~ (\ref{annuliT}) (decay of instant one-arms). 
\begin{prop}\label{equiVor}
	Let $\sigma_p:\RR^d \to \{0,1\}$ be Voronoi percolation with parameter $p\in ]0,1[$.
	Then, condition~(\ref{wannuli}) and condition~(\ref{annuliT}) are equivalent.
	Moreover if $p<p_c(d)$, then $\sigma_p$ satisfies condition~(\ref{annuliT}).
\end{prop}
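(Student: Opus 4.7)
The plan is to establish the equivalence by invoking Lemma~\ref{3a3b}, and to deduce the subcritical statement from Theorem~\ref{DRT}.

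For the equivalence, one inclusion is immediate: any continuous path in $\{\sigma_p=0\}$ crossing $A_R$ satisfies $\int_\gamma \sigma_p = 0$, so $\{\Cross_0(A_R)\}\subset\{T(A_R)=0\}$, whence $(\ref{annuliT})$ trivially implies $(\ref{wannuli})$. For the non-trivial direction, I would verify condition~(\ref{blackreg}) (positive region regularity) for the Voronoi colouring $\sigma_p$ and then apply Lemma~\ref{3a3b}. Almost surely, the Poisson point process is in general position (no $d{+}2$ sites cospherical), so the Voronoi tessellation is a regular polytope complex. The black region $\{\sigma_p=1\}$ is thus locally a finite union of closed convex $d$-polytopes (the black cells), each being a $d$-dimensional submanifold with piecewise linear (hence piecewise $C^1$) boundary. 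Because two adjacent cells have disjoint open interiors but touch along a shared $(d{-}1)$-face, the condition (\ref{blackreg}) must be checked after absorbing adjacent black cells into a single connected component of $\{\sigma_p=1\}$. After this grouping, any two distinct components are disjoint, and almost surely their closures can meet only at strictly lower-dimensional faces; such meeting points necessarily border at least one white cell (otherwise the two components would be joined through a $(d{-}1)$-face and would form a single component), so these meeting points lie in $\{\sigma_p=0\}$ by the rule that boundaries between cells of different colours are white. With this interpretation, the alternative in (\ref{blackreg}) is satisfied, and Lemma~\ref{3a3b} yields the desired equivalence.

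For the second assertion, Theorem~\ref{DRT} asserts that for any $p<p_c(d)$ there exist $c>0$ and $R_0>0$ such that $\PP[\Cross_0(A_R)]\le e^{-cR}$ for all $R\ge R_0$. Exponential decay dominates any polynomial $R^{-(d-1+\eta)}$ for large $R$, so $\sigma_p$ satisfies condition~(\ref{wannuli}) for every $\eta>0$. Combined with the equivalence just obtained, this gives (\ref{annuliT}).

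The delicate step is the verification of (\ref{blackreg}) for Voronoi, because the condition is worded with smooth (Gaussian-type) examples in mind and the natural decomposition of $\{\sigma_p=1\}$ into \emph{individual} cells violates it. The resolution is the grouping into connected components described above, together with a careful use of the generic position of the Poisson process, so that the proof of Lemma~\ref{3a3b} transfers verbatim: a piecewise $C^1$ path $\gamma$ with $\int_\gamma \sigma_p=0$ cannot enter the interior of any black component (this would produce positive integral on an open subinterval), cannot traverse a $(d{-}1)$-face shared by two black cells of the same component (these faces lie in $\{\sigma_p=1\}$ and a segment along them would again produce positive integral), and can only touch the closure of the black region at points that already lie in $\{\sigma_p=0\}$, so $\gamma$ stays in $\{\sigma_p=0\}$ throughout and yields a white crossing.
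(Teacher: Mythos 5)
Your high-level strategy mirrors the paper's: verify condition~(\ref{blackreg}) for Voronoi percolation, apply Lemma~\ref{3a3b}, and invoke Theorem~\ref{DRT} for the subcritical exponential decay. The paper handles the first step in a single terse sentence (boundaries are semi-algebraic, hence (\ref{blackreg}) holds) and does not address the point you raise, namely which decomposition of $\{\sigma_p=1\}$ into submanifolds $W_i$ actually satisfies the closure-intersection alternative. You correctly observe that the naive decomposition into individual black cells fails --- two adjacent black cells have touching closures but disjoint interiors --- and your fix, grouping cells into connected components of $\{\sigma_p=1\}$, is the right one.

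That said, the middle of your verification of (\ref{blackreg}) contains a small logical slip. You allow for the possibility that the closures of two distinct components $W_1,W_2$ meet at faces of dimension strictly less than $d-1$, and you argue that such meeting points lie in $\{\sigma_p=0\}$. Even granting both of those claims, they do not yield $\overline{W_1}\cap\overline{W_2}=\emptyset$, which is the alternative of (\ref{blackreg}) you actually need here (the open-overlap alternative being excluded since $W_1\cap W_2=\emptyset$). The cleaner route is the one your parenthetical already hints at: almost surely the Poisson process is in general position, and then for any two cells $V_a,V_b$ the set $\overline{V_a}\cap\overline{V_b}$ coincides with the convex polytope $F_{ab}=\{x:\|x-a\|=\|x-b\|\le\|x-c\|\ \text{for all }c\in X\}$, which is generically either empty or of full dimension $d-1$. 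Hence if $V_a\subset W_1$ and $V_b\subset W_2$ were both black and had touching closures, they would share a $(d-1)$-dimensional black face and so would lie in the same component --- a contradiction. There are thus no meeting points at all, $\overline{W_1}\cap\overline{W_2}=\emptyset$, and (\ref{blackreg}) holds with the grouped $W_i$'s. With this patch, the remainder of your argument (Lemma~\ref{3a3b} for the equivalence and Theorem~\ref{DRT} to get (\ref{wannuli}), hence (\ref{annuliT})) goes through exactly as in the paper.
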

\begin{proof}
The boundaries of the Voronoi cells are defined by inequalities depending through quadratic equations on the points given by the Poisson process, so that condition~(\ref{blackreg}) (positive region regularity) is satisfied.  By Lemma~\ref{3a3b}, condition~(\ref{wannuli}) and condition~(\ref{annuliT}) are equivalent. Now, if $p<p_c(d)$, 
	Theorem~\ref{DRT} (exponential decay of white one-arm) implies that $\sigma_p$ satisfies condition~(\ref{wannuli}) (decay of white one-arms), hence the result. 
	\end{proof}

For condition~(\ref{hom}) (ergodicity) and condition~(\ref{AI}) (asymptotic independence), we will need the following lemmas.
\begin{prop}\cite{tassion2016crossing}\label{bevodec}
Let $p\in]0,1[$ and $\sigma_p $ be the associated Voronoi percolation over $\RR^d$.
Then, there exist constants $c, M_0>0$ such that for all $M\geq M_0$ and $A_1,A_2$ two compact subsets of $\RR^d$, both of diameter less than $M$ and at a distance $\geq M$ from each other, for all events $E_1,E_2$ depending respectively on the colour  over $A_1,A_2$ respectively, we have:
$$
\big|\PP\left[E_1\cap E_2\right]-\PP[E_1]\PP[E_2]\big|\leq e^{-cM^d}.
$$
In particular $\sigma_p$ satisfies condition~(\ref{AI}) (quasi independence).
\end{prop}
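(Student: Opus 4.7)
The approach rests on the classical observation that Voronoi tessellations have effectively finite range: if the Poisson process $X$ has no large gap in or near $A_1 \cup A_2$, then the colourings on $A_1$ and on $A_2$ are determined by disjoint portions of the process. The plan is to pick a length scale $r := M/3$, introduce the ``no gap'' events
\begin{equation*}
F_i := \left\{ \forall y \in A_i, \ X \cap B(y, r) \neq \emptyset \right\}, \qquad i = 1, 2,
\end{equation*}
show that on $F_i$ the colouring $\sigma_p|_{A_i}$ is measurable with respect to the marked Poisson process (points together with their $\{0,1\}$-colour marks) restricted to the thickening $A_i + B(0, r)$, then exploit independence of the Poisson process on disjoint regions, and finally bound $\PP[F_i^c]$ by a covering argument.

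For the locality step: if $y \in A_i$ and $F_i$ holds, the Poisson point $x(y) \in X$ whose Voronoi cell contains $y$ satisfies $\|y - x(y)\| \leq r$, so $x(y) \in A_i + B(0, r)$; and any competitor $z \in X$ with $\|z - y\| < \|x(y) - y\|$ must also lie in $B(y, r) \subset A_i + B(0, r)$. Hence both the identity of $x(y)$ and its colour mark --- that is, $\sigma_p(y)$ --- are determined by the marked process restricted to this $r$-thickening. As $F_i$ itself is clearly measurable with respect to the locations of the points in $A_i + B(0, r)$, the event $E_i \cap F_i$ is measurable with respect to the restriction of the marked process to $A_i + B(0, r)$. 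Since $\dist(A_1, A_2) \geq M > 2r$, the two thickenings are disjoint, so by the Poisson independence on disjoint sets and the independence of the colour marks, $E_1 \cap F_1$ and $E_2 \cap F_2$ are independent. Combined with the elementary bounds $\PP[E_i] - \PP[E_i \cap F_i] \leq \PP[F_i^c]$ and $\PP[(F_1 \cap F_2)^c] \leq \PP[F_1^c] + \PP[F_2^c]$, a short calculation yields
\begin{equation*}
\big| \PP[E_1 \cap E_2] - \PP[E_1]\PP[E_2] \big| \leq 2 \big( \PP[F_1^c] + \PP[F_2^c] \big).
\end{equation*}

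To estimate $\PP[F_i^c]$, cover $A_i$ by $N = O((M/r)^d) = O(1)$ balls of radius $r/2$ centred in $A_i$: if $F_i$ fails, some such ball $B(y_j, r/2)$ is avoided by $X$ (otherwise every $y \in A_i$ would lie in some $B(y_j, r/2) \subset B(y, r)$ which meets $X$). Since $\PP[X \cap B(y_j, r/2) = \emptyset] = \exp(-c_d r^d)$ for a dimensional constant $c_d > 0$, the union bound gives $\PP[F_i^c] \leq N \exp(-c_d r^d) \leq \exp(-c M^d)$ for $M \geq M_0$ and a suitable $c > 0$, which proves the proposition. For the ``in particular'' claim, the same argument applied with $r = Q/3$ and diameters up to $S = Q^{1+\alpha}$ gives $\PP[F_i^c] \leq C Q^{\alpha d} \exp(-c Q^d) \leq \exp(-Q^\beta)$ for any $\beta < d$ and $Q$ large enough, which is condition~(\ref{AI}).

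The main delicate point is the locality step: one must verify that knowing the marked process on the $r$-thickening genuinely recovers both the combinatorial Voronoi structure \emph{and} the colours of all cells intersecting $A_i$. The rest is routine Poisson bookkeeping.
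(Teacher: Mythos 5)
Your proof follows the same approach as the paper's: the paper isolates the ``no gap'' event $E(A,M)$ in Lemma~\ref{loin}, uses a covering argument to bound its complement exponentially, and then deduces the proposition from Poisson independence on the disjoint thickened regions. Your events $F_i$ are the same device, and your covering estimate for $\PP[F_i^c]$ is essentially Lemma~\ref{loin}. The locality step you flag as the delicate point is correct as you argue it: on $F_i$ the nearest Poisson point to any $y\in A_i$ lies within $B(y,r)$, and any tied or closer competitor must too, so the marked process restricted to $A_i+B(0,r)$ determines $\sigma_p|_{A_i}$.

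Two places where you are actually more careful than the paper's own write-up. First, you shrink the thickening to $r=M/3$ so that $\dist(A_1,A_2)\geq M$ suffices for the two thickenings to be disjoint; the paper's proof instead works with radius $M$ and quietly asserts ``the distance between $A$ and $B$ is larger than $2M$,'' which does not match the stated hypothesis $\dist(A_1,A_2)\geq M$. Your scale choice repairs this. Second, the proposition's hypotheses (diameter $<M$, distance $\geq M$) do not literally cover the regime of condition~(\ref{AI}) (diameter up to $Q^{1+\alpha}$, distance $>Q$), so the ``in particular'' clause needs the separate estimate you give, with the polynomial factor $Q^{\alpha d}$ from the larger covering number absorbed by the $\exp(-cQ^d)$ term; the paper omits this verification. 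So: same method, but your version closes two small gaps in the paper's exposition.
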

The proof of this proposition can be extracted from the proof of Lemma 1.1 of~\cite{tassion2016crossing}. For sake of clarity, we give here a proof of it.
It is a consequence of the following lemma:
\begin{lem}\cite{tassion2016crossing}\label{loin}
	Let $X$ be a Poisson process over $\RR^d$ with intensity $1$, and for $x\in X$, denote by $V_x$ the Voronoi cell based on $x$. Then there exists $c>0$ and $M_0>0$ such that the following holds. For any  open bounded subset $A\subset \RR^d$  with diameter less than $M\geq M_0$, 
	let $E(A,M)$ be the event
	\begin{equation}\label{ki}
	 E(A,M):=\left\{A\subset \bigcup_{x\in X\cap (A+B(0,M))}V_x\right\}.
	 \end{equation}
	Then, 
	$ \PP[E(A,M)]\geq 1-\exp(-cM^d).$
		\end{lem}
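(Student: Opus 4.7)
The plan is to translate the covering condition defining $E(A,M)$ into a tail estimate on the nearest-point function of the Poisson process. First I would observe that the complement $E(A,M)^c$ is contained in the event $\{\exists a\in A,\ X\cap B(a,M)=\emptyset\}$. Indeed, for any $a\in A$ the cell containing $a$ is $V_{x(a)}$ where $x(a)\in X$ is the point of $X$ closest to $a$; if some point of $X$ lies within distance $M$ of $a$, then so does $x(a)$, and consequently $x(a)\in A+B(0,M)$. Thus $E(A,M)$ contains $\bigcap_{a\in A}\{X\cap B(a,M)\neq\emptyset\}$.

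Next I would reduce the uncountable intersection ``$\forall a\in A$'' to a finite covering. Since $\Diam A\leq M$, the set $A$ is contained in $B(a_0,M)$ for any fixed $a_0\in A$, so a standard volume-packing argument produces a constant $N_d$ depending only on $d$, together with points $a_1,\dots,a_{N_d}\in A$, such that $A\subset \bigcup_{i=1}^{N_d}B(a_i,M/2)$. The triangle inequality then ensures $B(a_i,M/2)\subset B(a,M)$ whenever $a\in B(a_i,M/2)$, so that if each of the balls $B(a_i,M/2)$ meets $X$, the required inclusion $\bigcap_{a\in A}\{X\cap B(a,M)\neq\emptyset\}$ holds automatically.

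Finally, the defining property of a Poisson process of intensity $1$ gives
$$\PP\left[X\cap B(a_i,M/2)=\emptyset\right]=\exp\bigl(-\omega_d (M/2)^d\bigr),$$
where $\omega_d=\Vol(B(0,1))$, so a union bound over the $N_d$ balls yields
$$\PP[E(A,M)^c]\leq N_d \exp\bigl(-\omega_d 2^{-d}M^d\bigr),$$
which for $M\geq M_0$ large enough is bounded by $\exp(-cM^d)$ for any $c<\omega_d 2^{-d}$.

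There is no real obstacle here; the only point that must be arranged carefully is that the covering number $N_d$ is independent of $M$, which follows from the fact that the ratio between the enclosing radius $M$ and the covering radius $M/2$ is a fixed constant, so one and the same packing pattern, rescaled by $M$, works for every $A$ of diameter at most $M$.
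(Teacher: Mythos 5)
Your proof is correct and follows essentially the same route as the paper's: cover $A$ by a bounded number of balls whose radius is a fixed fraction of $M$, bound the probability that any one of them misses the Poisson process, and take a union bound. Your choice of covering radius $M/2$ is in fact slightly tidier than the paper's radius-$M$ balls, since the triangle inequality then gives exactly the $M$-closeness needed to place each nearest Poisson point inside $A+B(0,M)$.
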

	In other terms, with exponentially high probability the Voronoi cells intersecting $A$ do not go too far off of $A$. 
\begin{proof}[Proof of Lemma~\ref{loin}.]
There exists $C>0$, such that for any $M>0$ and $A$ as in the lemma, 
$A$  can be covered by at most $C$ balls of radius  $M$. With probability at 
least $1-C\exp\left(-(\Vol \mathbb B)^d M^d)\right)$, there exists
at least one point of the Poisson process in every ball. Consequently, with the same probability, any point of $A$ 
is $M$-close to a point of the Poisson process. 
\end{proof}	
\begin{proof}[Proof of Proposition~\ref{bevodec}.]
By Lemma~\ref{loin}, with probability at 
	least $1-2e^{-cM^d}$, the event $E(A,M)\cap E(B,M)$ happens, where $E(A,M)$ is defined by~(\ref{ki}). Since the distance beween $A$ and $B$ is larger than $2M$, this implies the result.
\end{proof}	
We could not find in the litterature the proof that the Voronoi percolation is ergodic under the actions of translations, hence the following proposition:
\begin{prop}\label{ergovor}
	For any $p\in \RR$, the translations over $\RR^d$ are ergodic for the Voronoi percolation $\sigma_p$.
\end{prop}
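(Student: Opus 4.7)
The plan is to prove the stronger property of strong mixing under translations, from which ergodicity of the translation group action follows immediately. Fix $v \in \RR^d \setminus \{0\}$. The goal is to show that for all measurable events $A, B$ in the $\sigma$-algebra generated by $\sigma_p$,
\begin{equation*}
\lim_{n\to\infty} \PP\bigl[A \cap \tau_{nv}^{-1}(B)\bigr] = \PP[A]\,\PP[B].
\end{equation*}
Once this is established, if $A$ is $\tau_v$-invariant, then taking $B = A$ gives $\PP[A] = \PP[A]^2$, forcing $\PP[A] \in \{0,1\}$. Since any event invariant under the full translation group is in particular $\tau_v$-invariant, this establishes the proposition.

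First, I would prove the mixing relation for \emph{local events}, i.e., events $A, B$ depending respectively on the values of $\sigma_p$ over compact sets $K_A, K_B \subset \RR^d$. The event $\tau_{nv}^{-1}(B)$ is then determined by $\sigma_p$ over the translate $K_B + nv$, whose diameter equals that of $K_B$ but whose distance $D_n := \dist(K_A, K_B + nv)$ to $K_A$ grows linearly with $n$. For $n$ large enough that $D_n \geq \max(\Diam K_A, \Diam K_B)$ and $D_n \geq M_0$, Proposition~\ref{bevodec} applied with $M = D_n$ yields
\begin{equation*}
\bigl| \PP[A \cap \tau_{nv}^{-1}(B)] - \PP[A]\,\PP[\tau_{nv}^{-1}(B)] \bigr| \leq e^{-c D_n^d}.
\end{equation*}
By translation invariance of the law of $\sigma_p$, $\PP[\tau_{nv}^{-1}(B)] = \PP[B]$, and the bound tends to $0$.

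Next, I would extend the mixing relation to all events by a standard $L^1$-approximation. The full $\sigma$-algebra generated by $\sigma_p$ is generated by the algebra of local events, so for any $A, B$ and any $\varepsilon > 0$ there exist local events $A', B'$ with $\PP[A \bigtriangleup A'] + \PP[B \bigtriangleup B'] < \varepsilon$. A routine triangle inequality then gives
\begin{equation*}
\bigl| \PP[A \cap \tau_{nv}^{-1}(B)] - \PP[A]\,\PP[B] \bigr| \leq 4\varepsilon + \bigl| \PP[A' \cap \tau_{nv}^{-1}(B')] - \PP[A']\,\PP[B'] \bigr|,
\end{equation*}
whose second term vanishes as $n \to \infty$ by the first step. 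Letting $\varepsilon \to 0$ finishes the proof.

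The main substantive input is the quantitative decorrelation bound of Proposition~\ref{bevodec}, whose hypotheses (both regions of diameter at most $M$, separated by distance at least $M$) are trivially met for $M = D_n$ once $n$ is large enough. The extension from local to general events is purely formal, so no real obstacle is expected.
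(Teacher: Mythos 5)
Your proposal is correct, and it works, but it takes a somewhat different route from the paper's. You establish the strictly stronger property of strong mixing under $\tau_{nv}$ as $n\to\infty$ (first for local events via Proposition~\ref{bevodec}, then for all events by $L^1$-approximation) and then extract ergodicity as the special case $A=B$ with $A$ invariant. The paper does not prove mixing: it approximates a given translation-invariant event $A$ by a single local event $A_S$, invokes Lemma~\ref{loin} directly (rather than Proposition~\ref{bevodec}) to find \emph{one} translation $v$ of fixed size making $A_S$ and $\tau_v A_S$ quasi-independent, and then chains the approximation and invariance inequalities to obtain $|\PP(A)^2 - \PP(A)| \leq 5\epsilon$ in one pass. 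The two arguments lean on the same underlying fact (Voronoi cells are local with exponentially high probability), so neither is truly more elementary; your version is more modular and yields the stronger mixing statement as a by-product, while the paper's avoids the $n\to\infty$ limit and the two-sided approximation bookkeeping by fixing a single translation up front.
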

\begin{proof}
	Let $\epsilon>0$ and $A$ an event invariant under the translations. 
	Since $A$ is measurable, there exists a finite number of points $S\subset \RR^d$
and an 	event $A_S$ depending only on the value of $\sigma_p$ on $S$ such that
\begin{equation}\label{rrr}
\PP (A\Delta A_S) \leq \epsilon.
\end{equation}
Let  $c, R_0>\Diam S$ be given by Lemma~\ref{loin} such that 
$$\forall R\geq R_0, \ \PP[E(S,R)]\geq 1- \exp(-cR^d)\leq \epsilon,$$
where $E(S,R)$ is defined by~(\ref{ki}).
Let  
$$v=(4R_0,0, \cdots, 0)\in  \RR^d.$$ 
Then with probability at least $1-\epsilon$, 
$A_S $ is independent of  $\tau_v A_S$,
so that 
\begin{equation}\label{st}
|\PP(A_S\cap \tau_v A_S)-\PP(A_S)^2| \leq \epsilon.
\end{equation}
Since $A$ is invariant under $\tau_v$,
$ \PP(A\cap \tau_v A) = \PP(A).$
Now
	\begin{eqnarray*}
	\PP\Bigl[(A_S\cap \tau_v A_S) \diffsym A \Bigr]&\leq&\PP(A_S \diffsym A)+\PP(\tau_v A_S \diffsym A)\\
& \leq & \PP(A_S \diffsym A)+\PP(A_S \diffsym \tau_{-v} A).
\end{eqnarray*}
	But $\tau_{-v}A=A$. Thus,
	$$\PP\Bigl[(A_S\cap \tau_{v}A_S) \diffsym A \Bigr]\leq2\PP(A_S \diffsym A)\leq 2\epsilon.
	$$
	Therefore,
	$|\PP(A_S\cap \tau_v A_S) -\PP(A)|\leq 2\epsilon.$
	Hence by (\ref{st}) we get
	$$|\PP(A_S)^2-\PP(A)|\leq 3\epsilon.
	$$
	Now using (\ref{rrr}), 
	$$|\PP(A)^2-\PP(A)|\leq 3\epsilon+ |\PP(A_S)^2-\PP(A)^2|\leq 5 \epsilon.$$
	Consequently, $\PP(A)\in \{0,1\}.$
	\end{proof}	
We can now prove the second main application of the general Corollary~\ref{positif}.
\begin{proof}[Proof of Theorem~\ref{bevo} (phase transition for Voronoi FPP)]
The colour of	Voronoi percolation is constant on each tile, and the tiles are semi-algebraic, 
so that $\sigma_p$ satisfies condition~(\ref{mesu}) (mesurability). By Proposition~\ref{ergovor},  $\sigma_p$ satisfies condition~(\ref{hom}) (ergodicity). 
	By Proposition~\ref{bevodec}, $\sigma_p$
	satisfies condition~(\ref{AI}) (quasi independence).

Now, let $p<p_c(d)$.
By Proposition~\ref{equiVor}, $\sigma_p$ satisfies condition~(\ref{annuliT}) (decay of instant one-arms).
Corollary~\ref{positif} then concludes.

For $p>p_c(d)$, by the definition~(\ref{pcdvor}) of $p_c(d)$, almost surely there is an infinite connected component of $\{\sigma_p=0\}$, so that condition~(\ref{vannuli}) (white crossing of large annuli) holds, which implies that $\mu_p=0$ by Proposition~\ref{muzero}.

 If $d=2$, then $p= p_c(2)=1/2$ by Theorem~\ref{BoRi}. By Theorem~\ref{RSWbevo} the colouring $\sigma_p$ satisfies condition~(\ref{RSW}) (RSW), so that  by Proposition~\ref{muzero},
$\mu_{1/2}=0$.
\end{proof}
\subsection{Boolean FPP}

Since this case has been proved in a greater generality, we provide a sketched proof of Corollary~\ref{bouboule}.
\begin{proof}[Sketch of proof of Corollary~\ref{bouboule}]
The model satisfies conditions~(\ref{mesu}) (mesurability) and~(\ref{hom}) (ergodicity).  By Lemma~\ref{loin} and the hypothesis on the exponential tail of the radii, condition~(\ref{AI}) (quasi independence) holds.

Assume $\lambda<\lambda_c$, where $\lambda_c$ is defined by~(\ref{boocrit2}). By Theorem~\ref{boc}, $\sigma_{\nu, \lambda}$ satisfies condition~(\ref{wannuli}) (decay of white one-arms).
By construction, the white region is a locally finite union of non-trivial discs, so that the complementary is defined by quadratic inequalities, hence satisfies condition~(\ref{blackreg}) (positive region regularity). By Lemma~\ref{3a3b}, it hence satisfies condition~(\ref{annuliT}) (decay of instant one-arms).
 Then, Corollary~\ref{positif} implies that $\mu$ is a norm. 
 
 Now if $\lambda>\lambda_c$, condition~(\ref{vannuli}) (white crossings of large annuli) is satisfied since the origin is negatively connected to infinity, so that by Theorem~\ref{muzero},  $\mu=0$. We use again~\cite{gouere2017positivity} for the critical case: Theorem A.1 of said paper implies that in the Boolean case, the subset $\{\lambda>0, \text{ condition~(\ref{vannuli}) is not satisfied}\}$ is open. This implies that for $\lambda=\lambda_c$, $\mu=0$ as well. 
\end{proof}

	\begin{proof}[Proof of Corollary~\ref{ceb}.] 
By  Corollary~\ref{bouboule}, $\mu_{\lambda_c}=0. $
By Remark~\ref{Xor}, the colouring satisfies condition~(\ref{moment}) (finite moment), thus the model satisfies the hypotheses of 
 Corollary~\ref{ceg}, so that its conclusion applies. 
\end{proof}

\subsection{Riemannian FPP}

In this paragraph, we prove Theorem~\ref{smoothmetric0} and its corollaries. 
	\begin{proof}[Proof of Theorem~\ref{smoothmetric0}]
		Let us prove that for any $x$ in $\RR^d$, $\mathbb E T(0,x)$ is finite. For this, 
		note that $$\forall x\in \RR^d, \ T(0,x)\leq \Leng_g([0,x]) = \int_0^1 \|x\|_{g(tx)}dt, $$
		so that by stationarity of $g$, 
		$ \mathbb E T(0,x) \leq \mathbb E \|x\|_{g(0)} .$
		By condition~(\ref{wFM}) (weak finite moment condition), this is finite, so that condition~(\ref{moment}) (finite moment) holds for $T$.  
		Now, condition~(\ref{annuliT}) (decay of instant one-arms) is automatically satisfied, since $T$ is a distance. All the conditions for Theorem~\ref{positifT} are in place, so that it can be applied.
		\end{proof}
\begin{proof}[Proof of Corollary~\ref{smoothmetric}]
	Under the hypotheses of Theorem~\ref{smoothmetricLW}, $g$ has finite correlations, so that condition~(\ref{AI}) (asymptotic independence) is satisfied for the associated pseudometric, thus we can apply Theorem~\ref{smoothmetric0}, which proves the result.
	\end{proof}
\begin{proof}[Proof of Corollary~\ref{smoothmetric2}]
Condition~(\ref{IL}) implies that if $E$ is a decreasing event for the associated pseumetric $T$, then it is also a decreasing event for the function $f$.  Hence, all the conditions are met for Theorem~\ref{muirhead2018sharp}, so that condition~(\ref{AI}) (asymptotic independence) is satisfied for the associated pseudometric, so that we can apply Theorem~\ref{smoothmetric0}.
\end{proof}

\begin{proof}[Proof of Corollary~\ref{smoothmetric3}]
Since $\varphi$ is non-decreasing, the functional $\Leng_g$ is a non-decreasing function in $f$, so that condition~(\ref{IL}) is fullfilled. Moreover,  
let $v\in \RR^d$. Then 
$$\mathbb E \|v\|_{g(0)} = 
\|v\|_{g_0} \mathbb E \varphi(f(0)) = \|v\|_{g_0} \int_\R 
\varphi(u)e^{-\frac{u^2}{2}} \frac{du}{2\pi}$$
which is finite by condition~(\ref{phi1}), so that $g$ satisfies condition~(\ref{wFM}), which implies that Corollary~\ref{smoothmetric2} applies. 
\end{proof}

\subsection{Other models}

\paragraph{Other Gaussian model}
\paragraph{Proof of the other Gaussian theorem.}
We finish this paragraph with the proof of Theorem~\ref{gauss2}.
We will need the classical Borell-TIS inequality:
\begin{prop}\label{Borell}\cite[Theorem 2.1.1]{adler-taylor} Let $A$ be a separable topological space and $f: A\to \R$ be a centered gaussian field over $A$ which is almost
	surely bounded and continuous. Then, $\mathbb E[\sup_{A} f]$ is finite and for all postive $u$,
	$$
	\PP\big[
	\sup_{A} f-\mathbb E (\sup_A f ) > u \big] \leq \exp(-\frac{u^2}{2\sigma_A^2}),$$
	where $\sigma_A^2= \sup_{x\in A} \Var f(x).$
\end{prop}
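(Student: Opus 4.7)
The plan is to reduce to finite maxima of Gaussian vectors via separability, apply the Gaussian concentration inequality for Lipschitz functions to each finite maximum, and pass to the limit. The main delicate step will be to establish $\EE[\sup_A f]<\infty$ so that the target bound is even meaningful.

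Fix a countable dense subset $\{x_1,x_2,\dots\}\subset A$. Since $f$ is a.s.\ continuous, $\sup_A f=\sup_n f(x_n)$ a.s., and the finite maxima $M_n:=\max_{1\leq i\leq n}f(x_i)$ increase a.s.\ to $\sup_A f$. Writing the vector $(f(x_1),\dots,f(x_n))^{T}=C_nZ$ with $Z\sim N(0,I_n)$ and $C_nC_n^{T}$ equal to the covariance matrix, the function $F_n(z):=\max_i (C_nz)_i$ is Lipschitz on $\RR^n$ with constant $\max_i\sqrt{(C_nC_n^T)_{ii}}=\max_i\sqrt{\Var f(x_i)}\leq\sigma_A$, since each coordinate $(C_nz)_i$ is a linear form of Euclidean norm $\sqrt{\Var f(x_i)}$. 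I would then invoke the standard Gaussian concentration inequality: if $F:\RR^n\to\RR$ is $L$-Lipschitz and $Z\sim N(0,I_n)$, then for every $u>0$,
\[
\PP[F(Z)-\EE F(Z)>u]\leq \exp(-u^2/(2L^2)).
\]
The cleanest derivation uses the Gaussian log-Sobolev inequality applied to $g=e^{\lambda F/2}$ (after mollification, since $F$ need not be smooth): it produces $(\psi(\lambda)/\lambda)'\leq L^2/2$ for $\psi(\lambda)=\log\EE e^{\lambda(F-\EE F)}$, hence $\psi(\lambda)\leq \lambda^2L^2/2$, and Markov's inequality closes the bound. Applied to $F_n$ and to $-F_n$ with $L=\sigma_A$, this gives for every $n$ and every $u>0$,
\[
\PP[M_n-\EE M_n>u]\leq e^{-u^2/(2\sigma_A^2)},\qquad \PP[M_n-\EE M_n<-u]\leq e^{-u^2/(2\sigma_A^2)}.
\]

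The main obstacle is to show $\sup_n \EE M_n<\infty$, since the target bound would otherwise be vacuous for $\sup_A f$. From the lower tail and $M_n\leq \sup_A f$ a.s., one has $\PP[\sup_A f>\EE M_n-u]\geq 1-e^{-u^2/(2\sigma_A^2)}$. Choose $u$ so that the right-hand side exceeds $3/4$, and choose $u_0$ with $\PP[\sup_A f>u_0]<3/4$, which exists because $\sup_A f$ is a.s.\ finite. Then necessarily $\EE M_n-u\leq u_0$ for every $n$, giving a uniform upper bound on $\EE M_n$; combined with $\EE M_n\geq \EE f(x_1)=0$, monotone convergence yields $\EE M_n\uparrow\EE[\sup_A f]<\infty$.

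To pass the concentration bound to the limit, fix $u>0$ and $\epsilon\in(0,u)$. For almost every $\omega\in\{\sup_A f-\EE\sup_A f>u\}$, once $n$ is large enough that $M_n(\omega)>\sup_A f(\omega)-\epsilon/2$ and $\EE M_n<\EE\sup_A f+\epsilon/2$, we have $M_n(\omega)-\EE M_n>u-\epsilon$. Fatou's lemma then gives
\[
\PP[\sup_A f-\EE\sup_A f>u]\leq\liminf_{n\to\infty}\PP[M_n-\EE M_n>u-\epsilon]\leq e^{-(u-\epsilon)^2/(2\sigma_A^2)},
\]
and sending $\epsilon\to 0$ yields the claimed inequality.
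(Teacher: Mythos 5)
The paper does not prove Proposition~\ref{Borell}; it is cited verbatim from Adler--Taylor (Theorem~2.1.1), so there is no in-paper argument to compare yours against. Your derivation is correct and is one of the standard self-contained routes to Borell--TIS: reduce to finite maxima $M_n$ over a countable dense set using a.s.\ continuity; prove the two-sided concentration for each $M_n$ via the Lipschitz bound $\Lip(F_n)\leq\max_i\sqrt{\Var f(x_i)}\leq\sigma_A$ and the Gaussian concentration inequality (your Herbst/log-Sobolev derivation is fine); bound $\sup_n\EE M_n$ by playing the lower tail against a.s.\ finiteness of $\sup_A f$ (this is essentially Borell's original device); and pass to the limit by monotonicity and Fatou. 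Adler--Taylor's treatment uses the Gaussian isoperimetric inequality rather than log-Sobolev for the finite-dimensional concentration step, but the surrounding reduction (separability, monotone limits, and the argument that $\EE\sup_A f<\infty$) is the same in spirit, so the two proofs buy essentially the same thing; yours is marginally more self-contained for readers who know LSI but not Gaussian isoperimetry.

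One small point you should make explicit: your argument bounding $\EE M_n$ tacitly requires $\sigma_A<\infty$, since otherwise the two-sided tail estimate $e^{-u^2/(2\sigma_A^2)}$ degenerates to the trivial bound $1$ and no threshold $u$ with $1-e^{-u^2/(2\sigma_A^2)}>3/4$ exists. This is genuinely part of the Borell--TIS conclusion, and it follows quickly from a.s.\ boundedness: if $\Var f(y_k)\to\infty$ along some sequence $y_k\in A$, then for every $M$ one has $\PP[\sup_A f>M]\geq\lim_k\PP[f(y_k)>M]=\tfrac12$, whence $\PP[\sup_A f=\infty]\geq\tfrac12$, contradicting the hypothesis. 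With that one-line lemma inserted, the proof is complete.
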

\begin{cor}\label{corgauss2} Let $f: \RR^p \to \R$ be an ergodic  continuous Gaussian field and $\psi$ an non-decreasing function satisfying~(\ref{psi2}). Let $T$ be the pseudometric defined by~(\ref{gauss2metric}). Then  $T$ satisfies condition~(\ref{moment}) (finite moment).
\end{cor}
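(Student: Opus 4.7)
The plan is to dominate $T(0,x)$ by a single deterministic-path integral, apply the Borell--TIS inequality (Proposition~\ref{Borell}) to get a sub-Gaussian tail for the supremum of $f$ along that path, and then use condition~(\ref{psi2}) to swallow this tail. The main obstacle will be a Stieltjes integration by parts at the final step, since $\psi$ is only assumed non-decreasing rather than absolutely continuous.

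First, I would use the straight segment from $0$ to $x$ as a competitor in the definition~(\ref{defmetric}) of $T$. Since $\psi$ is non-decreasing, this immediately yields
$$ T(0,x) \;\leq\; \int_0^1 \psi(f(tx))\,\|x\|\, dt \;\leq\; \|x\|\,\psi(M_x), \qquad M_x \;:=\; \sup_{t \in [0,1]} f(tx), $$
which reduces the problem to proving $\EE\,\psi(M_x) < \infty$.

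Continuity of $f$ on the compact segment $[0,x]$ guarantees that $M_x$ is almost surely finite and that $\sigma^2 := \sup_{t \in [0,1]} \Var f(tx)$ is finite. Proposition~\ref{Borell} then furnishes $m := \EE\, M_x < \infty$ together with the sub-Gaussian tail $\PP(M_x > m+u) \leq \exp(-u^2/(2\sigma^2))$ for every $u > 0$; in particular there exist constants $C, c > 0$ such that $\PP(M_x > u) \leq C e^{-c u^2}$ for all $u \geq 0$.

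Finally, writing $\psi(y) - \psi(0) = \int_0^y d\psi(u)$ for $y \geq 0$ and applying Fubini gives
$$ \EE[\psi(M_x)\,\un_{M_x > 0}] \;=\; \psi(0)\,\PP(M_x > 0) + \int_0^\infty \PP(M_x > u)\, d\psi(u). $$
A Stieltjes integration by parts converts the right-hand integral into a boundary term at infinity, which vanishes because~(\ref{psi2}) combined with monotonicity of $\psi$ forces $\psi(u) e^{-cu^2} \to 0$, plus a Lebesgue integral of the form $C'\int_0^\infty u\,\psi(u)\, e^{-cu^2}\,du$, which is finite by~(\ref{psi2}) applied with $\alpha = c/2$, the extra factor $u$ being absorbed into a slightly weaker Gaussian. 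Together with the trivial bound $\psi(M_x) \leq \psi(0)$ on $\{M_x \leq 0\}$, this gives $\EE\,\psi(M_x) < \infty$, hence $\EE\, T(0,x) \leq \|x\|\,\EE\,\psi(M_x) < \infty$, i.e.\ condition~(\ref{moment}) for $T$.
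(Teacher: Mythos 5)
Your proof is correct and follows essentially the same route as the paper's: bound $T(0,x)$ by $\|x\|\,\psi(\sup f)$ along the straight segment, apply Borell--TIS to obtain a sub-Gaussian tail for the supremum, and invoke~(\ref{psi2}) to conclude that $\EE\,\psi(\sup f)<\infty$. The only cosmetic differences are that the paper takes the supremum over the ball $B(0,\|x\|)$ rather than the segment, and controls $\EE\,\psi(\sup f)$ by a discrete sum $\sum_k \psi(k+1)\,\PP[\sup f \geq k]$ compared directly to the Gaussian integral, which is a slightly lighter device than your Stieltjes integration by parts.
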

\begin{proof}Let $x\in \R^d$ and  $B=B(0,\|x\|)$.
	By Proposition~\ref{Borell}, $\mathbb E (\sup_{B} f)$ is finite. 
	Since
	$$   T(0,x) \leq  \psi (\sup_{B} f)\|x\|, $$
	we obtain that there exists a constant $C_f$, such that
	\begin{eqnarray*}
		\mathbb E T(0,x) &\leq &\sum_{k=0}^\infty 
		\psi(k+1) \PP [ \sup_{B} f\geq k]\\
		& \leq & C_f \int_0^\infty \psi(u) \exp(-\frac{u^2}{2\sigma_{B}^2}) du.
	\end{eqnarray*}
	so that the corollary is true. 
\end{proof}
\begin{proof}[Proof of Theorem~\ref{gauss2}.] For any $p\in \R$, let $T_{p}$ be the pseudometric defined by~(\ref{gauss2metric}) associated with $f+p$.  By the ergodicity of $f$ and thus of $\psi\circ (f+p)$, $T_{p}$ satisfies condition~(\ref{homT}) (ergodicity). By Corollary~\ref{corgauss2} it satisfies condition~(\ref{moment}) (finite moment). Theorem~\ref{fixeddirT} provides the existence of $\mu_p$. Now, all the arguments used in the previous proof of Theorem~\ref{posBF} apply. Indeed, for $p=0$, we still can prove that condition~(\ref{annuliT}) (fast crossings of annuli) is fullfilled, since as the former case, the speed of travelling equals zero over $\{f+p<0\}$. The case $p\leq 0$ is identical. For $p>0$, only condition~(\ref{AI}) is challenging. However, any event $E$ decreasing for $\sigma_p$ is also decreasing for $f$, so that Theorem~\ref{muirhead2018sharp} applies again, and $T_p$ satisfies condition~(\ref{AI}) for any $p$. 
\end{proof}

\paragraph{Ising model. }
\begin{proof}	[Sketch of proof of Corollary~\ref{ising}]
	The Ising model is ergodic and the associated colouring $\sigma_\beta$ is measurable. For $\beta\geq 0$, the model satisfies the FKG inequality, so that we can apply Corollary~\ref{remtassion}, although the model does not have the required symmetries. Indeed, \cite{tassion2016crossing} holds for the symmetries of the triangle lattice.
	For negative $\beta$, this is due to~\cite{beffara2017percolation}, where it is proved that the antiferromagnetic model with high negative temperature satisfies condition~(\ref{sRSW}) (strong RSW), hence condition~(\ref{vannuli}), so that the Proposition~\ref{muzero} concludes. 
\end{proof}

\bibliography{mybib}{}

\providecommand{\bysame}{\leavevmode\hbox to3em{\hrulefill}\thinspace}
\providecommand{\MR}{\relax\ifhmode\unskip\space\fi MR }
\providecommand{\MRhref}[2]{%
  \href{http://www.ams.org/mathscinet-getitem?mr=#1}{#2}
}
\providecommand{\href}[2]{#2}
\begin{thebibliography}{10}

\bibitem{adler}
Robert~J. Adler, \emph{The geometry of random fields}, Wiley, 1981.

\bibitem{adler-taylor}
Robert~J. Adler and Jonathan~E. Taylor, \emph{Random fields and geometry},
  Springer Science \& Business Media, 2009.

\bibitem{ahlberg2018}
Daniel Ahlberg, Vincent Tassion, and Augusto Teixeira, \emph{Existence of an
  unbounded vacant set for subcritical continuum percolation}, Electron.
  Commun. Probab. \textbf{23} (2018), 8 pp.

\bibitem{alexander1996boundedness}
Kenneth~S. Alexander, \emph{Boundedness of level lines for two-dimensional
  random fields}, The Annals of Probability \textbf{24} (1996), no.~4,
  1653--1674.

\bibitem{auffinger201750}
Antonio Auffinger, Michael Damron, and Jack Hanson, \emph{50 years of
  first-passage percolation}, vol.~68, American Mathematical Soc., 2017.

\bibitem{beffara2016v}
Vincent {Beffara} and Damien {Gayet}, \emph{{Percolation of random nodal
  lines.}}, {Publ. Math., Inst. Hautes \'Etud. Sci.} \textbf{126} (2017),
  131--176.

\bibitem{beffara2017percolation}
Vincent Beffara and Damien Gayet, \emph{Percolation without {FKG}},
  arXiv:1710.10644 (2017).

\bibitem{beliaev2018discretisation}
Dmitry Beliaev and Stephen Muirhead, \emph{Discretisation schemes for level
  sets of planar {G}aussian fields}, Communications in Mathematical Physics
  \textbf{359} (2018), no.~3, 869--913.

\bibitem{beliaev2017russo}
Dmitry Beliaev, Stephen Muirhead, and Igor Wigman, \emph{{Russo-Seymour-Welsh
  estimates for the Kostlan ensemble of random polynomials}}, arXiv:1709.08961
  (2017).

\bibitem{bollobas2006percolation}
B{\'e}la Bollob{\'a}s, Bela Bollob{\'a}s, Oliver Riordan, and O~Riordan,
  \emph{Percolation}, Cambridge University Press, 2006.

\bibitem{bollobas2006critical}
B{\'e}la Bollob{\'a}s and Oliver Riordan, \emph{The critical probability for
  random {V}oronoi percolation in the plane is 1/2}, Probability theory and
  related fields \textbf{136} (2006), no.~3, 417--468.

\bibitem{cox}
J.~Theodore Cox and Richard Durrett, \emph{Some limit theorems for percolation
  processes with necessary and sufficient conditions}, The Annals of
  Probability \textbf{9} (1981), no.~4, 583--603.

\bibitem{ding2019upper}
Jian Ding and Subhajit Goswami, \emph{Upper bounds on liouville first-passage
  percolation and watabiki's prediction}, Communications on Pure and Applied
  Mathematics \textbf{72} (2019), no.~11, 2331--2384.

\bibitem{duminil2018subcritical}
Hugo Duminil-Copin, Aran Raoufi, and Vincent Tassion, \emph{Subcritical phase
  of $ d $-dimensional {Poisson-Boolean} percolation and its vacant set},
  arXiv:1805.00695 (2018).

\bibitem{duminil2019}
\bysame, \emph{Exponential decay of connection probabilities for subcritical
  {V}oronoi percolation in ${R}^d$}, Probability Theory and Related Fields
  \textbf{173} (2019), no.~1-2, 479--490.

\bibitem{ferrari2013random}
Frank Ferrari, Semyon Klevtsov, and Steve Zelditch, \emph{Random {K}{\"a}hler
  metrics}, Nuclear Physics B \textbf{869} (2013), no.~1, 89--110.

\bibitem{fitzner2017mean}
Robert Fitzner and Remco van~der Hofstad, \emph{Mean-field behavior for
  nearest-neighbor percolation in $ d> 10$}, Electronic Journal of Probability
  \textbf{22} (2017).

\bibitem{garban2019bargmann}
Christophe Garban and Hugo Vanneuville, \emph{{Bargmann-Fock} percolation is
  noise sensitive}, arXiv:1906.02666 (2019).

\bibitem{gouere2017positivity}
Jean-Baptiste Gou{\'e}r{\'e} and Marie Th{\'e}ret, \emph{Positivity of the time
  constant in a continuous model of first passage percolation}, Electronic
  Journal of Probability \textbf{22} (2017).

\bibitem{gouere2008}
Jean-Baptiste Gouéré, \emph{Subcritical regimes in the {Poisson Boolean}
  model of continuum percolation}, Ann. Probab. \textbf{36} (2008), no.~4,
  1209--1220.

\bibitem{grimmett}
Geoffrey {Grimmett}, \emph{{Percolation}}, 2nd ed. ed., Berlin: Springer, 1999.

\bibitem{Hw}
J.~M. Hammersley and D.~J.~A. Welsh, \emph{First-passage percolation,
  subadditive processes, stochastic networks, and generalized renewal theory},
  Proc. Internat. Res. Semin., Statist. Lab., Univ. California, Berkeley,
  Calif, Springer, 1965, pp.~61--110.

\bibitem{howard1997euclidean}
C.~Douglas Howard and Charles~M. Newman, \emph{Euclidean models of
  first-passage percolation}, Probability Theory and Related Fields
  \textbf{108} (1997), no.~2, 153--170.

\bibitem{kesten1986aspects}
Harry Kesten, \emph{Aspects of first passage percolation}, {\'E}cole
  d'{\'e}t{\'e} de probabilit{\'e}s de Saint Flour XIV-1984, Springer, 1986,
  pp.~125--264.

\bibitem{kesten1987scaling}
\bysame, \emph{Scaling relations for 2d-percolation}, Communications in
  Mathematical Physics \textbf{109} (1987), no.~1, 109--156.

\bibitem{kozma}
Gady Kozma and Asaf Nachmias, \emph{Arm exponents in high dimensional
  percolation}, Journal of the American Mathematical Society \textbf{24}
  (2011), no.~2, 375--409.

\bibitem{lagatta2010shape}
Tom LaGatta and Jan Wehr, \emph{A shape theorem for {R}iemannian first-passage
  percolation}, Journal of mathematical physics \textbf{51} (2010), no.~5,
  053502.

\bibitem{lawler2002}
Gregory Lawler, Oded Schramm, and Wendelin Werner, \emph{One-arm exponent for
  critical 2d percolation}, Electron. J. Probab. \textbf{7} (2002), 13 pp.

\bibitem{meester1996}
Ronald Meester and Rahul Roy, \emph{Continuum percolation}, Cambridge Tracts in
  Mathematics, Cambridge University Press, 1996.

\bibitem{muirhead2018sharp}
Stephen Muirhead and Hugo Vanneuville, \emph{The sharp phase transition for
  level set percolation of smooth planar gaussian fields}, Ann. Inst. H.
  Poincaré Probab. Statist. \textbf{56} (2020), no.~2, 1358--1390.

\bibitem{nazarov2}
Fedor Nazarov and Mikhail Sodin, \emph{{Asymptotic laws for the spatial
  distribution and the number of connected components of zero sets of Gaussian
  random functions.}}, {Zh. Mat. Fiz. Anal. Geom.} \textbf{12} (2016), no.~3,
  205--278.

\bibitem{pimentel2006time}
Leandro Pimentel, \emph{The time constant and critical probabilities in
  percolation models}, Electronic Communications in Probability \textbf{11}
  (2006), 160--167.

\bibitem{piterbarg}
Vladimir Piterbarg, \emph{Asymptotic methods in the theory of {G}aussian
  processes and fields}, American Mathematical Soc., 1996.

\bibitem{Pitt1982}
Loren~D. Pitt, \emph{Correlated normal variables are associated}, The Annals of
  Probability \textbf{10} (1982), no.~2, 496--499.

\bibitem{rivera2019talagrand}
Alejandro Rivera, \emph{Talagrand's inequality in planar gaussian field
  percolation}, arXiv:1905.13317 (2019).

\bibitem{rivera2017critical}
Alejandro Rivera and Hugo Vanneuville, \emph{The critical threshold for
  {Bargmann-Fock} percolation}, arXiv:1711.05012 (2017).

\bibitem{rivera2019quasi}
\bysame, \emph{Quasi-independence for nodal lines}, Annales de l'Institut Henri
  Poincar{\'e}, Probabilit{\'e}s et Statistiques \textbf{55} (2019), no.~3,
  1679--1711.

\bibitem{tassion2016crossing}
Vincent Tassion, \emph{Crossing probabilities for {V}oronoi percolation}, The
  Annals of Probability \textbf{44} (2016), no.~5, 3385--3398.

\bibitem{hugo}
Hugo Vanneuville, Personal communication.

\bibitem{Velenik}
Yvan Velenik, \emph{{Le mod{\`e}le d'Ising}}, Lecture, February 2009.

\bibitem{viana_oliveira_2016}
Marcelo Viana and Krerley Oliveira, \emph{Foundations of ergodic theory},
  Cambridge Studies in Advanced Mathematics, Cambridge University Press, 2016.

\bibitem{ziesche2016first}
Sebastian Ziesche, \emph{First passage percolation in euclidean space and on
  random tessellations}, arXiv:1611.02005 (2016).

\end{thebibliography}
\bibliographystyle{amsplain}

\end{document}